\definecolor{darkergreen}{rgb}{0.0, 0.5, 0.0}
\numberwithin{equation}{section}
\def\theequation{\arabic{section}.\arabic{equation}}
\newcommand{\be}{\begin{eqnarray}}
\newcommand{\ee}{\end{eqnarray}}
\newcommand{\ce}{\begin{eqnarray*}}
\newcommand{\de}{\end{eqnarray*}}
\newtheorem{theorem}{Theorem}[section]
\newtheorem{lemma}[theorem]{Lemma}
\newtheorem{remark}[theorem]{Remark}
\newtheorem{definition}[theorem]{Definition}
\newtheorem{proposition}[theorem]{Proposition}
\newtheorem{Examples}[theorem]{Example}
\newtheorem{corollary}[theorem]{Corollary}
\newtheorem{assumption}{Assumption}[section]
\newtheorem*{theorem*}{Theorem}
\newtheorem*{remark*}{Remark}
\def\[{{\Big[}}
\def\]{{\Big]}}
\def\<{{\langle}}
\def\>{{\rangle}}
\def\({{\Big(}}
\def\){{\Big)}}
\def\bx{{\mathbf{x}}}
\def\={&\!\!=\!\!&}
\def\1{{\mathbf{1}}}
\def\geq{\geqslant}
\def\leq{\leqslant}
\def\le{\leqslant}
\def\[{{\Big[}}
\def\]{{\Big]}}
\def\<{{\langle}}
\def\>{{\rangle}}
\def\({{\Big(}}
\def\){{\Big)}}
\def\bx{{\mathbf{x}}}
\def\={&\!\!=\!\!&}
\def\bt{\begin{theorem}}
\def\et{\end{theorem}}
\def\bl{\begin{lemma}}
\def\el{\end{lemma}}
\def\br{\begin{remark}}
\def\er{\end{remark}}
\def\bx{\begin{Examples}}
\def\ex{\end{Examples}}
\def\bd{\begin{definition}}
\def\ed{\end{definition}}
\def\bp{\begin{proposition}}
\def\ep{\end{proposition}}
\def\bc{\begin{corollary}}
\def\ec{\end{corollary}}
\def\geq{\geqslant}
\def\leq{\leqslant}
\def\le{\leqslant}
\def\<{\langle} \def\>{\rangle}
\begin{document}

\title[Fluctuating hydrodynamics of The Ising-Kac-Kawasaki model]{\large Fluctuating Hydrodynamics of The Ising-Kac-Kawasaki Model and Nonlinear Fluctuations Near Criticality}

\author[Zhengyan Wu]{\large Zhengyan Wu}
\address[Z. Wu]{Department of Mathematics, Technische Universit\"at M\"unchen, Boltzmannstr. 3, 85748 Garching, Germany}
\email{wuzh@cit.tum.de}

\begin{abstract}
We study the scaling limit behavior of a family of conservative SPDEs as  the fluctuating Ising-Kac-Kawasaki dynamics. Precisely, we show that there exists a sequence of the one-dimensional rescaled fluctuating Ising-Kac-Kawasaki equation converges to the solution of the stochastic Cahn-Hilliard equation. This solves a simple version of the conjecture concerning the nonlinear fluctuation phenomenon, proposed by [Giacomin, Lebowitz, Presutti; Math. Surveys Monogr., 1999]. Furthermore, we prove a multi-scale dynamical large deviations in a small noise regime. Finally, we show the $\Gamma$-convergence of the rate function for the rescaled fluctuating Ising-Kac-Kawasaki equation to the rate function of the Cahn-Hilliard equation. 
\end{abstract}

\keywords{Ising-Kac, Kawasaki dynamics, near criticality, Cahn-Hilliard equation}

\date{\today}

\maketitle

\setcounter{tocdepth}{1}

\tableofcontents

\section{Introduction}
The Ising-Kac model was introduced to recover the van der Waals theory of phase transitions, in which interactions are nonlocal; see \cite{HKU} for further details. The Kawasaki dynamics provides a natural evolution for the Ising-Kac model, whereby neighboring particles exchange spins at a prescribed rate. In \cite{GJE99}, formal singular stochastic PDEs were proposed as approximations of the Ising-Kac-Kawasaki dynamics by comparing their generators, allowing for heuristic computations that illustrate the nonlinear fluctuation phenomena. These stochastic PDEs can be interpreted as the fluctuating hydrodynamic equations corresponding to the Ising-Kac-Kawasaki model.  

We first summarize the content of nonlinear fluctuations conjecture proposed by \cite{GJE99} for the fluctuating Ising-Kac-Kawasaki equation. Let $\beta>0$, the following one-dimensional informal singular stochastic PDE has been considered in \cite[(7.34)]{GJE99}: 
\begin{align}\label{intro-spde-1}
\partial_tm=\partial_{xx}^2m-\beta\partial_x\Big[(1-m^2)\partial_xJ\ast m\Big]+\gamma^{1/2}\sqrt{2}\partial_x\Big(\sqrt{1-m^2}\xi\Big), 	
\end{align}
where $J$ denotes a smooth interaction kernel and $\xi$ denotes a space-time white noise. By the scaling properties of the white noise, in order to describe the evolution of the rescaled fluctuation field 
\begin{equation}\label{fluctuation-field-intro}
	u_{\gamma}(x,t):=\gamma^{-1/3}m(\gamma^{-1/3}x,\gamma^{-4/3}t),\ \ \beta=1+a\gamma^{2/3}, 
\end{equation}
for some $a\in\mathbb{R}$, \cite{GJE99} derive the following equation (see \cite[(7.36)]{GJE99}) 
\begin{align}\label{intro-spde-2}
	\partial_tu_{\gamma}=\gamma^{-2/3}\Big[\partial_{xx}^2 u_{\gamma}-(1+a\gamma^{2/3})\partial_x\Big((1-\gamma^{2/3}u_{\gamma}^2)J_{\gamma^{1/3}}\ast\partial_xu_{\gamma}\Big)\Big]+\sqrt{2}\partial_x\Big(\sqrt{1-\gamma^{2/3}u_{\gamma}^2}\xi\Big),
\end{align}
where $J_{\gamma^{1/3}}(\cdot)=\gamma^{-1/3}J(\gamma^{-1/3}\cdot)$. We defer the precise derivation of the above rescaled equation to Subsection \ref{subsec-equation}. Based on an informal computation of the Taylor expansion of the nonlocal term $J_{\gamma^{1/3}}\ast\partial_xu_{\gamma}$, Giacomin, Lebowitz and Presutti \cite{GJE99} conjectured that the solution $u_{\gamma}$ of \eqref{intro-spde-2} converges to the solution of the stochastic Cahn-Hilliard equation 
\begin{equation}\label{intro-CahnHilliard}
\partial_tu=\partial_{xx}^2\Big[V'(u)-\frac{D}{2}\partial_{xx}^2 u\Big]-\sqrt{2}\partial_x\xi
\end{equation}
as $\gamma\rightarrow0$, where $V(u)=\frac{u^4}{4}-\frac{au^2}{2}$, $D=\int J(x)|x|^2dx$. This is not a Gaussian limit; therefore, it is known as a nonlinear fluctuation phenomenon.

The Giacomin-Lebowitz-Presutti conjecture provides a framework for analyzing the fluctuation behavior of long-range interacting spin systems in the continuum limit. Under the Kac scaling, which simulates interactions long-range but weak, the microscopic Ising-Kac-Kawasaki model is expected to be governed by a nonlocal, nonlinear parabolic equation describing the fluctuations. A key insight of the conjecture is that, at the critical temperature, the standard linearization around the mean fails, and the leading-order fluctuations are intrinsically nonlinear. These nonlinear fluctuations are predicted to capture fundamental aspects of the critical dynamics, including non-Gaussian statistics, thereby offering a characterization of phase transitions in systems with weak long-range interactions. 

The above description of the Giacomin-Lebowitz-Presutti conjecture remains at an informal stage due to the lack of a rigorous, general mathematical theory of supercritical singular stochastic PDEs. This results in the ill-definedness of \eqref{intro-spde-2}. Consequently, a rigorous study of the Giacomin-Lebowitz-Presutti conjecture requires a modification of its statement at present.

Referring to the theory of fluctuating hydrodynamics, conservative stochastic PDE models of type \eqref{intro-spde-2} are used to describe the mesoscopic picture. Consequently, the driving noise has a small spatial correlation length. For generality, we replace the noise~$\xi$ with a spatially correlated noise of correlation length~$\delta>0$.

Furthermore, both physical and analytical considerations suggest modifying the type of noise. In thermodynamics \cite{Ottinger}, \"Ottinger considers the stochastic GENERIC equation driven by Klimontovich noise in order to preserve the dynamics invariant under the Gibbs measure, albeit informally. In \cite{FG24}, an analytic approach to the Dean-Kawasaki equation indicated that Stratonovich noise provided stochastic coercivity, and this is essential for obtaining well-posedness.    

In comparison to Klimontovich noise, the Stratonovich-It\^o correction is half of the Klimontovich-It\^o correction, and this factor of $1/2$ makes no difference in the Giacomin-Lebowitz-Presutti scaling limit. Therefore, we ignore this difference and modify \eqref{intro-spde-2} into a stochastic PDE driven by Stratonovich-correlated noise: 
\begin{align}\label{intro-spde-3}
	\partial_tu_{\gamma}=\gamma^{-2/3}\Big[\partial_{xx}^2 u_{\gamma}-(1+a\gamma^{2/3})\partial_x\Big((1-\gamma^{2/3}u_{\gamma}^2)J_{\gamma^{1/3}}\ast\partial_xu_{\gamma}\Big)\Big]+\sqrt{2}\partial_x\Big(\sqrt{1-\gamma^{2/3}u_{\gamma}^2}\circ \xi_{\delta}\Big),
\end{align}
where $\xi_{\delta}=\xi\ast\eta_{\delta}$, with $\eta_{\delta}$ being a standard convolution kernel. The analytic approach developed by \cite{FG24,FG23} has been extended by Wang, the author, and Zhang \cite{WWZ22} to more general cases that encompass nonlocal interactions, and the well-posedness of equation \eqref{intro-spde-3} has also been established by \cite{WWZ22} for every fixed $\gamma,\delta>0$. 

In \cite{FG24,FG23,DFG,GWZ24}, several techniques have been employed to address the irregularity of the square-root coefficient, including the renormalized kinetic solution theory by \cite{LPT94}, the doubling variables approach for uniqueness, entropy dissipation estimates, a diagonal argument for $L^1$-tightness, and so on. However, in the study of the Giacomin-Lebowitz-Presutti conjecture, there are still limitations in applying these techniques, and the irregularity of the square-root remains a challenge. Further discussion of this point will be provided in Section \ref{sec-5}. To simplify the problem, we replace the square-root coefficient with a smooth approximation.

To conclude the above analysis, the first goal of this paper is to investigate the fluctuation field of the fluctuating Ising-Kac-Kawasaki equation 
\begin{align}\label{equgamma-0}
\partial_tu_{\gamma}=&\gamma^{-2/3}\partial_{xx}^2 u_{\gamma}-\gamma^{-2/3}(1+a\gamma^{2/3})\partial_{x}\Big[(1-\gamma^{2/3}u_{\gamma}^2)J_{\gamma^{1/3}}\ast\partial_{x} u_{\gamma}\Big]-\sqrt{2}\partial_{x}\Big(\sigma(1-\gamma^{2/3}u_{\gamma}^2)\circ \xi_{\delta}\Big),   
\end{align}
with initial data $u_{\gamma,0}:\mathbb{T}^1\rightarrow[-\gamma^{-1/3},\gamma^{-1/3}]$, where $\mathbb{T}^1$ denotes the one-dimensional torus with volume 1, with convention $\mathbb{T}^1=[-1/2,1/2]$. Furthermore, $\ast$ denotes the spatial convolution, $\circ$ denotes the Stratonovich integration, $J\in C^{\infty}_c((-1/2,1/2))$ is a smooth kernel. We still denote the periodized $J$ by $J$, and let $J_{\gamma^{1/3}}=\gamma^{-1/3}J(\gamma^{-1/3}\cdot)$. Moreover, $\sigma(\cdot)$ is a smooth approximation of the square-root function. We devote to showing that $u_{\gamma}\rightarrow u$ along a subsequence of $(\gamma,\delta)\rightarrow(0,0)$, where $u$ solves the Cahn-Hilliard equation:  
\begin{equation}\label{CahnHilliard}
\partial_tu=\partial_{xx}^2\Big[V'(u)-\frac{D}{2}\partial_{xx}^2 u\Big]-\sqrt{2}\partial_x\xi,\ \ u(0)=u_0, 
\end{equation}
with $V(u)=\frac{u^4}{4}-\frac{au^2}{2}$, $D=\int_{\mathbb{T}^1}J(x)|x|^2dx$, the space-time white noise $\xi$ being the limit of $\xi_{\delta}=\xi\ast\eta_{\delta}$ in \eqref{equgamma-0} and $u_0$ being the limit point of $u_{\gamma,0}$.  

This simplifies the Giacomin-Lebowitz-Presutti conjecture in two aspects: replacing the singular noise with a regular Stratonovich noise and regularizing the square-root coefficient. The former simplification introduces the limitation that we can only obtain convergence along a subsequence of $(\gamma,\delta)\rightarrow(0,0)$, since for fixed $\gamma\in(0,1]$, letting $\delta\rightarrow0$ does not lead to a nontrivial limit at present. Furthermore, the conjecture stated in \cite{GJE99} is considered on $\mathbb{R}^1$, whereas we consider all equations on a torus $\mathbb{T}^1$.

In the second part of this paper, we study a multi-scale large deviation principle for the fluctuation field in a small noise regime. Precisely, we denote the noise intensity of \eqref{equgamma-0} by an additional parameter $\varepsilon^{1/2}>0$:   
\begin{align}\label{smallnoiseeq}
\partial_tu_{\varepsilon,\gamma,\delta,n}=\gamma^{-2/3}\partial_{xx}^2 u_{\varepsilon,\gamma,\delta,n}-&\gamma^{-2/3}(1+a\gamma^{2/3})\partial_{x}\Big[(1-\gamma^{2/3}u_{\varepsilon,\gamma,\delta,n}^2)J_{\gamma^{1/3}}\ast\partial_{x} u_{\varepsilon,\gamma,\delta,n}\Big]\notag\\
&-\sqrt{2}\varepsilon^{1/2}\partial_{x}\Big(\sigma_n(1-\gamma^{2/3}u_{\varepsilon,\gamma,\delta,n}^2)\circ \xi_{\delta}\Big),\  u_{\varepsilon,\gamma,\delta,n}(0)=u_{\gamma,0}, 	
\end{align}
where $(\sigma_n(\cdot))_{n\geq0}$ is a sequence of smooth approximation of the square-root coefficient. Throughout this work, we fix a time horizon $T > 0$ and aim to establish a large deviation principle for the trajectories of the family of processes $(u_{\varepsilon} = u_{\varepsilon, \gamma(\varepsilon), \delta(\varepsilon), n(\varepsilon)})_{\varepsilon > 0}$ on the interval $[0, T]$. We consider these trajectories under the scaling regime
$$
(\varepsilon, \gamma(\varepsilon), \delta(\varepsilon), n(\varepsilon)) \to (0, 0, 0, \infty),
$$
as $\varepsilon \to 0$. Our goal is to characterize the asymptotic behavior of the probability of deviations of $u_{\varepsilon}$ from its typical behavior in this limit, within an appropriate topology for trajectories on $[0, T]$.

\begin{align*}
\mathbb{P}(u_{\varepsilon}\in C)\sim\exp\Big(-\varepsilon^{-1}\inf_{u\in C}\mathcal{I}_{CH}(u)\Big),\ \ \text{as }\varepsilon\rightarrow0, \ \text{for some rare events }C,  	
\end{align*}
where the rate function $\mathcal{I}_{CH}$ is defined by 
\begin{align}\label{Cahn-Hilliar-rate-intro}
\mathcal{I}_{CH}(u)=\inf\Big\{\frac{1}{2}\|g\|_{L^2([0,T];L^2(\mathbb{T}^1))}^2:\partial_tu=\partial_{xx}^2\Big[V'(u)-\frac{D}{2}\partial_{xx}^2 u\Big]-\sqrt{2}\partial_xg,\ u(0)=u_0\Big\}, 	
\end{align}
where $u_0$ is the limit point of $u_{\gamma(\varepsilon),0}$. 

For every fixed $\gamma\in(0,1]$, the small-noise large deviations under the scaling regimes $(\varepsilon, \delta(\varepsilon), n(\varepsilon))\rightarrow(0,0,\infty)$ for \eqref{smallnoiseeq} have been established by the author and Zhang \cite{WZ24}\footnote{\cite{WZ24} proved large deviations for \eqref{smallnoiseeq} without regularizing the square-root coefficient, which is even more challenging.}. Specifically, they show that the large deviations hold with respect to the rate function  
\begin{align}\label{rate-gamma-intro}  
\mathcal{I}_{IKK,\gamma}(u) = \frac{1}{2} \inf \Big\{\|g\|_{L^2([0,T];L^2(\mathbb{T}^1))}^2 : \partial_t u = & \gamma^{-2/3} \partial_{xx}^2 u - \gamma^{-2/3} (1 + a\gamma^{2/3}) \partial_x \Big[(1 - \gamma^{2/3} u^2) J_{\gamma^{1/3}} \ast \partial_x u\Big] \notag \\  
& - \sqrt{2} \partial_x \Big(\sqrt{1 - \gamma^{2/3} u^2} g \Big), \quad u(0) = u_{\gamma,0} \Big\}.  
\end{align}  

This result demonstrates the consistency between fluctuating hydrodynamics and interacting particle systems with long-range interactions \cite{ME07} in terms of large deviation rates. In this paper, we further let the parameter $\gamma \to 0$ to investigate multi-scale large deviations, aiming to understand how the Cahn-Hilliard rate function $\mathcal{I}_{CH}$ predicts the exponential decay of rare events. Furthermore, we also study the $\Gamma$-convergence of $\mathcal{I}_{IKK,\gamma}$ to the rate function $\mathcal{I}_{CH}$ as $\gamma \to 0$.

\subsection{Main results}
Before we state the main results, we firstly introduce an assumption for the initial data $u_{\gamma,0}$, $\gamma\in(0,1]$. We define a rescaled entropy function: 
\begin{align*}
\Psi_{\gamma}(\zeta) = &\frac{\gamma^{-1/3}}{2} \Big[(1+\gamma^{1/3}\zeta)\log(\gamma^{1/3}\zeta+1) + (1-\gamma^{1/3}\zeta)\log(1-\gamma^{1/3}\zeta) -2\Big], \quad \zeta \in [-\gamma^{-1/3},\gamma^{-1/3}].  
\end{align*}  
\begin{description}
\item[Assumption (A1)] The initial data $u_{\gamma,0}$ is a sequence of $\mathcal{F}$-measurable random variables with  

(i) $-\gamma^{-1/3}\leq u_{\gamma,0}\leq \gamma^{-1/3}$, almost surely. 

(ii) Almost surely, for every $\gamma\in(0,1]$, $\Psi_{\gamma}(u_{\gamma,0})\in L^1(\mathbb{T}^1)$. 

(iii) Almost surely, $\sup_{\gamma\in(0,1]}\|u_{\gamma,0}\|_{L^1(\mathbb{T}^1)}<\infty$. 

(iv) $\mathbb{E}\int_{\mathbb{T}^1}\Psi_{\gamma}(u_{\gamma,0})+\gamma^{-1/3}dx\lesssim \gamma^{1/3}$ holds for every $\gamma\in(0,1]$. 

(v) Almost surely, $\|u_{\gamma,0}-u_0\|_{H^{-1}(\mathbb{T}^1)}\rightarrow0$, as $\gamma\rightarrow0$, for some $u_0\in H^{-1}(\mathbb{T}^1)$.   
\end{description}

We noting that $u_{\gamma,0}=0$ is an example of Assumption (A1). To construct a nontrivial example, for every $\gamma\in(0,1]$, let $\rho_{0,\gamma}$ be a random variable such that, almost surely, $\rho_{0,\gamma}\in C^{\infty}_c(-1/2,1/2)$ and 
\begin{equation}\label{entropy-bound}
\mathbb{E}(\Psi_1(\rho_{0,\gamma})+1)\leq\gamma^{1/3}\quad\text{and }\mathbb{E}\|\rho_{0,\gamma}\|_{L^1(\mathbb{T}^1)}\leq C.
\end{equation}
We periodize $\rho_{0,\gamma}$ as a function on $\mathbb{T}^1$, and denote it by $\bar{\rho}_{0,\gamma}$. Define $u_{\gamma}(0)=\gamma^{-1/3}\bar{\rho}_{0,\gamma}(\gamma^{-1/3}\cdot)$. Using the change variables formula, for every $\gamma\in(0,1]$
\begin{align*}
\int_{\mathbb{T}^1}|u_{\gamma,0}|dx=&\int_{\mathbb{T}^1}|\gamma^{-1/3}\bar{\rho}_{0,\gamma}(\gamma^{-1/3}x)|dx=\int_{\mathbb{R}^1}|\gamma^{-1/3}\rho_{0,\gamma}(\gamma^{-1/3}x)|dx=\int_{\mathbb{R}^1}|\rho_{0,\gamma}(y)|dy\leq C.
\end{align*}
Similarly, combining \eqref{entropy-bound} and the change variables formula, we can verify (iv). Since $\rho_0$ is smooth, $u_{\gamma,0}$ is a mollifier, and thus (v) holds as well.  

The first main results show that along a subsequence, the fluctuation field \eqref{equgamma-0} converges to the solution of the stochastic Cahn-Hilliard equation \eqref{CahnHilliard}, which solves a simplified version of the Giacomin-Lebowitz-Presutti conjecture. 
\begin{theorem}\label{thm-1-intro}
Assume that the Assumption (A1) holds. Suppose that $J\in C^{\infty}(\mathbb{T}^1)$. For every $\gamma\in(0,1]$ and $\delta>0$, let $u_{\gamma,\delta}$ be the weak solution of \eqref{equgamma-0} with initial data $u_{\gamma,0}$. Suppose that $a<0$, then there exists a subsequence of $(u_{\gamma,\delta})_{\gamma,\delta>0}$, denoted by $(u_n:=u_{\gamma(n),\delta(n)})_{n\geq1}$, such that 
\begin{align*}
\|u_{n}-u\|_{L^2([0,T];L^2(\mathbb{T}^1))}\rightarrow0,	
\end{align*}
as $n\rightarrow\infty$ in probability, where $u$ is the weak solution of the Cahn-Hilliard equation \eqref{CahnHilliard} with initial data $u_0$.  
\end{theorem}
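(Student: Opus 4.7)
The plan is a standard compactness and martingale-problem scheme: derive uniform (in $\gamma,\delta$) a priori bounds via the entropy functional $\Psi_\gamma$, extract a limit point by tightness, pass to the limit in the weak formulation using the Taylor expansion of the nonlocal kernel, and invoke pathwise uniqueness of the Cahn-Hilliard equation to upgrade subsequential weak convergence to convergence in probability via Gy\"ongy--Krylov. The key analytic input is the identity
\begin{equation*}
\gamma^{-2/3}\bigl(J_{\gamma^{1/3}}\ast\phi - \phi\bigr)\longrightarrow\tfrac{D}{2}\,\partial_{xx}^2\phi\qquad(\gamma\to 0)
\end{equation*}
for smooth $\phi$, together with the cancellation between the stiff term $\gamma^{-2/3}\partial_{xx}^2u_\gamma$ and the leading part of the nonlocal drift; the remainder after cancellation is precisely of Cahn--Hilliard type.

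\emph{A priori estimates.} I would apply It\^o's formula to $\int_{\mathbb{T}^1}\Psi_\gamma(u_\gamma(t,x))\,dx$ and adapt the entropy--dissipation calculation from \cite{WWZ22} to the present equation. Combined with Assumption (A1)(iv) for the initial datum, this should produce
\begin{equation*}
\sup_{t\in[0,T]}\mathbb{E}\!\int_{\mathbb{T}^1}\!\bigl(\Psi_\gamma(u_\gamma(t))+\gamma^{-1/3}\bigr)\,dx\;+\;\mathbb{E}\!\int_0^T\!\mathcal{D}_\gamma(u_\gamma)\,dt\;\lesssim\;\gamma^{1/3},
\end{equation*}
where $\mathcal{D}_\gamma$ is a Dirichlet-type dissipation associated with the nonlocal operator $J_{\gamma^{1/3}}\ast\cdot-I$. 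Since $\Psi_\gamma(u)+\gamma^{-1/3}\asymp\gamma^{1/3}u^2$ on the regime $|\gamma^{1/3}u|\ll 1$, this yields a uniform expected bound on $u_\gamma$ in $L^\infty([0,T];L^2(\mathbb{T}^1))$, while $\mathcal{D}_\gamma$ controls a nonlocal analogue of $\|\partial_x u_\gamma\|_{L^2_{t,x}}^2$.

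\emph{Tightness and identification of the limit.} Combined with equicontinuity of $t\mapsto\langle u_\gamma(t),\phi\rangle$ for $\phi\in C^\infty(\mathbb{T}^1)$ obtained by testing \eqref{equgamma-0} directly, the above bounds yield tightness of the laws of $(u_\gamma)$ in $C([0,T];H^{-s}(\mathbb{T}^1))\cap L^2([0,T];L^2(\mathbb{T}^1))$ for some $s>0$ through a stochastic Aubin--Lions compactness argument. To identify a subsequential limit, I rewrite the weak form of the drift by transferring the convolution and all spatial derivatives to $\phi$; using the symmetry of $J$ and integration by parts, the deterministic contribution paired with $\phi$ becomes
\begin{equation*}
\gamma^{-2/3}\!\!\int\!u_\gamma\bigl(\partial_{xx}^2\phi-J_{\gamma^{1/3}}\ast\partial_{xx}^2\phi\bigr)dx\;-\;a\!\!\int\!u_\gamma\,J_{\gamma^{1/3}}\ast\partial_{xx}^2\phi\,dx\;-\!\!\int\!u_\gamma^2\,J_{\gamma^{1/3}}\ast\partial_x u_\gamma\cdot\partial_x\phi\,dx\;+\;O(\gamma^{2/3}),
\end{equation*}
whose three terms converge: the first, by the identity above applied to the smooth function $\partial_{xx}^2\phi$, to $-\tfrac{D}{2}\int u\,\partial_{xxxx}^4\phi\,dx$; the second to $-a\int u\,\partial_{xx}^2\phi\,dx$; and the third, after one more integration by parts and exploiting strong $L^2$-convergence of $u_\gamma$ to replace $J_{\gamma^{1/3}}\ast(u_\gamma^2\partial_x\phi)$ by $u^2\partial_x\phi$, to $\int u\,\partial_x[u^2\partial_x\phi]\,dx$, matching the weak form of $\partial_{xx}^2 V'(u)$ in \eqref{CahnHilliard}. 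For the noise, smoothness of $\sigma$ and $\gamma^{2/3}u_\gamma^2\to 0$ in $L^1_{t,x}$ give $\sigma(1-\gamma^{2/3}u_\gamma^2)\to\sigma(1)=1$; combined with $\xi_\delta\to\xi$, the stochastic term converges to $-\sqrt{2}\partial_x\xi$, and the It\^o--Stratonovich correction is of order $c_\delta(0)\gamma^{2/3}$ (where $c_\delta(0)$ is the self-correlation of $\xi_\delta$), which vanishes provided the subsequence is chosen with $\delta\to 0$ slowly enough relative to $\gamma^{2/3}$.

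\emph{Uniqueness and main obstacle.} Under $a<0$ the potential $V$ in \eqref{CahnHilliard} is strictly convex, so monotonicity methods yield pathwise uniqueness of its weak solution; tightness plus uniqueness together with Assumption (A1)(v) for the initial data then upgrade subsequential convergence in law to convergence in probability in $L^2([0,T];L^2(\mathbb{T}^1))$ via the Gy\"ongy--Krylov criterion. The principal obstacle is the a priori step: the Dirichlet-type dissipation $\mathcal{D}_\gamma$ is weaker than a classical $H^1$-bound, so making the singular cancellation $\gamma^{-2/3}\partial_{xx}^2u_\gamma-\gamma^{-2/3}\partial_x(J_{\gamma^{1/3}}\ast\partial_x u_\gamma)\to-\tfrac{D}{2}\partial_{xxxx}^4u$ quantitatively precise at the level of weak solutions requires careful commutator and interpolation estimates against $J_{\gamma^{1/3}}$. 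A secondary difficulty is calibrating the joint scaling of $\gamma$ and $\delta$ to kill the It\^o--Stratonovich correction, which dictates the simultaneous (rather than iterated) passage to the limit along the subsequence $(\gamma(n),\delta(n))$.
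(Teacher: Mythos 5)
Your high-level scheme (entropy estimate, Aubin--Lions tightness, Skorokhod/Gy\"ongy--Krylov, Taylor expansion of $J_{\gamma^{1/3}}$, pathwise uniqueness for $a<0$) matches the paper in broad outline, but there is a genuine structural gap: you propose a \emph{simultaneous} joint passage $(\gamma,\delta)\to(0,0)$, whereas the paper's entropy--dissipation estimate (Proposition~\ref{prp-entropydissipation}) produces a bound of the form $\|u_{\gamma,\delta}\|_{L^2(\Omega\times[0,T];H^1)}\lesssim 1 + T\delta^{-1}$, which \emph{diverges} as $\delta\to 0$ regardless of how $\gamma$ is coupled to it. The paper explicitly notes (Remark after Theorem~\ref{thm-convergence}) that no joint scaling regime saves this estimate, because once the $\gamma^{1/3}$ factors have been cancelled, nothing is left to absorb $\delta^{-2/3}$. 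For this reason the paper takes an \emph{iterated-limit} route: first $\gamma\to 0$ with $\delta$ fixed (Proposition~\ref{prp-convergence}), giving the Cahn--Hilliard equation with correlated noise $\xi_\delta$; then, separately, $u_\delta\to u$ via a Da Prato--Debussche decomposition (Lemma~\ref{lem-approx-CH}, Appendix~\ref{sec-app-A}); finally a diagonal extraction over $n\mapsto(\gamma(n),\delta(n))$ assembles these into Theorem~\ref{thm-1-intro}. Your calibration of a single joint subsequence to kill the It\^o--Stratonovich correction does not address the divergence of the compactness bound itself, and so the ``secondary difficulty'' you flag is in fact fatal to the direct approach.

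Two further points are mis-calibrated. First, you describe the entropy dissipation as producing only a ``nonlocal Dirichlet-type'' quantity ``weaker than a classical $H^1$-bound,'' and then worry this is insufficient for the $\gamma^{-2/3}$ cancellation. In fact, the paper shows that for $a<0$ the rescaled entropy estimate delivers a genuine uniform $L^2([0,T];H^1(\mathbb{T}^1))$ bound: after absorbing the Kac term by Young's inequality (using $\|J\|_{L^1}=1$) and using $\psi_\gamma'(\zeta)\ge\gamma^{1/3}$, one gets $-a\,\mathbb{E}\int_0^T\|\partial_x u_{\gamma}\|_{L^2}^2\,ds\lesssim 1+T\delta^{-1}$ on the nose. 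Second, the real technical obstacle in the singular cancellation is the remainder $R_{3,\gamma}= -\gamma^{-2/3}\partial_x[J_{\gamma^{1/3}}\ast\partial_x u_\gamma-\partial_x u_\gamma-\gamma^{2/3}\tfrac{D}{2}\partial_{xxx}^3u_\gamma]$, which carries five spatial derivatives on $u_\gamma$ but only $H^1$ regularity; the paper resolves this not by commutator estimates but by setting $U_\gamma:=-(-\partial_{xx}^2)^{-3/2}u_\gamma$, using the Sobolev embedding $L^1\hookrightarrow H^{-\beta+5}$ to transfer all derivatives to the test space, and doing a third-order Taylor expansion of $J_{\gamma^{1/3}}\ast U_\gamma$, which produces the favorable factor $\gamma^{1/3}$. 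Without an explicit device of this kind, the step ``make the singular cancellation quantitatively precise at the level of weak solutions'' remains unproved in your proposal.
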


In the following, we consider a family of deterministic initial data $u_{\gamma,0}$, $\gamma \in (0,1)$, rather than stochastic initial data. We assume that this family satisfies conditions similar to those stated in Assumption~(A1). Precisely, 
\begin{description}
\item[Assumption (A2)] The initial data $u_{\gamma,0}$ satisfies that 

(i) $-\gamma^{-1/3}\leq u_{\gamma,0}\leq \gamma^{-1/3}$. 

(ii) For every $\gamma\in(0,1]$, $\Psi_{\gamma}(u_{\gamma,0})\in L^1(\mathbb{T}^1)$. 

(iii) $\sup_{\gamma\in(0,1]}\|u_{\gamma,0}\|_{L^1(\mathbb{T}^1)}<\infty$. 

(iv) $\int_{\mathbb{T}^1}\Psi_{\gamma}(u_{\gamma,0})+\gamma^{-1/3}dx\lesssim \gamma^{1/3}$ holds for every $\gamma\in(0,1]$. 

(v) $\|u_{\gamma,0}-u_0\|_{H^{-1}(\mathbb{T}^1)}\rightarrow0$, as $\gamma\rightarrow0$, for some $u_0\in H^{-1}(\mathbb{T}^1)$.   
\end{description}

The second result stated that under a certain scaling regime, the solution of \eqref{smallnoiseeq} satisfies a dynamical large deviation principle, with respect to the Cahn-Hilliard rate function \eqref{Cahn-Hilliar-rate-intro}. 
\begin{theorem}\label{thm-2-intro}
	Assume that the Assumption (A2) holds. For every $\varepsilon,\gamma,n,\delta>0$, let $u_{\varepsilon}$ be the weak solution of \eqref{smallnoiseeq} with initial data $0$. Suppose that $a<0$. Let $u_{\gamma,0}$ be a sequence of functions satisfy Assumption (A2), and let $u_0$ be its limit. Under the scaling regime $(\varepsilon,\gamma(\varepsilon),\delta(\varepsilon),n(\varepsilon))\rightarrow(0,0,0,+\infty)$ such that 
\begin{align*}
	\varepsilon\Big(\delta(\varepsilon)^{-2/3}+\gamma(\varepsilon)^{4/3}\delta(\varepsilon)^{-1/3}\|\sigma_{n(\varepsilon)}'(\cdot)\|_{L^{\infty}(\mathbb{R})}^2\Big)\rightarrow0, 
\end{align*}
then the laws of $(u_{\varepsilon})_{\varepsilon>0}$ satisfy a large deviation principle with a good rate function $\mathcal{I}_{CH}$ defined by \eqref{Cahn-Hilliar-rate-intro}. 
\end{theorem}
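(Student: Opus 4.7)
The plan is to apply the Budhiraja--Dupuis weak convergence method for small-noise large deviations for SPDEs, which reduces the LDP to two PDE-level statements about the controlled version of \eqref{smallnoiseeq},
\begin{align*}
\partial_t u_\varepsilon^g = & \gamma^{-2/3}\partial_{xx}^2 u_\varepsilon^g - \gamma^{-2/3}(1+a\gamma^{2/3})\partial_x\bigl[(1-\gamma^{2/3}(u_\varepsilon^g)^2) J_{\gamma^{1/3}} * \partial_x u_\varepsilon^g\bigr] \\
& -\sqrt{2}\varepsilon^{1/2}\partial_x\bigl(\sigma_n(1-\gamma^{2/3}(u_\varepsilon^g)^2) \circ \xi_\delta\bigr) -\sqrt{2}\partial_x\bigl(\sigma_n(1-\gamma^{2/3}(u_\varepsilon^g)^2)\, g\bigr),
\end{align*}
for admissible controls $g \in L^2([0,T];L^2(\mathbb{T}^1))$; well-posedness for fixed $(\varepsilon,\gamma,\delta,n)$ follows by the Galerkin/energy framework of \cite{WWZ22,WZ24}. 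The target deterministic skeleton is the Cahn--Hilliard equation $\partial_t v^g = \partial_{xx}^2[V'(v^g) - \frac{D}{2}\partial_{xx}^2 v^g] - \sqrt{2}\partial_x g$ with $v^g(0)=u_0$, obtained by setting $\varepsilon=0$ and passing $(\gamma,\delta,n)\to(0,0,\infty)$.

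The two criteria to verify are (a) \emph{compactness of level sets}: for deterministic controls $g_\varepsilon$ bounded in $L^2([0,T];L^2(\mathbb{T}^1))$ with $g_\varepsilon \rightharpoonup g$ weakly, the noiseless controlled solutions converge (along a subsequence) in, say, $C([0,T];H^{-1}(\mathbb{T}^1))\cap L^2([0,T];L^2(\mathbb{T}^1))$ to $v^g$; and (b) \emph{weak convergence under noise}: for $\mathcal{F}_t$-adapted controls with uniformly bounded $L^2$ energy in expectation, the joint laws of $(u_\varepsilon^{g_\varepsilon}, g_\varepsilon)$ converge weakly (along a subsequence) to those of $(v^g, g)$. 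The technical core of both is a uniform a priori estimate derived from the rescaled entropy functional $\int_{\mathbb{T}^1}[\Psi_\gamma(u_\varepsilon^{g_\varepsilon}) + \gamma^{-1/3}]dx$; its It\^o expansion along the controlled dynamics, in the spirit of the analysis behind Theorem \ref{thm-1-intro}, delivers $\gamma$-uniform $L^2_{t,x}$ control of $\sqrt{1-\gamma^{2/3}(u_\varepsilon^{g_\varepsilon})^2}\,J_{\gamma^{1/3}}*\partial_x u_\varepsilon^{g_\varepsilon}$ together with higher-order Sobolev bounds, and then strong compactness of $u_\varepsilon^{g_\varepsilon}$ via Aubin--Lions. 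The Stratonovich--It\^o correction and the martingale quadratic variation produce drift and variance terms of orders $\varepsilon\delta^{-2/3}$ and $\varepsilon\gamma^{4/3}\delta^{-1/3}\|\sigma_n'\|_{L^\infty(\mathbb{R})}^2$ respectively, both of which vanish under the hypothesized scaling, so that only the control $g$ persists in the limit.

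The main obstacle is identifying the limit of the singular quasilinear drift $\gamma^{-2/3}\partial_{xx}^2 u_\varepsilon^{g_\varepsilon} - \gamma^{-2/3}(1+a\gamma^{2/3})\partial_x[(1-\gamma^{2/3}(u_\varepsilon^{g_\varepsilon})^2)J_{\gamma^{1/3}}*\partial_x u_\varepsilon^{g_\varepsilon}]$. A Taylor expansion of the convolution in powers of $\gamma^{1/3}$ predicts that the leading $O(\gamma^{-2/3})$ Laplacian contributions cancel, leaving the finite Cahn--Hilliard nonlinearity $\partial_{xx}^2 V'(u) - \frac{D}{2}\partial_{xxxx}^4 u$. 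Upgrading this formal expansion to a distributional limit requires (i) strong $L^p$ compactness of $u_\varepsilon^{g_\varepsilon}$ to pass to the limit in the cubic term, (ii) quantitative commutator estimates bounding the mollification remainder against the emerging fourth-order dispersion, and (iii) the exact cancellation of the two divergent $\gamma^{-2/3}$ pieces against each other. Once this deterministic identification is secured, part (b) closes via a Skorokhod representation and martingale identification paralleling the fixed-$\gamma$ analysis of \cite{WZ24}; goodness of $\mathcal{I}_{CH}$ then follows from lower semicontinuity of $\|g\|_{L^2}^2$ under weak convergence together with uniqueness for the Cahn--Hilliard skeleton equation.
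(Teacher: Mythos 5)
Your proposal follows essentially the same route as the paper: the Budhiraja--Dupuis--Maroulas weak-convergence framework, reducing the LDP to (a) weak-strong continuity of the Cahn--Hilliard skeleton map $g\mapsto v^g$ (giving compactness of level sets) and (b) convergence in distribution of the controlled stochastic equation to the skeleton, with both steps resting on uniform rescaled entropy dissipation bounds, Aubin--Lions compactness, the Taylor-expansion rewriting of the Kac drift that cancels the $\gamma^{-2/3}$ divergences, and a Skorokhod/martingale identification. The paper organizes these ingredients as Theorem~\ref{stability-skeleton} (existence, uniqueness, and weak-strong stability of the skeleton), Propositions~\ref{entropydissipation-stochasticcontrol} and \ref{second-uniform-es-stochasticcontrol} (uniform entropy and time-regularity estimates for the controlled equation), and Theorem~\ref{converg-stochasticcontrol} (distributional convergence), which is exactly the structure you sketch.
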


Finally, we show a result of $\Gamma$-convergence of the rate function \eqref{rate-gamma-intro} to the rate function \eqref{Cahn-Hilliar-rate-intro}. 
\begin{theorem}\label{thm-3-intro}
Let $a<0$. For every $\gamma\in(0,1]$, let $\mathcal{I}_{IKK,\gamma}$ be defined by \eqref{rate-gamma-intro} with a sequence of initial data $u_{\gamma,0}$ satisfying Assumption (A2), and let $\mathcal{I}_{CH}$ be defined by \eqref{Cahn-Hilliar-rate-intro}. Then 
\begin{align}
\mathcal{I}_{IKK,\gamma}\rightarrow\mathcal{I}_{CH}, 
\end{align}
as $\gamma\rightarrow0$, in the sense of $\Gamma$-convergence. 
\end{theorem}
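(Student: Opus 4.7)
The plan is to verify the two defining conditions of $\Gamma$-convergence---the liminf inequality and the existence of a recovery sequence---in the natural ambient topology for controlled trajectories, say $C([0,T];H^{-1}(\mathbb{T}^1))\cap L^2([0,T];L^2(\mathbb{T}^1))$, matching the one used in the large-deviation framework of Theorem \ref{thm-2-intro}. The structural observation is that both $\mathcal{I}_{IKK,\gamma}$ and $\mathcal{I}_{CH}$ are defined as infima of $\tfrac12\|g\|_{L^2L^2}^2$ over controls $g$ driving a controlled PDE, so $\Gamma$-convergence reduces to a deterministic stability statement for controlled trajectories, in essence the controlled analogue of the scaling limit established in Theorem \ref{thm-1-intro}.

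For the liminf inequality I fix $u_\gamma\to u$, reduce to the case $\sup_\gamma\mathcal{I}_{IKK,\gamma}(u_\gamma)<\infty$, and pick near-minimizers $g_\gamma$ with $\|g_\gamma\|_{L^2L^2}^2\leq 2\mathcal{I}_{IKK,\gamma}(u_\gamma)+\gamma$. Weak compactness in $L^2([0,T];L^2(\mathbb{T}^1))$ yields $g_\gamma\rightharpoonup g$ along a subsequence; by weak lower semicontinuity of the squared norm, it then suffices to identify $(u,g)$ as a controlled Cahn-Hilliard pair. To pass to the limit in the controlled IKK equation I would pair it against a smooth test function $\phi$, rewrite the deterministic drift in the divergence form $\gamma^{-2/3}\partial_x[\partial_xu_\gamma-(1+a\gamma^{2/3})(1-\gamma^{2/3}u_\gamma^2)J_{\gamma^{1/3}}\ast\partial_xu_\gamma]$, redistribute derivatives onto $\phi$ via integration by parts and the self-adjointness of convolution against a symmetric kernel, and then Taylor-expand $J_{\gamma^{1/3}}\ast$ around the identity using $\int J=1$, $\int J(x)x\,dx=0$, and $\int J(x)|x|^2\,dx=D$. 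The $\gamma^0$-cancellation between the linear diffusion and the nonlocal term yields the biharmonic piece $-\tfrac{D}{2}\partial_{xx}^2\partial_{xx}^2u$, while the $a$-term and the $u_\gamma^2$-term together generate $\partial_{xx}^2V'(u)$, the cubic being identified via strong $L^2$-convergence of $u_\gamma$. That strong compactness is provided by an Aubin-Lions argument using the initial-data bound of Assumption (A2) and the energy/entropy estimates already developed to prove Theorems \ref{thm-1-intro}--\ref{thm-2-intro}, now applied in the deterministic controlled setting. Finally, the noise term $-\sqrt{2}\partial_x(\sqrt{1-\gamma^{2/3}u_\gamma^2}\,g_\gamma)$ converges to $-\sqrt{2}\partial_xg$ by a strong-times-weak argument, since $\sqrt{1-\gamma^{2/3}u_\gamma^2}\to 1$ strongly in any $L^p_tL^p_x$ while $g_\gamma\rightharpoonup g$ weakly in $L^2_tL^2_x$.

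For the recovery sequence, given $u$ with $\mathcal{I}_{CH}(u)<\infty$ and an optimal Cahn-Hilliard control $g=-\tfrac{1}{\sqrt 2}\partial_x^{-1}(\partial_tu-\mathcal{L}_{CH}u)$ (with $\partial_x^{-1}$ the mean-zero antiderivative on $\mathbb{T}^1$), the natural candidate is $u_\gamma\equiv u$ together with a $g_\gamma$ defined by inverting the controlled IKK equation: using that $\sqrt{1-\gamma^{2/3}u^2}\geq c>0$ once $\|u\|_{L^\infty}<\gamma^{-1/3}$, one sets $g_\gamma=-\tfrac{1}{\sqrt 2}(1-\gamma^{2/3}u^2)^{-1/2}\partial_x^{-1}(\partial_tu-\mathcal{L}_\gamma u)$, where $\mathcal{L}_\gamma$ denotes the IKK drift operator. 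The convergence $\|g_\gamma-g\|_{L^2L^2}\to 0$ then follows from $\|\mathcal{L}_\gamma u-\mathcal{L}_{CH}u\|_{L^2_tH^{-1}_x}\to 0$ plus the uniform smallness of $\|(1-\gamma^{2/3}u^2)^{-1/2}-1\|_{L^\infty}$, both coming from the Taylor expansion above applied pointwise in $u$. For a general $u\in\{\mathcal{I}_{CH}<\infty\}$, I would first approximate by a mollification $u^{(\eta)}$ with associated Cahn-Hilliard controls $g^{(\eta)}\to g$ and $u^{(\eta)}\to u$ in the ambient topology (standard, thanks to the quadratic-in-control structure of $\mathcal{I}_{CH}$ and the coercivity of $V$ at $a<0$), apply the above inversion to each $u^{(\eta)}$, and extract a diagonal sequence $(u_{\gamma,\eta(\gamma)},g_{\gamma,\eta(\gamma)})$ realizing $\limsup_\gamma\tfrac12\|g_{\gamma,\eta(\gamma)}\|_{L^2L^2}^2\leq\mathcal{I}_{CH}(u)$.

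The main obstacle will be this diagonal extraction. The Taylor remainder in $\mathcal{L}_\gamma u^{(\eta)}-\mathcal{L}_{CH}u^{(\eta)}$ scales as $\gamma^{2/3}$ but is weighted by high-order spatial derivatives of $u^{(\eta)}$ that blow up as $\eta\to 0$, while the admissibility constraint $\|u^{(\eta)}\|_{L^\infty}\leq\gamma^{-1/3}$ ties $\eta$ and $\gamma$ from the opposite side. Executing the recovery therefore requires a quantitative balance between the mollification rate $\eta(\gamma)\to 0$ and the $\gamma$-Taylor error, parallel to (but simpler than) the multi-scale balance appearing in Theorem \ref{thm-1-intro}; the rescaled entropy $\Psi_\gamma$ and the deterministic counterparts of the energy estimates used there should supply the right quantitative input.
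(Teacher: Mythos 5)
Your liminf argument matches the paper's in all essential respects: extract near-minimizing controls $g_\gamma$, pass to a weak $L^2_tL^2_x$ limit, identify the limiting controlled pair $(u,g)$ as a weak solution of the Cahn--Hilliard skeleton equation via Aubin--Lions compactness and the Taylor expansion of the Kac kernel (which is exactly the content of Proposition \ref{convergence-skeleton}), and conclude by weak lower semicontinuity of the squared norm. The paper uses exact minimizers supplied by the Riesz-representation characterization in Lemma \ref{risez}, but near-minimizers work just as well; this part of your proposal is sound.

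Your recovery sequence, however, has a genuine gap and is also substantially more labored than what the paper does. The definition of $\mathcal{I}_{IKK,\gamma}$ in \eqref{rate-gamma-intro} fixes the initial condition to be $u_{\gamma,0}$, and by Assumption (A2)(v) these data vary with $\gamma$ (they converge to $u_0$ in $H^{-1}$ but are not constant in $\gamma$). A $\gamma$-independent trajectory $u_\gamma\equiv u$ (or $u_\gamma\equiv u^{(\eta)}$) therefore simply cannot be admissible for $\mathcal{I}_{IKK,\gamma}$ unless all the $u_{\gamma,0}$ coincide, so the inversion formula for $g_\gamma$ never gets off the ground. Beyond that, inverting the IKK drift $\mathcal{L}_\gamma u$ needs far more regularity and $L^\infty$ control on $u$ than $\mathcal{I}_{CH}(u)<\infty$ provides, which forces the mollification-plus-diagonal argument you rightly flag as the ``main obstacle.'' The paper avoids all of this at once by exchanging the roles of trajectory and control: given $u$ with $\mathcal{I}_{CH}(u)<\infty$, take the optimal Cahn--Hilliard control $g_u$ from Lemma \ref{risez-2}, \emph{keep this control fixed}, and let $u_\gamma$ be the solution of the IKK skeleton equation \eqref{skeleton-gamma} with control $g_u$ and initial data $u_{\gamma,0}$. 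Proposition \ref{convergence-skeleton} gives $u_\gamma\to u$ directly, the admissibility and initial condition are automatic, and $\mathcal{I}_{IKK,\gamma}(u_\gamma)\leq\tfrac12\|g_u\|_{L^2L^2}^2=\mathcal{I}_{CH}(u)$ by definition of the rate function. No mollification, no diagonal extraction, no $L^\infty$ issues.
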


\subsection{Key ideas and technical comments}
We first summarize the key ideas in the proof of Theorem \ref{thm-1-intro}. The convergence result stated in Theorem \ref{thm-1-intro} is established through a two-step analysis. To emphasize the dependence on the correlation length parameter $\delta > 0$, we denote the solution of \eqref{equgamma-0} by $u_{\gamma,\delta}$.  

First, we show that $u_{\gamma,\delta} \to u_{\delta}$ in probability, where $u_{\delta}$ is the weak solution of the stochastic Cahn-Hilliard equation driven by a correlated noise:  

\begin{equation}\label{CahnHilliard-delta-intro}  
\partial_t u_{\delta} = \partial_{xx}^2 \Big[V'(u_{\delta}) - \frac{D}{2} \partial_{xx}^2 u_{\delta} \Big] - \sqrt{2} \partial_x \xi_{\delta}, \quad u_{\delta}(0) = u_0.  
\end{equation}  

Next, we establish the convergence $u_{\delta} \to u$, where $u$ is the solution of \eqref{CahnHilliard}. This latter convergence is proved using the Da Prato-Debussche trick, with a detailed proof provided in Appendix \ref{sec-app-B}. The main focus of our analysis is the first convergence, $u_{\gamma,\delta} \to u_{\delta}$. For each fixed $\delta > 0$, we prove this convergence by applying the Aubin-Lions compactness method. To verify the compactness criterion, we require two uniform estimates.  

\textbf{Uniform rescaled entropy dissipation estimates.}  
As noted in \cite{FG24,DFG,WWZ22}, entropy dissipation estimates play a crucial role in the study of the existence of the Dean-Kawasaki equation and related structured equations. We emphasize that in \cite[(7.6)]{WWZ22}, the author and Zhang introduced the mathematical entropy of \eqref{intro-spde-1}:  
\begin{align}\label{entropy-intro}
\Psi(\zeta)=&\frac{1}{2}\Big[(1+\zeta)\log(\zeta+1)+(1-\zeta)\log(1-\zeta)-2\Big],\ \ \zeta\in[-1,1]. 
\end{align}
In order to find a suitable uniform estimate for \eqref{equgamma-0}, we define a rescaled entropy function according to the scaling considered in \eqref{fluctuation-field-intro}:  

\begin{align}\label{rescaled-entropy-intro}
\Psi_{\gamma}(\zeta) = &\frac{\gamma^{-1/3}}{2} \Big[(1+\gamma^{1/3}\zeta)\log(\gamma^{1/3}\zeta+1) + (1-\gamma^{1/3}\zeta)\log(1-\gamma^{1/3}\zeta) -2\Big], \quad \zeta \in [-\gamma^{-1/3},\gamma^{-1/3}].  
\end{align}  

By applying a small trick based on basic mathematical analysis, we are able to bound the rescaled entropy function $\Psi_{\gamma}$ from below by a quadratic function: $\frac{1}{2} \gamma^{1/3} \zeta^2 - \gamma^{-1/3}$,  
see Lemma \ref{lem-tech-1}. The scaling factor $\gamma^{1/3}$ enables us to cancel out other scaling parameters, allowing us to obtain the final uniform estimate; see Proposition \ref{prp-entropydissipation} for more details. We also emphasize that the estimates of the Kac-interaction term strongly rely on the choice $a<0$, which excludes the double-well potential case of $V$. Eventually, we obtain an $L^2(\Omega\times[0,T];H^1(\mathbb{T}^1))$-uniform estimate for $(u_{\gamma,\delta})_{\gamma\in(0,1]}$.  

\textbf{Uniform time-regularity estimates.}  
We observe that the interaction term can be divided into two parts:  

\begin{align*}
	-\gamma^{-2/3}(1+a\gamma^{2/3})\partial_{x} \Big[(1-\gamma^{2/3}u_{\gamma,\delta}^2)J_{\gamma^{1/3}}\ast\partial_{x}u_{\gamma,\delta}\Big]  
	=& -\gamma^{-2/3}(1+a\gamma^{2/3})\partial_{x} \Big[J_{\gamma^{1/3}}\ast\partial_{x}u_{\gamma,\delta}\Big] \\
	&+ (1+a\gamma^{2/3})\partial_x \Big[u_{\gamma,\delta}^2 J_{\gamma^{1/3}}\ast\partial_x u_{\gamma,\delta}\Big].  
\end{align*}  

These two terms have different orders of $\gamma$ and therefore require different orders of the Taylor expansion for $u_{\gamma,\delta}$. Based on the above analysis, we rewrite \eqref{equgamma-0} in the following form:   
\begin{align}\label{rewriteform-intro}
\partial_tu_{\gamma,\delta}=&\gamma^{-2/3}\partial_{xx}^2 u_{\gamma,\delta}-\sqrt{2}\partial_{x}(\sigma(1-\gamma^{2/3}u_{\gamma,\delta}^2)\xi_{\delta})+4\partial_{x}\Big(F_{1,\delta}\sigma'(1-\gamma^{2/3}u_{\gamma,\delta}^2)^2\gamma^{4/3}u_{\gamma,\delta}^2\partial_{x}u_{\gamma,\delta}\Big)\notag\\
&-\gamma^{-2/3}(1+a\gamma^{2/3})\partial_{x}\Big[J_{\gamma^{1/3}}\ast\partial_{x}u_{\gamma,\delta}\Big]+(1+a\gamma^{2/3})\partial_x\Big[u_{\gamma,\delta}^2J_{\gamma^{1/3}}\ast\partial_xu_{\gamma,\delta}\Big]\notag\\
=&\gamma^{-2/3}\partial_{xx}^2 u_{\gamma,\delta}-\sqrt{2}\partial_{x}(\sigma(1-\gamma^{2/3}u_{\gamma,\delta}^2)\xi_{\delta})+4\partial_{x}\Big(F_{1,\delta}\sigma'(1-\gamma^{2/3}u_{\gamma,\delta}^2)^2\gamma^{4/3}u_{\gamma,\delta}^2\partial_{x}u_{\gamma,\delta}\Big)\notag\\
&-\gamma^{-2/3}(1+a\gamma^{2/3})\partial_{x}\Big[J_{\gamma^{1/3}}\ast\partial_{x}u_{\gamma,\delta}-\partial_xu_{\gamma,\delta}-\gamma^{2/3}\frac{D}{2}\partial_{xxx}^3u_{\gamma,\delta}\Big]\notag\\
&+(1+a\gamma^{2/3})\partial_x\Big[u_{\gamma,\delta}^2\Big(J_{\gamma^{1/3}}\ast\partial_xu_{\gamma,\delta}-\partial_xu_{\gamma,\delta}\Big)\Big]\notag\\
&-\gamma^{-2/3}(1+a\gamma^{2/3})\Big[\partial_{xx}^2u_{\gamma,\delta}+\gamma^{2/3}\frac{D}{2}\partial_{xxxx}^4u_{\gamma,\delta}\Big]+(1+a\gamma^{2/3})\partial_x(u_{\gamma,\delta}^2\partial_xu_{\gamma,\delta})\notag\\
=&-\frac{D}{2}\partial_{xxxx}^4u_{\gamma,\delta}+\partial_{xx}^2\Big[\frac{1}{3}u_{\gamma,\delta}^3-au_{\gamma,\delta}\Big]+\sqrt{2}\partial_{x}(\sigma(1-\gamma^{2/3}u_{\gamma,\delta}^2)\xi_{\delta})\notag\\
&+R_{1,\gamma,\delta}+R_{2,\gamma,\delta}+R_{3,\gamma,\delta}+R_{4,\gamma,\delta},
\end{align}
where $D=\int_{\mathbb{T}^1}J(x)|x|^2dx$, and the remainder terms defined by 
\begin{align*}
&R_{1,\gamma,\delta}=-\frac{D}{2}a\gamma^{2/3}\partial_{xxxx}^4u_{\gamma,\delta}+\frac{1}{3}a\gamma^{2/3}\partial_{xx}^2u_{\gamma,\delta}^3,\\
&R_{2,\gamma,\delta}=(1+a\gamma^{2/3})\partial_x\Big[u_{\gamma,\delta}^2(J_{\gamma^{1/3}}\ast\partial_xu_{\gamma,\delta}-\partial_xu_{\gamma,\delta})\Big],\\
&R_{3,\gamma,\delta}=-\gamma^{-2/3}(1+a\gamma^{2/3})\partial_{x}\Big[J_{\gamma^{1/3}}\ast\partial_{x} u_{\gamma,\delta}-\partial_{x} u_{\gamma,\delta}-\gamma^{2/3}\frac{D}{2}\partial_{xxx}^3u_{\gamma,\delta}\Big],\\
&R_{4,\gamma,\delta}=4\partial_{x}\Big(F_{1,\delta}\sigma'(1-\gamma^{2/3}u_{\gamma,\delta}^2)^2\gamma^{4/3}u_{\gamma,\delta}^2\partial_{x}u_{\gamma,\delta}\Big).
\end{align*}
Here $R_{4,\gamma,\delta}$ is the Stratonovich-It\^o correction term. We establish a uniform estimate for $u_{\gamma,\delta}$ in the space 
$W^{\alpha,2}([0,T];H^{-\beta}(\mathbb{T}^1))$, valid for all $\alpha \in (0,1/2)$ and $\beta > 13/2$. 
The primary difficulties in obtaining this estimate arise from the terms $R_{2,\gamma,\delta}$ and $R_{3,\gamma,\delta}$. 
To control $R_{2,\gamma,\delta}$, we employ Poincar\'e's inequality in conjunction with uniform entropy dissipation estimates. 
For $R_{3,\gamma,\delta}$, its linear structure allows us to transfer the fourth-order derivative terms into the Sobolev norm 
$\|\cdot\|_{H^{-\beta}(\mathbb{T}^1)}$. 
By Sobolev embedding, we obtain the bound 
$\|\cdot\|_{H^{-\beta+4}(\mathbb{T}^1)} \lesssim \|\cdot\|_{L^1(\mathbb{T}^1)}$. 
Furthermore, by applying a third-order Taylor expansion to
\begin{align*}
U_{\gamma} := -(-\partial_{xx}^2)^{-\frac{3}{2}} u_{\gamma}, 
\end{align*}
and utilizing the uniform $L^2([0,T];H^1(\mathbb{T}^1))$-estimate, we are able to obtain sufficient control over $R_{3,\gamma,\delta}$. Further details can be found in the proof of Proposition \ref{second-uniform-es}. 

Combining these two uniform estimates, we are able to prove the convergence $u_{\gamma,\delta} \rightarrow u_{\delta}$ via a compactness argument. Together with the convergence $u_{\delta} \rightarrow u$, this implies that, along a subsequence of $(\gamma,\delta)$, we have $u_{\gamma,\delta} \rightarrow u$ in probability. 

\textbf{Multi-scale large deviations.}  
To establish the multi-scale large deviations result stated in Theorem \ref{thm-2-intro}, we employ the weak convergence approach developed by Dupuis and Ellis \cite{DE}. Specifically, we adopt the sufficient condition for large deviation principles introduced by Budhiraja, Dupuis and Maroulas \cite{BDM11}, which applies to functionals of Brownian motions and consists of two main components. The first requires establishing the weak-strong continuity of the Cahn-Hilliard skeleton equation, which we verify using a compactness argument. The second involves proving the weak convergence of perturbations of \eqref{smallnoiseeq} along directions in the Cameron-Martin space associated with the driving Brownian motions. This leads to the analysis of a so-called stochastic controlled equation; see \eqref{eqgamma-2} below. The proof of this convergence relies fundamentally on the techniques developed in Theorem \ref{thm-1-intro}. Furthermore, utilizing the compactness argument developed in Theorem \ref{thm-1-intro}, we establish the convergence of the skeleton equation associated with \eqref{smallnoiseeq} to the corresponding Cahn-Hilliard skeleton equation; see Proposition \ref{convergence-skeleton} for details. Moreover, by verifying the sufficient conditions provided in \cite{M12}, we demonstrate the $\Gamma$-convergence of the associated rate functions, as stated in Theorem \ref{thm-3-intro}.

\subsection{Nonlinear fluctuations of the dynamical Ising-Kac model}

\ 

In this part, we briefly summarize the development of the study of nonlinear fluctuations for dynamical Ising-Kac models. Let $N\in\mathbb{N}_+$, and let $\Lambda_N = \{-N+1, \dots, N\}$ denote the discrete lattice domain with periodic boundary conditions. Let $\Sigma_N$ be the corresponding configuration space. For each configuration $\sigma \in \Sigma_N$ and any function $g:\Sigma_N \rightarrow \mathbb{R}$, consider the generators
\begin{align*}
\mathcal{L}_{Kaw} g(\sigma) &= \sum_{x \sim y \in \Lambda_N} c_{Kaw}(x, y, \gamma, \beta, \sigma) \bigl(g(\sigma^{x,y}) - g(\sigma)\bigr), 
\end{align*}
and
\begin{align*}
\mathcal{L}_{Glau} g(\sigma) &= \sum_{x \in \Lambda_N} c_{Glau}(x, \gamma, \beta, \sigma) \bigl(g(\sigma^{x}) - g(\sigma)\bigr),
\end{align*}
where $\sigma^{x,y}$ denotes the configuration obtained from $\sigma$ by swapping the spins at sites $x$ and $y$, and $\sigma^x$ denotes the configuration obtained by flipping the spin at site $x$. Here, $c_{Kaw}(x, y, \gamma, \beta, \sigma)$ and $c_{Glau}(x, \gamma, \beta, \sigma)$ denote the rates of the Kawasaki and Glauber dynamics, respectively. These rates depend on the positions $x$ and $y$, the inverse temperature $\beta$, the configuration $\sigma$, and the Hamiltonian. The Hamiltonian interaction is no longer restricted to nearest neighbors but involves the Kac potential, with interaction range $\gamma^{-1} > 0$. Precise definitions can be found in \cite{BPRS94, FR95, MW17, GMW23, IM18}.

{\bf Glauber dynamics.}
Regarding the Glauber dynamics, nonlinear fluctuation phenomena have been investigated in different spatial dimensions: see \cite{BPRS94, FR95} for the one-dimensional case, \cite{MW17} for two dimensions, and \cite{GMW23} for three dimensions. In particular, Fritz and R\"udiger \cite{FR95} provided a rigorous analysis of the Glauber dynamics in one dimension and proved that, under an appropriate scaling, the temporal evolution of spin fluctuations converges to the stochastic Allen--Cahn equation. Their approach relies on a delicate coupling between the Glauber dynamics and a linearized voter-model approximation, which makes it possible to establish energy inequalities that control the critical fluctuations. This methodology yields a mathematically precise description of the nonlinear behavior at criticality. The higher-dimensional cases ($d=2,3$) present additional challenges, since the driving space-time white noise is too singular to be handled by classical techniques. In these settings, the works \cite{MW17, GMW23} employ the modern theory of singular stochastic PDEs, such as regularity structures, to construct the limiting dynamics and to establish the convergence of the Glauber spin system to its continuum counterpart. Together, these results form a coherent picture of the universality of the stochastic Allen-Cahn equation as the scaling limit of Glauber dynamics at criticality.

{\bf Kawasaki dynamics.}
The study of nonlinear fluctuations for the Kawasaki dynamics remains largely open. To the best of the author's knowledge, the only partial progress in this direction is presented in \cite[Chapter~4]{IM18}, where a conditional result is established. More precisely, they proved tightness of the rescaled fluctuation fields and showed that any limit point satisfies the martingale problem formally associated with the stochastic Cahn-Hilliard equation. Their analysis combines martingale decompositions, block replacement arguments, and spectral gap estimates to control the nonlinear contributions arising near the critical temperature. A crucial limitation, however, is that the validity of their result hinges on an unproven conjecture, which concerns the control of a certain component of the discrete fluctuation equation; see \cite[(4.23) and Conjecture~4.2.6]{IM18} for the precise formulation. Consequently, the rigorous derivation of the stochastic Cahn-Hilliard equation from the Kawasaki dynamics at criticality remains an open and challenging problem, and its resolution would represent a significant step toward understanding nonlinear fluctuation phenomena in conservative spin systems.

{\bf Comparison with the stochastic PDE.}  
In contrast to discrete dynamical Ising-Kac particle systems, the SPDE \eqref{intro-spde-2} features a nonlocal Kac interactions and nonlinear multiplicative noise. Guided by the framework of fluctuating hydrodynamics, the mesoscopic noise is formally replaced by a temporally white but spatially correlated noise. This spatial regularization enhances the spatial regularity of solutions to \eqref{intro-spde-2}, so that, unlike in the particle setting, the solutions are function-valued. In the particle framework, the analysis primarily focuses on the convergence of the laws of the fluctuation fields, with the generators of the dynamics playing a central role. By contrast, our SPDE approach in fluctuating hydrodynamics begins with rescaled entropy dissipation estimates and investigates the properties of trajectories. As a result, both the questions addressed and the methodologies employed differ significantly between discrete particle models and their continuum SPDE analogues.

\subsection{Comments on the literature}
\ 

{\bf Fluctuating Hydrodynamics. }
The Macroscopic Fluctuation Theory (MFT) provides a systematic framework for understanding systems far from equilibrium, extending and refining classical near-equilibrium linear approximations (see Bertini et al. \cite{BDGJL}, Derrida \cite{Derrida}). At its core, MFT is based on an ansatz concerning the large deviation principles for interacting particle systems. Closely related is the theory of Fluctuating Hydrodynamics (FHD), which models microscopic fluctuations in accordance with statistical mechanics and non-equilibrium thermodynamics. In this framework, conservative stochastic PDEs are postulated to capture the essential features of fluctuations in non-equilibrium systems (see \cite{LL87}, \cite{HS}). The fundamental ansatz of MFT can thus be derived from the zero-noise large deviation principles associated with such conservative stochastic PDEs. A prototypical example within FHD is the Dean-Kawasaki equation (see \cite{D96,K98}), which effectively captures fluctuation behavior in mean-field interacting particle systems. 

{\bf Conservative stochastic PDEs}
To advance the understanding of the Dean-Kawasaki equation and structurally related models, a range of analytic techniques has been developed. Debussche and Vovelle \cite{DV10} studied the Cauchy problem for stochastic conservation laws via the concept of kinetic solutions. This notion was subsequently extended to parabolic-hyperbolic stochastic PDEs with conservative noise by Gess and Souganidis \cite{GS17}, Fehrman and Gess \cite{FG19}, and Dareiotis and Gess \cite{DG20}. These methodologies have since been adapted to the analysis of Dean-Kawasaki-type equations. In the case of local interactions, Fehrman and Gess \cite{FG24} established the well-posedness of functional-valued solutions to the Dean-Kawasaki equation with correlated noise. Building on this framework, they further investigated small noise large deviations in \cite{FG23}. They later extend the results to the whole-space case \cite{FG25}, and Fehrman extends the well-posedness result to non-stationary Stratonovich noise \cite{F25}. More recently, Clini and Fehrman \cite{CF23} developed a central limit theorem for the nonlinear Dean-Kawasaki equation with correlated noise, while Gess, the author, and Zhang \cite{GWZ24} derived higher-order fluctuation expansions for Dean-Kawasaki-type systems. For models with nonlocal interactions, Wang, the author, and Zhang \cite{WWZ22}, as well as the author and Zhang \cite{WZ24}, analyzed the well-posedness and large deviations for Dean-Kawasaki equations with singular interactions, with applications to the fluctuating Ising-Kac-Kawasaki equation. For models with Dirichlet boundary conditions, well-posedness, large deviation, and fluctuation results have been established in Popat \cite{Shyam25,Shyam25-fluc}. For weak error estimates, we refer the reader to \cite{DKP24,DJP25,CF23,CFIR23}, among others. Additionally, Martini and Mayorcas \cite{AA25,AA24} established well-posedness and large deviation principles for an additive noise approximation of the Keller-Segel Dean-Kawasaki equation. Recently, the fluctuating kinetic equations have been discussed in \cite{FMJ25} and \cite{HWZ25}. Further research on the regularized Dean-Kawasaki equation for second-order particle systems has been conducted in \cite{CS23,CSZ19,CSZ20} and has found applications in computational chemistry \cite{JLR25}.

{\bf Dynamical Ising-Kac models. }
We also highlight several works relevant to the dynamical Ising-Kac model. The long-standing conjecture regarding nonlinear fluctuations of Glauber dynamics in dimensions $d=1,2,3$ has been fully resolved: the rescaled density field converges to the dynamical $\Phi^4_d$ model. For details, see \cite{BPRS94, FR95} for $d=1$, \cite{MW17} for $d=2$, and \cite{GMW23} for $d=3$. For the case of Kawasaki dynamics, \cite{IM18} established a conditional result on nonlinear fluctuations in dimension $d=1$. Regarding the macroscopic limits of Kawasaki dynamics, we refer the reader to \cite{GL97}, with further developments and analytical results provided in \cite{VY97, GL15, Gia91, LOP91}. 

\subsection{Structure of the paper}
This paper is organized as follows. In Section \ref{sec-2}, we introduce basic notations, state the assumptions on the noise and initial data, and provide the definitions of the solution concepts. Section \ref{sec-3} is devoted to presenting two uniform estimates for \eqref{equgamma-0}, namely, the uniform entropy dissipation estimates and uniform time-regularity estimates. The convergence of \eqref{equgamma-0} to the stochastic Cahn-Hilliard equation is rigorously proved in Section \ref{sec-4}. In Section \ref{sec-5}, we establish the multi-scale large deviations for \eqref{smallnoiseeq}. Finally, in Section \ref{sec-6}, we show the $\Gamma$-convergence of the rate function \eqref{rate-gamma-intro} to the rate function \eqref{Cahn-Hilliar-rate-intro}. Appendix \ref{sec-app-A} contains a proof of the convergence of the stochastic Cahn-Hilliard equation with correlated noise to the solution with space-time white noise. Finally, Appendix \ref{sec-app-B} discusses the challenges of improving the convergence results to the case of square-root type diffusion coefficients.

\section{Preliminaries}\label{sec-2}
\subsection{Notations}
Let $(\Omega, \mathcal{F}, \mathbb{P}, \{\mathcal{F}_t\}_{t \in [0,T]}, \{B^k(t)\}_{t \in [0,T], k \in \mathbb{N}})$ be a stochastic basis. Without loss of generality, we assume that the filtration $\{\mathcal{F}_t\}_{t \in [0,T]}$ is complete and that $\{B^k(t)\}_{t \in [0,T]}$, $k \in \mathbb{N}$, are mutually independent $\{\mathcal{F}_t\}_{t \in [0,T]}$-Wiener processes taking values in $\mathbb{R}^1$. Expectation with respect to the probability measure $\mathbb{P}$ is denoted by $\mathbb{E}$.

For each $p \in [1, \infty]$, we denote by $\|\cdot\|_{L^p(\mathbb{T}^1)}$ the norm on the Lebesgue space $L^p(\mathbb{T}^1)$. The inner product on $L^2(\mathbb{T}^1)$ is denoted by $\langle \cdot, \cdot \rangle$. Let $C^\infty(\mathbb{T}^1 \times (0, \infty))$ denote the space of infinitely differentiable functions on $\mathbb{T}^1 \times (0, \infty)$, and let $C_c^\infty(\mathbb{T}^1 \times (0, \infty))$ be the subspace of compactly supported functions. For a non-negative integer $k$ and $p \in [1, \infty]$, we denote by $W^{k,p}(\mathbb{T}^1)$ the standard Sobolev space on the torus $\mathbb{T}^1$. In particular, we write $H^a(\mathbb{T}^1) := W^{a,2}(\mathbb{T}^1)$ and denote its topological dual by $H^{-a}(\mathbb{T}^1)$. Finally, we let $(e_k)_{k \geq 0}$ denote the Fourier basis of $L^2(\mathbb{T}^1)$.

\subsection{Derivation of the equation}\label{subsec-equation}
To transform the conjecture in \cite{GJE99} into a well-justified and rigorous mathematical problem, we present in this section a detailed derivation of equation \eqref{equgamma}. We begin with the original equation introduced in \cite{GJE99} at an informal level. Consider
\begin{align}\label{SPDE-1}
\partial_t \rho = \partial_{xx}^2 \rho - \beta \partial_x \big[(1 - \rho^2) \partial_x J \ast \rho \big] - \sqrt{2} \gamma^{1/2} \partial_x \big( \sigma(1 - \rho^2) \xi \big),\quad \text{on }\gamma^{-1/3}\mathbb{T}^1\times(0,\infty),  
\end{align}
with initial data $\rho_0$. Due to the singularity of the space-time white noise $\xi$, equation \eqref{SPDE-1} lies in the super-critical regime within the framework of singular SPDEs \cite{H14,GIP12}. For each $\gamma\in(0,1]$ and $a \in \mathbb{R}$, define the rescaled variable
$$
u_{\gamma}(x,t) := \gamma^{-1/3} \rho(\gamma^{-1/3} x, \gamma^{-4/3} t), \quad \text{with} \quad \beta = 1 + a \gamma^{2/3},\quad \text{for every }(x,t)\in\mathbb{T}^1\times[0,T]. 
$$

We proceed to formally derive the equation satisfied by $u_{\gamma}$. By the chain rule, we have
\begin{align*}
\partial_t u_{\gamma} 
= \gamma^{-1/3} \partial_t \rho(\gamma^{-1/3} x, \gamma^{-4/3} t) 
= \gamma^{-5/3} \big(\partial_t \rho\big)(\gamma^{-1/3} x, \gamma^{-4/3} t).
\end{align*}
Substituting \eqref{SPDE-1} into the right-hand side, we obtain
\begin{align*}
\partial_t u_{\gamma} 
= \; & \gamma^{-5/3} \big(\partial_{xx}^2 \rho\big)(\gamma^{-1/3} x, \gamma^{-4/3} t) 
- \gamma^{-5/3} (1 + a \gamma^{2/3}) \partial_x \big[(1 - \rho^2) \partial_x J \ast \rho \big](\gamma^{-1/3} x, \gamma^{-4/3} t) \\
& - \gamma^{-5/3 + 1/2} \sqrt{2} \partial_x \big( \sigma(1 - \rho^2) \xi \big)(\gamma^{-1/3} x, \gamma^{-4/3} t).
\end{align*}

For the dissipation term, applying the chain rule yields
$$
\gamma^{-5/3} \big(\partial_{xx}^2 \rho\big)(\gamma^{-1/3} x, \gamma^{-4/3} t) 
= \gamma^{-1} \partial_{xx}^2 \big(\rho(\gamma^{-1/3} x, \gamma^{-4/3} t)\big) 
= \gamma^{-2/3} \partial_{xx}^2 u_{\gamma}.
$$

Regarding the non-local term, we have
\begin{align*}
& - \gamma^{-5/3} (1 + a \gamma^{2/3}) \partial_x \big[(1 - \rho^2) \partial_x J \ast \rho \big](\gamma^{-1/3} x, \gamma^{-4/3} t) \\
= \; & - \gamma^{-4/3} (1 + a \gamma^{2/3}) \partial_x \big[ (1 - \gamma^{2/3} u_{\gamma}^2) (\partial_x J \ast \rho)(\gamma^{-1/3} x, \gamma^{-4/3} t) \big].
\end{align*}
By a change of variables, it follows that
\begin{align*}
(\partial_x J \ast \rho)(\gamma^{-1/3} x, \gamma^{-4/3} t) 
&= \int_{\gamma^{-1/3} \mathbb{T}^1} (\partial_x J)(\gamma^{-1/3} x - \tilde{y}) \rho(\tilde{y}, \gamma^{-4/3} t) d \tilde{y} \\
&= \gamma^{-1/3} \int_{\mathbb{T}^1} (\partial_x J)(\gamma^{-1/3} (x - y)) \rho(\gamma^{-1/3} y, \gamma^{-4/3} t) dy.
\end{align*}
Defining the rescaled kernel $J_{\gamma^{1/3}}(\cdot) := \gamma^{-1/3} J(\gamma^{-1/3} \cdot)$, we deduce
\begin{align*}
& - \gamma^{-5/3} (1 + a \gamma^{2/3}) \partial_x \big[(1 - \rho^2) \partial_x J \ast \rho \big](\gamma^{-1/3} x, \gamma^{-4/3} t) \\
= \; & - \gamma^{-2/3} (1 + a \gamma^{2/3}) \partial_x \big[ (1 - \gamma^{2/3} u_{\gamma}^2) J_{\gamma^{1/3}} \ast \partial_x u_{\gamma} \big].
\end{align*}

For the noise term, we write
\begin{align*}
& \gamma^{-5/3 + 1/2} \sqrt{2} \partial_x \big(\sigma(1 - \rho^2) \xi \big)(\gamma^{-1/3} x, \gamma^{-4/3} t) \\
= \; & \gamma^{-5/6} \sqrt{2} \partial_x \big( \sigma(1 - \gamma^{2/3} u_{\gamma}^2) \xi(\gamma^{-1/3} x, \gamma^{-4/3} t) \big).
\end{align*}
Applying the scaling property of space-time white noise yields the distributional identity
$$
\xi(\gamma^{-1/3} x, \gamma^{-4/3} t) \stackrel{d}{=} \gamma^{5/6} \xi,
$$
where $\stackrel{d}{=}$ denotes equality in distribution. Consequently, the noise term can be replaced by
$$
\sqrt{2} \partial_x \big( \sigma(1 - \gamma^{2/3} u_{\gamma}^2) \xi \big).
$$

Putting these together, we find that $u_{\gamma}$ formally satisfies
$$
\partial_t u_{\gamma} = \gamma^{-2/3} \partial_{xx}^2 u_{\gamma} - \gamma^{-2/3} (1 + a \gamma^{2/3}) \partial_x \big[ (1 - \gamma^{2/3} u_{\gamma}^2) J_{\gamma^{1/3}} \ast \partial_x u_{\gamma} \big] - \sqrt{2} \partial_x \big( \sigma(1 - \gamma^{2/3} u_{\gamma}^2) \xi \big),
$$
with initial data $u_{\gamma,0}(x) = \gamma^{-1/3} \rho_0(\gamma^{-1/3} x)$.

In light of the current understanding of the analytical solution theory for fluctuating hydrodynamics (see \cite{Ottinger}), it is natural to consider a Stratonovich modification of the noise and to replace the space-time white noise by a spatially correlated noise with a small correlation length. To this end, we introduce a correlation length parameter $\delta^{1/3} > 0$ independent of the scaling parameter $\gamma$. Let $\eta_{\delta} = \delta^{-1/3} \eta(\delta^{-1/3} \cdot)$ denote a standard mollifier. This motivates studying the following equation:
\begin{align*}
\partial_t u_{\gamma} = & \; \gamma^{-2/3} \partial_{xx}^2 u_{\gamma} - \gamma^{-2/3} (1 + a \gamma^{2/3}) \partial_x \big[ (1 - \gamma^{2/3} u_{\gamma}^2) J_{\gamma^{1/3}} \ast \partial_x u_{\gamma} \big] \\
& - \sqrt{2} \partial_x \big( \sigma(1 - \gamma^{2/3} u_{\gamma}^2) \circ \xi_{\delta} \big).
\end{align*}
Upon rewriting in It\^o form, we obtain
\begin{align*}
\partial_t u_{\gamma} = & \; \gamma^{-2/3} \partial_{xx}^2 u_{\gamma} - \gamma^{-2/3} (1 + a \gamma^{2/3}) \partial_x \big[ (1 - \gamma^{2/3} u_{\gamma}^2) J_{\gamma^{1/3}} \ast \partial_x u_{\gamma} \big] \\
& - \sqrt{2} \partial_x \big( \sigma(1 - \gamma^{2/3} u_{\gamma}^2) \xi_{\delta} \big) \\
& + 4 \partial_x \Big( F_{1,\delta} \sigma'(1 - \gamma^{2/3} u_{\gamma}^2)^2 \gamma^{4/3} u_{\gamma}^2 \partial_x u_{\gamma} - \frac{1}{2} F_{2,\delta} \sigma(1 - \gamma^{2/3} u_{\gamma}^2) \sigma'(1 - \gamma^{2/3} u_{\gamma}^2) \gamma^{2/3} u_{\gamma} \Big), 
\end{align*}
on $\mathbb{T}^1\times(0,\infty)$. Since the Fourier basis $(e_k)_{k \geq 0}$ is used, it follows that $F_{2,\delta} = 0$. This recovers equation \eqref{equgamma}.

\subsection{Assumptions of the noise}
Let $\eta$ be a standard convolution kernel, and let $\eta_{\delta}=\delta^{-1/3}\eta(\cdot/\delta^{1/3})$, for every $\delta>0$. For each $k \geq 0$ and $\delta > 0$, define $f_{\delta,k} := \eta_{\delta} \ast e_k$. We introduce Brownian motions 
\begin{align}\label{brownian}
W(t) := \sum_{k \geq 0} e_k B^k(t)\quad\text{and}\quad W_{\delta}(t) := \sum_{k \geq 0} f_{\delta,k} B^k(t),\quad \forall t\in[0,T]. 
\end{align}

Furthermore, define the following coefficient functions:
\begin{equation*}
	F_{1,\delta} := \sum_{k \geq 0} |f_{\delta,k}|^2, \quad F_{2,\delta} := \frac{1}{2} \sum_{k \geq 0} \partial_x f_{\delta,k}^2, \quad F_{3,\delta} := \sum_{k \geq 0} |\partial_x f_{\delta,k}|^2.
\end{equation*}

We remark that, since $(e_k)_{k \geq 0}$ is the Fourier basis, the coefficients $F_{1,\delta}$ and $F_{3,\delta}$ are constants for every $\delta > 0$. Moreover, we have the identity $F_{2,\delta} = 0$ for all $\delta > 0$. 

We denote the space-time white noise and the spatial correlated noise by the distributional time-derivatives $\xi=\frac{d}{dt}W$ and $\xi_{\delta}=\frac{d}{dt}W_{\delta}$, respectively.

\subsection{Well-posedness of the SPDEs}
We begin by stating a precise assumptions on the diffusion coefficient $\sigma(\cdot)$ and the interaction kernel $J$. 

\begin{assumption}\label{Assump-sigma}
	\textbf{(Regular coefficients)} Assume that $\sigma(\cdot) \in \mathrm{C}_{\mathrm{loc}}^1((0,\infty))$. Furthermore, $\sigma$ satisfies the following properties:
	\begin{enumerate}
		\item[(1)] $\sigma \in C([0,\infty)) \cap C^{\infty}((0,\infty))$, with $\sigma(0) = 0$, $\sigma(1) = 1$, and $\sigma' \in C_c^{\infty}([0,\infty))$;
		\item[(2)] there exists a constant $c \in (0,\infty)$ such that for all $\zeta \in [0,\infty)$,
		\begin{align}
			|\sigma(\zeta)| \le c \sqrt{\zeta};
		\end{align}
	\end{enumerate}
\end{assumption}
\begin{assumption}\label{Assump-J}
	\textbf{(Regular interactions)} Assume that $J\in C^{\infty}(\mathbb{T}^1;\mathbb{R}^1_+)$ is an even function with $supp\ J\subset(-1/2,1/2)$ and $\|J\|_{L^1(\mathbb{T}^1)}=1$. 
	\end{assumption}

In the sequel, we always assume that the coefficient $\sigma(\cdot)$ and the interaction kernel $J$ satisfy Assumption \ref{Assump-sigma} and Assumption \ref{Assump-J}, respectively, and we shall not reiterate this assumption thereafter. By calculating the covariation structure of the Brownian motions, we may rewrite equation \eqref{equgamma-0} in It\^o form as follows:
\begin{align}\label{equgamma}
\partial_tu_{\gamma,\delta} =&\ \gamma^{-2/3}\partial_{xx}^2 u_{\gamma,\delta} - \gamma^{-2/3}(1+a\gamma^{2/3})\partial_{x}\Big[(1-\gamma^{2/3}u_{\gamma,\delta}^2)J_{\gamma^{1/3}}\ast\partial_{x} u_{\gamma,\delta}\Big]\notag\\
& - \sqrt{2}\partial_{x}\Big(\sigma(1-\gamma^{2/3}u_{\gamma,\delta}^2)\xi_{\delta}\Big) + 4\partial_{x}\Big(F_{1,\delta}\sigma'(1-\gamma^{2/3}u_{\gamma,\delta}^2)^2\gamma^{4/3}u_{\gamma,\delta}^2\partial_{x}u_{\gamma,\delta}\Big).
\end{align}

Recall that the mathematical entropy $\Psi$ is defined by \eqref{entropy-intro}. We introduce the space of functions with finite entropy:
\begin{align}\label{ENT}
\overline{\mathrm{Ent}}(\mathbb{T}^{d}) = \left\{ \rho : -1 \leq \rho \leq 1 \ \text{a.e.},\ \text{and} \ \int_{\mathbb{T}^{d}} \Psi(\rho(x))\,\mathrm{d}x < \infty \right\}.
\end{align}
In what follows, we provide the definitions of weak solutions to both equation \eqref{equgamma} and the limiting equation \eqref{CahnHilliard}. 

\begin{definition}\label{def-1}
Let $\gamma,\delta > 0$, $a \in \mathbb{R}$, and let $u_{\gamma,0} \in \overline{\mathrm{Ent}}(\mathbb{T}^{1})$. A \emph{weak solution} of equation \eqref{equgamma} with initial data $u_{\gamma,0}$ is an $L^2(\mathbb{T}^1)$-valued, $\{\mathcal{F}(t)\}_{t\in[0,T]}$-predictable process $u_{\gamma,\delta}$ such that almost surely,
$$
u_{\gamma,\delta} \in L^{\infty}([0,T];L^2(\mathbb{T}^1))\cap L^2([0,T]; H^1(\mathbb{T}^1)), \quad \text{and} \quad -\gamma^{-1/3} \leq u_{\gamma,\delta} \leq \gamma^{-1/3}.
$$
Moreover, it holds almost surely that for every $\psi \in C^{\infty}(\mathbb{T}^1)$ and every $t \in [0,T]$, 
\begin{align}
\int_{\mathbb{T}^1} u_{\gamma,\delta}(t)\psi(x)\,dx &= \int_{\mathbb{T}^1} u_{\gamma,0} \psi\,dx - \gamma^{-2/3} \int_0^t\int_{\mathbb{T}^1} \partial_x u_{\gamma,\delta} \partial_x \psi\,dx\,ds\notag\\
&\quad + \gamma^{-2/3}(1 + a\gamma^{2/3}) \int_0^t \int_{\mathbb{T}^1} \big((1 - \gamma^{2/3} u_{\gamma,\delta}^2) J_{\gamma^{1/3}} \ast \partial_x u_{\gamma,\delta}\big) \partial_x \psi\,dx\,ds\notag\\
&\quad + \sqrt{2} \int_0^t \int_{\mathbb{T}^1} \partial_x \psi\, \sigma(1 - \gamma^{2/3} u_{\gamma,\delta}^2)\,dW_{\delta}\,dx\notag\\
&\quad - 4 \int_0^t \int_{\mathbb{T}^1} F_{1,\delta}\, \sigma'(1 - \gamma^{2/3} u_{\gamma,\delta}^2)^2\, \gamma^{4/3} u_{\gamma,\delta}^2\, \partial_x u_{\gamma,\delta} \partial_x \psi\,dx\,ds.
\end{align}
\end{definition}

\begin{definition}\label{def-2}
Let $u_0 \in H^{-1}(\mathbb{T}^1)$ and $a \in \mathbb{R}$. A \emph{weak solution} of equation \eqref{CahnHilliard} with initial data $u_0$ is an $H^{-1}(\mathbb{T}^1)$-valued, $\{\mathcal{F}(t)\}_{t\in[0,T]}$-predictable process $u$ such that almost surely,
$$
u \in C([0,T]; H^{-1}(\mathbb{T}^1)) \cap L^2([0,T]; H^1(\mathbb{T}^1)) \cap L^4([0,T]; L^4(\mathbb{T}^1)).
$$
Moreover, it holds almost surely that for every $\psi \in C^{\infty}(\mathbb{T}^1)$ and every $t \in [0,T]$, 
\begin{align*}
\int_{\mathbb{T}^1} u(t)\psi\,dx &= \int_{\mathbb{T}^1} u_0 \psi\,dx + \frac{D}{2} \int_0^t \int_{\mathbb{T}^1} \partial_x u\, \partial_{xxx}^3 \psi\,dx\,ds\\
&\quad + \int_0^t \int_{\mathbb{T}^1} V'(u)\, \partial_{xx}^2 \psi\,dx\,ds + \sqrt{2} \int_0^t \int_{\mathbb{T}^1} \partial_x \psi\, dW\,dx.
\end{align*}
\end{definition}
In the following, we establish the global-in-time existence and uniqueness of weak solutions to both \eqref{equgamma} and \eqref{CahnHilliard}.

\begin{theorem}
For every $\gamma\in(0,1]$, let $u_{\gamma,0} \in \overline{\mathrm{Ent}}(\mathbb{T}^1)$. Then there exists a unique weak solution to \eqref{equgamma} in the sense of Definition~\ref{def-1} with initial data $u_{\gamma,0}$.
\end{theorem}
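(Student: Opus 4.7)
The plan is to prove existence and uniqueness for each fixed $\gamma\in(0,1]$ and each $\delta>0$, following the template of \cite{WWZ22} for the nonlocal Dean-Kawasaki equation. For existence I would approximate by truncating the driving Brownian motion, setting $W_\delta^N:=\sum_{k=0}^N f_{\delta,k}B^k$, projecting the drift and noise coefficients onto $\mathrm{span}\{e_0,\dots,e_N\}$ via the $L^2$-projection $\Pi_N$, and extending $\sigma(\cdot)$ by $0$ for negative arguments so that all coefficients are globally well-defined. The resulting finite-dimensional It\^o system has locally Lipschitz coefficients, since $\sigma$ is smooth on $(0,\infty)$, $\sigma'$ has compact support, $J_{\gamma^{1/3}}$ is smooth, and $F_{1,\delta}$ is a finite constant; hence a local solution $u^N$ exists up to an explosion time $\tau^N$ with initial data $\Pi_N u_{\gamma,0}$.

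The decisive a priori bound is obtained by applying It\^o's formula to $\int_{\mathbb{T}^1}\Psi_\gamma(u^N(t,x))\,dx$. Since $\Psi_\gamma''(\zeta)=\gamma^{1/3}/(1-\gamma^{2/3}\zeta^2)$, Assumption~\ref{Assump-sigma}(2) yields $\sigma(1-\gamma^{2/3}u^2)^2\,\Psi_\gamma''(u)\leq c^2\gamma^{1/3}$, so the quadratic variation of the noise contributes a bounded term. The remaining piece of the It\^o correction involves $|\partial_x u^N|^2$ and is designed to cancel, modulo integrable remainders, against the Stratonovich-It\^o drift $4\partial_x(F_{1,\delta}\sigma'(1-\gamma^{2/3}u^2)^2\gamma^{4/3}u^2\partial_x u^N)$ tested against $\Psi_\gamma'$, thanks to $F_{1,\delta}=\sum_k|f_{\delta,k}|^2$ being constant and $F_{2,\delta}=0$. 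The dissipative term yields $-\gamma^{-2/3}\int|\partial_x u^N|^2\Psi_\gamma''(u^N)\,dx$, and the Kac-interaction term is absorbed via Young's inequality using $\|J_{\gamma^{1/3}}\|_{L^1}=1$. Putting these together produces uniform bounds $\mathbb{E}\sup_{t\leq T}\|\Psi_\gamma(u^N(t))\|_{L^1}\leq C$ and $\mathbb{E}\|u^N\|_{L^2([0,T];H^1(\mathbb{T}^1))}^2\leq C$, independent of $N$. Finiteness of $\Psi_\gamma(u^N)$ keeps $|u^N|<\gamma^{-1/3}$ and forces $\tau^N=T$ almost surely.

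Tightness of $\mathrm{Law}(u^N)$ on $L^2([0,T];L^2(\mathbb{T}^1))$ then follows from the Aubin-Lions lemma, combining the uniform $H^1$ estimate with a fractional-in-time bound in $H^{-m}(\mathbb{T}^1)$ obtained by testing the equation against smooth functions and using the bounds above. Skorokhod representation (or Jakubowski on non-metric spaces) produces an a.s.-convergent subsequence whose limit $u_{\gamma,\delta}$ satisfies Definition~\ref{def-1}: the stochastic integral passes by martingale convergence, $\sigma(1-\gamma^{2/3}u^2)$ by continuity and dominated convergence, and the nonlocal convolution $J_{\gamma^{1/3}}\ast\partial_x u^N$ by smoothness of $J_{\gamma^{1/3}}$; lower semicontinuity of $\Psi_\gamma$ preserves $|u_{\gamma,\delta}|\leq\gamma^{-1/3}$. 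For uniqueness I compute $d\|u^{(1)}-u^{(2)}\|_{H^{-1}}^2$ for two solutions sharing the same initial datum; on the deterministic compact set $[-\gamma^{-1/3},\gamma^{-1/3}]$ all relevant coefficients are $C^1$ with bounded derivatives, so the stochastic, Stratonovich-correction, Kac, and dissipative contributions combine into a Gr\"onwall-type estimate, and Yamada-Watanabe upgrades weak existence plus pathwise uniqueness to a strong solution on the original stochastic basis. The main obstacle is verifying the delicate cancellation in the entropy identity: one must track how the Stratonovich-It\^o correction precisely compensates the $|\partial_x u^N|^2$ piece of $\tfrac{1}{2}\sum_k\int(\partial_x(\sigma(1-\gamma^{2/3}u^2)f_{\delta,k}))^2\Psi_\gamma''(u^N)\,dx$, leaving only remainders absorbable into the dissipation via $\|J_{\gamma^{1/3}}\|_{L^1}=1$ and the $L^1$-entropy bound, which is the structural ingredient that promotes the formal Stratonovich noise to a genuinely stochastically coercive contribution.
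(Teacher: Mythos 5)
The paper itself disposes of this theorem in two sentences, by citing \cite{FG24,WWZ22}: existence via $L^2$-based energy/entropy estimates and compactness, uniqueness via the kinetic formulation and the method of doubling variables. Your existence architecture (Galerkin truncation, entropy dissipation as the key a priori bound, Aubin--Lions, Skorokhod/Jakubowski, identification of the limit) is consistent with that methodology, but one step as written is circular: the Fourier projection $\Pi_N$ does not preserve the bound $|u^N|\leq\gamma^{-1/3}$, so $\Psi_\gamma(u^N)$ is simply undefined wherever $|u^N|>\gamma^{-1/3}$ and you cannot apply It\^o's formula to it; saying "finiteness of $\Psi_\gamma(u^N)$ keeps $|u^N|<\gamma^{-1/3}$" assumes what must be proved. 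The cited works circumvent this by working with regularized entropies defined on all of $\mathbb{R}$ (compare $\Psi_{\gamma,\bar\delta}$ in Proposition~\ref{prp-entropydissipation}) together with approximations that preserve the maximum principle, deriving the $L^\infty$ bound from the degeneracy $\sigma(0)=0$ at $|u|=\gamma^{-1/3}$ rather than from finiteness of the entropy of the Galerkin iterate.

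The more serious deviation is uniqueness. You propose an $H^{-1}$ Gr\"onwall estimate for $v:=u^{(1)}-u^{(2)}$, but the closure of that estimate is exactly the step that fails in general for conservative multiplicative noise, which is why the paper invokes doubling of variables on the kinetic formulation instead. Concretely, the It\^o quadratic-variation term in $d\|v\|_{H^{-1}}^2$ is
$2\sum_k\big\|(-\partial_{xx}^2)^{-1/2}\partial_x\big[(\sigma(1-\gamma^{2/3}(u^{(1)})^2)-\sigma(1-\gamma^{2/3}(u^{(2)})^2))f_{\delta,k}\big]\big\|_{L^2}^2\lesssim F_{1,\delta}\,\|\sigma'\|_{L^\infty}^2\,\gamma^{2/3}\,\|v\|_{L^2}^2$,
and the only dissipation available in the $H^{-1}$ pairing is the $L^2$-coercivity $-2\gamma^{-2/3}\|v\|_{L^2}^2$ produced by $\gamma^{-2/3}\partial_{xx}^2$, which must simultaneously absorb the nonlocal Kac contributions (costing factors of $\|\partial_xJ_{\gamma^{1/3}}\|_{L^1}=\gamma^{-1/3}\|J'\|_{L^1}$ after Young's convolution inequality). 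Since $F_{1,\delta}=\|\eta_\delta\|_{L^2}^2\sim\delta^{-1/3}$, the required absorption $F_{1,\delta}\|\sigma'\|_{L^\infty}^2\gamma^{2/3}\ll\gamma^{-2/3}$ fails for fixed parameters with $\delta$ small relative to $\gamma^{4}$, and you cannot interpolate $\|v\|_{L^2}^2\leq\|v\|_{H^{-1}}\|v\|_{H^1}$ because the $H^{-1}$ estimate gives no control of $\|v\|_{H^1}$. So "combine into a Gr\"onwall-type estimate" is a genuine gap as stated; either you must restrict the parameter regime, run the estimate at a different regularity, or — as the paper and \cite{FG24,WWZ22} do — prove an $L^1$-contraction via the doubling-of-variables argument in the kinetic framework, after which Yamada--Watanabe is indeed the right way to conclude.
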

\begin{proof}
The proof can be carried out using the same approach as in \cite{FG24,WWZ22}. In fact, the methodology developed in \cite{FG24,WWZ22} establishes the well-posedness of \eqref{equgamma} even in the case where the diffusion coefficient $\sigma(\cdot)$ takes the square-root form. As a regularized version of \eqref{intro-spde-3}, the equation \eqref{equgamma} with a diffusion coefficient $\sigma(\cdot)$ satisfying Assumption \ref{Assump-sigma} allows for the equivalence between the notions of renormalized kinetic solutions (see \cite[Definition 7.2]{WWZ22} for details) and weak solutions. Consequently, existence can be established via $L^2(\mathbb{T}^d)$-based energy estimates combined with a compactness argument. Uniqueness follows from the kinetic formulation through the method of doubling variables. 
\end{proof}

\begin{theorem}
Let $u_0 \in H^{-1}(\mathbb{T}^1)$ and $a \in \mathbb{R}$. Then there exists a unique weak solution to \eqref{CahnHilliard} with initial data $u_0$ in the sense of Definition~\ref{def-2}.
\end{theorem}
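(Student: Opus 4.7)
The plan is to apply the Da Prato-Debussche decomposition to reduce \eqref{CahnHilliard} to a pathwise deterministic Cahn-Hilliard equation with random forcing. First, define the stochastic convolution $z$ as the unique mild solution of the linear problem $\partial_t z=-\frac{D}{2}\partial_{xxxx}^4 z-\sqrt{2}\partial_x\xi$ with $z(0)=0$. Diagonalizing in the Fourier basis $(e_k)_{k\ge 0}$, each Fourier mode of $z$ is an Ornstein-Uhlenbeck process whose damping rate grows like $k^4$ while its noise intensity grows only like $|k|$, hence $\mathbb{E}|\hat{z}_k(t)|^2\lesssim |k|^{-2}$ uniformly in $t$. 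A standard summation together with a Kolmogorov continuity estimate then yields
\begin{equation*}
z\in C([0,T];H^{s}(\mathbb{T}^1))\cap L^p([0,T];L^p(\mathbb{T}^1))\quad\text{a.s., for every }s<\tfrac{1}{2}\text{ and }p<\infty,
\end{equation*}
and $z$ is $\{\mathcal{F}_t\}$-adapted by construction.

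Setting $v=u-z$, equation \eqref{CahnHilliard} becomes the pathwise deterministic problem $\partial_t v=-\frac{D}{2}\partial_{xxxx}^4 v+\partial_{xx}^2\bigl(V'(v+z)\bigr)$ with $v(0)=u_0\in H^{-1}(\mathbb{T}^1)$, and the task reduces to producing a unique $v\in C([0,T];H^{-1})\cap L^2([0,T];H^1)\cap L^4([0,T];L^4)$. The natural energy is $\|v\|_{H^{-1}}^2$: testing the equation against $(-\partial_{xx}^2)^{-1}v$, expanding $(v+z)^3 v=v^4+3v^3z+3v^2z^2+vz^3$, and absorbing the $z$-contributions via Young's inequality gives
\begin{equation*}
\frac{1}{2}\frac{d}{dt}\|v\|_{H^{-1}}^2+\frac{D}{2}\|\partial_x v\|_{L^2}^2+\frac{1}{2}\|v\|_{L^4}^4 \le C\bigl(1+\|z\|_{L^4(\mathbb{T}^1)}^4+\|v\|_{H^{-1}}^2\bigr),
\end{equation*}
and Grönwall produces the target three-space bound. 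Existence then follows by Galerkin projection onto $\mathrm{span}(e_0,\dots,e_N)$: the truncated system is an ODE with locally Lipschitz, coercive drift; the above inequality passes uniformly to the truncation; and Aubin-Lions furnishes strong convergence in $L^2([0,T];L^2(\mathbb{T}^1))$, which combined with the uniform $L^4_{t,x}$ bound suffices to identify the cubic nonlinearity in the limit.

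Uniqueness is handled by the monotonicity of $x\mapsto x^3$. For two solutions $v_1,v_2$ driven by the same $z$, the difference $w=v_1-v_2$ satisfies
\begin{equation*}
\frac{1}{2}\frac{d}{dt}\|w\|_{H^{-1}}^2+\frac{D}{2}\|\partial_x w\|_{L^2}^2=-\bigl\langle(v_1+z)^3-(v_2+z)^3,w\bigr\rangle_{L^2}+a\|w\|_{L^2}^2,
\end{equation*}
where the cross-term is non-negative through the factorization $(b^3-c^3)(b-c)=(b-c)^2(b^2+bc+c^2)$, and the linear contribution is absorbed via the one-dimensional interpolation $\|w\|_{L^2}^2\le\|w\|_{H^{-1}}\|\partial_x w\|_{L^2}$ (for mean-zero $w$, with the mean handled separately as a conserved quantity), so that Grönwall closes the estimate. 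Setting $u=v+z$ recovers a weak solution of \eqref{CahnHilliard} in the sense of Definition \ref{def-2}, and uniqueness transfers back since any weak solution $\tilde u$ in the prescribed class yields $\tilde v=\tilde u-z$ solving the same deterministic PDE with the same initial datum. The principal obstacle I anticipate is the regularity matching at the $v/z$ interface: the cubic term $V'(v+z)$ only makes distributional sense when both $z$ and $v$ belong to $L^4_{t,x}$, and a direct Itô energy for $u$ in $H^{-1}$ fails because the formal covariance $\sum_{k}\|(-\partial_{xx}^2)^{-1/2}\partial_x e_k\|_{L^2}^2$ diverges; it is precisely this obstruction that the splitting $u=v+z$ is designed to bypass.
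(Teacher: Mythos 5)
Your proof is correct. The paper itself gives no argument for this theorem: the proof in the text is the single line ``The proof can be found in \cite{DZ92},'' delegating to the classical literature. Your Da Prato--Debussche decomposition $u=v+z$ is the standard route to that result, and it is consistent with the method the paper does spell out in closely related places --- Lemma~\ref{lem-approx-CH} in Appendix~\ref{sec-app-A} uses the same $z$/$w$ splitting and the same $\dot H^{-1}$ energy with Young absorption of the $z$-dependent products, and Theorem~\ref{stability-skeleton} uses the deterministic version of the decomposition and the identical monotonicity argument (factorization of $b^3-c^3$, plus $\|w\|_{L^2}^2\le\|w\|_{H^{-1}}\|\partial_x w\|_{L^2}$ with the mean treated as a conserved quantity) for uniqueness. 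Two small bookkeeping points you should tidy if you write this out in full: after absorbing $\|v\|_{L^2}^2$ via interpolation into the dissipation the coefficient on $\|\partial_x v\|_{L^2}^2$ must be reduced below $D/2$, and the $\dot H^{-1}$ test function $(-\partial_{xx}^2)^{-1}v$ requires first subtracting the constant mean $\langle u_0,1\rangle$ from $v$ (which is legitimate since the equation is conservative and $z$ is mean-zero). Your closing observation about why the splitting is necessary is exactly right: the It\^o correction for a direct $H^{-1}$ energy of $u$ is $\sum_k\|(-\partial_{xx}^2)^{-1/2}\partial_x e_k\|_{L^2}^2=\sum_k 1=\infty$, so the noise must be treated through the smoothing of the bi-Laplacian semigroup rather than through an energy identity for $u$ itself.
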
 
\begin{proof}
The proof can be found in \cite{DZ92}. 	
\end{proof}

\section{Uniform estimates and compactness}\label{sec-3}
In this section, we establish two uniform estimates in the parameter $\gamma\in(0,1]$ for the weak solution of \eqref{equgamma}, with $\delta > 0$ fixed throughout. For notational convenience, we denote the weak solution of \eqref{equgamma} by $u_{\gamma}$, and henceforth omit the explicit mention that $\delta > 0$ is given and fixed. In Subsection \ref{subsec-3-1}, we first derive a uniform estimate for the dissipation of the mathematical entropy. Subsequently, in Subsection \ref{subsec-3-2}, we provide a uniform estimate on the time regularity. As a consequence of these two estimates, we deduce the tightness of the family of solutions to \eqref{equgamma} in $L^2([0,T];L^2(\mathbb{T}^1))$.

\subsection{Uniform rescaled entropy dissipation estimates}\label{subsec-3-1}
For every $\gamma\in(0,1]$, we recall that the rescaled entropy function is defined by
\begin{align}\label{rescaled-entropy}
\Psi_{\gamma}(\zeta) = \frac{\gamma^{-1/3}}{2} \Big[(1+\gamma^{1/3}\zeta)\log(\gamma^{1/3}\zeta+1) + (1-\gamma^{1/3}\zeta)\log(1-\gamma^{1/3}\zeta) - 2\Big], \quad \zeta \in [-\gamma^{-1/3}, \gamma^{-1/3}], 
\end{align}
where $\Psi_{\gamma}(\gamma^{-1/3})$ and $\Psi_{\gamma}(-\gamma^{-1/3})$ are defined by the continuum limits
$$
\Psi_{\gamma}(\gamma^{-1/3}) := \lim_{\zeta \uparrow \gamma^{-1/3}} \Psi(\zeta) = \gamma^{-1/3} (\log 2 - 1), \quad
\Psi_{\gamma}(-\gamma^{-1/3}) := \lim_{\zeta \downarrow -\gamma^{-1/3}} \Psi(\zeta) = \gamma^{-1/3} (\log 2 - 1).
$$
 Moreover, we denote by $\psi_{\gamma}$ the derivative of $\Psi_{\gamma}$, given explicitly by
\begin{align}\label{entropy-derivative} 
\psi_{\gamma}(\zeta) = \Psi'_{\gamma}(\zeta) = \frac{1}{2} \log\left(\frac{1 + \gamma^{1/3} \zeta}{1 - \gamma^{1/3} \zeta}\right), \quad \zeta \in (-\gamma^{-1/3}, \gamma^{-1/3}).
\end{align}
A straightforward computation shows that the second derivative of $\Psi_{\gamma}$ takes the form
\begin{align*}
\psi_{\gamma}'(\zeta) = \frac{\gamma^{1/3}}{1 - \gamma^{2/3} \zeta^2}, \quad \zeta \in (-\gamma^{-1/3}, \gamma^{-1/3}).
\end{align*}
In the special case $\gamma = 1$, we write $\Psi := \Psi_1$ and $\psi := \psi_1$ to denote the standard entropy function and its derivative, respectively. In the following, we establish several fundamental analytic properties of the rescaled entropy function, which play a key role in the subsequent analysis. 
\begin{lemma}\label{lem-tech-1}
For every $\gamma\in(0,1]$, let $\Psi_{\gamma}$ be defined as in \eqref{rescaled-entropy}. Then the following properties hold. First, for all $\zeta \in [-\gamma^{-1/3}, \gamma^{-1/3}]$, we have the lower bound
\begin{align}\label{property-1}
\Psi_{\gamma}(\zeta) \geq \frac{1}{2} \gamma^{1/3} \zeta^2 - \gamma^{-1/3}.
\end{align}
In addition, the rescaled entropy function is strictly negative throughout its domain:
\begin{align}\label{property-2}
\Psi_{\gamma}(\zeta) < 0, \quad \text{for all } \zeta \in [-\gamma^{-1/3}, \gamma^{-1/3}].
\end{align} 
\end{lemma}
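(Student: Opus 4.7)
The plan is to reduce both inequalities to elementary estimates on the classical entropy via the substitution $y := \gamma^{1/3}\zeta \in [-1,1]$, under which
$$
\Psi_{\gamma}(\zeta) = \frac{\gamma^{-1/3}}{2}\Bigl[(1+y)\log(1+y) + (1-y)\log(1-y) - 2\Bigr].
$$
This rewriting reduces the $\gamma$-dependence to a single scaling prefactor and leaves a purely analytic inequality on $[-1,1]$ to verify, with the endpoint values $y = \pm 1$ handled by the continuity conventions $\Psi_{\gamma}(\pm\gamma^{-1/3}) = \gamma^{-1/3}(\log 2 - 1)$ stated just above the lemma.

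For property \eqref{property-1}, I would multiply both sides by $2\gamma^{1/3}$ and recast the claim as
$$
(1+y)\log(1+y) + (1-y)\log(1-y) \geq y^2, \qquad y \in [-1,1].
$$
Setting $g(y) := (1+y)\log(1+y) + (1-y)\log(1-y) - y^2$, a direct computation gives $g(0) = 0$, $g'(0) = 0$, and
$$
g''(y) = \frac{1}{1+y} + \frac{1}{1-y} - 2 = \frac{2y^2}{1-y^2} \geq 0 \quad \text{on } (-1,1),
$$
so $g$ is convex with global minimum $g(0) = 0$, which gives the pointwise bound on the open interval. The endpoint cases follow from the continuity convention together with the elementary inequality $\log 2 - 1 \geq -1/2$, which is exactly $\Psi_{\gamma}(\pm\gamma^{-1/3}) \geq \tfrac{1}{2}\gamma^{-1/3} - \gamma^{-1/3}$.

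For property \eqref{property-2}, set $f(y) := (1+y)\log(1+y) + (1-y)\log(1-y)$. The same convexity computation $f''(y) = 2/(1-y^2) > 0$ together with $f(0) = f'(0) = 0$ shows that $f$ is strictly convex on $(-1,1)$ with minimum $0$ at the origin, so it attains its maximum at the endpoints, where $f(\pm 1) = 2\log 2 < 2$. Therefore $(1+y)\log(1+y) + (1-y)\log(1-y) - 2 < 0$ on all of $[-1,1]$, and multiplying by the positive prefactor $\gamma^{-1/3}/2$ yields $\Psi_{\gamma}(\zeta) < 0$ throughout the domain.

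The main obstacle here is essentially nonexistent: the statement is a purely elementary calculus lemma, and the only thing to double-check is that the scaling prefactor $\gamma^{-1/3}$ in front of $\Psi_{\gamma}$ exactly cancels with the $\gamma^{1/3}$ arising from $y^2 = \gamma^{2/3}\zeta^2$, which is precisely the "small trick" flagged in the introduction as the mechanism by which the scaling factors combine cleanly in the subsequent uniform entropy-dissipation estimates.
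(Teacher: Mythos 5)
Your proposal is correct, and after the substitution $y=\gamma^{1/3}\zeta$ it is the same argument as the paper's: the bound $g''(y)=2y^2/(1-y^2)\ge 0$ is exactly the paper's $\psi_\gamma'(\zeta)\ge\gamma^{1/3}$, and integrating twice from $g(0)=g'(0)=0$ is the paper's two-step integration from $\psi_\gamma(0)=0$ and $\Psi_\gamma(0)=-\gamma^{-1/3}$. The only cosmetic difference is in \eqref{property-2}, where you invoke convexity of $f$ to locate the maximum at the endpoints, while the paper uses the sign of $\psi_\gamma$ (monotonicity) — both give the same conclusion.
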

\begin{proof}
Recall that $\psi_{\gamma}$ is defined in \eqref{entropy-derivative}. A direct computation yields
\begin{align*}
\psi_{\gamma}'(\zeta) = \frac{\gamma^{1/3}}{1 - \gamma^{2/3} \zeta^2} \geq \gamma^{1/3}, \quad \text{for all } \zeta \in (-\gamma^{-1/3}, \gamma^{-1/3}).
\end{align*}
Since $\psi_{\gamma}(0) = 0$, it follows that
\begin{align*}
\psi_{\gamma}(\zeta) \geq \gamma^{1/3} \zeta, \quad \text{for all } \zeta \in (0, \gamma^{-1/3}).
\end{align*}
Integrating this inequality and using the symmetry of $\Psi_{\gamma}$, we deduce that
\begin{align*}
\Psi_{\gamma}(\zeta) \geq \frac{1}{2} \gamma^{1/3} \zeta^2 - \gamma^{-1/3}, \quad \text{for all } \zeta \in [-\gamma^{-1/3}, \gamma^{-1/3}],
\end{align*}
which establishes \eqref{property-1}.

Next, observe that $\psi_{\gamma}(\zeta) > 0$ for all $\zeta \in (0, \gamma^{-1/3})$, and $\psi_{\gamma}(\zeta) < 0$ for all $\zeta \in (-\gamma^{-1/3}, 0)$. Hence, the function $\Psi_{\gamma}$ attains its maximum at the boundary points $\zeta = \pm \gamma^{-1/3}$, where
$$
\sup_{\zeta \in [-\gamma^{-1/3}, \gamma^{-1/3}]} \Psi_{\gamma}(\zeta) = \Psi_{\gamma}(\gamma^{-1/3}) = \Psi_{\gamma}(-\gamma^{-1/3}) = \frac{\gamma^{-1/3}}{2} (2 \log 2 - 2) < 0.
$$
This completes the proof of \eqref{property-2}.
\end{proof}

In the following, we show the uniform rescaled entropy dissipation estimates. 
\begin{proposition}\label{prp-entropydissipation}
Assume that the initial data, the coefficient, and the interaction kernel $J$ satisfy Assumptions (A1), \ref{Assump-sigma}, and \ref{Assump-J}, respectively. For every $\gamma\in(0,1]$, let $u_{\gamma}$ denote the weak solution of \eqref{equgamma} with initial data $u_{\gamma,0}$. Let $\Psi_{\gamma}$ be defined as in \eqref{rescaled-entropy}. Then there exists a constant $C > 0$, independent of $\gamma$ and $\delta$, such that the following uniform entropy estimate holds: \begin{align}\label{uniformentropy}
\sup_{t \in [0,T]} \mathbb{E} \left( \int_{\mathbb{T}^1} \Psi_{\gamma}(u_{\gamma}(t))\, dx \right)
&+ \gamma^{-1/3} \mathbb{E} \int_0^T \int_{\mathbb{T}^1} \frac{|\partial_x u_{\gamma}|^2}{1 - \gamma^{2/3} u_{\gamma}^2}\, dx\, dt \notag \\
&+ \gamma^{-1/3}(1 + a\gamma^{2/3}) \mathbb{E} \int_0^T \int_{\mathbb{T}^1} \partial_x u_{\gamma} \, (J_{\gamma^{1/3}} \ast \partial_x u_{\gamma})\, dx\, ds \notag \\
&\lesssim \int_{\mathbb{T}^1} \Psi_{\gamma}(u_{\gamma,0})\, dx + CT \gamma^{1/3} \delta^{-1}, \quad \text{for all } \gamma\in(0,1].
\end{align}

As a consequence, we also obtain the estimate
\begin{align*}
\gamma^{1/3} \sup_{t \in [0,T]} \mathbb{E} \left( \int_{\mathbb{T}^1} \left( \frac{1}{2} |u_{\gamma}(t)|^2 - 1 \right) dx \right)
&+ \gamma^{-1/3} \mathbb{E} \int_0^T \int_{\mathbb{T}^1} \frac{|\partial_x u_{\gamma}|^2}{1 - \gamma^{2/3} u_{\gamma}^2}\, dx\, dt \\
&+ \gamma^{-1/3}(1 + a\gamma^{2/3}) \mathbb{E} \int_0^T \int_{\mathbb{T}^1} \partial_x u_{\gamma} \, (J_{\gamma^{1/3}} \ast \partial_x u_{\gamma})\, dx\, ds \\
&\lesssim \int_{\mathbb{T}^1} \Psi_{\gamma}(u_{\gamma,0})\, dx + CT \gamma^{1/3} \delta^{-1}, \quad \text{for all } \gamma\in(0,1].
\end{align*}

Furthermore, if $a < 0$, then the following uniform energy estimate holds:
\begin{align}\label{uniformestimate}
\sup_{t \in [0,T]} \mathbb{E} \left( \int_{\mathbb{T}^1} \frac{1}{2} |u_{\gamma}(t)|^2\, dx \right)
&+ \mathbb{E} \int_0^T \int_{\mathbb{T}^1} \frac{u_{\gamma}^2 |\partial_x u_{\gamma}|^2}{1 - \gamma^{2/3} u_{\gamma}^2}\, dx\, dt \notag \\
&- a \mathbb{E} \int_0^T \|\partial_x u_{\gamma}\|_{L^2(\mathbb{T}^1)}^2\, ds \lesssim CT \delta^{-1}, \quad \text{for all } \gamma\in(0,1].
\end{align}
\end{proposition}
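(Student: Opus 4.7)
The plan is to derive all three estimates from a single application of It\^o's formula to the rescaled entropy $u\mapsto\int_{\mathbb{T}^1}\Psi_\gamma(u)\,dx$. The key algebraic identity is $\psi_\gamma'(\zeta)(1-\gamma^{2/3}\zeta^2)=\gamma^{1/3}$, which produces the correct scaling in every term carrying the multiplicative factor $1-\gamma^{2/3}u_\gamma^2$, in particular the noise. I would first test \eqref{equgamma} against $\psi_\gamma(u_\gamma)$ and integrate by parts: the linear dissipation $\gamma^{-2/3}\partial_{xx}^2 u_\gamma$ produces $-\gamma^{-1/3}\int|\partial_x u_\gamma|^2/(1-\gamma^{2/3}u_\gamma^2)\,dx$; the Kac drift produces $-\gamma^{-1/3}(1+a\gamma^{2/3})\int(J_{\gamma^{1/3}}\ast\partial_x u_\gamma)\partial_x u_\gamma\,dx$; and the Stratonovich-It\^o correction contributes $-4F_{1,\delta}\gamma^{5/3}\int\sigma'(1-\gamma^{2/3}u_\gamma^2)^2 u_\gamma^2|\partial_x u_\gamma|^2/(1-\gamma^{2/3}u_\gamma^2)\,dx$. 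The stochastic integral is a true martingale since $u_\gamma$, $\sigma$ and $\sigma'$ are bounded on the relevant range.

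The decisive step is a cancellation inside the It\^o quadratic variation. Expanding $\tfrac{1}{2}\sum_k\int\psi_\gamma'(u_\gamma)\,|{-}\sqrt{2}\,\partial_x(\sigma(1-\gamma^{2/3}u_\gamma^2)f_{\delta,k})|^2\,dx$ with the product rule, the cross term vanishes because $F_{2,\delta}=0$ (a consequence of using the Fourier basis), leaving
\begin{equation*}
4F_{1,\delta}\gamma^{5/3}\int\frac{\sigma'(1-\gamma^{2/3}u_\gamma^2)^2 u_\gamma^2|\partial_x u_\gamma|^2}{1-\gamma^{2/3}u_\gamma^2}\,dx\quad\text{and}\quad F_{3,\delta}\gamma^{1/3}\int\frac{\sigma(1-\gamma^{2/3}u_\gamma^2)^2}{1-\gamma^{2/3}u_\gamma^2}\,dx.
\end{equation*}
The first exactly cancels the Stratonovich-It\^o drift above, while the second collapses to $c^2 F_{3,\delta}\gamma^{1/3}\lesssim\gamma^{1/3}\delta^{-1}$ using $\sigma(\zeta)^2\leq c^2\zeta$ from Assumption \ref{Assump-sigma} together with the scaling $\eta_\delta(\cdot)=\delta^{-1/3}\eta(\cdot/\delta^{1/3})$. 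Integrating over $[0,t]$ and taking expectations produces \eqref{uniformentropy}.

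The remaining two estimates then follow algebraically. The second bound is immediate from Lemma \ref{lem-tech-1}, namely $\Psi_\gamma(\zeta)\geq\tfrac{1}{2}\gamma^{1/3}\zeta^2-\gamma^{-1/3}$, applied to both $u_\gamma(t)$ and $u_{\gamma,0}$. For \eqref{uniformestimate} under $a<0$, Young's convolution inequality $|\int(J_{\gamma^{1/3}}\ast f)f\,dx|\leq\|f\|_{L^2}^2$ controls the indefinite Kac term, while the identity $1/(1-\gamma^{2/3}u_\gamma^2)=1+\gamma^{2/3}u_\gamma^2/(1-\gamma^{2/3}u_\gamma^2)$ splits the main dissipation; the combined dissipation in \eqref{uniformentropy} is then bounded below by
\begin{equation*}
\gamma^{1/3}\int\frac{u_\gamma^2|\partial_x u_\gamma|^2}{1-\gamma^{2/3}u_\gamma^2}\,dx + (-a)\gamma^{1/3}\|\partial_x u_\gamma\|_{L^2(\mathbb{T}^1)}^2.
\end{equation*}
Dividing the resulting inequality by $\gamma^{1/3}$ and invoking the initial-data bound $\mathbb{E}\int(\Psi_\gamma(u_{\gamma,0})+\gamma^{-1/3})\,dx\lesssim\gamma^{1/3}$ from Assumption (A1)(iv), together with the lower bound $\Psi_\gamma+\gamma^{-1/3}\geq\tfrac12\gamma^{1/3}\zeta^2$, yields \eqref{uniformestimate}.

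The main obstacle will be executing the algebraic cancellation in the quadratic variation cleanly. Without $F_{2,\delta}=0$ a cross term of order $\gamma^{1/3}$ would survive and require independent control; without the exact prefactor $\gamma^{4/3}$ in the Stratonovich-It\^o correction the residual error would blow up as $\gamma\to 0$ rather than vanishing like $\gamma^{1/3}\delta^{-1}$; and the cancellation of the singular factor $1/(1-\gamma^{2/3}u_\gamma^2)$ by the diffusion bound $\sigma^2\leq c^2(1-\gamma^{2/3}u_\gamma^2)$ is what prevents divergence near the boundary $u_\gamma\to\pm\gamma^{-1/3}$.
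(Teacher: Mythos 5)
Your plan coincides with the paper's proof in all essential respects: apply It\^o's formula to the rescaled entropy, use the identity $\psi_\gamma'(\zeta)(1-\gamma^{2/3}\zeta^2)=\gamma^{1/3}$ to normalize every term carrying the prefactor $1-\gamma^{2/3}u_\gamma^2$, cancel the Stratonovich--It\^o drift against the $F_{1,\delta}$ piece of the quadratic variation (with the $F_{2,\delta}$ cross term vanishing for the Fourier basis), bound $F_{3,\delta}\lesssim\delta^{-1}$ from the scaling of $\eta_\delta$, estimate the Kac term by Young's convolution inequality, and split $1/(1-\gamma^{2/3}u_\gamma^2)=1+\gamma^{2/3}u_\gamma^2/(1-\gamma^{2/3}u_\gamma^2)$ so that the $\gamma^{-1/3}\|\partial_x u_\gamma\|_{L^2}^2$ piece absorbs the Kac contribution and what remains is exactly the left-hand side of \eqref{uniformestimate}. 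The reduction to the second and third displayed bounds via Lemma~\ref{lem-tech-1} and Assumption (A1)(iv) is likewise identical to the paper.

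The one technical step you gloss over is the point at which It\^o's formula and the martingale claim are actually justified. You assert the stochastic integral is a true martingale because ``$u_\gamma$, $\sigma$ and $\sigma'$ are bounded,'' but the integrand also contains $\psi_\gamma'(u_\gamma)=\gamma^{1/3}/(1-\gamma^{2/3}u_\gamma^2)$, which is unbounded as $u_\gamma\to\pm\gamma^{-1/3}$; even after using $\sigma(\zeta)\leq c\sqrt\zeta$ the product satisfies only $\psi_\gamma'(u_\gamma)\sigma(1-\gamma^{2/3}u_\gamma^2)\lesssim\gamma^{1/3}(1-\gamma^{2/3}u_\gamma^2)^{-1/2}$, still unbounded, and the natural $L^2$ bound for the quadratic variation is precisely the entropy dissipation one is trying to establish, which is circular. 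The paper resolves this by first replacing $\Psi_\gamma$ with the shifted entropy $\Psi_{\gamma,\bar\delta}$ built from $1+\bar\delta\pm\gamma^{1/3}\zeta$: its derivatives are bounded on the whole admissible range, so It\^o's formula applies and the stochastic integral has zero mean; your exact identity weakens to the one-sided bound $\psi_{\gamma,\bar\delta}'(\zeta)(1-\gamma^{2/3}\zeta^2)\leq\gamma^{1/3}$, which is all that is used; and the regularization is removed by monotone and dominated convergence as $\bar\delta\to0$. With that insertion your argument becomes the paper's.
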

\begin{proof}
We begin by introducing a family of smooth approximations to the entropy function. For every $\bar{\delta} > 0$, define
\begin{align*}
\Psi_{\gamma,\bar{\delta}}(\zeta) &= \frac{\gamma^{-1/3}}{2(1 + \bar{\delta})} \left[ (1 + \bar{\delta} + \gamma^{1/3} \zeta) \log(1 + \bar{\delta} + \gamma^{1/3} \zeta) + (1 + \bar{\delta} - \gamma^{1/3} \zeta) \log(1 + \bar{\delta} - \gamma^{1/3} \zeta) - 2 \right], \\
\psi_{\gamma,\bar{\delta}}(\zeta) &= \Psi_{\gamma,\bar{\delta}}'(\zeta) = \frac{1}{2(1 + \bar{\delta})} \log\left( \frac{1 + \bar{\delta} + \gamma^{1/3} \zeta}{1 + \bar{\delta} - \gamma^{1/3} \zeta} \right), \quad \text{for all } \zeta \in [-\gamma^{-1/3}, \gamma^{-1/3}].
\end{align*}
A direct computation yields
\begin{align*}
\psi_{\gamma,\bar{\delta}}'(\zeta) = \frac{\gamma^{1/3}}{(1 + \bar{\delta})^2 - \gamma^{2/3} \zeta^2}, \quad \text{for all } \zeta \in [-\gamma^{-1/3}, \gamma^{-1/3}].
\end{align*}

Referring to \cite[Theorem 3.1]{Krylov}, we apply It\^o's formula to the functional $\Psi_{\gamma,\bar{\delta}}(u_{\gamma})$. By employing integration by parts and utilizing the regularity properties of $u_{\gamma}$, we obtain that, almost surely, for every $t \in [0,T]$,
\begin{align*}
& \int_{\mathbb{T}^1} \Psi_{\gamma,\bar{\delta}}(u_{\gamma}(t)) \, dx
+ \gamma^{-2/3} \int_0^t \int_{\mathbb{T}^1} \psi'_{\gamma,\bar{\delta}}(u_{\gamma}) |\partial_x u_{\gamma}|^2 \, dx \, ds \\
=\, & \int_{\mathbb{T}^1} \Psi_{\gamma,\bar{\delta}}(u_{\gamma,0}) \, dx
+ \gamma^{-2/3} (1 + a \gamma^{2/3}) \int_0^t \int_{\mathbb{T}^1} \psi'_{\gamma,\bar{\delta}}(u_{\gamma}) \partial_x u_{\gamma} (1 - \gamma^{2/3} u_{\gamma}^2) J_{\gamma^{1/3}} * \partial_x u_{\gamma} \, dx \, ds \\
& + \sqrt{2} \int_0^t \int_{\mathbb{T}^1} \psi'_{\gamma,\bar{\delta}}(u_{\gamma}) \partial_x u_{\gamma} \sigma(1 - \gamma^{2/3} u_{\gamma}^2) \, dW_{\delta} \, dx
+ \int_0^t \int_{\mathbb{T}^1} \psi'_{\gamma,\bar{\delta}}(u_{\gamma}) \sigma(1 - \gamma^{2/3} u_{\gamma}^2)^2 F_{3,\delta} \, dx \, ds \\
& - 4 \gamma^{4/3} \int_0^t \int_{\mathbb{T}^1} \psi'_{\gamma,\bar{\delta}}(u_{\gamma}) |\partial_x u_{\gamma}|^2 \sigma'(1 - \gamma^{2/3} u_{\gamma}^2)^2 u_{\gamma}^2 F_{1,\delta} \, dx \, ds \\
& + 4 \gamma^{4/3} \int_0^t \int_{\mathbb{T}^1} \psi'_{\gamma,\bar{\delta}}(u_{\gamma}) |\partial_x u_{\gamma}|^2 \sigma'(1 - \gamma^{2/3} u_{\gamma}^2)^2 u_{\gamma}^2 F_{1,\delta} \, dx \, ds.
\end{align*}
Noting that the last two terms cancel each other out. Due to the properties of the function $\sigma(\cdot)$ and the specific form of $\psi'_{\gamma,\bar{\delta}}$,  the above identity reduces to
\begin{align*}
& \int_{\mathbb{T}^1} \Psi_{\gamma,\bar{\delta}}(u_{\gamma}(t)) \, dx
+ \gamma^{-2/3} \int_0^t \int_{\mathbb{T}^1} \psi'_{\gamma,\bar{\delta}}(u_{\gamma}) |\partial_x u_{\gamma}|^2 \, dx \, ds \\
\leq\, & \int_{\mathbb{T}^1} \Psi_{\gamma,\bar{\delta}}(u_{\gamma,0}) \, dx
+ \gamma^{-2/3} (1 + a \gamma^{2/3}) \int_0^t \int_{\mathbb{T}^1} \psi'_{\gamma,\bar{\delta}}(u_{\gamma}) \partial_x u_{\gamma} (1 - \gamma^{2/3} u_{\gamma}^2) J_{\gamma^{1/3}} * \partial_x u_{\gamma} \, dx \, ds \\
& + \sqrt{2} \int_0^t \int_{\mathbb{T}^1} \psi'_{\gamma,\bar{\delta}}(u_{\gamma}) \partial_x u_{\gamma} \sigma(1 - \gamma^{2/3} u_{\gamma}^2) \, dW_{\delta} \, dx
+ \gamma^{1/3} \int_0^t \int_{\mathbb{T}^1} F_{3,\delta} \, dx \, ds.
\end{align*}
In the sequel, we first address the martingale term, which vanishes upon taking expectation due to the properties of stochastic integrals. More precisely, we have
\begin{align*}
\sqrt{2} \, \mathbb{E} \int_0^t \int_{\mathbb{T}^1} \psi'_{\gamma,\bar{\delta}}(u_{\gamma}) \, \partial_x u_{\gamma} \, \sigma\bigl(1 - \gamma^{2/3} u_{\gamma}^2\bigr) \, dW_{\delta} \, dx = 0.
\end{align*}
Regarding the nonlocal interaction term, a direct computation shows that for all $\zeta \in (-\gamma^{-1/3}, \gamma^{-1/3})$, we have
\begin{align} \label{bound-psi-prime}
|\psi_{\gamma,\bar{\delta}}'(\zeta)|(1 - \gamma^{2/3} \zeta^2) \leq \gamma^{1/3}.
\end{align}

Now, assume that $a < 0$ and that $\gamma\in(0,1]$ is sufficiently small so that $1 + a \gamma^{2/3} > 0$. Applying Young's inequality and using the fact that the kernel $J_{\gamma^{1/3}}$ integrates to one, we deduce that 
\begin{align}\label{kernel-term}
&\gamma^{-2/3}(1 + a\gamma^{2/3})\,\mathbb{E} \int_0^t \int_{\mathbb{T}^1} \psi_{\gamma,\bar{\delta}}'(u_{\gamma})\,\partial_x u_{\gamma} (1 - \gamma^{2/3} u_{\gamma}^2)\, J_{\gamma^{1/3}} \ast \partial_x u_{\gamma} \,dx\,ds \notag\\
&\leq \frac{\gamma^{-1/3}}{1 + \bar{\delta}}(1 + a\gamma^{2/3}) \,\mathbb{E} \int_0^T \int_{\mathbb{T}^1} \partial_x u_{\gamma}\, J_{\gamma^{1/3}} \ast \partial_x u_{\gamma} \,dx\,ds \notag\\
&\leq \frac{\gamma^{-1/3}}{1 + \bar{\delta}}(1 + a\gamma^{2/3}) \,\mathbb{E} \int_0^T \int_{\mathbb{T}^1} \int_{\mathbb{T}^1} \left[\frac{1}{2} (\partial_x u_{\gamma}(x))^2 + \frac{1}{2} (\partial_x u_{\gamma}(y))^2 \right] J_{\gamma^{1/3}}(x - y)\,dx\,dy\,ds \notag\\
&= \frac{\gamma^{-1/3}}{1 + \bar{\delta}}(1 + a\gamma^{2/3})\, \mathbb{E} \int_0^T \|\partial_x u_{\gamma}\|_{L^2(\mathbb{T}^1)}^2\,ds \notag\\
&= \frac{\gamma^{-1/3}}{1 + \bar{\delta}}\, \mathbb{E} \int_0^T \|\partial_x u_{\gamma}\|_{L^2(\mathbb{T}^1)}^2\,ds + \frac{\gamma^{1/3}}{1 + \bar{\delta}}\, a\, \mathbb{E} \int_0^T \|\partial_x u_{\gamma}\|_{L^2(\mathbb{T}^1)}^2\,ds.
\end{align}

The first term on the right-hand side above can be absorbed by the dissipation term. Indeed, by chain rule, we can rewrite the dissipation term into:  
\begin{align*}
\gamma^{-2/3}\, \mathbb{E} \int_0^t \int_{\mathbb{T}^1} \psi_{\gamma,\bar{\delta}}'(u_{\gamma})\, |\partial_x u_{\gamma}|^2\,dx\,ds
&= \gamma^{-1/3} \,\mathbb{E} \int_0^t \int_{\mathbb{T}^1} \frac{|\partial_x u_{\gamma}|^2}{1 + \bar{\delta} - \gamma^{2/3} u_{\gamma}^2} \,dx\,ds \\
&= \frac{\gamma^{-1/3}}{1+\bar{\delta}} \,\mathbb{E} \int_0^t \int_{\mathbb{T}^1} \left[ \gamma^{-2/3} \left| \partial_x \sqrt{1 + \bar{\delta} - \gamma^{2/3} u_{\gamma}^2} \right|^2 + |\partial_x u_{\gamma}|^2 \right] dx\,ds \\
&= \frac{\gamma^{1/3}}{1 + \bar{\delta}} \,\mathbb{E} \int_0^t \int_{\mathbb{T}^1} \frac{u_{\gamma}^2 (\partial_x u_{\gamma})^2}{1 + \bar{\delta} - \gamma^{2/3} u_{\gamma}^2}\, dx\,ds \\
&\quad + \frac{\gamma^{-1/3}}{1 + \bar{\delta}}\, \mathbb{E} \int_0^t \|\partial_x u_{\gamma}\|_{L^2(\mathbb{T}^1)}^2\,ds.
\end{align*}
Thus, the last term on the right-hand side above cancels with the first term in \eqref{kernel-term}. Moreover, by the assumptions on the covariance structure of the noise, the definition of $\eta_{\delta} = \delta^{-1/3} \eta(\cdot / \delta^{1/3})$, and applying the change of variables formula, there exists a constant independent of both $\gamma$ and $\delta$ such that 
$$
\gamma^{1/3} \int_0^t \int_{\mathbb{T}^1} F_{3,\delta}\, dx\,ds \leq\gamma^{1/3}\sum_{k\geq0}\langle\nabla\eta_{\delta},e_k\rangle^2\leq CT \gamma^{1/3} \delta^{-1}.
$$

Combining the above bounds, we conclude that
\begin{align*}
&\sup_{t \in [0,T]} \mathbb{E} \left( \int_{\mathbb{T}^1} \Psi_{\gamma,\bar{\delta}}(u_{\gamma}(t))\,dx \right) + \frac{\gamma^{1/3}}{1 + \bar{\delta}} \,\mathbb{E} \int_0^T \int_{\mathbb{T}^1} \frac{u_{\gamma}^2 |\partial_x u_{\gamma}|^2}{1 + \bar{\delta} - \gamma^{2/3} u_{\gamma}^2}\, dx\,dt \\
&\quad - a\, \frac{\gamma^{1/3}}{1 + \bar{\delta}} \,\mathbb{E} \int_0^T \|\partial_x u_{\gamma}\|_{L^2(\mathbb{T}^1)}^2\, ds \lesssim \mathbb{E} \int_{\mathbb{T}^1} \Psi_{\gamma,\bar{\delta}}(u_{\gamma,0})\,dx + CT\gamma^{1/3} \delta^{-1}.
\end{align*}

Now, for any fixed $\gamma\in(0,1]$, since $\Psi_{\gamma,\bar{\delta}}$ is uniformly bounded with respect to $\bar{\delta} \in (0,1)$, we may apply the dominated convergence theorem and the monotone convergence theorem to pass to the limit $\bar{\delta} \to 0$. We obtain
\begin{align*}
&\sup_{t \in [0,T]} \mathbb{E} \left( \int_{\mathbb{T}^1} \Psi_{\gamma}(u_{\gamma}(t))\,dx \right) + \gamma^{1/3} \,\mathbb{E} \int_0^T \int_{\mathbb{T}^1} \frac{u_{\gamma}^2 |\partial_x u_{\gamma}|^2}{1 - \gamma^{2/3} u_{\gamma}^2}\, dx\,dt \\
&\quad - a \gamma^{1/3} \,\mathbb{E} \int_0^T \|\partial_x u_{\gamma}\|_{L^2(\mathbb{T}^1)}^2\, ds \lesssim \mathbb{E} \int_{\mathbb{T}^1} \Psi_{\gamma}(u_{\gamma,0})\,dx + CT\gamma^{1/3} \delta^{-1}.
\end{align*}

Furthermore, combining this estimate with the lower bound of the entropy function established in Lemma \ref{lem-tech-1}, we deduce 
\begin{align*}
\gamma^{1/3} \sup_{t \in [0,T]} \mathbb{E} \left( \int_{\mathbb{T}^1} \left( \frac{1}{2} |u_{\gamma}(t)|^2 - \gamma^{-2/3} \right)\, dx \right) &+ \gamma^{1/3} \,\mathbb{E} \int_0^T \int_{\mathbb{T}^1} \frac{u_{\gamma}^2 |\partial_x u_{\gamma}|^2}{1 - \gamma^{2/3} u_{\gamma}^2}\, dx\,dt \\
&- a \gamma^{1/3} \,\mathbb{E} \int_0^T \|\partial_x u_{\gamma}\|_{L^2(\mathbb{T}^1)}^2\, ds\\
& \lesssim \mathbb{E} \int_{\mathbb{T}^1} \Psi_{\gamma}(u_{\gamma,0})\,dx + CT\gamma^{1/3} \delta^{-1}.
\end{align*}
With the help of the assumptions on $u_{\gamma,0}$, we have
$$
\mathbb{E} \int_{\mathbb{T}^1} \left( \Psi_{\gamma}(u_{\gamma,0}) + \gamma^{-1/3} \right) dx \lesssim \gamma^{1/3}.
$$
After canceling the common factor $\gamma^{1/3}$, this yields, for $a < 0$, 
\begin{align} \label{entropy-gamma-es}
\sup_{t \in [0,T]} \mathbb{E} \left( \int_{\mathbb{T}^1} \frac{1}{2} |u_{\gamma}(t)|^2 \, dx \right) &+  \,\mathbb{E} \int_0^T \int_{\mathbb{T}^1} \frac{u_{\gamma}^2 |\partial_x u_{\gamma}|^2}{1 - \gamma^{2/3} u_{\gamma}^2}\, dx\,dt \notag\\
& - a  \,\mathbb{E} \int_0^T \|\partial_x u_{\gamma}\|_{L^2(\mathbb{T}^1)}^2\, ds \lesssim  1 + CT\delta^{-1}. 
\end{align}
This concludes the proof of \eqref{uniformestimate}.

\end{proof}

\subsection{Uniform time-regularity estimates}\label{subsec-3-2}
We rewrite \eqref{equgamma} in the form of a stochastic Cahn-Hilliard equation supplemented by error terms. Notably, the fourth-order differential term and the potential term characteristic of the stochastic Cahn-Hilliard equation fundamentally originate from the Kac interaction term in \eqref{equgamma}. This connection becomes clear upon applying a Taylor expansion to the Kac interaction term.

Motivated by the analysis presented in the introduction, we thus reformulate \eqref{equgamma} as follows: 
\begin{align}\label{rewriteform}
du_{\gamma}=&\gamma^{-2/3}\partial_{xx}^2 u_{\gamma}dt-\sqrt{2}\partial_{x}(\sigma(1-\gamma^{2/3}u_{\gamma}^2)dW_{\delta})+4\partial_{x}\Big(F_{1,\delta}\sigma'(1-\gamma^{2/3}u_{\gamma}^2)^2\gamma^{4/3}u_{\gamma}^2\partial_{x}u_{\gamma}\Big)dt\notag\\
&-\gamma^{-2/3}(1+a\gamma^{2/3})\partial_{x}\Big[J_{\gamma^{1/3}}\ast\partial_{x}u_{\gamma}\Big]dt+(1+a\gamma^{2/3})\partial_x\Big[u_{\gamma}^2J_{\gamma^{1/3}}\ast\partial_xu_{\gamma}\Big]dt\notag\\
=&\gamma^{-2/3}\partial_{xx}^2 u_{\gamma}dt-\sqrt{2}\partial_{x}(\sigma(1-\gamma^{2/3}u_{\gamma}^2)dW_{\delta})+4\partial_{x}\Big(F_{1,\delta}\sigma'(1-\gamma^{2/3}u_{\gamma}^2)^2\gamma^{4/3}u_{\gamma}^2\partial_{x}u_{\gamma}\Big)dt\notag\\
&-\gamma^{-2/3}(1+a\gamma^{2/3})\partial_{x}\Big[J_{\gamma^{1/3}}\ast\partial_{x}u_{\gamma}-\partial_xu_{\gamma}-\gamma^{2/3}\frac{D}{2}\partial_{xxx}^3u_{\gamma}\Big]dt\notag\\
&+(1+a\gamma^{2/3})\partial_x\Big[u_{\gamma}^2\Big(J_{\gamma^{1/3}}\ast\partial_xu_{\gamma}-\partial_xu_{\gamma}\Big)\Big]dt\notag\\
&-\gamma^{-2/3}(1+a\gamma^{2/3})\Big[\partial_{xx}^2u_{\gamma}+\gamma^{2/3}\frac{D}{2}\partial_{xxxx}^4u_{\gamma}\Big]dt+(1+a\gamma^{2/3})\partial_x(u_{\gamma}^2\partial_xu_{\gamma})dt\notag\\
=&-\frac{D}{2}\partial_{xxxx}^4u_{\gamma}dt+\partial_{xx}^2\Big[\frac{1}{3}u_{\gamma}^3-au_{\gamma}\Big]dt+\sqrt{2}\partial_{x}(\sigma(1-\gamma^{2/3}u_{\gamma}^2)dW_{\delta})\notag\\
&+(R_{1,\gamma}+R_{2,\gamma}+R_{3,\gamma}+R_{4,\gamma})dt,
\end{align}
where $D=\int_{\mathbb{T}^1}J(x)|x|^2dx$, and the remainder terms are  defined by 
\begin{align*}
&R_{1,\gamma}=-\frac{D}{2}a\gamma^{2/3}\partial_{xxxx}^4u_{\gamma}+\frac{1}{3}a\gamma^{2/3}\partial_{xx}^2u_{\gamma}^3,\\
&R_{2,\gamma}=(1+a\gamma^{2/3})\partial_x\Big[u_{\gamma}^2(J_{\gamma^{1/3}}\ast\partial_xu_{\gamma}-\partial_xu_{\gamma})\Big],\\
&R_{3,\gamma}=-\gamma^{-2/3}(1+a\gamma^{2/3})\partial_{x}\Big[J_{\gamma^{1/3}}\ast\partial_{x} u_{\gamma}-\partial_{x} u_{\gamma}-\gamma^{2/3}\frac{D}{2}\partial_{xxx}^3u_{\gamma}\Big],\\
&R_{4,\gamma}=4\partial_{x}\Big(F_{1,\delta}\sigma'(1-\gamma^{2/3}u_{\gamma}^2)^2\gamma^{4/3}u_{\gamma}^2\partial_{x}u_{\gamma}\Big).
\end{align*}

In the following, we will provide a $W^{\alpha,2}([0,T];H^{-\beta}(\mathbb{T}^1))$-uniform estimate to $\{u_{\gamma}\}_{\gamma\in(0,1]}$ for every $\alpha\in(0,1/2)$ and $\beta>\frac{13}{2}$. 
\begin{proposition}\label{second-uniform-es}
	Assume that the initial data, the coefficient, and the interaction kernel $J$ satisfy Assumptions (A1), \ref{Assump-sigma}, and \ref{Assump-J}, respectively. For every $\gamma\in(0,1]$, let $u_{\gamma}$ be the weak solution of \eqref{equgamma} with initial data $u_{\gamma,0}$. Suppose that $a<0$. For every $\alpha\in(0,1/2)$, $\beta>\frac{13}{2}$, there exists a constant $C=C(u_0,T,\delta)>0$ such that  
	\begin{equation}
		\mathbb{E}\|u_{\gamma}\|_{W^{\alpha,2}([0,T];H^{-\beta}(\mathbb{T}^1))}\leq C. 
	\end{equation}
\end{proposition}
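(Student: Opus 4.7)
The plan is to write $u_\gamma(t) = u_{\gamma,0} + \int_0^t F_\gamma(s)\,ds + M_\gamma(t)$ along the reformulation \eqref{rewriteform}, where $F_\gamma$ collects all deterministic drifts and $M_\gamma(t) = -\sqrt{2}\int_0^t\partial_x(\sigma(1-\gamma^{2/3}u_\gamma^2)\,dW_\delta)$ is the stochastic integral. By the embedding $W^{1,2}([0,T];H^{-\beta}(\mathbb{T}^1))\hookrightarrow W^{\alpha,2}([0,T];H^{-\beta}(\mathbb{T}^1))$ valid for $\alpha\in(0,1/2)$, one has
\begin{equation*}
\Big\|\int_0^\cdot F_\gamma(s)\,ds\Big\|_{W^{\alpha,2}([0,T];H^{-\beta}(\mathbb{T}^1))}^2 \leq C(\alpha)\,\|F_\gamma\|_{L^2([0,T];H^{-\beta}(\mathbb{T}^1))}^2,
\end{equation*}
so the drift part reduces to bounding $\mathbb{E}\int_0^T\|F_\gamma\|_{H^{-\beta}}^2\,ds$ uniformly in $\gamma\in(0,1]$. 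For the martingale term, I would invoke the Hilbert-valued Burkholder-Davis-Gundy inequality together with the pathwise bound $|u_\gamma|\leq\gamma^{-1/3}$, the boundedness of $\sigma$, and the smoothness of $\eta_\delta$ to obtain $\mathbb{E}\|M_\gamma\|_{W^{\alpha,2}([0,T];H^{-\beta}(\mathbb{T}^1))}^2\leq C(T,\delta)$ for any $\alpha<1/2$.

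For the three main drift terms $-\tfrac{D}{2}\partial_{xxxx}^4 u_\gamma$, $-a\partial_{xx}^2 u_\gamma$ and $\partial_{xx}^2(\tfrac{1}{3}u_\gamma^3)$, I would use $\|\partial_{xxxx}^4 u_\gamma\|_{H^{-\beta}}\lesssim \|u_\gamma\|_{L^2}$ (valid for $\beta\geq 4$) and the Sobolev embedding $L^1(\mathbb{T}^1)\hookrightarrow H^{-\beta+2}(\mathbb{T}^1)$ (valid since $\beta>13/2$), which gives $\|\partial_{xx}^2 u_\gamma^3\|_{H^{-\beta}}\lesssim \|u_\gamma\|_{L^3}^3\leq \|u_\gamma\|_{L^\infty}\|u_\gamma\|_{L^2}^2$, controlled by one-dimensional Gagliardo-Nirenberg and Proposition \ref{prp-entropydissipation}. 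The remainder $R_{1,\gamma}$ has an explicit prefactor $\gamma^{2/3}$ that absorbs the pathwise growth $\|u_\gamma\|_{L^\infty}\leq \gamma^{-1/3}$, while for the Stratonovich correction $R_{4,\gamma}$ the uniform boundedness of $\sigma'$, the factor $\gamma^{4/3}u_\gamma^2\leq \gamma^{2/3}$, and $\mathbb{E}\int_0^T\|\partial_x u_\gamma\|_{L^2}^2\,ds\leq C$ from \eqref{uniformestimate} supply the required control.

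The two delicate contributions are the Kac remainders $R_{3,\gamma}$ and $R_{2,\gamma}$. For the linear remainder $R_{3,\gamma}$, I would work in Fourier: since $J$ is smooth and even with $\int J=1$ and $\int x^2 J=D$, a third-order Taylor expansion of $\hat J$ at the origin combined with the trivial bound at infinity yields $|\hat J(\gamma^{1/3}k)-1+\tfrac{D}{2}\gamma^{2/3}k^2|\leq C\min(\gamma^{4/3}k^4,\,1)$, so that $|\widehat{R_{3,\gamma}}(k)|\leq C\min(\gamma^{2/3}k^6,\,k^4+\gamma^{-2/3}k^2)\,|\widehat{u_\gamma}(k)|$; splitting the frequency integral at $|k|=\gamma^{-1/3}$ then gives $\|R_{3,\gamma}\|_{H^{-\beta}}^2\leq C(\gamma^{4/3}+\gamma^{(2\beta-8)/3})\|u_\gamma\|_{L^2}^2$, uniform once $\beta>13/2$; this is the Fourier realization of the substitution $U_\gamma:=-(-\partial_{xx}^2)^{-3/2}u_\gamma$ highlighted in the introductory heuristics. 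For the quadratic remainder $R_{2,\gamma}$, I would exploit the evenness of $J$: for a test function $\phi$ with $\|\phi\|_{H^{\beta-1}(\mathbb{T}^1)}\leq 1$,
\begin{equation*}
\big\langle u_\gamma^2\,(J_{\gamma^{1/3}}\ast\partial_x u_\gamma-\partial_x u_\gamma),\,\partial_x\phi\big\rangle = -\int_{\mathbb{T}^1}(J_{\gamma^{1/3}}\ast u_\gamma-u_\gamma)\,\partial_x(u_\gamma^2\partial_x\phi)\,dx,
\end{equation*}
and then invoke the convolution Poincar\'e bound $\|J_{\gamma^{1/3}}\ast u_\gamma-u_\gamma\|_{L^2}\leq C\gamma^{1/3}\|\partial_x u_\gamma\|_{L^2}$ (available because $J$ is compactly supported with $\int|y|J(y)\,dy<\infty$) together with $\|\partial_x(u_\gamma^2\partial_x\phi)\|_{L^2}\leq C\|\phi\|_{W^{2,\infty}}(\|u_\gamma\partial_x u_\gamma\|_{L^2}+\|u_\gamma\|_{L^4}^2)$; the threshold $\beta>13/2$ is precisely what ensures $H^{\beta-1}(\mathbb{T}^1)\hookrightarrow W^{2,\infty}(\mathbb{T}^1)$.

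The main obstacle I anticipate is closing the $L^2(\Omega;L^2([0,T];H^{-\beta}))$ estimate for $R_{2,\gamma}$, since the resulting integrand $\gamma^{2/3}\|\partial_x u_\gamma\|_{L^2}^2(\|u_\gamma\partial_x u_\gamma\|_{L^2}^2+\|u_\gamma\|_{L^4}^4)$ is a priori only $L^1(\Omega\times[0,T])$-integrable via Proposition \ref{prp-entropydissipation}. To upgrade it, I would apply Burkholder-Davis-Gundy to the martingale term in the It\^o expansion of the rescaled entropy $\Psi_\gamma(u_\gamma)$ used in the proof of Proposition \ref{prp-entropydissipation}, producing $\mathbb{E}\sup_{t\in[0,T]}\|u_\gamma\|_{L^2}^p\leq C_p$ for every $p\geq 2$ uniformly in $\gamma$; combined with the $\gamma^{2/3}$ prefactor and the pathwise bound $\|u_\gamma\|_{L^\infty}\leq \gamma^{-1/3}$, this absorbs one extra copy of $\|\partial_x u_\gamma\|_{L^2}$ or $\|u_\gamma\partial_x u_\gamma\|_{L^2}$ into a uniformly integrable quantity. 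Summing the contributions of all drift terms together with the martingale bound then yields $\mathbb{E}\|u_\gamma\|_{W^{\alpha,2}([0,T];H^{-\beta}(\mathbb{T}^1))}\leq C$.
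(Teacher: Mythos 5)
The central issue is your choice of embedding. You bound the drift via $W^{1,2}([0,T];H^{-\beta})\hookrightarrow W^{\alpha,2}([0,T];H^{-\beta})$, which forces you to control $\mathbb{E}\bigl(\int_0^T\|F_\gamma(s)\|_{H^{-\beta}}^2\,ds\bigr)^{1/2}$. The paper instead uses the weaker embedding $W^{1,1}([0,T];H^{-\beta})\hookrightarrow W^{\alpha,2}([0,T];H^{-\beta})$ (valid for $\alpha<1/2$), which only requires $\mathbb{E}\int_0^T\|F_\gamma(s)\|_{H^{-\beta}}\,ds$. This distinction is not cosmetic: with the $L^1$-in-time norm and one Cauchy--Schwarz in $\Omega\times[0,T]$, the cubic term costs $\mathbb{E}\int_0^T\|\partial_x u_\gamma^3\|_{L^1}\,ds\lesssim (\mathbb{E}\int_0^T\|u_\gamma\|_{L^2}^2)^{1/2}(\mathbb{E}\int_0^T\|u_\gamma\partial_x u_\gamma\|_{L^2}^2)^{1/2}$, exactly the second moments delivered by the entropy dissipation estimate \eqref{uniformestimate}; with your $L^2$-in-time norm, the same term asks for $\mathbb{E}\int_0^T\|u_\gamma^3\|_{L^1}^2\,ds$, which needs fourth (or higher) moments of $u_\gamma$ and a pathwise $\sup_t$ bound, none of which the entropy estimate supplies. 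You do spot this obstacle, but only for $R_{2,\gamma}$, when in fact it afflicts every nonlinear drift term; and the fix you propose, deriving $\mathbb{E}\sup_t\|u_\gamma\|_{L^2}^p\leq C_p$ for all $p$ by applying Burkholder--Davis--Gundy inside the entropy It\^o computation, is a substantial additional argument you do not carry out and whose uniformity in $\gamma$ would need to be verified separately. In short, the gap is self-inflicted: switching to $W^{1,1}$ removes the need for any higher-moment machinery and reduces your plan to the paper's.

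Two of your other choices are genuine alternatives worth noting. For $R_{3,\gamma}$ you work in Fourier and Taylor-expand $\widehat{J}(\gamma^{1/3}k)$ at the origin; this is precisely the frequency-side picture of the paper's substitution $U_\gamma=-(-\partial_{xx}^2)^{-3/2}u_\gamma$ and third-order physical Taylor expansion, and both routes land on a $O(\gamma^{1/3})$ bound in $H^{-\beta}$ once $\beta$ is large enough (your intermediate claim $|\widehat{J}(\theta)-1+\tfrac{D}{2}\theta^2|\leq C\min(\theta^4,1)$ should read $C\min(\theta^4,1+\theta^2)$, since the $\theta^2$ term is not uniformly bounded, but the split at $|k|\sim\gamma^{-1/3}$ still closes). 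For $R_{2,\gamma}$ you integrate by parts and then invoke the Poincar\'e-type convolution bound $\|J_{\gamma^{1/3}}*u_\gamma-u_\gamma\|_{L^2}\leq C\gamma^{1/3}\|\partial_x u_\gamma\|_{L^2}$; this is cleaner than the paper's Young-plus-Poincar\'e route and even extracts an explicit $\gamma^{1/3}$ decay that the paper does not display. But both observations only become part of a complete proof after you replace the $W^{1,2}$ embedding by $W^{1,1}$, at which point the moment issue you worry about disappears.
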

\begin{proof}
	Taking the $W^{\alpha,2}([0,T];H^{-\beta}(\mathbb{T}^1))$-norm to the both sides of the equation in the rewrite form \eqref{rewriteform}, using the Sobolev's embedding $W^{1,1}([0,T];H^{-\beta}(\mathbb{T}^1))\subset W^{\alpha,2}([0,T];H^{-\beta}(\mathbb{T}^1))$, we get 
	\begin{align*}
	\mathbb{E}\|u_{\gamma}\|_{W^{\alpha,2}([0,T];H^{-\beta}(\mathbb{T}^1))}\leq&\|u_{\gamma,0}\|_{L^1(\mathbb{T}^1)}T+\mathbb{E}\Big\|\int^{\cdot}_0\frac{D}{2}\partial_{xxxx}^4u_{\gamma}ds\Big\|_{W^{1,1}([0,T];H^{-\beta}(\mathbb{T}^1))}\\
	&+\mathbb{E}\Big\|\int^{\cdot}_0\partial_{xx}^2[\frac{1}{3}u_{\gamma}^3-au_{\gamma}]ds\Big\|_{W^{1,1}([0,T];H^{-\beta}(\mathbb{T}^1))}\\
	&+\mathbb{E}\Big\|\int^{\cdot}_0\sqrt{2}\partial_x\Big(\sigma(1-\gamma^{2/3}u_{\gamma}^2)dW_{\delta}\Big)\Big\|_{W^{\alpha,2}([0,T];H^{-\beta}(\mathbb{T}^1))}\\
	&+\mathbb{E}\Big\|\int^{\cdot}_0R_{1,\gamma}ds\Big\|_{W^{1,1}([0,T];H^{-\beta}(\mathbb{T}^1))}+\mathbb{E}\Big\|\int^{\cdot}_0R_{2,\gamma}ds\Big\|_{W^{1,1}([0,T];H^{-\beta}(\mathbb{T}^1))}\\
	&+\mathbb{E}\Big\|\int^{\cdot}_0R_{3,\gamma}ds\Big\|_{W^{1,1}([0,T];H^{-\beta}(\mathbb{T}^1))}+\mathbb{E}\Big\|\int^{\cdot}_0R_{4,\gamma}ds\Big\|_{W^{1,1}([0,T];H^{-\beta}(\mathbb{T}^1))}\\
	=:&\|u_{\gamma,0}\|_{L^1(\mathbb{T}^1)}T+I_1+I_2+I_3+I_4+I_5+I_6+I_7.  
	\end{align*}
For the initial data, thanks to the Assumption (A1), we have that 
\begin{align*}
\|u_{\gamma,0}\|_{L^1(\mathbb{T}^1)}\leq C. 
\end{align*}
For the term $I_1$, we employ the Sobolev embedding $L^2(\mathbb{T}^1) \subset H^{-\beta + 3}(\mathbb{T}^1)$. In conjunction with the uniform entropy dissipation estimate \eqref{uniformestimate}, this yields the bound 
\begin{align*}
	I_1\leq\Big(\mathbb{E}\int^T_0\frac{D}{2}\|\partial_xu_{\gamma}\|_{L^2(\mathbb{T}^1)}^2ds\Big)^{1/2}\leq C, 
\end{align*}
for some $C>0$, independent of $\gamma\in(0,1]$ but depending on $T$ and $\delta$. For the term $I_2$, we utilize the Sobolev embedding $L^1(\mathbb{T}^1) \subset H^{-\beta + 1}(\mathbb{T}^1)$. Applying H\"older's inequality together with the uniform entropy dissipation estimate \eqref{uniformestimate}, we can see that there exists a constant $C>0$, independent of $\gamma\in(0,1]$ but depending on $\delta$, such that 
\begin{align*}
I_2\leq&\frac{1}{3}\mathbb{E}\int^T_0\int_{\mathbb{T}^1}|\partial_x(u_{\gamma}^3)|dxds+\left(\mathbb{E}\int^T_0\int_{\mathbb{T}^1}|\partial_xu_{\gamma}|^2dxds\right)^{1/2}\\
\leq&\mathbb{E}\int^T_0\int_{\mathbb{T}^1}|u_{\gamma}^2\partial_xu_{\gamma}|dxds+C\\
\leq&\Big(\mathbb{E}\int^T_0\int_{\mathbb{T}^1}u_{\gamma}^2dxds\Big)^{1/2}\Big(\mathbb{E}\int^T_0\int_{\mathbb{T}^1}|u_{\gamma}\partial_xu_{\gamma}|^2dxds\Big)^{1/2}+C\\
\leq&\Big(\mathbb{E}\int^T_0\int_{\mathbb{T}^1}u_{\gamma}^2dxds\Big)^{1/2}\Big(\mathbb{E}\int^T_0\int_{\mathbb{T}^1}\frac{u_{\gamma}^2|\partial_xu_{\gamma}|^2}{1-\gamma^{2/3}u_{\gamma}^2}dxds\Big)^{1/2}+C\\
\leq&C. 	
\end{align*}
For the stochastic noise term, by applying the argument presented in \cite[Lemma 2.1]{FG95}, we deduce that
\begin{align*}
I_3 \leq \left( \mathbb{E} \int_0^T \int_{\mathbb{T}^1} \sigma \big(1 - \gamma^{2/3} u_{\gamma} \big)^2 \, dx \, ds \right)^{1/2} \leq C,
\end{align*}
for some constant $C > 0$ independent of $\gamma\in(0,1]$. Similarly to the estimates for $I_1$ and $I_2$, the term $I_4$ can be controlled by applying H\"older's inequality together with the uniform entropy dissipation estimate \eqref{uniformestimate}. This leads to the conclusion that 
\begin{align}\label{I4}
I_4\leq C\gamma^{2/3}. 	
\end{align}
For the term $I_5$, employing the same approach as in the estimation of $I_2$, we obtain
\begin{align*}
I_5 \leq{}& \mathbb{E} \left\| \int_0^\cdot \partial_x \big( u_{\gamma}^2 J_{\gamma^{1/3}} \ast \partial_x u_{\gamma} \big) ds \right\|_{W^{1,1}([0,T]; H^{-\beta}(\mathbb{T}^1))} + \mathbb{E} \left\| \int_0^\cdot \partial_x \big( u_{\gamma}^2 \partial_x u_{\gamma} \big) ds \right\|_{W^{1,1}([0,T]; H^{-\beta}(\mathbb{T}^1))} \\
\leq{}& \mathbb{E} \left\| \int_0^\cdot \partial_x \big( u_{\gamma}^2 J_{\gamma^{1/3}} \ast \partial_x u_{\gamma} \big) ds \right\|_{W^{1,1}([0,T]; H^{-\beta}(\mathbb{T}^1))} + C.
\end{align*}

By applying the chain rule, we observe that
\begin{align*}
\partial_x \big( u_{\gamma}^2 J_{\gamma^{1/3}} \ast \partial_x u_{\gamma} \big) = \partial_x \left( \partial_x (u_{\gamma}^2 J_{\gamma^{1/3}} \ast u_{\gamma}) - \partial_x (u_{\gamma}^2) J_{\gamma^{1/3}} \ast u_{\gamma} \right).
\end{align*}

Taking this expression into account, and employing the Sobolev embedding $L^1(\mathbb{T}^1) \subset H^{-\beta + 2}(\mathbb{T}^1)$, together with H\"older's inequality and Young's convolution inequality, we can see that there exists a constant $C>0$, independent of $\gamma\in(0,1]$ but depending on $T$ and $\delta$, such that 
\begin{align*}
I_5\lesssim&\mathbb{E}\int^T_0\int_{\mathbb{T}^1}u_{\gamma}^2|J_{\gamma^{1/3}}\ast u_{\gamma}|dxds\\
&+\Big(\mathbb{E}\int^T_0\|\partial_xu_{\gamma}u_{\gamma}\|_{L^2(\mathbb{T}^1)}^2ds\Big)^{1/2}\Big(\mathbb{E}\int^T_0\|J_{\gamma^{1/3}}\|_{L^1(\mathbb{T}^1)}^2\|u_{\gamma}\|_{L^2(\mathbb{T}^1)}^2ds\Big)^{1/2}+C\\
\leq&\Big(\mathbb{E}\int^T_0\int_{\mathbb{T}^1}u_{\gamma}^4dxds\Big)^{1/2}\Big(\mathbb{E}\int^T_0\|J_{\gamma^{1/3}}\|_{L^1(\mathbb{T}^1)}^2\|u_{\gamma}\|_{L^2(\mathbb{T}^1)}^2ds\Big)^{1/2}+C\\
\leq& C\Big(\mathbb{E}\int^T_0\int_{\mathbb{T}^1}|u_{\gamma}^2|^2dxds\Big)^{1/2}+C. 
\end{align*}
By Poincar\'e's inequality and again invoking the uniform entropy dissipation estimate \eqref{uniformentropy}, we obtain that there exists a constant $C>0$, independent of $\gamma\in(0,1]$ but depending on $T$ and $\delta$, such that
\begin{align*}
I_5 \leq{}& C \left( \mathbb{E} \int_0^T \int_{\mathbb{T}^1} |u_{\gamma}^2|^2 \, dx \, ds \right)^{1/2} + C \\
\leq{}& C \left( \mathbb{E} \int_0^T \int_{\mathbb{T}^1} |\partial_x u_{\gamma}^2|^2 \, dx \, ds + \mathbb{E} \int_0^T \int_{\mathbb{T}^1} u_{\gamma}^2 \, dx \, ds \right)^{1/2} + C \\
\leq{}& C.
\end{align*}

In the following, we define
\begin{align*}
U_{\gamma} := -(-\partial_{xx}^2)^{-\frac{3}{2}} u_{\gamma}.
\end{align*}
For the term $I_6$, we apply the Sobolev embedding $L^1(\mathbb{T}^1) \subset H^{-\beta + 5}(\mathbb{T}^1)$, and, using a Taylor expansion of $U_{\gamma}$, one can see that there exists a constant $C>0$, independent of $\gamma\in(0,1]$ but depending on $T$ and $\delta$, such that
\begin{align}\label{I6}
I_6\leq&\mathbb{E}\Big\|\int^{\cdot}_0R_{3,\gamma}ds\Big\|_{W^{1,1}([0,T];H^{-\beta}(\mathbb{T}^1))}\notag\\
\leq&\mathbb{E}\int^T_0\Big\|\gamma^{-2/3}\partial_x\Big(J_{\gamma^{1/3}}\ast\partial_xu_{\gamma}-\partial_xu_{\gamma}-\gamma^{2/3}\frac{D}{2}\partial_{xxx}^3u_{\gamma}\Big)\Big\|_{H^{-\beta}(\mathbb{T}^1)}ds\notag\\
\leq&\mathbb{E}\int^T_0\Big\|\gamma^{-2/3}\partial_{xxxxx}^5\Big(J_{\gamma^{1/3}}\ast U_{\gamma}-U_{\gamma}-\gamma^{2/3}\frac{D}{2}\partial_{xx}^2U_{\gamma}\Big)\Big\|_{H^{-\beta}(\mathbb{T}^1)}ds\notag	\\
\leq&\mathbb{E}\int^T_0\int_{\mathbb{T}^1}\gamma^{-2/3}|J_{\gamma^{1/3}}\ast U_{\gamma}-U_{\gamma}-\gamma^{2/3}\frac{D}{2}\partial_{xx}^2U_{\gamma}|dxds\notag\\
\leq&\gamma^{-2/3+1}\Big(\frac{1}{6}\int_{\mathbb{T}^1}|y|^3J(y)dy\Big)\mathbb{E}\int^T_0\int_{\mathbb{T}^1}\|\partial_{xxx}^3U_{\gamma}\|_{L^{\infty}(\mathbb{T}^1)}dxds\notag\\
\leq&C\gamma^{1/3}\mathbb{E}\int^T_0\|u_{\gamma}\|_{L^{\infty}(\mathbb{T}^1)}ds\leq C\gamma^{1/3}\mathbb{E}\int^T_0\|u_{\gamma}\|_{H^1(\mathbb{T}^1)}ds\notag\\
\leq&C\gamma^{1/3}. 
\end{align}
Arguing similarly to the estimate for $I_2$ and applying H\"older's inequality, we exploit the boundedness of $\sigma'(\cdot)$ to obtain the existence of a constant $C > 0$ such that
\begin{align*}
I_7 ={}& \mathbb{E} \left\| \int_0^\cdot 4 \partial_x \left( F_{1,\delta} \sigma'\left(1 - \gamma^{2/3} u_\gamma^2 \right)^2 \gamma^{4/3} u_\gamma^2 \partial_x u_\gamma \right) ds \right\|_{W^{1,1}([0,T]; H^{-\beta}(\mathbb{T}^1))} \\
\leq{}& 4\gamma^{4/3} \|\sigma'(\cdot)\|_{L^\infty(\mathbb{R})}^2 \, \mathbb{E} \int_0^T \int_{\mathbb{T}^1} \left| u_\gamma^2 \partial_x u_\gamma \, F_{1,\delta} \right| \, dx \, ds \\
\leq{}& 4\gamma^{4/3} \|\sigma'(\cdot)\|_{L^\infty(\mathbb{R})}^2 \|F_{1,\delta}\|_{L^\infty(\mathbb{T}^1)} \, \mathbb{E} \int_0^T \int_{\mathbb{T}^1} \left| u_\gamma^2 \partial_x u_\gamma \right| \, dx \, ds \\
\leq{}& C \gamma^{4/3} \|\sigma'(\cdot)\|_{L^\infty(\mathbb{R})}^2 \|F_{1,\delta}\|_{L^\infty(\mathbb{T}^1)}.
\end{align*}

To estimate $\|F_{1,\delta}\|_{L^\infty(\mathbb{T}^1)}$, we recall the definition of $F_{1,\delta}$ and apply a change of variables:
\begin{align*}
\|F_{1,\delta}\|_{L^\infty(\mathbb{T}^1)} ={}& \left\| \sum_{k \geq 0} |e_k \ast \eta_\delta|^2 \right\|_{L^\infty(\mathbb{T}^1)} \\
={}& \operatorname*{ess\,sup}_{x \in \mathbb{T}^1} \| \eta_\delta(x - \cdot) \|_{L^2(\mathbb{T}^1)}^2 \\
={}& \int_{\mathbb{T}^1} |\delta^{-1/3} \eta(\delta^{-1/3} y)|^2 \, dy \leq C\delta^{-1/3}.
\end{align*}

Consequently, we conclude that
\begin{align}\label{I7}
I_7 \leq C \|\sigma'(\cdot)\|_{L^\infty(\mathbb{R})}^2 \gamma^{4/3} \delta^{-1/3}, 
\end{align}
for some $C>0$, independent of $\gamma\in(0,1]$ but depending on $T$ and $\delta$. This completes the proof.
\end{proof}

As a consequence, we obtain the following tightness result.

\begin{proposition}\label{prp-tight}
	Assume that the initial data, the coefficient, and the interaction kernel $J$ satisfy Assumptions (A1), \ref{Assump-sigma}, and \ref{Assump-J}, respectively. For each $\gamma\in(0,1]$, let $u_{\gamma}$ be the weak solution of \eqref{equgamma} with initial data $u_{\gamma,0}$. Suppose that $a < 0$. Then the family $(u_{\gamma}, \partial_x u_{\gamma}^2)_{\gamma\in(0,1]}$ is tight in
	$$
	L^2([0,T];L^2(\mathbb{T}^1)) \cap \left(L^2([0,T];H^1(\mathbb{T}^1)), w\right) \times \left(L^2([0,T];L^2(\mathbb{T}^1)), w\right).
	$$
\end{proposition}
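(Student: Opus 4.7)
The claim decomposes into three tightness statements, one per factor of the product topology, and I would verify each of them by combining the two uniform estimates already established in Proposition~\ref{prp-entropydissipation} and Proposition~\ref{second-uniform-es}. The strong $L^2([0,T];L^2(\mathbb{T}^1))$-tightness of $(u_{\gamma})_{\gamma\in(0,1]}$ is the heart of the statement and will be handled by the Aubin--Lions--Simon compactness lemma; the two weak-topology tightness statements will then follow from Banach--Alaoglu together with Markov's inequality.

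For the strong tightness of $u_\gamma$ in $L^2([0,T];L^2(\mathbb{T}^1))$, I fix $\alpha\in(0,1/2)$ and $\beta>13/2$. Since the embedding $H^1(\mathbb{T}^1)\hookrightarrow L^2(\mathbb{T}^1)$ is compact and $L^2(\mathbb{T}^1)\hookrightarrow H^{-\beta}(\mathbb{T}^1)$ is continuous, the Aubin--Lions--Simon lemma guarantees that for every $R>0$ the ball
\begin{equation*}
K_R := \Bigl\{ v \in L^2([0,T];H^1(\mathbb{T}^1)) \cap W^{\alpha,2}([0,T];H^{-\beta}(\mathbb{T}^1)) : \|v\|_{L^2_tH^1_x} + \|v\|_{W^{\alpha,2}_tH^{-\beta}_x} \leq R \Bigr\}
\end{equation*}
is relatively compact in $L^2([0,T];L^2(\mathbb{T}^1))$. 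Proposition~\ref{prp-entropydissipation} yields $\mathbb{E}\|u_\gamma\|_{L^2([0,T];H^1(\mathbb{T}^1))}^2\lesssim 1$, and Proposition~\ref{second-uniform-es} provides $\mathbb{E}\|u_\gamma\|_{W^{\alpha,2}([0,T];H^{-\beta}(\mathbb{T}^1))}\lesssim 1$, both uniformly in $\gamma\in(0,1]$. Markov's inequality then gives $\mathbb{P}(u_\gamma\notin K_R)\leq C/R$ uniformly in $\gamma$, which is precisely the desired tightness.

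For the two weak-topology tightness statements, I use that closed balls in a separable reflexive Hilbert space are weakly compact and metrizable, so it suffices to establish a uniform-in-$\gamma$ moment bound on each sequence. For $u_\gamma$ in $(L^2([0,T];H^1(\mathbb{T}^1)),w)$ this is immediate from Proposition~\ref{prp-entropydissipation}. For $\partial_x u_\gamma^2 = 2u_\gamma\partial_x u_\gamma$, I exploit the pointwise bound $1-\gamma^{2/3}u_\gamma^2\leq 1$ to write
\begin{equation*}
\mathbb{E}\int_0^T\!\!\int_{\mathbb{T}^1}|\partial_x u_\gamma^2|^2\,dx\,dt = 4\,\mathbb{E}\int_0^T\!\!\int_{\mathbb{T}^1}u_\gamma^2|\partial_x u_\gamma|^2\,dx\,dt \leq 4\,\mathbb{E}\int_0^T\!\!\int_{\mathbb{T}^1}\frac{u_\gamma^2|\partial_x u_\gamma|^2}{1-\gamma^{2/3}u_\gamma^2}\,dx\,dt \lesssim 1,
\end{equation*}
where the last bound is exactly \eqref{uniformestimate}. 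Another application of Markov's inequality delivers the weak tightness of $\partial_x u_\gamma^2$ in $L^2([0,T];L^2(\mathbb{T}^1))$. Tightness on the product space follows because a finite product of tight families is tight.

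The proof is essentially a bookkeeping exercise once the two uniform estimates are available; the substantive work has already been carried out in Propositions~\ref{prp-entropydissipation} and~\ref{second-uniform-es}. The only point requiring a moment of care is the matching of indices: the choice $\alpha<1/2$ and $\beta>13/2$ in Proposition~\ref{second-uniform-es} is dictated precisely by the need to interpolate with the $L^2_tH^1_x$-bound through Aubin--Lions--Simon, and no additional regularity beyond what is already in hand is needed. I do not anticipate any genuine obstacle at this step.
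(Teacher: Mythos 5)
Your proof is correct and takes essentially the same route as the paper, which simply cites the Aubin--Lions lemma for the compact embedding $L^2_tH^1_x\cap W^{\alpha,2}_tH^{-\beta}_x\hookrightarrow L^2_tL^2_x\cap(L^2_tH^1_x,w)$ and leaves the rest implicit. You have merely filled in the details the paper compresses: Markov's inequality converting the moment bounds of Propositions~\ref{prp-entropydissipation} and~\ref{second-uniform-es} into tail bounds, Banach--Alaoglu for the two weak-topology factors, and the observation that $4u_\gamma^2|\partial_x u_\gamma|^2\le 4u_\gamma^2|\partial_x u_\gamma|^2/(1-\gamma^{2/3}u_\gamma^2)$ controls $\partial_x u_\gamma^2$ via~\eqref{uniformestimate}.
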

\begin{proof}
This result is a direct consequence of the Aubin-Lions lemma, which states that for every $\alpha \in (0,1/2)$ and $\beta > \frac{13}{2}$, the embedding
\begin{align*}
L^2([0,T];H^1(\mathbb{T}^1)) \cap W^{\alpha,2}([0,T];H^{-\beta}(\mathbb{T}^1)) \subset L^2([0,T];L^2(\mathbb{T}^1)) \cap \left(L^2([0,T];H^1(\mathbb{T}^1)), w\right)
\end{align*}
is compact.	
\end{proof}

\section{Convergence of $\{u_{\gamma}\}_{\gamma\in(0,1]}$}\label{sec-4}
Before proceeding to the analysis of the convergence of the family $\{u_{\gamma}\}_{\gamma\in(0,1]}$, we first present the following lemma, which serves as an auxiliary tool in our argument. 
\begin{lemma}\label{lem-approx-CH}
Suppose that $a < 0$. For every $\delta > 0$, let $u_0 \in H^{-1}(\mathbb{T}^1)$, and denote by $u_{\delta}$ the weak solution to 
\begin{align}\label{CahnHilliard-delta}
	du_{\delta} = \partial_{xx}^2\left[V'(u_{\delta}) - \frac{D}{2} \partial_{xx}^2 u_{\delta} \right]dt - \sqrt{2} \, \partial_x dW_{\delta}, \quad u_{\delta}(0) = u_0.
\end{align}	
Let $u$ be the weak solution to
\begin{align*}
	du = \partial_{xx}^2\left[V'(u) - \frac{D}{2} \partial_{xx}^2 u \right]dt - \sqrt{2} \, \partial_x dW, \quad u(0) = u_0.
\end{align*}
with the same initial condition $u_0$ and the same noise $W$. Then, as $\delta \to 0$, the following convergence holds:
\begin{align*}
	\mathbb{E}\|u_{\delta} - u\|_{L^2([0,T]; L^2(\mathbb{T}^1))}^2 \rightarrow 0.
\end{align*}
\end{lemma}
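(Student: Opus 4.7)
The plan is to employ the Da Prato-Debussche decomposition in order to separate the singular driving noise from the cubic nonlinearity. Let $S(t) := e^{-\frac{D}{2}\partial_{xxxx}^4 t}$ denote the biharmonic semigroup on $\mathbb{T}^1$, and introduce the stochastic convolutions
\begin{equation*}
Z(t) := -\sqrt{2}\int_0^t S(t-s)\partial_x \, dW(s), \qquad Z_\delta(t) := -\sqrt{2}\int_0^t S(t-s)\partial_x \, dW_\delta(s).
\end{equation*}
Setting $Y := u - Z$ and $Y_\delta := u_\delta - Z_\delta$, these residuals satisfy the \emph{pathwise} random PDEs
\begin{equation*}
\partial_t Y + \tfrac{D}{2}\partial_{xxxx}^4 Y = \partial_{xx}^2 V'(Y + Z), \qquad Y(0) = u_0,
\end{equation*}
together with the analogous equation for $Y_\delta$ driven by $Z_\delta$. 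The proof then proceeds in three steps: (i) convergence of the stochastic convolutions $Z_\delta \to Z$; (ii) a pathwise stability estimate for $Y_\delta - Y$ in $C([0,T];H^{-1}) \cap L^2([0,T];H^1)$; and (iii) combining (i) and (ii) to deduce $u_\delta \to u$ in $L^2(\Omega; L^2([0,T];L^2(\mathbb{T}^1)))$.

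For step (i), a direct Fourier calculation using $(\eta_\delta \ast e_k)(x) = \hat\eta(\delta^{1/3}k) e_k(x)$ gives $\hat Z_\delta(k,t) = \hat\eta(\delta^{1/3}k)\,\hat Z(k,t)$, while the It\^o isometry yields $\mathbb{E}|\hat Z(k,t)|^2 \leq C/|k|^2$. Hence $Z,Z_\delta$ lie in $L^p(\Omega; L^\infty([0,T]; H^s(\mathbb{T}^1)))$ for every $s < 1/2$ and every $p < \infty$, uniformly in $\delta$, and dominated convergence on the Fourier side yields $Z_\delta \to Z$ in those spaces; the one-dimensional Sobolev embedding $H^{1/2-\varepsilon} \hookrightarrow L^q$, valid for every $q < \infty$, then upgrades convergence to every $L^q$ norm. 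For step (ii), set $W := Y_\delta - Y$, which satisfies $W(0) = 0$ and an equation whose test against $(-\partial_{xx}^2)^{-1} W$, after integration by parts, produces
\begin{equation*}
\tfrac{1}{2}\tfrac{d}{dt}\|W\|_{H^{-1}}^2 + \tfrac{D}{2}\|\partial_x W\|_{L^2}^2 = -\int \bigl[V'(Y_\delta + Z_\delta) - V'(Y + Z)\bigr]\, W \, dx.
\end{equation*}
Splitting the right-hand side as $[V'(Y_\delta + Z_\delta) - V'(Y + Z_\delta)] + [V'(Y + Z_\delta) - V'(Y + Z)]$, the hypothesis $a < 0$ makes $V'$ monotone, so that the first bracket integrated against $W = Y_\delta - Y$ contributes a non-negative term that can be discarded. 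The second bracket is controlled via $|V'(\alpha) - V'(\beta)| \leq C(1 + \alpha^2 + \beta^2)|\alpha - \beta|$, the interpolation $\|W\|_{L^2} \leq \|W\|_{H^{-1}}^{1/2}\|W\|_{H^1}^{1/2}$, and Young's inequality; Gronwall's lemma then yields
\begin{equation*}
\sup_{t\in[0,T]}\|W(t)\|_{H^{-1}}^2 + \int_0^T \|\partial_x W\|_{L^2}^2 \, dt \leq C\int_0^T \|(1 + Y^2 + Z^2 + Z_\delta^2)(Z_\delta - Z)\|_{L^2}^2 \, dt,
\end{equation*}
whose right-hand side vanishes in $L^1(\Omega)$ as $\delta \to 0$ thanks to step (i).

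The main obstacle is securing uniform-in-$\delta$ moment bounds on $Y, Y_\delta$ in $L^\infty([0,T]; L^q)$ for a sufficiently large $q$ (roughly $q \geq 8$), so that the right-hand side of the Gronwall estimate is genuinely integrable in $\omega$ and vanishes in the limit. This requires a separate pathwise energy analysis of the random PDEs governing $Y$ and $Y_\delta$, and here the convexity of $V$ ensured by $a < 0$ is again decisive: it turns the term $\partial_{xx}^2(Y_\delta + Z_\delta)^3$ into a dissipative contribution, allowing the estimates to close in terms of moments of $Z, Z_\delta$ alone, which are themselves uniform in $\delta$ by step (i). Once these bounds are in place, step (ii) combined with the $L^2([0,T];L^2)$ convergence $Z_\delta \to Z$ yields $u_\delta = Y_\delta + Z_\delta \to Y + Z = u$ in the desired topology, completing the proof.
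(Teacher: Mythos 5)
Your strategy is the same as the paper's: a Da Prato--Debussche splitting $u = Y + Z$, an $H^{-1}$-energy estimate for the difference $W = Y_\delta - Y$, and the sign $a<0$ (hence monotonicity of $V'$) to discard the quadratic-in-$W$ contribution. The difference is in how the remaining nonlinear term is handled, and this is where a real gap appears. You bound the second bracket via the Lipschitz estimate $|V'(\alpha)-V'(\beta)|\le C(1+\alpha^2+\beta^2)|\alpha-\beta|$ and then invoke Young's inequality, which forces you to estimate $\int_0^T\|(1+Y^2+Z^2+Z_\delta^2)(Z_\delta-Z)\|_{L^2}^2\,dt$; expanding and applying Cauchy--Schwarz, the leading term $\int Y^4|Z_\delta-Z|^2$ demands control of $Y$ in $L^8([0,T];L^8(\mathbb{T}^1))$. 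You flag this as ``the main obstacle'' but do not establish it, and it is not clear that the plain $H^{-1}$ or $L^2$ energy identity for the random PDE produces it: the $H^{-1}$ estimate naturally yields the dissipative $-\int Y^4$ and hence only $L^4([0,T];L^4)$, and interpolating $L^\infty_t H^{-1}\cap L^2_t H^1$ (or $L^\infty_t L^2\cap L^2_t H^2$) falls short of $L^8_{t,x}$.

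The paper avoids this entirely. Writing $A=w+z$, $B=w_\delta+z_\delta$, it uses the exact factorisation $A^3-B^3=(A-B)(A^2+AB+B^2)$ with $A-B=V_\delta+U_\delta$, so the term to control after absorbing the monotone part by Young is $\int U_\delta^2\,(A^2+AB+B^2)$. This is only \emph{quadratic} in the solutions, and Cauchy--Schwarz reduces everything to $\|U_\delta\|_{L^4_{t,x}}^2$ and $\|w\|_{L^4_{t,x}}^2,\|z\|_{L^4_{t,x}}^2$, etc. These $L^4$ bounds are exactly what the $H^{-1}$ energy identity and the $L^p$-isometry for the stochastic convolution deliver, so no extra a priori analysis is needed. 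If you replace the crude Lipschitz bound on $V'$ by the cubic factorisation applied to $V'(Y_\delta+Z_\delta)-V'(Y+Z)$ directly (note $V'(\alpha)-V'(\beta)=(\alpha-\beta)(\alpha^2+\alpha\beta+\beta^2-a)$), the non-negative factor $\alpha^2+\alpha\beta+\beta^2-a\ge -a>0$ lets you absorb the $W^2$ contribution and the leftover is again quadratic; this closes your argument at the $L^4_{t,x}$ level and removes the unresolved step.
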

\begin{proof}
	We defer the proof to Appendix \ref{sec-app-B}. 
\end{proof}

To deduce the convergence in probability of the family $\{u_{\gamma}\}_{\gamma\in(0,1]}$ from the established tightness, we invoke the following version of Krylov's diagonal argument. 
\begin{lemma}\label{lem-diagonal} 
Let $(E, d)$ be a Polish space. Let $(Z_n)_{n \geq 1}$ be a sequence of $E$-valued random variables. Then, $Z_n$ converges in probability to an $E$-valued random variable if and only if for every pair of subsequences $(Z_{l})$ and $(Z_{m})$, there exists a further subsequence $\big(v_k\big) = \big(Z_{l(k)}, Z_{m(k)}\big)$ which converges weakly to a random variable $v$ supported on the diagonal $\{(x,y) \in E \times E : x = y\}$.

Moreover, the same result holds when $E$ is replaced by a Hilbert space $H$ equipped with its weak topology $(H, w)$, provided that the sequence satisfies the uniform bound
\begin{equation}\label{Hilbert-cond}
\sup_{n \geq 1} \mathbb{E} \|Z_n\|_H^2 < \infty.
\end{equation}
\end{lemma}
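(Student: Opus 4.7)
The plan is to treat necessity and sufficiency separately, first in the Polish setting, and then adapt the argument to the Hilbert space equipped with its weak topology using the uniform second-moment bound \eqref{Hilbert-cond} as a substitute for metric completeness.

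The necessity direction is routine: if $Z_n \to Z$ in probability, then for any two subsequences the pair $(Z_{l(k)}, Z_{m(k)})$ converges in probability in $E \times E$ to $(Z, Z)$, and hence in law to the pushforward measure of $Z$ under $x \mapsto (x,x)$, which is by construction supported on the diagonal.

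For sufficiency in the Polish case, the central step is to show that $(Z_n)$ is Cauchy in probability; completeness of $(E, d)$ will then supply a limit $Z$, and convergence in probability to $Z$ follows automatically. Arguing by contradiction, if $(Z_n)$ is not Cauchy in probability, there exist $\varepsilon, \eta > 0$ and subsequences $(l_k), (m_k)$ with $\mathbb{P}(d(Z_{l_k}, Z_{m_k}) > \varepsilon) > \eta$ for every $k$. By hypothesis I can pass to a further subsequence along which $(Z_{l_{k_j}}, Z_{m_{k_j}})$ converges weakly in $E \times E$ to a law $v$ concentrated on the diagonal $\{(x,y) : x = y\}$. Since $d : E \times E \to [0, \infty)$ is continuous and vanishes on the diagonal, the continuous mapping theorem gives $d(Z_{l_{k_j}}, Z_{m_{k_j}}) \Rightarrow \delta_0$; convergence in law to a constant is convergence in probability, which contradicts $\mathbb{P}(d(Z_{l_{k_j}}, Z_{m_{k_j}}) > \varepsilon) > \eta$.

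The main obstacle is transporting this argument to the Hilbert space $(H, w)$, because $(H, w)$ is not metrizable on the whole of $H$, so both convergence in probability and the Cauchy property must be localized to weakly metrizable subsets. The bound \eqref{Hilbert-cond} is exactly what resolves this: by Markov's inequality, for every $\varepsilon > 0$ one can pick $R$ with $\sup_n \mathbb{P}(\|Z_n\|_H > R) < \varepsilon$, and closed balls $B_R \subset H$ are weakly compact by Banach--Alaoglu and metrizable in the weak topology by, for instance, $d_w(x,y) = \sum_{k \geq 1} 2^{-k}\bigl(|\langle x - y, \varphi_k \rangle| \wedge 1\bigr)$ for a countable dense set $(\varphi_k) \subset H$. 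Thus $(B_R, w)$ is a Polish space on which $d_w$ is continuous and the diagonal is closed, so the contradiction argument above applies verbatim inside $B_R$, with the uniform tail bound ensuring the contribution from $\{\|Z_n\|_H > R\}$ is negligible. Combining the within-ball Cauchy property with the tightness supplied by \eqref{Hilbert-cond} then yields convergence of $Z_n$ in probability in $(H, w)$ to some random variable $Z$.
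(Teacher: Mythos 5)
Your necessity argument and the Polish-case sufficiency argument are correct and follow exactly the Gy\"ongy--Krylov template the paper cites: argue by contradiction, observe that $d$ is continuous and vanishes precisely on the diagonal, and invoke the continuous mapping theorem together with ``weak convergence to a constant implies convergence in probability.'' The paper itself gives no proof beyond a pointer to \cite{GK21} and \cite{holden2022global}, so actually supplying the argument is a genuine contribution.

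The Hilbert-space part, however, has a real gap, and you partly obscure it by invoking the ball $B_R$ in the wrong place. The contradiction step needs no localization at all: the pseudometric $d_w(x,y)=\sum_k 2^{-k}(|\langle x-y,\varphi_k\rangle|\wedge 1)$ is weakly continuous on all of $H\times H$ and its zero set is exactly the diagonal, so the continuous mapping theorem already yields that $(Z_n)$ is $d_w$-Cauchy in probability with no reference to $B_R$. The step you do not justify is the passage from ``$d_w$-Cauchy in probability'' to ``converges in probability,'' and this is not automatic, because $(H,d_w)$ is \emph{not} complete and $d_w$ does not metrize the weak topology off bounded sets: for an orthonormal basis $(\varphi_k)$, the sequence $x_k=k\varphi_k$ satisfies $d_w(x_k,0)\to 0$ while $x_k$ has no weak limit in $H$. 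Saying the tail bound makes the contribution from $\{\|Z_n\|_H>R\}$ ``negligible'' does not by itself produce a limit random variable.

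The bound \eqref{Hilbert-cond} does rescue the argument, but through a Fatou-plus-sub-subsequence mechanism that you need to make explicit. From $d_w$-Cauchy in probability, extract a subsequence $(Z_{n_k})$ with $\sum_k \mathbb{P}\big(d_w(Z_{n_k},Z_{n_{k+1}})>2^{-k}\big)<\infty$; by Borel--Cantelli, $(Z_{n_k}(\omega))_k$ is $d_w$-Cauchy for a.e.\ $\omega$. By Fatou and \eqref{Hilbert-cond}, $\mathbb{E}\big[\liminf_k\|Z_{n_k}\|_H^2\big]\le\sup_n\mathbb{E}\|Z_n\|_H^2<\infty$, so a.s.\ $\liminf_k\|Z_{n_k}(\omega)\|_H<\infty$ and there is an $\omega$-dependent sub-subsequence confined to a closed ball $B_{R(\omega)}$. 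On that ball $d_w$ metrizes the weak topology and is complete (weak compactness), so the sub-subsequence converges weakly to some $Z(\omega)\in H$; since the full sequence $(Z_{n_k}(\omega))_k$ is $d_w$-Cauchy and has a $d_w$-convergent sub-subsequence, it converges in $d_w$ to $Z(\omega)$. This produces the $H$-valued random variable $Z$ and the a.s.\ (hence in-probability) convergence along $(n_k)$, from which convergence in probability of the whole sequence follows by the usual subsequence principle. Without this step your sketch asserts the conclusion but does not reach it.
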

\begin{proof}
The version of the result stated for Polish spaces has been established in \cite{GK21}. In the Hilbert space setting endowed with the weak topology, the same conclusion follows by combining the argument in \cite{GK21} with the additional condition \eqref{Hilbert-cond}. As an alternative, we also refer the reader to \cite[Theorem 1.1]{holden2022global} for a more general formulation of this result, where the convergence criterion is proven in the context of quasi-Polish spaces. 

\end{proof}

For the reader's convenience, we reformulate equation \eqref{equgamma} in the following form:
\begin{align}\label{eq-remainder}
du_{\gamma} =\ & -\frac{D}{2} \partial_{xxxx}^4 u_{\gamma} \, dt + \partial_{xx}^2 \left[ \frac{1}{3} u_{\gamma}^3 - a u_{\gamma} \right] \, dt - \sqrt{2} \, \partial_x \left( \sigma(1 - \gamma^{2/3} u_{\gamma}^2) \, dW_{\delta} \right) \notag \\
& + (R_{1,\gamma} + R_{2,\gamma} + R_{3,\gamma} + R_{4,\gamma}) \, dt,
\end{align}
where $ D = \int_{\mathbb{T}^1} J(x) |x|^2 \, dx $, and the remainder terms are given by
\begin{align}\label{remainder}
R_{1,\gamma} &= -\frac{D}{2} a \gamma^{2/3} \partial_{xxxx}^4 u_{\gamma} + \frac{1}{3} a \gamma^{2/3} \partial_{xx}^2 u_{\gamma}^3, \notag \\
R_{2,\gamma} &= (1 + a \gamma^{2/3}) \partial_x \left[ u_{\gamma}^2 \left( J_{\gamma^{1/3}} * \partial_x u_{\gamma} - \partial_x u_{\gamma} \right) \right], \notag \\
R_{3,\gamma} &= -\gamma^{-2/3} (1 + a \gamma^{2/3}) \, \partial_x \left[ J_{\gamma^{1/3}} * \partial_x u_{\gamma} - \partial_x u_{\gamma} - \gamma^{2/3} \frac{D}{2} \partial_{xxx}^3 u_{\gamma} \right], \notag \\
R_{4,\gamma} &= 4 \, \partial_x \left( F_{1,\delta} \, \sigma'(1 - \gamma^{2/3} u_{\gamma}^2)^2 \, \gamma^{4/3} u_{\gamma}^2 \, \partial_x u_{\gamma} \right).
\end{align}

To emphasize the dependence on the parameter $\delta > 0$, we henceforth denote the solution $u_{\gamma}$ of \eqref{equgamma} by $u_{\gamma,\delta}$, and the remainder terms $R_{i,\gamma}$, $i = 1,2,3,4$, by $R_{i,\gamma,\delta}$ accordingly. We now state the main result.

\begin{proposition}\label{prp-convergence}
Assume that the initial data, the coefficient, and the interaction kernel $J$ satisfy Assumptions (A1), \ref{Assump-sigma}, and \ref{Assump-J}, respectively. For every $ \gamma\in(0,1] $ and $ \delta > 0 $, let $ u_{\gamma,\delta} $ denote the weak solution to \eqref{equgamma} with initial data $ u_{\gamma,0} $. Suppose further that $ a < 0 $. Then, for every fixed $ \delta > 0 $, the following convergence holds in probability:
\begin{align*}
	\|u_{\gamma,\delta} - u_{\delta}\|_{L^2([0,T]; L^2(\mathbb{T}^1))} \rightarrow 0,
\end{align*}
as $ \gamma \rightarrow 0 $, where $ u_{\delta} $ denotes the weak solution to the Cahn-Hilliard equation \eqref{CahnHilliard-delta} with initial data $ u_0 $.
 \end{proposition}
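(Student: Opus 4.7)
The plan is to combine the tightness established in Proposition \ref{prp-tight} with a Skorokhod representation argument, identify the weak limit as a solution of the correlated stochastic Cahn-Hilliard equation \eqref{CahnHilliard-delta}, and then invoke uniqueness of that equation together with the diagonal criterion of Lemma \ref{lem-diagonal} to upgrade the convergence to convergence in probability. Since $\delta>0$ is fixed throughout, all constants are allowed to depend on it, and the rewriting \eqref{eq-remainder}-\eqref{remainder} of \eqref{equgamma} is the starting point.

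First, I would consider any two subsequences $(u_{\gamma_l,\delta})_l$ and $(u_{\gamma_m,\delta})_m$. By Proposition \ref{prp-tight}, the joint family $\bigl(u_{\gamma_l,\delta},\, u_{\gamma_m,\delta},\, \partial_x u_{\gamma_l,\delta}^2,\, \partial_x u_{\gamma_m,\delta}^2,\, W_{\delta}\bigr)$ is tight on
\[
\bigl(L^2([0,T];L^2(\mathbb{T}^1))\cap(L^2([0,T];H^1(\mathbb{T}^1)),w)\bigr)^2\times\bigl(L^2([0,T];L^2(\mathbb{T}^1)),w\bigr)^2\times C([0,T];H^{-\alpha}),
\]
for $\alpha$ large enough. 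Extract a further subsequence converging in law, apply Skorokhod's representation theorem on a new probability space to get a.s. convergence of copies $(\tilde u_l,\tilde u_m,\ldots,\tilde W_\delta)\to(\bar u^{(1)},\bar u^{(2)},\ldots,\bar W_\delta)$. The uniform estimates of Proposition \ref{prp-entropydissipation} and Proposition \ref{second-uniform-es} transfer to these copies, so in particular $\bar u^{(i)}\in L^2([0,T];H^1(\mathbb{T}^1))$ a.s.\ and $\gamma^{2/3}(\tilde u^{(i)})^2\to 0$ in $L^\infty([0,T];L^1(\mathbb{T}^1))$.

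The core of the argument is to pass to the limit in the weak form \eqref{eq-remainder} tested against $\psi\in C^\infty(\mathbb{T}^1)$. The linear dissipative part $-\tfrac{D}{2}\partial_{xxxx}^4 u_{\gamma,\delta}$ passes to the limit by the weak $H^1$-convergence; the cubic term $\partial_{xx}^2(\tfrac{1}{3}u_{\gamma,\delta}^3-au_{\gamma,\delta})$ is handled by combining strong $L^2$-convergence with the uniform bound in $L^2(\Omega;L^2([0,T];H^1))\hookrightarrow L^2(\Omega;L^2([0,T];L^\infty))$ in one dimension, giving strong $L^2([0,T];L^3)$-convergence of $u_{\gamma,\delta}^3$. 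For the four remainders, the bounds already established in the proof of Proposition \ref{second-uniform-es} (see the estimates leading to \eqref{I4}, \eqref{I6}, \eqref{I7}, and the bound on $I_5$) show that each $R_{i,\gamma,\delta}$, tested against $\psi$, tends to zero in $L^1(\Omega\times[0,T])$ as $\gamma\to 0$; in particular $R_{3,\gamma,\delta}$ vanishes by the third-order Taylor expansion of $U_{\gamma,\delta}=-(-\partial_{xx}^2)^{-3/2}u_{\gamma,\delta}$ and the uniform $H^1$-bound. For the stochastic term $\sqrt{2}\partial_x(\sigma(1-\gamma^{2/3}u_{\gamma,\delta}^2)\,dW_\delta)$, since $\sigma(1)=1$ and $\gamma^{2/3}u_{\gamma,\delta}^2\to 0$ in $L^\infty([0,T];L^1)$ with $\sigma$ Lipschitz near $1$, the integrand converges to $1$ in $L^2(\Omega\times[0,T];L^2(\mathbb{T}^1))$, and a standard martingale identification (e.g.\ via the Skorokhod limits of the quadratic variation of $M_t^\psi:=\int_0^t\int_{\mathbb{T}^1}\sqrt{2}\partial_x\psi\,\sigma(1-\gamma^{2/3}u_{\gamma,\delta}^2)\,dW_\delta\,dx$) shows that the limiting martingale is $\sqrt{2}\int_0^t\int_{\mathbb{T}^1}\partial_x\psi\,d\bar W_\delta\,dx$. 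Hence each limit $\bar u^{(i)}$ is a weak solution of \eqref{CahnHilliard-delta} driven by $\bar W_\delta$ with initial datum $u_0$ (using (v) of Assumption (A1)).

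By pathwise uniqueness for \eqref{CahnHilliard-delta} (Definition \ref{def-2}), $\bar u^{(1)}=\bar u^{(2)}=u_\delta$ a.s., so the joint law of the further subsequence is supported on the diagonal. Applying Lemma \ref{lem-diagonal} (in its Polish form on $L^2([0,T];L^2(\mathbb{T}^1))$) yields convergence in probability of $u_{\gamma,\delta}$ to $u_\delta$. The main obstacle I anticipate is the martingale identification for the limiting noise, since $W_\delta$ is not white in space and the diffusion coefficient $\sigma(1-\gamma^{2/3}u_{\gamma,\delta}^2)$ is random; this must be handled by computing the limiting cross-variations on the new probability space and invoking a representation theorem to construct $\bar W_\delta$, compatibly with the filtration generated by $\bar u$. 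Everything else is either a direct consequence of the uniform estimates in Section~\ref{sec-3} or of the explicit $O(\gamma^{1/3})$ and $O(\gamma^{2/3})$ bounds on the remainder terms.
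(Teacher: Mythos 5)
Your proposal follows essentially the same route as the paper: tightness from Proposition \ref{prp-tight}, Skorokhod--Jakubowski representation on a new probability space, passage to the limit in the weak form of \eqref{eq-remainder} term by term, uniqueness of the limiting correlated Cahn--Hilliard equation \eqref{CahnHilliard-delta}, and Lemma \ref{lem-diagonal} to upgrade distributional convergence to convergence in probability.

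There is, however, one genuine gap: your treatment of $R_{2,\gamma,\delta}$. You write that ``the bounds already established in the proof of Proposition \ref{second-uniform-es} (see the estimates leading to \eqref{I4}, \eqref{I6}, \eqref{I7}, and the bound on $I_5$) show that each $R_{i,\gamma,\delta}$, tested against $\psi$, tends to zero in $L^1(\Omega\times[0,T])$ as $\gamma\to 0$.'' This is correct for $R_1$, $R_3$, $R_4$, which carry explicit small prefactors $\gamma^{2/3}$, $\gamma^{1/3}$, $\gamma^{4/3}\delta^{-1/3}$, but it fails for $R_2$: the estimate $I_5 \leq C$ obtained in Proposition \ref{second-uniform-es} is only a $\gamma$-uniform bound with no decay, so it cannot by itself yield $\langle R_{2,\gamma,\delta},\psi\rangle\to 0$. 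The term $R_{2,\gamma,\delta}=(1+a\gamma^{2/3})\partial_x\bigl[u_{\gamma,\delta}^2(J_{\gamma^{1/3}}\ast\partial_x u_{\gamma,\delta}-\partial_x u_{\gamma,\delta})\bigr]$ needs a separate argument on the Skorokhod space: integrate by parts to move the convolution onto $\bar u_{\gamma_j,\delta}$ rather than $\partial_x\bar u_{\gamma_j,\delta}$, split
\begin{align*}
J_{\gamma_j^{1/3}}\ast\bar u_{\gamma_j,\delta}-\bar u_{\gamma_j,\delta}
= J_{\gamma_j^{1/3}}\ast(\bar u_{\gamma_j,\delta}-\bar u_\delta)
+\bigl(J_{\gamma_j^{1/3}}\ast\bar u_\delta-\bar u_\delta\bigr)
+\bigl(\bar u_\delta-\bar u_{\gamma_j,\delta}\bigr),
\end{align*}
and then use the strong $L^2([0,T];L^2)$-convergence of $\bar u_{\gamma_j,\delta}$, Young's convolution inequality with $\|J_{\gamma_j^{1/3}}\|_{L^1}=1$, the mollifier property $J_{\gamma_j^{1/3}}\ast\bar u_\delta\to\bar u_\delta$ in $L^2$, together with the uniform bounds on $\|\bar u_{\gamma_j,\delta}\partial_x\bar u_{\gamma_j,\delta}\|_{L^2}$ and $\|\bar u_{\gamma_j,\delta}^2\|_{L^2}$ (the latter via Poincar\'e and the weak $L^2$-convergence of $\partial_x\bar u_{\gamma_j,\delta}^2$), to conclude a.s.\ convergence to zero. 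Without this argument the passage to the limit in the weak form is incomplete. The rest of the proposal is sound, with the minor differences that you use the $H^1\hookrightarrow L^\infty$ embedding for the cubic term (the paper uses Poincar\'e, both work) and you identify the limiting noise via quadratic variations rather than passing directly to the limit in the stochastic integral as the paper does.
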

\begin{proof}
Let $\delta > 0$ be fixed. By applying Proposition~\ref{prp-tight}, it follows that the family 
$$
X_{\gamma,\delta} := (u_{\gamma,\delta}, \partial_x u_{\gamma,\delta}^2, u_{\gamma,0})_{\gamma\in(0,1]}
$$
is tight in the space
\begin{align*}
	\mathbb{X} := \left( L^2([0,T]; L^2(\mathbb{T}^1)) \cap \left( L^2([0,T]; H^1(\mathbb{T}^1)), w \right) \right) \times \left( L^2([0,T]; L^2(\mathbb{T}^1)), w \right) \times H^{-1}(\mathbb{T}^1).
\end{align*}

Let $(B^k)_{k \geq 0}$ denote the Brownian motions appearing in \eqref{brownian}. To apply Lemma \ref{lem-diagonal}, let $\gamma_j$ and $\gamma_j'$ be two subsequences of $\gamma$. We now consider the pair
\begin{align*}
(X_{\gamma_j,\delta},X_{\gamma_j',\delta},(B^k)_{k\geq0})\ \text{on }\mathbb{Y}=\mathbb{X}\times\mathbb{X}\times C([0,T];\mathbb{R}^{\mathbb{N}}). 	
\end{align*}
Let $(u_{\delta}, v_{\delta}, u_0)$ be a limit point of the sequence $(u_{\gamma_j,\delta}, \partial_x u_{\gamma_j,\delta}^2, u_{\gamma_j,0})_{\gamma_j > 0}$, and let $(u_{\delta}', v_{\delta}', u_0)$ be a limit point of $(u_{\gamma_j',\delta}, \partial_x u_{\gamma_j',\delta}^2, u_{\gamma_j',0})_{\gamma_j' > 0}$. 

By the Skorokhod-Jakubowski representation theorem \cite{Jak97}, there exists a new probability space, denoted by $(\bar{\Omega}, \bar{\mathcal{F}}, \bar{\mathbb{P}})$, and a new sequence of random variables,
\begin{align*}
	\bar{X}_{\gamma_j,\delta} := (\bar{u}_{\gamma_j,\delta}, \bar{v}_{\gamma_j,\delta}, \bar{u}_{\gamma_j,0,\delta}), \quad 
	\bar{X}_{\gamma_j',\delta} := (\bar{u}_{\gamma_j',\delta}, \bar{v}_{\gamma_j',\delta}, \bar{u}_{\gamma_j',0,\delta}), \quad 
	(\bar{B}^k_{j,\delta})_{k \geq 0},
\end{align*}
and
\begin{align*}
	\bar{X}_{\delta} := (\bar{u}_\delta, \bar{v}_\delta, \bar{u}_0), \quad 
	\bar{X}_{\delta}' := (\bar{u}_{\delta}', \bar{v}_{\delta}', \bar{u}_0), \quad 
	(\bar{B}^k_{\delta})_{k \geq 0},
\end{align*}
such that all random variables on the new probability space have the same laws as their counterparts on the original space. More precisely, for every $j > 0$,
\begin{align*}
	\bar{X}_{\gamma_j,\delta} \overset{d}{=} X_{\gamma_j,\delta}, \quad 
	\bar{X}_{\gamma_j',\delta} \overset{d}{=} X_{\gamma_j',\delta}, \quad 
	(\bar{B}^k_{j,\delta})_{k \geq 0} \overset{d}{=} (B^k)_{k \geq 0},
\end{align*}
and
\begin{align*}
	\bar{X}_{\delta} \overset{d}{=} (u_\delta, v_\delta, u_0), \quad 
	\bar{X}_{\delta}' \overset{d}{=} (u_{\delta}', v_{\delta}', u_0), \quad 
	(\bar{B}^k_{\delta})_{k \geq 0} \overset{d}{=} (B^k)_{k \geq 0}.
\end{align*}

Furthermore, we have the following almost sure convergences as $j \to \infty$: 
\begin{itemize}
	\item $\bar{u}_{\gamma_j,\delta} \to \bar{u}_\delta$ strongly in $L^2([0,T]; L^2(\mathbb{T}^1))$ and weakly in $L^2([0,T];H^1(\mathbb{T}^1))$;
	\item $\bar{v}_{\gamma_j,\delta} \rightharpoonup \bar{v}_\delta$ weakly in $L^2([0,T]; L^2(\mathbb{T}^1))$;
	\item for every $k \geq 0$, $\sup_{t \in [0,T]} |\bar{B}^k_{j,\delta}(t) - \bar{B}^k_\delta(t)| \to 0$;
	\item $\bar{u}_{\gamma_j,0,\delta} \to \bar{u}_0$ in $H^{-1}(\mathbb{T}^1)$.
\end{itemize}
The same convergences also hold for the subsequence $(\gamma_j')$.

We next verify that, for every $\gamma_j > 0$, the identity
\begin{align}\label{identity-derivativeterm}
	\bar{v}_{\gamma_j,\delta} = \partial_x \bar{u}_{\gamma_j,\delta}^2, \quad \text{for almost every } (t,x) \in [0,T] \times \mathbb{T}^1,
\end{align}
holds almost surely. This follows from the fact that the joint laws of $(u_{\gamma_j,\delta}, \partial_x u_{\gamma_j,\delta}^2)$ and $(\bar{u}_{\gamma_j,\delta}, \bar{v}_{\gamma_j,\delta})$ are identical. Therefore, for every test function $\psi \in C^\infty([0,T] \times \mathbb{T}^1)$, we have
\begin{align*}
	\bar{\mathbb{E}}\Big|\langle \bar{v}_{\gamma_j,\delta}, \psi \rangle + \langle \bar{u}_{\gamma_j,\delta}^2, \partial_x \psi \rangle\Big| 
	= \mathbb{E}\Big|\langle \partial_x u_{\gamma_j,\delta}^2, \psi \rangle + \langle u_{\gamma_j,\delta}^2, \partial_x \psi \rangle\Big| = 0.
\end{align*}
This implies \eqref{identity-derivativeterm}.

Let $\bar{W}_{j,\delta}$ and $\bar{W}_\delta$ denote the cylindrical Wiener processes generated by $(\bar{B}^k_{j,\delta})_{k \geq 0}$ and $(\bar{B}^k_\delta)_{k \geq 0}$, respectively.
In the following, we show that $(\bar{B}^k_{\delta})_{k\geq1}$ is a sequence of Brownian motions. For every $t\in[0,T]$, let $\mathcal{G}_t:=\sigma\Big(\bar{X}_{\delta}|_{[0,t]},\bar{X}'_{\delta}|_{[0,t]},(\bar{B}^k)_{k\geq1}|_{[0,t]}\Big)$, and let $\bar{\mathcal{G}}_t$ be the augmented filtration of $\mathcal{G}_t$. Let $F:\mathbb{X}\times\mathbb{X}\times C([0,T])\rightarrow\mathbb{R}$ be a bounded continuous function. For every $0\leq s\leq t\leq T$ and every $j,k\geq1$, by the identical of distributions, we find that 
\begin{align*}
&\bar{\mathbb{E}}\Big(F(\bar{X}_{\gamma_j,\delta}|_{[0,s]},\bar{X}_{\gamma_j',\delta}|_{[0,s]},\bar{B}^k_{j,\delta}|_{[0,s]})(\bar{B}^k_{j,\delta}(t)-\bar{B}^k_{j,\delta}(s))\Big)\\
=&\mathbb{E}\Big(F(X_{\gamma_j,\delta}|_{[0,s]},X_{\gamma_j',\delta}|_{[0,s]},B^k|_{[0,s]})(B^k(t)-B^k(s))\Big)=0. 
\end{align*}
By the continuity of the functional $F$, we can pass to the limit as $j \to \infty$ in the corresponding identities. Consequently, for every $0 \leq s \leq t \leq T$ and every $k \geq 1$, we obtain
\begin{align*}
\bar{\mathbb{E}}\Big(F(\bar{X}_{\delta}|_{[0,s]}, \bar{X}'_{\delta}|_{[0,s]}, \bar{B}^k_{\delta}|_{[0,s]})(\bar{B}^k_{\delta}(t) - \bar{B}^k_{\delta}(s))\Big) = 0.
\end{align*}
Similarly, for every $k, l \geq 1$, we have
\begin{align*}
\bar{\mathbb{E}}\Big(F(\bar{X}_{\delta}|_{[0,s]}, \bar{X}'_{\delta}|_{[0,s]}, \bar{B}^k_{\delta}|_{[0,s]})
\big[\bar{B}^k_{\delta}(t) \bar{B}^l_{\delta}(t) - \bar{B}^k_{\delta}(s) \bar{B}^l_{\delta}(s) - \delta_{kl}(t - s)\big]\Big) = 0.
\end{align*}
These identities imply that the sequence $(\bar{B}^k_{\delta})_{k \geq 1}$ forms a family of standard Brownian motions with respect to the filtration $(\mathcal{G}_t)_{t \in [0,T]}$. Moreover, due to the continuity in time of each $\bar{B}^k_{\delta}$, it follows that $(\bar{B}^k_{\delta})_{k \geq 1}$ remains a family of Brownian motions with respect to the completed, right-continuous filtration $(\bar{\mathcal{G}}_t)_{t \in [0,T]}$.

In what follows, we aim to establish that $\bar{u}_\delta$ satisfies the Cahn-Hilliard equation driven by the correlated noise $\bar{W}_\delta \ast \eta_{\delta}$ in the weak sense:
\begin{align}\label{CahnHilliard-n}
	\partial_t\bar{u}_\delta = \partial_{xx}^2\Big[V'(\bar{u}_\delta) - \frac{D}{2}\partial_{xx}^2 \bar{u}_\delta\Big] - \sqrt{2} \partial_x\bar{dW}_\delta \ast \eta_{\delta}, \quad \bar{u}_\delta(0) = \bar{u}_{0}.
\end{align}
To this end, we invoke the definition of weak solutions for the approximating equation \eqref{equgamma}. It suffices to verify that, almost surely, for every test function $\varphi \in C^{\infty}(\mathbb{T}^1)$ and for every $t \in [0,T]$, 
\begin{align}\label{passagetothelimits}
	\langle (\bar{u}_{\gamma_j,\delta}-\bar{u}_\delta)(t),\varphi\rangle-&(\langle \bar{u}_{\gamma_j,0,\delta}(0),\varphi\rangle+\langle \bar{u}_{0},\varphi\rangle)-\frac{D}{2}\int^t_0\langle \bar{u}_{\gamma_j,\delta}-\bar{u}_\delta,\partial_{xxxx}^4\varphi\rangle ds\notag\\
	&+\int^t_0\langle\Big[\frac{1}{3}(\bar{u}_{\gamma_j,\delta}^3-\bar{u}_\delta^3)-a(\bar{u}_{\gamma_j,\delta}-\bar{u}_\delta)\Big],\partial_{xx}^2\varphi\rangle ds\notag\\
	&+\sqrt{2}\int^t_0\langle \partial_{x}\varphi,(\sigma(1-\gamma_j^{2/3}\bar{u}_{\gamma_j,\delta}^2)-1)d\bar{W}_{j,\delta}\ast\eta_{\delta}\rangle\notag\\
	&+\sqrt{2}\int^t_0\langle \partial_{x}\varphi,(d\bar{W}_{j,\delta}-d\bar{W}_{\delta})\ast\eta_{\delta}\rangle\notag\\
&+\int^t_0\langle R_{1,\gamma_j,\delta}+R_{2,\gamma_j,\delta}+R_{3,\gamma_j,\delta}+R_{4,\gamma_j,\delta},\varphi\rangle ds\notag\\
&\rightarrow0,
\end{align}
as $j\rightarrow\infty$. By the $L^2([0,T];L^2(\mathbb{T}^1))$-convergence of $\bar{u}_{\gamma_j,\delta}$ and the $H^{-1}(\mathbb{T}^1)$-convergence of the initial data $\bar{u}_{\gamma_j,0,\delta}$, we can readily conclude the convergence of all terms exhibiting a linear structure. In particular, we have
\begin{align*}
&\langle (\bar{u}_{\gamma_j,\delta}-\bar{u}_\delta)(t),\varphi\rangle-(\langle \bar{u}_{\gamma_j,0,\delta},\varphi\rangle-\langle \bar{u}_{0,\delta},\varphi\rangle)+\frac{D}{2}\int^t_0\langle \bar{u}_{\gamma_j,\delta}-\bar{u}_\delta,\partial_{xxxx}^4\varphi\rangle ds\\
\leq&(\|\bar{u}_{\gamma_j,\delta}(t)-\bar{u}_\delta(t)\|_{L^2(\mathbb{T}^1)}+\|\bar{u}_{\gamma_j,\delta}-\bar{u}_\delta\|_{L^2([0,T];L^2(\mathbb{T}^1))}+\|\bar{u}_{\gamma_j,0,\delta}-\bar{u}_{0,\delta}\|_{H^{-1}(\mathbb{T}^1)})\|\varphi\|_{H^4(\mathbb{T}^1)}\rightarrow0,
\end{align*} 
as $j \to \infty$. For the error terms, we proceed using the same techniques as those employed in the estimates \eqref{I4}, \eqref{I6}, and \eqref{I7} of Lemma \ref{second-uniform-es}. These yield the following bounds:
\begin{align*}
\int_0^t \langle R_{1,\gamma_j,\delta}, \varphi \rangle \, ds &\leq \|\varphi\|_{H^4(\mathbb{T}^1)} \gamma_j^{2/3},\\
\int_0^t \langle R_{3,\gamma_j,\delta}, \varphi \rangle \, ds &\leq \|\varphi\|_{H^4(\mathbb{T}^1)} \gamma_j^{1/3},\\
\int_0^t \langle R_{4,\gamma_j,\delta}, \varphi \rangle \, ds &\leq \|\varphi\|_{H^4(\mathbb{T}^1)} \gamma_j^{4/3} \delta^{-1/3}.
\end{align*}

To estimate $R_{2,\gamma_j,\delta}$, we decompose it into two parts using integration by parts:
\begin{align*}
\int_0^t \langle R_{2,\gamma_j,\delta}, \varphi \rangle \, ds 
&= -(1 + a \gamma_j^{2/3}) \int_0^t \langle \bar{u}_{\gamma_j,\delta}^2 (J_{\gamma_j^{1/3}} \ast \partial_x \bar{u}_{\gamma_j,\delta} - \partial_x \bar{u}_{\gamma_j,\delta}), \partial_x \varphi \rangle \, ds \\
&=  (1 + a \gamma_j^{2/3}) \int_0^t \langle \partial_x \bar{u}_{\gamma_j,\delta}^2 (J_{\gamma_j^{1/3}} \ast \bar{u}_{\gamma_j,\delta} - \bar{u}_{\gamma_j,\delta}), \partial_x \varphi \rangle \, ds \\
&\quad  (1 + a \gamma_j^{2/3}) \int_0^t \langle \bar{u}_{\gamma_j,\delta}^2 (J_{\gamma_j^{1/3}} \ast \bar{u}_{\gamma_j,\delta} - \bar{u}_{\gamma_j,\delta}), \partial_{xx}^2 \varphi \rangle \, ds \\
&=: I_1 + I_2.
\end{align*}

For $I_1$, by applying H\"older's inequality and Young's inequality for convolutions, we obtain
\begin{align*}
I_1 
&\lesssim \|\varphi\|_{H^4(\mathbb{T}^1)} \int_0^t \|\partial_x \bar{u}_{\gamma_j,\delta} \bar{u}_{\gamma_j,\delta}\|_{L^2(\mathbb{T}^1)} \|J_{\gamma_j^{1/3}} \ast \bar{u}_{\gamma_j,\delta} - \bar{u}_{\gamma_j,\delta}\|_{L^2(\mathbb{T}^1)} \, ds \\
&\leq \|\varphi\|_{H^4(\mathbb{T}^1)} \left( \int_0^t \|\partial_x \bar{u}_{\gamma_j,\delta} \bar{u}_{\gamma_j,\delta}\|_{L^2(\mathbb{T}^1)}^2 \, ds \right)^{1/2} \\
&\quad \cdot \left( \int_0^t \|J_{\gamma_j^{1/3}} \ast (\bar{u}_{\gamma_j,\delta} - \bar{u}_\delta)\|_{L^2(\mathbb{T}^1)}^2 + \|J_{\gamma_j^{1/3}} \ast \bar{u}_\delta - \bar{u}_\delta\|_{L^2(\mathbb{T}^1)}^2 + \|\bar{u}_\delta - \bar{u}_{\gamma_j,\delta}\|_{L^2(\mathbb{T}^1)}^2 \, ds \right)^{1/2} \\
&\leq \|\varphi\|_{H^4(\mathbb{T}^1)} \left( \int_0^t \|\partial_x \bar{u}_{\gamma_j,\delta} \bar{u}_{\gamma_j,\delta}\|_{L^2(\mathbb{T}^1)}^2 \, ds \right)^{1/2} \\
&\quad \cdot \left( \int_0^t \|J_{\gamma_j^{1/3}}\|_{L^1(\mathbb{T}^1)}^2 \|\bar{u}_{\gamma_j,\delta} - \bar{u}_\delta\|_{L^2(\mathbb{T}^1)}^2 + \|J_{\gamma_j^{1/3}} \ast \bar{u}_\delta - \bar{u}_\delta\|_{L^2(\mathbb{T}^1)}^2 + \|\bar{u}_\delta - \bar{u}_{\gamma_j,\delta}\|_{L^2(\mathbb{T}^1)}^2 \, ds \right)^{1/2} \rightarrow 0,
\end{align*}
as $j \to \infty$.

Similarly, for $I_2$, applying H\"older's inequality and Poincar\'e's inequality yields
\begin{align*}
I_2 
&\leq \|\varphi\|_{H^4(\mathbb{T}^1)} \int_0^t \|\bar{u}_{\gamma_j,\delta}^2\|_{L^2(\mathbb{T}^1)} \big( \|J_{\gamma_j^{1/3}} \ast (\bar{u}_{\gamma_j,\delta} - \bar{u}_\delta)\|_{L^2(\mathbb{T}^1)} \\
&\quad + \|J_{\gamma_j^{1/3}} \ast \bar{u}_\delta - \bar{u}_\delta\|_{L^2(\mathbb{T}^1)} + \|\bar{u}_\delta - \bar{u}_{\gamma_j,\delta}\|_{L^2(\mathbb{T}^1)} \big) \, ds \\
&\leq \|\varphi\|_{H^4(\mathbb{T}^1)} \left( \int_0^t \|\bar{u}_{\gamma_j,\delta}^2\|_{L^2(\mathbb{T}^1)}^2 \, ds \right)^{1/2} \\
&\quad \cdot \left( \int_0^t \|J_{\gamma_j^{1/3}} \ast (\bar{u}_{\gamma_j,\delta} - \bar{u}_\delta)\|_{L^2(\mathbb{T}^1)}^2 + \|J_{\gamma_j^{1/3}} \ast \bar{u}_\delta - \bar{u}_\delta\|_{L^2(\mathbb{T}^1)}^2 + \|\bar{u}_\delta - \bar{u}_{\gamma_j,\delta}\|_{L^2(\mathbb{T}^1)}^2 \, ds \right)^{1/2} \\
&\leq \|\varphi\|_{H^4(\mathbb{T}^1)} \left( \int_0^t \|\partial_x \bar{u}_{\gamma_j,\delta}^2\|_{L^2(\mathbb{T}^1)}^2 + \|\bar{u}_{\gamma_j,\delta}\|_{L^2(\mathbb{T}^1)}^2 \, ds \right)^{1/2} \\
&\quad \cdot \left( \int_0^t \|J_{\gamma_j^{1/3}} \ast (\bar{u}_{\gamma_j,\delta} - \bar{u}_\delta)\|_{L^2(\mathbb{T}^1)}^2 + \|J_{\gamma_j^{1/3}} \ast \bar{u}_\delta - \bar{u}_\delta\|_{L^2(\mathbb{T}^1)}^2 + \|\bar{u}_\delta - \bar{u}_{\gamma_j,\delta}\|_{L^2(\mathbb{T}^1)}^2 \, ds \right)^{1/2} \rightarrow 0,
\end{align*}
as $j \to \infty$. In the final step of the previous estimate, we observe that
$$
\limsup_{j\rightarrow\infty}\left(\int^t_0\|\partial_x\bar{u}_{\gamma_j,\delta}^2\|_{L^2(\mathbb{T}^1)}^2\,ds + \int^t_0\|\bar{u}_{\gamma_j,\delta}\|_{L^2(\mathbb{T}^1)}^2\,ds\right)^{1/2} < \infty,
$$
as a consequence of the weak convergence of $\partial_x\bar{u}_{\gamma_j,\delta}^2$ in $L^2([0,T];L^2(\mathbb{T}^1))$ and the strong convergence of $\bar{u}_{\gamma_j,\delta}$ in the same space. We now proceed to analyze the convergence of the cubic nonlinear term. By applying Poincar\'e's inequality once more, we obtain
\begin{align*}
\frac{1}{3}\int^t_0\langle\bar{u}_{\gamma_j,\delta}^3 - \bar{u}_\delta^3, \partial_{xx}^2\varphi\rangle\,ds
&= \frac{1}{3}\int^t_0\langle (\bar{u}_{\gamma_j,\delta} - \bar{u}_\delta)(\bar{u}_{\gamma_j,\delta}^2 + \bar{u}_{\gamma_j,\delta}\bar{u}_\delta + \bar{u}_\delta^2), \partial_{xx}^2\varphi \rangle\,ds \\
&\leq \|\varphi\|_{H^4(\mathbb{T}^1)} \left( \int^t_0\|\bar{u}_{\gamma_j,\delta} - \bar{u}_\delta\|_{L^2(\mathbb{T}^1)}^2\,ds \right)^{1/2} \\
& \times \left( \int^t_0 \|\bar{u}_{\gamma_j,\delta}^2 + \bar{u}_{\gamma_j,\delta}\bar{u}_\delta + \bar{u}_\delta^2\|_{L^2(\mathbb{T}^1)}^2\,ds \right)^{1/2} \\
&\lesssim \|\varphi\|_{H^4(\mathbb{T}^1)} \left( \int^t_0\|\bar{u}_{\gamma_j,\delta} - \bar{u}_\delta\|_{L^2(\mathbb{T}^1)}^2\,ds \right)^{1/2} \\
& \times \left( \int^t_0 \|\partial_x\bar{u}_{\gamma_j,\delta}^2\|_{L^2(\mathbb{T}^1)}^2 + \|\bar{u}_{\gamma_j,\delta}\|_{L^2(\mathbb{T}^1)}^2 + \|\partial_x\bar{u}_\delta^2\|_{L^2(\mathbb{T}^1)}^2 + \|\bar{u}_\delta\|_{L^2(\mathbb{T}^1)}^2\,ds \right)^{1/2} \to 0,
\end{align*}
as $j\rightarrow\infty$.

For the stochastic noise term, we apply It\^o's isometry and Young's convolution inequality. For any fixed $t\in[0,T]$, we estimate
\begin{align*}
\mathbb{E}\int^t_0 \langle \partial_x\varphi, \left(\sigma(1 - \gamma_j^{2/3} \bar{u}_{\gamma_j,\delta}^2) - 1\right) d\bar{W} \ast \eta_\delta \rangle
&\leq \mathbb{E}\int^T_0 \left\| \eta_\delta \ast \left( \partial_x\varphi \left( \sigma(1 - \gamma_j^{2/3} \bar{u}_{\gamma_j,\delta}^2) - 1 \right) \right) \right\|_{L^2(\mathbb{T}^1)}^2\,ds \\
&\leq \|\varphi\|_{H^4(\mathbb{T}^1)}^2 \mathbb{E}\int^T_0 \left\| \sigma(1 - \gamma_j^{2/3} \bar{u}_{\gamma_j,\delta}^2) - 1 \right\|_{L^2(\mathbb{T}^1)}^2\,ds.
\end{align*}
By the assumptions on $\sigma(\cdot)$ and the fact that $\gamma_j^{2/3} \bar{u}_{\gamma_j,\delta}^2 \leq 1$ almost surely, we have
$$
\left| \sigma(1 - \gamma_j^{2/3} \bar{u}_{\gamma_j,\delta}^2) - 1 \right| \leq \sqrt{2} + 1.
$$
Hence, with the help of the fact that $\sigma(1)=1$, applying the dominated convergence theorem yields
$$
\lim_{j\to\infty} \mathbb{E} \int^T_0 \left\| \sigma(1 - \gamma_j^{2/3} \bar{u}_{\gamma_j,\delta}^2) - 1 \right\|_{L^2(\mathbb{T}^1)}^2\,ds = \mathbb{E} \int^T_0 \int_{\mathbb{T}^1} \lim_{j\to\infty} \left| \sigma(1 - \gamma_j^{2/3} \bar{u}_{\gamma_j,\delta}^2) - 1 \right|^2\,dx\,ds = 0.
$$
This implies that, up to a subsequence (still denoted by $(\bar{u}_{\gamma_j,\delta})$), we have
$$
\int^t_0 \langle \partial_x\varphi, \left( \sigma(1 - \gamma_j^{2/3} \bar{u}_{\gamma_j,\delta}^2) - 1 \right) d\bar{W}_\delta \rangle \to 0,
$$
as $j \to \infty$, almost surely.

Finally, since for every $k \geq 0$,
$$
\sup_{t\in[0,T]} \left| \bar{B}_{j,\delta}^k(t) - \bar{B}_\delta^k(t) \right| \to 0 \quad \text{almost surely,}
$$
we conclude that
$$
\sqrt{2} \int^t_0 \langle \partial_x\varphi, (d\bar{W}_{j,\delta} - d\bar{W}_\delta) \ast \eta_\delta \rangle \to 0,
$$
as $j \to \infty$, almost surely. These results imply that $\bar{u}_\delta$ is a weak solution of \eqref{CahnHilliard-n} with initial data $\bar{u}_{0}$. Moreover, by the uniqueness of weak solutions to \eqref{CahnHilliard-n}, we conclude that $\bar{u}_\delta$ is the unique weak solution with initial data $\bar{u}_{0}$. An identical passage to the limit can be performed for the subsequence $(\gamma_k')$, yielding a limit $\bar{u}_\delta'$ which is likewise the unique weak solution of \eqref{CahnHilliard-n} with initial data $\bar{u}_{0}$.

By the uniqueness of solutions to \eqref{CahnHilliard-delta}, it follows that $\bar{u}_\delta = \bar{u}_\delta'$ almost surely for almost every $(x,t) \in \mathbb{T}^1 \times [0,T]$. Consequently, when returning to the original probability space, the joint laws of $(X_{\gamma_j}, X_{\gamma_j'})$ converge weakly to a probability measure on $\mathbb{X}^2$ supported on the diagonal set $\{(x,y) \in \mathbb{X}^2 : x = y\}$. Applying Lemma~\ref{lem-diagonal}, we deduce that the family $(X_\gamma)_{\gamma\in(0,1]}$ converges in probability. In particular, by extracting a subsequence, there exists a sequence $(\gamma_j)$ such that $u_{\gamma_j,\delta}$ converges to $u_\delta$ in $L^2([0,T];L^2(\mathbb{T}^1))$ almost surely. Moreover, following the above analysis, $u_\delta$ is the weak solution of \eqref{CahnHilliard-delta}. This concludes the proof.  

\end{proof} 

\begin{theorem}\label{thm-convergence}
Assume that the initial data, the coefficient, and the interaction kernel $J$ satisfy Assumptions (A1), \ref{Assump-sigma}, and \ref{Assump-J}, respectively. For every $\gamma\in(0,1]$ and $\delta > 0$, let $u_{\gamma,\delta}$ denote the weak solution of \eqref{equgamma} with initial condition $u_{\gamma,0}$. Suppose that $a < 0$. Then there exists a subsequence of the family $(u_{\gamma,\delta})_{\gamma,\delta > 0}$, denoted by $(u_n := u_{\gamma(n), \delta(n)})_{n \geq 1}$, such that $\lim_{n \to \infty} (\gamma(n), \delta(n)) = (0,0)$, and
\begin{align*}
\|u_n - u\|_{L^2([0,T]; L^2(\mathbb{T}^1))} \rightarrow 0,
\end{align*}
as $n \to \infty$ in probability, where $u$ is the weak solution of the Cahn-Hilliard equation \eqref{CahnHilliard} with initial data $u_0$.
\end{theorem}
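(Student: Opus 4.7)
The plan is to derive the theorem by combining Proposition \ref{prp-convergence} with Lemma \ref{lem-approx-CH} through a standard diagonal extraction. Proposition \ref{prp-convergence} provides, for each fixed $\delta > 0$, the convergence in probability $u_{\gamma,\delta} \to u_{\delta}$ in $L^2([0,T]; L^2(\mathbb{T}^1))$ as $\gamma \to 0$, where $u_{\delta}$ is the unique weak solution of the stochastic Cahn-Hilliard equation with correlated noise \eqref{CahnHilliard-delta}. Lemma \ref{lem-approx-CH} then furnishes $\mathbb{E}\|u_{\delta} - u\|_{L^2([0,T]; L^2(\mathbb{T}^1))}^2 \to 0$ as $\delta \to 0$, which by Chebyshev's inequality also yields $u_{\delta} \to u$ in probability. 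The task is to diagonalize these two convergences to produce a single subsequence along which both parameters vanish simultaneously.

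First I would fix a sequence $\delta(n) \downarrow 0$, for instance $\delta(n) := 1/n$, and denote by $u_{\delta(n)}$ the weak solution of \eqref{CahnHilliard-delta} with correlation length $\delta(n)$. For each $n$, applying Proposition \ref{prp-convergence} with $\delta = \delta(n)$ fixed, there exists $\gamma(n) \in (0, 1/n)$ sufficiently small that
\[
\mathbb{P}\Big(\|u_{\gamma(n), \delta(n)} - u_{\delta(n)}\|_{L^2([0,T]; L^2(\mathbb{T}^1))} > 1/n\Big) < 1/n.
\]
Such a choice is possible because convergence in probability permits suppression of the tail at any prescribed rate. By construction, $(\gamma(n), \delta(n)) \to (0,0)$, and $u_{\gamma(n), \delta(n)} - u_{\delta(n)} \to 0$ in probability in $L^2([0,T]; L^2(\mathbb{T}^1))$.

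To conclude, I would combine the above with the convergence $u_{\delta(n)} \to u$ in probability via the triangle inequality
\[
\|u_{\gamma(n), \delta(n)} - u\|_{L^2([0,T]; L^2(\mathbb{T}^1))} \leq \|u_{\gamma(n), \delta(n)} - u_{\delta(n)}\|_{L^2([0,T]; L^2(\mathbb{T}^1))} + \|u_{\delta(n)} - u\|_{L^2([0,T]; L^2(\mathbb{T}^1))},
\]
so that for every $\varepsilon > 0$ a union bound gives
\[
\mathbb{P}\Big(\|u_{\gamma(n), \delta(n)} - u\|_{L^2([0,T]; L^2(\mathbb{T}^1))} > \varepsilon\Big) \leq \mathbb{P}\Big(\|u_{\gamma(n), \delta(n)} - u_{\delta(n)}\|_{L^2} > \varepsilon/2\Big) + \mathbb{P}\Big(\|u_{\delta(n)} - u\|_{L^2} > \varepsilon/2\Big),
\]
and both terms on the right tend to zero as $n \to \infty$. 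There is no substantive obstacle here, since both ingredients have already been established; the only point requiring care is that Proposition \ref{prp-convergence} yields convergence merely in probability rather than in $L^p(\Omega)$, so the diagonal selection of $\gamma(n)$ must be performed through tail probabilities rather than expectations.
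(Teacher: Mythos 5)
Your proposal is correct and takes essentially the same approach as the paper: a diagonal extraction combining Proposition \ref{prp-convergence} (fixed $\delta$, $\gamma \to 0$) with Lemma \ref{lem-approx-CH} ($\delta \to 0$) via the triangle inequality and a union bound on tail probabilities. The only cosmetic difference is that the paper first chooses $\delta(n)$ so that Chebyshev yields a quantified bound $\mathbb{P}(\|u_{\delta(n)}-u\|>1/(2n)) < 1/(2n)$ and then selects $\gamma(n)$, whereas you fix $\delta(n)=1/n$ up front and invoke convergence in probability directly in the final union bound; both are valid and lead to the same conclusion.
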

\begin{proof}
	Let $u$ be the weak solution of \eqref{CahnHilliard} with initial data $u_0$. By Lemma~\ref{lem-approx-CH}, for any $\delta > 0$, let $u_\delta$ denote the weak solution of \eqref{CahnHilliard-delta} with initial data $u_0$. Then we have
\begin{align*}
\mathbb{E}\|u_\delta - u\|_{L^2([0,T]; L^2(\mathbb{T}^1))}^2 \rightarrow 0,
\end{align*}
as $\delta \to 0$. Consequently, for every $n \in \mathbb{N}_+$, there exists $\delta(n) > 0$ such that
\begin{align*}
\mathbb{E}\|u_{\delta(n)} - u\|_{L^2([0,T]; L^2(\mathbb{T}^1))}^2 < \frac{1}{8n^3}.
\end{align*}

Applying the triangle inequality, we obtain that for every $\gamma\in(0,1]$,
\begin{align*}
&\mathbb{P}\Big(\|u_{\gamma,\delta(n)} - u\|_{L^2([0,T]; L^2(\mathbb{T}^1))} > \frac{1}{n} \Big)\\
&\leq \mathbb{P}\Big(\|u_{\gamma,\delta(n)} - u_{\delta(n)}\|_{L^2([0,T]; L^2(\mathbb{T}^1))} + \|u_{\delta(n)} - u\|_{L^2([0,T]; L^2(\mathbb{T}^1))} > \frac{1}{n} \Big) \\
&\leq \mathbb{P}\Big(\|u_{\gamma,\delta(n)} - u_{\delta(n)}\|_{L^2([0,T]; L^2(\mathbb{T}^1))} > \frac{1}{2n} \Big) + \mathbb{P}\Big(\|u_{\delta(n)} - u\|_{L^2([0,T]; L^2(\mathbb{T}^1))} > \frac{1}{2n} \Big) \\
&\leq \mathbb{P}\Big(\|u_{\gamma,\delta(n)} - u_{\delta(n)}\|_{L^2([0,T]; L^2(\mathbb{T}^1))} > \frac{1}{2n} \Big) + 4n^2 \, \mathbb{E}\|u_{\delta(n)} - u\|_{L^2([0,T]; L^2(\mathbb{T}^1))}^2 \\
&\leq \mathbb{P}\Big(\|u_{\gamma,\delta(n)} - u_{\delta(n)}\|_{L^2([0,T]; L^2(\mathbb{T}^1))} > \frac{1}{2n} \Big) + \frac{1}{2n},
\end{align*}
where in the penultimate inequality we have used Chebyshev's inequality.

By Proposition \ref{prp-convergence}, there exists $\gamma = \gamma(\delta(n))$ such that
\begin{align*}
\mathbb{P}\Big(\|u_{\gamma,\delta(n)} - u_{\delta(n)}\|_{L^2([0,T]; L^2(\mathbb{T}^1))} > \frac{1}{2n} \Big) \leq \frac{1}{2n}.
\end{align*}
This implies that there exists a pair $(\gamma(n), \delta(n))$ such that
\begin{align*}
\mathbb{P}\Big(\|u_{\gamma(n), \delta(n)} - u\|_{L^2([0,T]; L^2(\mathbb{T}^1))} > \frac{1}{n} \Big) < \frac{1}{n}.
\end{align*}
This completes the proof.

\end{proof}

\begin{remark}
The above analysis establishes a two-step convergence of the nonlinear fluctuations. However, it remains unclear whether there exists an explicit scaling regime $(\gamma, \delta(\gamma))$ under which the same nonlinear fluctuation phenomenon can be observed. In particular, considering \eqref{entropy-gamma-es}, once all occurrences of the parameter $\gamma^{1/3}$ are eliminated, no remaining scaling parameter is available to control the divergence $\delta^{-2/3} \to \infty$. This observation implies that the uniform entropy dissipation estimate derived in Proposition~\ref{prp-entropydissipation} is valid only when $\delta$ is fixed, and does not extend to any joint scaling regime $(\gamma, \delta(\gamma)) \to (0,0)$. 
\end{remark}

\section{Multi-scale large deviations}\label{sec-5}
In this section, we consider a sequence of smooth approximations of the square-root diffusion coefficient and introduce an additional noise intensity parameter into equation \eqref{equgamma}. Our goal is to establish a multi-scale large deviation principle for this equation in the small noise regime. 
\begin{definition}
Let $E$ be a Polish space, and let $(X_{\varepsilon})_{\varepsilon>0}$ be a family of $E$-valued random variables defined on a fixed probability space $(\Omega,\mathcal{F},\mathbb{P})$. A function $I:E \to [0,+\infty]$ is called a \emph{rate function} if it is lower semi-continuous. We say that the family $(X_{\varepsilon})_{\varepsilon>0}$ satisfies a \emph{large deviation principle} (LDP) with speed $\varepsilon$ and rate function $I$ if for every closed set $F \subset E$ and every open set $U \subset E$, the following bounds hold:
\begin{align*}
	\limsup_{\varepsilon \to 0} \varepsilon \log \mathbb{P}(X_{\varepsilon} \in F) &\leq -\inf_{x \in F} I(x), \\
	\liminf_{\varepsilon \to 0} \varepsilon \log \mathbb{P}(X_{\varepsilon} \in U) &\geq -\inf_{x \in U} I(x).
\end{align*} 
\end{definition}

In the following, we introduce a sequence of smooth approximations of the square-root coefficient. 
\begin{lemma}\label{lem-approximation}
There exists a sequence of functions $\{\sigma_{n}\}_{n\in\mathbb{N}}$ such that $\sigma_{n}(\cdot) \to \sqrt{\cdot}$ in $\mathrm{C}_{\mathrm{loc}}^{1}((0,\infty))$ as $n \to \infty$. Moreover, each function $\sigma_n$ satisfies the following properties:
\begin{enumerate}
	\item[(1)] $\sigma_n \in C([0,\infty)) \cap C^{\infty}((0,\infty))$ with $\sigma_n(0) = 0$, and $\sigma_n' \in C_c^{\infty}([0,\infty))$ for every $n \in \mathbb{N}$;
	\item[(2)] there exists a constant $c \in (0,\infty)$ such that for every $\zeta \in [0,\infty)$,
	\begin{align}\label{kk-5.2}
		|\sigma_n(\zeta)| \leq c\sqrt{\zeta} \quad \text{uniformly in } n \in \mathbb{N};
	\end{align}
	\item[(3)] for every $\delta \in (0,1)$, there exists a constant $c_{\delta} \in (0,\infty)$ such that
	\begin{align}\label{eq-5.2}
		[\sigma_n'(\zeta)]^4 \mathbf{1}_{\{\zeta \geq \delta\}} + |\sigma_n(\zeta)\sigma_n'(\zeta)|^2 \mathbf{1}_{\{\zeta \geq \delta\}} \leq c_{\delta} \quad \text{uniformly in } n \in \mathbb{N}.
	\end{align}
\end{enumerate}
	
\end{lemma}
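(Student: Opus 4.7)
The plan is to construct $\sigma_n$ explicitly by regularizing the derivative of the square root. Introduce a cutoff function $\chi_n \in C^\infty_c([0,\infty))$ with $0 \leq \chi_n \leq 1$, $\chi_n \equiv 1$ on $[0,n]$, and $\chi_n \equiv 0$ on $[2n,\infty)$, and set
\begin{align*}
\sigma_n(\zeta) := \int_0^\zeta \frac{\chi_n(s)}{2\sqrt{s + 1/n^2}}\,ds, \qquad \zeta \in [0,\infty).
\end{align*}
The additive shift $1/n^2$ inside the square root removes the singularity of $(\sqrt{\cdot})'$ at the origin, so that the integrand is smooth on $[0,\infty)$, while the cutoff $\chi_n$ renders the support of $\sigma_n'$ compact. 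These two features immediately yield property (1): $\sigma_n \in C^\infty([0,\infty))$ with $\sigma_n(0) = 0$, and $\sigma_n' = \chi_n(\cdot)/(2\sqrt{\cdot + 1/n^2}) \in C^\infty_c([0,\infty))$ with support in $[0,2n]$.

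The verification of (2) proceeds by splitting into three regions. On $[0,n]$, $\chi_n \equiv 1$ so direct integration gives $\sigma_n(\zeta) = \sqrt{\zeta + 1/n^2} - 1/n$, and the elementary bound $\sqrt{\zeta + 1/n^2} \leq \sqrt{\zeta} + 1/n$ yields $\sigma_n(\zeta) \leq \sqrt{\zeta}$. On $[n,2n]$, one still has $\sigma_n(\zeta) \leq \sqrt{\zeta + 1/n^2} - 1/n \leq \sqrt{\zeta}$ because $\chi_n \leq 1$. On $[2n,\infty)$, $\sigma_n$ is constant and bounded by $\sqrt{2n} \leq \sqrt{\zeta}$. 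Thus $|\sigma_n(\zeta)| \leq \sqrt{\zeta}$ uniformly in $n$.

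The heart of the argument lies in (3). The key observation is that on the support of $\chi_n$ one has
\begin{align*}
\sigma_n(\zeta)\sigma_n'(\zeta) \leq \left(\sqrt{\zeta + 1/n^2} - 1/n\right) \cdot \frac{1}{2\sqrt{\zeta + 1/n^2}} = \frac{1}{2} - \frac{1/n}{2\sqrt{\zeta + 1/n^2}} \leq \frac{1}{2},
\end{align*}
while outside $\mathrm{supp}(\chi_n)$ the product vanishes; hence $|\sigma_n\sigma_n'|^2 \leq 1/4$ uniformly on $[0,\infty)$ and in $n$, with no need for the indicator $\mathbf{1}_{\{\zeta \geq \delta\}}$. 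For the first summand in \eqref{eq-5.2}, whenever $\zeta \geq \delta$ one has $\sigma_n'(\zeta) \leq 1/(2\sqrt{\zeta + 1/n^2}) \leq 1/(2\sqrt{\delta})$, so $[\sigma_n'(\zeta)]^4 \leq 1/(16\delta^2)$ uniformly in $n$. Finally, the $C^1_{\text{loc}}((0,\infty))$ convergence follows because on any compact interval $[a,b] \subset (0,\infty)$, once $n \geq b$ one has $\sigma_n = \sqrt{\cdot + 1/n^2} - 1/n$ and $\sigma_n' = 1/(2\sqrt{\cdot + 1/n^2})$ on $[a,b]$, both converging uniformly to $\sqrt{\cdot}$ and $1/(2\sqrt{\cdot})$ respectively.

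The construction presents no serious technical obstacle; the single subtlety is that the uniform product bound $\sigma_n \sigma_n' \leq 1/2$ in (3) is really the scaling identity $\sqrt{\zeta}(\sqrt{\zeta})' = 1/2$ of the unperturbed square root in disguise, which the additive shift $\zeta \mapsto \zeta + 1/n^2$ preserves exactly whereas a multiplicative mollification would not.
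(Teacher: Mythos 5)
Your construction is correct, and since the paper states this lemma without providing a proof, there is nothing to compare against on the author's side. Let me confirm each step works.

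Property (1) is immediate: the integrand $\chi_n(s)/(2\sqrt{s+1/n^2})$ is smooth on $[0,\infty)$ because the shift $1/n^2$ keeps the denominator bounded away from zero, so $\sigma_n\in C^\infty([0,\infty))\subset C([0,\infty))\cap C^\infty((0,\infty))$, $\sigma_n(0)=0$, and $\sigma_n'=\chi_n/(2\sqrt{\cdot+1/n^2})\in C_c^\infty([0,\infty))$ with support in $[0,2n]$. Property (2) holds uniformly with $c=1$: in fact the three-region split is more than you need, since for \emph{every} $\zeta\geq 0$ one has $\sigma_n(\zeta)\leq\int_0^\zeta\frac{ds}{2\sqrt{s+1/n^2}}=\sqrt{\zeta+1/n^2}-1/n\leq\sqrt{\zeta}$ by subadditivity of the square root; the cases $[n,2n]$ and $[2n,\infty)$ are subsumed by this single inequality. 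The verification of property (3) is the most interesting point and is right: $|\sigma_n'(\zeta)|\leq 1/(2\sqrt{\zeta})\leq 1/(2\sqrt{\delta})$ for $\zeta\geq\delta$ controls the first summand by $1/(16\delta^2)$, while the product bound $\sigma_n(\zeta)\sigma_n'(\zeta)\leq(\sqrt{\zeta+1/n^2}-1/n)/(2\sqrt{\zeta+1/n^2})\leq 1/2$ (and $=0$ off $\mathrm{supp}\,\chi_n$) makes the second summand $\leq 1/4$ even without the indicator, so one may take $c_\delta=1/(16\delta^2)+1/4$. The $C^1_{\mathrm{loc}}((0,\infty))$ convergence is also correct: on a compact $[a,b]\subset(0,\infty)$ and $n\geq b$, $\chi_n\equiv 1$ on $[a,b]$ so $\sigma_n=\sqrt{\cdot+1/n^2}-1/n$ and $\sigma_n'=1/(2\sqrt{\cdot+1/n^2})$ there, and both converge uniformly on $[a,b]$ by elementary estimates. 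Your closing remark that the uniform product bound is the scaling identity $\sqrt{\zeta}\,(\sqrt{\zeta})'=1/2$ surviving an additive shift — and would be destroyed by a multiplicative mollification — is a genuine and worthwhile observation, as it explains why this particular regularization is the natural one for the Dean-Kawasaki coercivity structure.
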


In this section, we investigate the small-noise large deviation behavior of the stochastic PDE
\begin{align}\label{eqgamma-1}
\partial_t u_{\varepsilon} =\ & \gamma^{-2/3} \partial_{xx}^2 u_{\varepsilon} - \gamma^{-2/3}(1 + a \gamma^{2/3}) \partial_x \left[ (1 - \gamma^{2/3} u_{\varepsilon}^2) J_{\gamma^{1/3}} \ast \partial_x u_{\varepsilon} \right] \notag \\
& - \sqrt{2} \varepsilon^{1/2} \partial_x \left( \sigma_n(1 - \gamma^{2/3} u_{\varepsilon}^2) \, dW_{\delta} \right) + 4\varepsilon \partial_x \left( F_{1,\delta} \, \sigma_n'(1 - \gamma^{2/3} u_{\varepsilon}^2)^2 \gamma^{4/3} u_{\varepsilon}^2 \partial_x u_{\varepsilon} \right),
\end{align}
with initial condition $u_{\varepsilon}(0) = u_{\gamma,0}$, in the asymptotic regime $(\varepsilon, \gamma(\varepsilon), \delta(\varepsilon), n(\varepsilon)) \to (0, 0, 0, +\infty)$.

Our analysis of the dynamical large deviation principle is based on the weak convergence framework developed in \cite{DE}, which relies on the equivalence between the Laplace principle and the large deviation principle under the assumption that the rate function is good, i.e., it possesses compact sublevel sets. We also refer to \cite{BD} and \cite{BDM11} for further details and foundational results.

To apply this approach, it is necessary to verify two key conditions. The first is the weak-strong stability of the associated skeleton equation. The second is the convergence in distribution of the controlled stochastic system (the so-called stochastic control equation) to the solution of the skeleton equation. These two aspects will be treated separately in the subsequent subsections. 

\subsection{The skeleton Cahn-Hilliard equation}
Let $g\in L^2([0,T];L^2(\mathbb{T}^1))$. Consider the following skeleton Cahn-Hilliard equation: 
\begin{equation}\label{skeleton-CahnHilliard}
\partial_tu=\partial_{xx}^2\Big[V'(u)-\frac{D}{2}\partial_{xx}^2 u\Big]-\sqrt{2}\partial_xg. 
\end{equation}

\begin{theorem}\label{stability-skeleton}
Let $g \in L^2([0,T]; L^2(\mathbb{T}^1))$, $a<0$ and $u_0 \in H^{-1}(\mathbb{T}^1)$. Then there exists a unique weak solution of \eqref{skeleton-CahnHilliard} with initial data $u_0$ and control $g$. 

Furthermore, let $(g_m)_{m \geq 1} \subset L^2([0,T]; L^2(\mathbb{T}^1))$ be a sequence such that
\begin{align*}
	g_m \rightharpoonup g \quad \text{weakly in } L^2([0,T]; L^2(\mathbb{T}^1)),
\end{align*}
as $m \to \infty$. Let $u_m$ denote the weak solution of \eqref{skeleton-CahnHilliard} with initial data $u_0$ and control $g_m$, and let $u$ be the solution corresponding to the control $g$. Then
\begin{align*}
	\|u_m - u\|_{L^2([0,T]; L^2(\mathbb{T}^1))} \to 0,
\end{align*}
as $m \to \infty$. 
\end{theorem}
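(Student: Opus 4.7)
The plan is to proceed in two stages: first establish well-posedness of the skeleton equation via a Galerkin approximation combined with an $H^{-1}$-energy estimate, and second prove weak-strong continuity by a compactness argument yielding strong convergence in $L^2([0,T];L^2(\mathbb{T}^1))$. The decisive structural input is the assumption $a<0$, which ensures that $uV'(u) = u^4 + |a|u^2 \geq 0$ and that $u\mapsto V'(u) = u^3 - au$ is monotone increasing; both properties are crucial, respectively, for the energy method and for the uniqueness argument.

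For existence, I would run a spectral Galerkin approximation in the Fourier basis $(e_k)_{k\geq 0}$, producing finite-dimensional ODEs whose global solvability follows from the a priori bound below. Pairing the $N$-th Galerkin equation with $(-\partial_{xx}^2)^{-1} u_N$ and integrating by parts yields
\begin{align*}
\tfrac{1}{2}\tfrac{d}{dt}\|u_N\|_{H^{-1}}^2 + \|u_N\|_{L^4}^4 + |a|\|u_N\|_{L^2}^2 + \tfrac{D}{2}\|\partial_x u_N\|_{L^2}^2 = \sqrt{2}\langle g,\, \partial_x(-\partial_{xx}^2)^{-1} u_N\rangle,
\end{align*}
and, after absorbing the right-hand side via Young's inequality (using the identity $\|\partial_x(-\partial_{xx}^2)^{-1} u_N\|_{L^2} = \|u_N\|_{H^{-1}}$) and applying Gr\"onwall's inequality, one obtains uniform control of $u_N$ in $L^\infty([0,T];H^{-1}(\mathbb{T}^1)) \cap L^2([0,T];H^1(\mathbb{T}^1)) \cap L^4([0,T];L^4(\mathbb{T}^1))$. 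Standard compactness then produces a weak solution in the sense of Definition~\ref{def-2}. Uniqueness follows from the monotonicity of the cubic: for two solutions $u_1, u_2$ with the same data and control, the difference $w = u_1 - u_2$ satisfies, upon pairing with $(-\partial_{xx}^2)^{-1} w$,
\begin{align*}
\tfrac{1}{2}\tfrac{d}{dt}\|w\|_{H^{-1}}^2 + \int_{\mathbb{T}^1}(u_1^3 - u_2^3)\, w\, dx + |a|\|w\|_{L^2}^2 + \tfrac{D}{2}\|\partial_x w\|_{L^2}^2 = 0,
\end{align*}
and the algebraic identity $(u_1^3 - u_2^3)\, w = w^2(u_1^2 + u_1 u_2 + u_2^2) \geq 0$ forces $w \equiv 0$.

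For the weak-strong continuity, the weak convergence $g_m \rightharpoonup g$ implies $\sup_m \|g_m\|_{L^2([0,T];L^2(\mathbb{T}^1))} < \infty$, so the energy bound provides uniform control of $(u_m)$ in the same three function spaces. From the equation itself I extract time regularity: using the $L^4_t L^4$-bound on $u_m$ one controls $\partial_{xx}^2 u_m^3$ in $L^{4/3}([0,T];H^{-s}(\mathbb{T}^1))$ and the remaining linear terms in $L^2([0,T];H^{-s}(\mathbb{T}^1))$ for a sufficiently large $s$, hence $\partial_t u_m$ is uniformly bounded in $L^{4/3}([0,T];H^{-s}(\mathbb{T}^1))$. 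Combining the space bound in $L^2_t H^1$ with this time regularity, the Aubin-Lions lemma produces strong convergence $u_m \to u$ in $L^2([0,T];L^2(\mathbb{T}^1))$ along a subsequence, together with pointwise a.e.\ convergence along a further subsequence. I then pass to the limit in the weak formulation of Definition~\ref{def-2}: the linear terms are immediate; the control term uses weak convergence of $g_m$; and the cubic nonlinearity is handled by Vitali's theorem, using pointwise convergence and the uniform $L^{4/3}_tL^{4/3}$-bound on $u_m^3$. The limit $u$ therefore solves the skeleton equation with control $g$, and the uniqueness established in stage one upgrades subsequential convergence to convergence of the full sequence.

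The principal technical obstacle I anticipate is precisely the passage to the limit in the cubic term, which requires strong $L^2_t L^2$-convergence rather than merely weak $L^2_t H^1$-convergence. This is what forces the extraction of time regularity from the equation: without it, Aubin-Lions cannot be invoked, and one is reduced either to a Minty-type monotonicity argument (awkward here because of the fourth-order operator coupled to the control term) or to stronger a priori assumptions. The $H^{-1}$-energy estimate is tailored exactly to bypass this difficulty by delivering both the compactness input and, through Vitali, the means to close the nonlinear passage.
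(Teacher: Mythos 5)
Your uniqueness and weak--strong stability arguments coincide with the paper's: the same $H^{-1}$ pairing with $(-\partial_{xx}^2)^{-1}(u_1-u_2)$, the same appeal to monotonicity of $\zeta\mapsto\zeta^3-a\zeta$ for $a<0$, and the same Aubin--Lions compactness with an identification of the limit. (The paper obtains time regularity in $L^1([0,T];H^{-l})$ for $l>9/2$ and calls the identification ``standard''; your $L^{4/3}([0,T];H^{-s})$ bound and your Vitali argument for the cubic are finer-grained but equivalent in substance.) The genuine difference is in existence: the paper uses a Da Prato--Debussche-type splitting $u=z+w$, where $z$ solves the linear fourth-order equation with forcing $g$ and $w$ solves a deterministic Cahn--Hilliard equation with perturbed nonlinearity $(w+z)^3-a(w+z)$, then cites \cite{RYZ21} for the nonlinear part. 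You instead run a spectral Galerkin scheme closed by the $H^{-1}$-energy estimate. Both routes are correct; the paper's choice aligns the skeleton argument with the decomposition already used for the stochastic Cahn--Hilliard equation in Appendix~\ref{sec-app-A}, whereas yours is self-contained and avoids the external reference. One small point you should make explicit when writing this out: on the torus $(-\partial_{xx}^2)^{-1}$ acts only on mean-zero functions, so the $H^{-1}$-energy pairing must be applied to $u_N$ minus its spatial mean, with the (conserved) mean tracked separately; the paper handles the analogous issue in the uniqueness proof via the Parseval computation \eqref{Parseval}, isolating the $k=0$ Fourier mode.
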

\begin{proof}
\textbf{Existence.} We consider the following decomposition:
\begin{align*}
\partial_t z &= -\frac{D}{2}(\partial_{xx}^2)^2 z - \sqrt{2} \partial_x g,\quad z(0) = z_0, \\
\partial_t w &= \partial_{xx}^2 \big[(w + z)^3 - a(w + z) - \tfrac{D}{2} \partial_{xx}^2 w\big],\quad w(0) = w_0,
\end{align*}
where $z_0, w_0 \in H^{-1}(\mathbb{T}^1)$ with $z_0 + w_0 = u_0$. Let $(S(t))_{t \geq 0}$ denote the semigroup generated by the bi-Laplacian operator $-\frac{D}{2}(\partial_{xx}^2)^2$. Then the linear part $z$ can be represented via the mild formulation:
\begin{align*}
z(t) = S(t) z_0 - \sqrt{2} \int_0^t S(t - s) \partial_x g(s)\, ds.
\end{align*}
According to \cite{RYZ21}, there exists a weak solution $w$ to the nonlinear part of the system. Consequently, the function $u := w + z$ defines a weak solution of \eqref{skeleton-CahnHilliard}. 

\textbf{Uniqueness.} 
Let $u_1, u_2$ be two weak solutions of \eqref{skeleton-CahnHilliard} corresponding to the same control $g$, with initial data $u_{1,0}$ and $u_{2,0}$, respectively. By the definition of weak solutions to \eqref{skeleton-CahnHilliard}, the difference $u_1 - u_2$ satisfies, in the weak sense,
\begin{equation}\label{u1-u2}
\partial_t(u_1 - u_2) = -\partial_{xx}^2\big[u_1^3 - u_2^3 - a(u_1 - u_2) - \tfrac{D}{2} \partial_{xx}^2(u_1 - u_2)\big],
\end{equation}
with initial data $u_{1,0} - u_{2,0}$. 

Due to the regularity of $u_1$ and $u_2$, and since $C^\infty([0,T] \times \mathbb{T}^1)$ is dense in $L^2([0,T]; \dot{H}^3(\mathbb{T}^1)) \cap C([0,T]; H^{-l}(\mathbb{T}^1))$ for some $l > \tfrac{5}{2}$, we may use $-(-\partial_{xx}^2)^{-1}(u_1 - u_2)$ as a test function. Taking the $L^2(\mathbb{T}^1)$-inner product of \eqref{u1-u2} with this function, and applying the chain rule, we obtain:
\begin{align*}
&\frac{1}{2}\sup_{t \in [0,T]} \|u_1(t) - u_2(t)\|_{\dot{H}^{-1}(\mathbb{T}^1)}^2 
+ D \int_0^T \|\partial_x(u_1(s) - u_2(s))\|_{L^2(\mathbb{T}^1)}^2 \, ds \\
&= \frac{1}{2}\|u_{1,0} - u_{2,0}\|_{\dot{H}^{-1}(\mathbb{T}^1)}^2 
- \int_0^T \langle u_1 - u_2, (u_1^3 - a u_1) - (u_2^3 - a u_2) \rangle \, ds.
\end{align*}

Since the function $u \mapsto u^3 - a u$ is monotone increasing for every $a < 0$, the last term is non-positive:
\begin{align*}
- \int_0^T \langle u_1 - u_2, (u_1^3 - a u_1) - (u_2^3 - a u_2) \rangle \, ds \leq 0.
\end{align*}

In addition, since $u_1 - u_2$ satisfies \eqref{u1-u2} weakly, we may test it with the constant function $\varphi \equiv 1$, which yields
$$
\langle (u_1 - u_2)(t), 1 \rangle = \langle u_{1,0} - u_{2,0}, 1 \rangle \quad \text{for all } t \in [0,T].
$$

Now, applying Parseval's identity, we obtain:
\begin{align}\label{Parseval}
\|u_1 - u_2\|_{L^2([0,T]; L^2(\mathbb{T}^1))}^2 
&= \int_0^T \sum_{k \in \mathbb{Z}} \langle u_1 - u_2, e_k \rangle^2 \, dt \notag\\
&= \int_0^T \sum_{k \in \mathbb{Z} \setminus \{0\}} \langle u_1 - u_2, e_k \rangle^2 \, dt + \langle u_{1,0} - u_{2,0}, 1 \rangle^2 T \notag\\
&\leq \int_0^T \sum_{k \in \mathbb{Z} \setminus \{0\}} |k|^2 \langle u_1 - u_2, e_k \rangle^2 \, dt + \langle u_{1,0} - u_{2,0}, 1 \rangle^2 T \notag\\
&\leq \int_0^T \|\partial_x(u_1(t) - u_2(t))\|_{L^2(\mathbb{T}^1)}^2 \, dt + \langle u_{1,0} - u_{2,0}, 1 \rangle^2 T \notag\\
&\leq C(T) \|u_{1,0} - u_{2,0}\|_{\dot{H}^{-1}(\mathbb{T}^1)}^2+\langle u_{1,0}-u_{2,0},1\rangle^2T. 
\end{align}

This implies that $u_1 = u_2$ in $L^2([0,T]; L^2(\mathbb{T}^1))$ as long as $u_{1,0}=u_{2,0}$ in $H^{-1}(\mathbb{T}^1)$, and thus the weak solution to \eqref{skeleton-CahnHilliard} is unique. 

\textbf{Weak-strong stability.} 
Let $u_m$ be the solution to \eqref{skeleton-CahnHilliard} with initial data $u_0$ and control $g_m$. Applying the $H^{-1}(\mathbb{T}^1)$-energy estimate and using the chain rule, we obtain
\begin{align*}
\frac{1}{2} \sup_{t \in [0,T]} \|u_m(t)\|_{H^{-1}(\mathbb{T}^1)}^2 
+ D \int_0^T \|\partial_x u_m(s)\|_{L^2(\mathbb{T}^1)}^2 \, ds&+\int^T_0\|u_m(s)\|_{L^4(\mathbb{T}^1)}^4-a\|u_m(s)\|_{L^2(\mathbb{T}^1)}^2ds\\
\leq& \frac{1}{2}\|u_0\|_{H^{-1}(\mathbb{T}^1)}^2 
+ \sqrt{2} \int_0^T \langle (-\partial_{xx}^2)^{-1} u_m, \partial_x g_m \rangle \, ds.
\end{align*}

To estimate the right-hand side, we integrate by parts and apply Young's convolution inequality:
\begin{align*}
\left| \sqrt{2} \int_0^T \langle (-\partial_{xx}^2)^{-1} u_m, \partial_x g_m \rangle \, ds \right|
&= \left| \sqrt{2} \int_0^T \langle \partial_x (-\partial_{xx}^2)^{-1} u_m, g_m \rangle \, ds \right| \\
&\leq \sqrt{2} \int_0^T \|\partial_x (-\partial_{xx}^2)^{-1} u_m\|_{L^2(\mathbb{T}^1)} \|g_m\|_{L^2(\mathbb{T}^1)} \, ds \\
&\leq \sqrt{2} \int_0^T \left[ \frac{1}{2} \left\| \partial_{xx}^2 (-\partial_{xx}^2)^{-1} \int_{-1/2}^{\cdot} u_m(y) \, dy \right\|_{L^2(\mathbb{T}^1)}^2 
+ \frac{1}{2} \|g_m\|_{L^2(\mathbb{T}^1)}^2 \right] ds \\
&\leq C \int_0^T \|u_m(s)\|_{L^2(\mathbb{T}^1)}^2 \, ds 
+ C \sup_{m \geq 1} \|g_m\|_{L^2([0,T]; L^2(\mathbb{T}^1))}^2.
\end{align*}

Applying Gronwall's inequality then yields the uniform estimate
\begin{align*}
\frac{1}{2} \sup_{t \in [0,T]} \|u_m(t)\|_{H^{-1}(\mathbb{T}^1)}^2 
+ D \int_0^T \|\partial_x u_m(s)\|_{L^2(\mathbb{T}^1)}^2 \, ds 
\leq C(T) \left( \|u_0\|_{H^{-1}(\mathbb{T}^1)}^2 
+ \sup_{m \geq 1} \|g_m\|_{L^2([0,T]; L^2(\mathbb{T}^1))}^2 \right).
\end{align*}

As in \eqref{Parseval}, we further deduce that
\begin{align*}
\int_0^T \|u_m(s)\|_{L^2(\mathbb{T}^1)}^2 \, ds 
+ \int_0^T \|\partial_x u_m(s)\|_{L^2(\mathbb{T}^1)}^2 \, ds 
\leq C(T) \left( \|u_0\|_{H^{-1}(\mathbb{T}^1)}^2 
+ \sup_{m \geq 1} \|g_m\|_{L^2([0,T]; L^2(\mathbb{T}^1))}^2 \right).
\end{align*}

Moreover, for every $l > \tfrac{9}{2}$, a direct computation gives
$$
\|\partial_t u_m\|_{L^1([0,T]; H^{-l}(\mathbb{T}^1))} 
\leq C(T) \left( \|u_0\|_{H^{-1}(\mathbb{T}^1)}^2 
+ \sup_{m \geq 1} \|g_m\|_{L^2([0,T]; L^2(\mathbb{T}^1))}^2 \right).
$$

Thus, by the Aubin-Lions lemma, the sequence $(u_m)_{m \geq 1}$ is relatively compact in $L^2([0,T]; L^2(\mathbb{T}^1))$. 

Let $g$ be the weak limit of $(g_m)_{m \geq 1}$ in $L^2([0,T]; L^2(\mathbb{T}^1))$. A standard passage-to-the-limit argument shows that any limit point $u$ of $(u_m)_{m \geq 1}$ is a weak solution to \eqref{skeleton-CahnHilliard} with control $g$. This concludes the proof.

\end{proof}

\subsection{Large deviations from the deterministic Cahn-Hilliard limit} 
Let
$$
\mathcal{A}_T=\left\{g\ \text{is progressively measurable}:\ g\in L^2\big(\Omega; L^2([0,T]; L^2(\mathbb{T}^1))\big)\right\}.
$$
For a given sequence $(g_{\varepsilon})_{\varepsilon>0} \subset \mathcal{A}_T$, we consider the stochastic control equation
\begin{align}\label{eqgamma-2}
du_{\varepsilon} 
=&\, \gamma^{-2/3} \partial_{xx}^2 u_{\varepsilon} \, dt 
- \gamma^{-2/3}(1 + a\gamma^{2/3}) \partial_x \big[(1 - \gamma^{2/3} u_{\varepsilon}^2) J_{\gamma^{1/3}} \ast \partial_x u_{\varepsilon}\big] \, dt \notag \\
&- \sqrt{2} \varepsilon^{1/2} \partial_x \big(\sigma_n (1 - \gamma^{2/3} u_{\varepsilon}^2) dW_{\delta}\big) 
+ 4 \varepsilon \partial_x \Big( F_{1,\delta} \sigma_n'(1 - \gamma^{2/3} u_{\varepsilon}^2)^2 \gamma^{4/3} u_{\varepsilon}^2 \partial_x u_{\varepsilon} \Big) dt \notag \\
&- \sqrt{2} \partial_x \big(\sigma_n (1 - \gamma^{2/3} u_{\gamma}^2) g_{\varepsilon}\big) \, dt, \quad u_{\varepsilon}(0) = u_{\gamma,0}.
\end{align}

We remark that, for notational simplicity, the dependence of the solution on the parameters $\gamma$, $\delta$, and $n$ is suppressed throughout, and the weak solution to equation \eqref{eqgamma-2} will be denoted simply by $u_{\varepsilon}$. In the subsequent analysis, we shall consider a scaling regime 
$$
(\varepsilon, \gamma(\varepsilon), \delta(\varepsilon), n(\varepsilon)) \to (0, 0, 0, \infty)
$$
under which a large deviation principle will be established. The concept of weak solutions for equation \eqref{eqgamma-2} can be formulated analogously to Definition \ref{def-1}. By leveraging the analytical framework developed in \cite{WWZ22, WZ22}, we obtain the following well-posedness result for equation \eqref{eqgamma-2}.

\begin{theorem}
For every fixed parameter $\gamma\in(0,1]$, $\varepsilon,\delta>0$ and $n\geq1$, let the initial data $u_{\gamma,0}$ belong to the space $\overline{{\rm Ent}}(\mathbb{T}^1)$. Then there exists a unique weak solution $u_{\varepsilon}$ to the stochastic control equation \eqref{eqgamma-2} corresponding to the initial condition $u_{\varepsilon,0}$. 
\end{theorem}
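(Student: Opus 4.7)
The plan is to adapt the renormalized kinetic solution framework developed in \cite{WWZ22} for the uncontrolled equation \eqref{equgamma}, treating the extra drift $-\sqrt{2}\partial_x(\sigma_n(1-\gamma^{2/3}u_\varepsilon^2)g_\varepsilon)\,dt$ as a lower-order random perturbation. Since $g_\varepsilon\in L^2(\Omega;L^2([0,T];L^2(\mathbb{T}^1)))$ is progressively measurable and $\sigma_n$ is globally bounded by $c\sqrt{\,\cdot\,}$ with $\sigma_n\in C([0,\infty))\cap C^\infty((0,\infty))$ (Lemma \ref{lem-approximation}), this additional term is well-defined distributionally and amenable to the same kinetic/energy machinery used to treat the Stratonovich-It\^o corrected noise in \eqref{equgamma}.

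For existence, I would first regularize by truncating the control, $g_\varepsilon^M:=g_\varepsilon\mathbf 1_{\{|g_\varepsilon|\le M\}}$, and by smoothing the square-root-type coefficient (the latter step is now trivial since $\sigma_n$ is already smooth for fixed $n$). For each truncation level $M$, existence of a weak solution $u_{\varepsilon}^M$ is obtained via a Galerkin scheme together with a fixed-point argument, using that for fixed $(\gamma,\varepsilon,\delta,n)$ all coefficients are Lipschitz on the bounded interval $[-\gamma^{-1/3},\gamma^{-1/3}]$. The vanishing of $\sigma_n$ at the admissible endpoints, combined with the structure of the Kac term, ensures that the $L^\infty$-bound $-\gamma^{-1/3}\le u_\varepsilon^M\le \gamma^{-1/3}$ is preserved almost surely. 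Then, applying It\^o's formula to $\Psi_\gamma(u_\varepsilon^M)$ as in Proposition \ref{prp-entropydissipation}, the control contribution produces an extra term
\begin{align*}
\sqrt{2}\,\mathbb{E}\int_0^t\!\!\int_{\mathbb{T}^1}\psi_\gamma'(u_\varepsilon^M)\,\partial_x u_\varepsilon^M\,\sigma_n(1-\gamma^{2/3}(u_\varepsilon^M)^2)\,g_\varepsilon^M\,dx\,ds,
\end{align*}
which, using the bound $|\psi_\gamma'(\zeta)|(1-\gamma^{2/3}\zeta^2)\le\gamma^{1/3}$ from \eqref{bound-psi-prime} and $|\sigma_n|\le c\sqrt{\cdot\,}$, can be absorbed into the parabolic dissipation via Young's inequality at the expense of a constant depending on $\mathbb{E}\|g_\varepsilon\|_{L^2([0,T];L^2(\mathbb{T}^1))}^2$. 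These uniform-in-$M$ estimates together with a time-regularity estimate in the spirit of Proposition \ref{second-uniform-es} yield tightness in $L^2([0,T];L^2(\mathbb{T}^1))$; a standard compactness and Skorokhod-type passage to the limit as $M\to\infty$ (following the scheme of Section \ref{sec-4}) then produces a weak solution of \eqref{eqgamma-2}.

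For uniqueness, I would employ the doubling-of-variables method in the kinetic formulation developed in \cite{WWZ22}. The additional control term enters the kinetic equation as a transport-type contribution whose coefficient $\sigma_n(1-\gamma^{2/3}\cdot^2)$ is Lipschitz in the velocity variable, uniformly in $g_\varepsilon$, so it generates only a Gronwall-controlled contribution in the $L^1$-contraction inequality between two solutions. Tracking the resulting error and applying Gronwall's lemma with the random integrand $\|g_\varepsilon(s)\|_{L^2(\mathbb{T}^1)}^2$ (which is pathwise integrable on $[0,T]$) gives pathwise $L^1$-uniqueness.

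The main technical obstacle will be handling the interaction between the control term and the renormalization procedure near the boundary $u=\pm\gamma^{-1/3}$: one must verify that the contributions coming from the signed measures in the kinetic formulation are compatible with the additional drift, and that the entropy-type estimate is closed with constants depending only on $\mathbb{E}\|g_\varepsilon\|_{L^2([0,T];L^2(\mathbb{T}^1))}^2$, independently of the regularization parameter $M$. Once this is done, the arguments of \cite{WWZ22,WZ24} transfer essentially verbatim and deliver both existence and uniqueness of a weak solution to \eqref{eqgamma-2}.
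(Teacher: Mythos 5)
The paper gives no proof for this theorem: it is stated as a consequence of the framework in \cite{WWZ22,WZ22}, with only the one-line remark that the analytical methods of those references apply. Your proposal is a reasonable reconstruction of what that citation points to, and the main ingredients you identify — renormalized kinetic solutions, an entropy dissipation estimate in which the control contribution is absorbed by Young's inequality into the parabolic dissipation, compactness, and $L^1$-contraction via doubling of variables — are exactly the ones the cited framework and the paper itself (see the entropy estimate for the controlled equation, Proposition~\ref{entropydissipation-stochasticcontrol}) rely on.

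Two points deserve caution. First, the claim that a Galerkin scheme preserves the bound $-\gamma^{-1/3}\leq u_{\varepsilon}^{M}\leq\gamma^{-1/3}$ is not correct as stated: finite-dimensional spectral projections do not respect pointwise bounds, even when the continuous equation does, so the $L^{\infty}$-constraint cannot be read off from the Galerkin approximation. The framework of \cite{FG24,WWZ22} instead works with the kinetic formulation (where the constraint is encoded in the support of the kinetic function $\chi_{\gamma}$), or with a regularized PDE that admits a comparison/maximum-principle argument; you should replace the Galerkin step with one of these constructions, or at least note that an $L^{\infty}$-preserving approximation is required. Second, the precise quantity that is uniformly bounded after Young's inequality is $\psi_{\gamma}'(\zeta)\,\sigma_{n}(1-\gamma^{2/3}\zeta^{2})^{2}\lesssim\gamma^{1/3}$, not $\psi_{\gamma}'(\zeta)\,\sigma_{n}(1-\gamma^{2/3}\zeta^{2})$; the latter behaves like $\gamma^{1/3}(1-\gamma^{2/3}\zeta^{2})^{-1/2}$ and is unbounded near $\zeta=\pm\gamma^{-1/3}$. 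The estimate closes because Young's inequality on $\psi_{\gamma}'\,\partial_{x}u\,\sigma_{n}\,g$ produces the square $\psi_{\gamma}'\sigma_{n}^{2}|g|^{2}$, exactly as in Proposition~\ref{entropydissipation-stochasticcontrol}; your sketch should be phrased accordingly. With these two corrections the plan agrees with the approach the paper invokes.
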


In the following, we show a uniform entropy dissipation estimate for the stochastic control equation. 
\begin{proposition}\label{entropydissipation-stochasticcontrol}
Assume that the initial data and the interaction kernel $J$ satisfy Assumptions (A2) and \ref{Assump-J}, respectively. For each $\gamma\in(0,1]$, $\varepsilon, \delta > 0$ and $n\geq1$, let the control $g_{\varepsilon} \subset \mathcal{A}_T$ satisfy the uniform bound
$$
\sup_{\varepsilon > 0} \|g_{\varepsilon}\|_{L^2(\Omega; L^2([0,T]; L^2(\mathbb{T}^1)))}^2 < \infty.
$$
Let $u_{\varepsilon}$ denote the weak solution to equation \eqref{eqgamma-2} with initial data $u_{\gamma,0}$ and control $g_{\varepsilon}$. Suppose that $a < 0$. Then we have the uniform energy estimate
\begin{align}\label{uniformestimate-gep}
\sup_{t \in [0,T]} \mathbb{E} \left( \int_{\mathbb{T}^1} \frac{1}{2} |u_{\varepsilon}(t)|^2 \, dx \right) 
&+ \mathbb{E} \int_0^T \int_{\mathbb{T}^1} \frac{u_{\varepsilon}^2 |\partial_x u_{\varepsilon}|^2}{1 - \gamma^{2/3} u_{\varepsilon}^2} \, dx \, dt 
- a \mathbb{E} \int_0^T \|\partial_x u_{\varepsilon}\|_{L^2(\mathbb{T}^1)}^2 \, ds \notag\\
&\lesssim 1 + \varepsilon \delta^{-2/3} + \mathbb{E} \|g_{\varepsilon}\|_{L^2([0,T]; L^2(\mathbb{T}^1))}^2, 
\end{align}
for every $\gamma\in(0,1]$, $\varepsilon, \delta > 0$ and $n\geq1$.

\end{proposition}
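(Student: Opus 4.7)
The plan is to mirror the strategy of Proposition~\ref{prp-entropydissipation}, now tracking the additional control drift $-\sqrt{2}\partial_x(\sigma_n(1-\gamma^{2/3}u_\varepsilon^2)g_\varepsilon)$ and the $\varepsilon$-scaling of the stochastic intensity. First I would apply It\^o's formula to $\int_{\mathbb{T}^1}\Psi_{\gamma,\bar{\delta}}(u_\varepsilon(t))\,dx$ for the regularized entropy $\Psi_{\gamma,\bar{\delta}}$ introduced in the proof of Proposition~\ref{prp-entropydissipation}, and take expectation so that the It\^o stochastic integral vanishes. By integration by parts, the Stratonovich-It\^o correction drift $4\varepsilon\partial_x(F_{1,\delta}\sigma_n'^2\gamma^{4/3}u_\varepsilon^2\partial_x u_\varepsilon)$ produces exactly the negative of the $F_{1,\delta}$-piece of the quadratic-variation contribution $\tfrac{1}{2}\int\psi_{\gamma,\bar{\delta}}'(u_\varepsilon)(du_\varepsilon)^2\,dx$, since $\partial_x\sigma_n(1-\gamma^{2/3}u_\varepsilon^2)=-2\gamma^{2/3}u_\varepsilon\sigma_n'\,\partial_x u_\varepsilon$; these two pieces therefore cancel, as in the uncontrolled case.

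What remains is the dissipation term $\gamma^{-2/3}\int\psi_{\gamma,\bar{\delta}}'(u_\varepsilon)|\partial_x u_\varepsilon|^2$, the Kac cross term (treated verbatim as in the proof of Proposition~\ref{prp-entropydissipation} via the pointwise bound \eqref{bound-psi-prime}, Young's inequality, and the normalization $\|J_{\gamma^{1/3}}\|_{L^1}=1$, using $a<0$ to absorb the leading $\gamma^{-1/3}\|\partial_x u_\varepsilon\|_{L^2}^2$), the residual It\^o contribution $\varepsilon\int\psi_{\gamma,\bar{\delta}}'(u_\varepsilon)\sigma_n^2(1-\gamma^{2/3}u_\varepsilon^2)F_{3,\delta}\,dx$, and the new control term $\sqrt{2}\int\psi_{\gamma,\bar{\delta}}'(u_\varepsilon)\partial_x u_\varepsilon\,\sigma_n(1-\gamma^{2/3}u_\varepsilon^2)g_\varepsilon\,dx$. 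For the It\^o residual I would use $\psi_{\gamma,\bar{\delta}}'(\zeta)\sigma_n^2(1-\gamma^{2/3}\zeta^2)\le c^2\gamma^{1/3}$, which is immediate from \eqref{kk-5.2} and the explicit form of $\psi_{\gamma,\bar{\delta}}'$; combined with the spatial scaling of $F_{3,\delta}$, this produces the $CT\varepsilon\gamma^{1/3}\delta^{-2/3}$ contribution.

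For the control term, the same two bounds give $|\psi_{\gamma,\bar{\delta}}'(u_\varepsilon)\sigma_n(1-\gamma^{2/3}u_\varepsilon^2)|\le c\gamma^{1/3}/\sqrt{1-\gamma^{2/3}u_\varepsilon^2}$, so that Young's inequality with a carefully chosen small parameter yields
\begin{align*}
\sqrt{2}\Big|\int\psi_{\gamma,\bar{\delta}}'(u_\varepsilon)\partial_x u_\varepsilon\sigma_n g_\varepsilon\,dx\Big|\le \tfrac{1}{2}\gamma^{-1/3}\int\frac{|\partial_x u_\varepsilon|^2}{1+\bar{\delta}-\gamma^{2/3}u_\varepsilon^2}\,dx+C\gamma\,\|g_\varepsilon\|_{L^2(\mathbb{T}^1)}^2,
\end{align*}
the first summand being absorbed into the dissipation. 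After passing $\bar{\delta}\to 0$ by dominated and monotone convergence, invoking the lower bound $\Psi_\gamma(\zeta)\ge\tfrac{1}{2}\gamma^{1/3}\zeta^2-\gamma^{-1/3}$ from Lemma~\ref{lem-tech-1} to extract the $L^2$-energy on the left, and using Assumption~(A2)(iv) to control $\int\Psi_\gamma(u_{\gamma,0})+\gamma^{-1/3}\,dx\lesssim\gamma^{1/3}$, I divide through by $\gamma^{1/3}$ to arrive at \eqref{uniformestimate-gep}.

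The main obstacle is the last estimate: the prefactor $\gamma^{1/3}$ coming from $\psi_{\gamma,\bar{\delta}}'\sigma_n$ must combine with the $\gamma^{-1/3}$ scaling of the dissipation to leave exactly one factor of $\gamma$ in front of $\|g_\varepsilon\|_{L^2}^2$, which, after the final division by $\gamma^{1/3}$, becomes $\gamma^{2/3}\le 1$ and hence contributes a constant independent of all parameters. Without the sharp growth bound \eqref{kk-5.2}, uniform in $n$, this Young-type absorption would fail and the control contribution would diverge in the regime $n(\varepsilon)\to\infty$; this is precisely why the approximating family $\{\sigma_n\}$ from Lemma~\ref{lem-approximation} is chosen with the square-root growth preserved uniformly in $n$.
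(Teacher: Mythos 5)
Your proposal follows the same strategy as the paper's own proof: regularize the entropy to $\Psi_{\gamma,\bar{\delta}}$, apply It\^o's formula and take expectations, exploit the cancellation between the Stratonovich--It\^o correction drift and the $F_{1,\delta}$ part of the quadratic variation, bound the Kac cross term as in Proposition~\ref{prp-entropydissipation}, absorb the control drift via Young's inequality using the uniform-in-$n$ bound $|\sigma_n(\zeta)|\le c\sqrt{\zeta}$, pass $\bar{\delta}\to 0$, invoke the lower bound from Lemma~\ref{lem-tech-1} together with Assumption~(A2)(iv), and divide by $\gamma^{1/3}$. The only cosmetic differences are in the choice of Young's constants --- you balance so as to produce a sharper $\gamma^{2/3}\|g_\varepsilon\|_{L^2}^2$ contribution after the final division, whereas the paper keeps $\frac14$ of the dissipation and lands on $\|g_\varepsilon\|_{L^2}^2$ directly --- and both yield the stated bound since $\gamma\le 1$.
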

\begin{proof}
	Similar to the proof of Proposition \ref{prp-entropydissipation}, we first regularized the entropy function. For every $\bar{\delta}>0$, define
\begin{align*}
\Psi_{\gamma,\bar{\delta}}(\zeta) &= \frac{\gamma^{-1/3}}{2(1+\bar{\delta})} \left[ (1+\bar{\delta}+\gamma^{1/3}\zeta)\log(1+\bar{\delta}+\gamma^{1/3}\zeta) + (1+\bar{\delta}-\gamma^{1/3}\zeta)\log(1+\bar{\delta}-\gamma^{1/3}\zeta) - 2 \right], \\
\psi_{\gamma,\bar{\delta}}(\zeta) &= \Psi'_{\gamma,\bar{\delta}}(\zeta) = \frac{1}{2(1+\bar{\delta})} \log\left( \frac{1+\bar{\delta}+\gamma^{1/3}\zeta}{1+\bar{\delta}-\gamma^{1/3}\zeta} \right), \quad \text{for all } \zeta \in [-\gamma^{-1/3}, \gamma^{1/3}].
\end{align*}
We also recall that 
$$
\psi'_{\gamma,\bar{\delta}}(\zeta) = \frac{\gamma^{1/3}}{(1+\bar{\delta})^2 - \gamma^{2/3}\zeta^2}, \quad \text{for all } \zeta \in [-\gamma^{-1/3}, \gamma^{1/3}].
$$

Applying It\^o's formula to $\Psi_{\gamma,\bar{\delta}}(u_{\varepsilon})$ (\cite[Theorem 3.1]{Krylov}), we obtain
\begin{align*}
& \int_{\mathbb{T}^1} \Psi_{\gamma,\bar{\delta}}(u_{\varepsilon}(t))\,dx
+ \gamma^{-2/3} \int_0^t \int_{\mathbb{T}^1} \psi'_{\gamma,\bar{\delta}}(u_{\varepsilon}) |\partial_x u_{\varepsilon}|^2 \, dx \, ds \\
=& \int_{\mathbb{T}^1} \Psi_{\gamma}(u_{\gamma,0})\,dx
+ \gamma^{-2/3}(1 + a\gamma^{2/3}) \int_0^t \int_{\mathbb{T}^1} \psi'_{\gamma,\bar{\delta}}(u_{\varepsilon}) \partial_x u_{\varepsilon}(1 - \gamma^{2/3} u_{\varepsilon}^2) J_{\gamma^{1/3}} \ast \partial_x u_{\varepsilon} \, dx \, ds \\
& + \sqrt{2} \varepsilon^{1/2} \int_0^t \int_{\mathbb{T}^1} \psi'_{\gamma,\bar{\delta}}(u_{\varepsilon}) \partial_x u_{\varepsilon} \sigma_n(1 - \gamma^{2/3} u_{\varepsilon}^2) \, dW_{\delta} \, dx \\
& + \varepsilon \int_0^t \int_{\mathbb{T}^1} \psi'_{\gamma,\bar{\delta}}(u_{\varepsilon}) \sigma_n(1 - \gamma^{2/3} u_{\varepsilon}^2)^2 F_{3,\delta} \, dx \, ds \\
& - 4 \varepsilon \gamma^{4/3} \int_0^t \int_{\mathbb{T}^1} \psi'_{\gamma,\bar{\delta}}(u_{\varepsilon}) |\partial_x u_{\varepsilon}|^2 \sigma_n'(1 - \gamma^{2/3} u_{\varepsilon}^2)^2 u_{\varepsilon}^2 F_{1,\delta} \, dx \, ds \\
& + 4 \varepsilon \gamma^{4/3} \int_0^t \int_{\mathbb{T}^1} \psi'_{\gamma,\bar{\delta}}(u_{\varepsilon}) |\partial_x u_{\varepsilon}|^2 \sigma_n'(1 - \gamma^{2/3} u_{\varepsilon}^2)^2 u_{\varepsilon}^2 F_{1,\delta} \, dx \, ds \\
& + \sqrt{2} \int_0^t \int_{\mathbb{T}^1} \psi'_{\gamma,\bar{\delta}}(u_{\varepsilon}) \partial_x u_{\varepsilon} \sigma_n(1 - \gamma^{2/3} u_{\varepsilon}^2) g_{\varepsilon} \, dx \, ds. 
\end{align*}
In comparison with the proof of Proposition \ref{prp-entropydissipation}, the main additional difficulty lies in the additional stochastic control term. To this end, we apply Young's inequality to the stochastic control term, which yields the estimate
\begin{align*}
&\int_{\mathbb{T}^1} \Psi_{\gamma,\bar{\delta}}(u_{\varepsilon}(t))\,dx
+ \gamma^{-2/3} \int_0^t \int_{\mathbb{T}^1} \psi'_{\gamma,\bar{\delta}}(u_{\varepsilon}) |\partial_x u_{\varepsilon}|^2 \, dx \, ds \\
\leq& \int_{\mathbb{T}^1} \Psi_{\gamma}(u_{\gamma,0})\,dx
+ \gamma^{-2/3}(1 + a\gamma^{2/3}) \int_0^t \int_{\mathbb{T}^1} \psi'_{\gamma,\bar{\delta}}(u_{\varepsilon}) \partial_x u_{\varepsilon}(1 - \gamma^{2/3} u_{\varepsilon}^2) J_{\gamma^{1/3}} \ast \partial_x u_{\varepsilon} \, dx \, ds \\
& + \sqrt{2} \varepsilon^{1/2} \int_0^t \int_{\mathbb{T}^1} \psi'_{\gamma,\bar{\delta}}(u_{\varepsilon}) \partial_x u_{\varepsilon} \sigma_n(1 - \gamma^{2/3} u_{\varepsilon}^2) \, dW_{\delta} \, dx \\
& + \varepsilon \gamma^{1/3} \int_0^t \int_{\mathbb{T}^1} F_{3,\delta} \, dx \, ds 
+ \frac{1}{4} \int_0^t \int_{\mathbb{T}^1} \psi'_{\gamma,\bar{\delta}}(u_{\varepsilon}) |\partial_x u_{\varepsilon}|^2 \, dx \, ds \\
& + C \int_0^t \int_{\mathbb{T}^1} \psi'_{\gamma,\bar{\delta}}(u_{\varepsilon}) \sigma_n(1 - \gamma^{2/3} u_{\varepsilon}^2)^2 |g_{\varepsilon}|^2 \, dx \, ds.
\end{align*}

By invoking the same reasoning as in the proof of Proposition \ref{prp-entropydissipation}, and exploiting the fact that $\psi'_{\gamma,\bar{\delta}}(u_{\varepsilon})\sigma_n(1 - \gamma^{2/3} u_{\varepsilon}^2)^2$ remains positive and uniformly bounded by a constant of order $\gamma^{1/3}$, we derive the desired entropy dissipation estimate. 
\begin{align*}
\gamma^{1/3} \sup_{t \in [0,T]} \mathbb{E} \left( \int_{\mathbb{T}^1} \left( \frac{1}{2} |u_{\varepsilon}(t)|^2 - \gamma^{-1/3} \right) dx \right)
&+ \gamma^{1/3} \mathbb{E} \int_0^T \int_{\mathbb{T}^1} \frac{u_{\varepsilon}^2 |\partial_x u_{\varepsilon}|^2}{1 - \gamma^{2/3} u_{\varepsilon}^2} \, dx \, dt \\
& - a \gamma^{1/3} \mathbb{E} \int_0^T \|\partial_x u_{\varepsilon}\|_{L^2(\mathbb{T}^1)}^2 \, ds \\
&\lesssim \int_{\mathbb{T}^1} \Psi_{\gamma}(u_{\gamma,0}) \, dx 
+ \gamma^{1/3} \varepsilon \delta^{-2/3} 
+ \gamma^{1/3} \mathbb{E} \|g_{\varepsilon}\|_{L^2([0,T]; L^2(\mathbb{T}^1))}^2.
\end{align*}
This confirms that the presence of the stochastic control does not alter the scaling structure of the bound and allows us to maintain uniform control over the solution $u_\varepsilon$ in the relevant energy norms. In particular, when $a < 0$, it follows that
\begin{align*}
\gamma^{1/3} \sup_{t \in [0,T]} \mathbb{E} \left( \int_{\mathbb{T}^1} \frac{1}{2} |u_{\gamma}(t)|^2 \, dx \right)
&+ \gamma^{1/3} \mathbb{E} \int_0^T \int_{\mathbb{T}^1} \frac{u_{\gamma}^2 |\partial_x u_{\gamma}|^2}{1 - \gamma^{2/3} u_{\gamma}^2} \, dx \, dt \\
& - a \gamma^{1/3} \mathbb{E} \int_0^T \|\partial_x u_{\gamma}\|_{L^2(\mathbb{T}^1)}^2 \, ds \\
&\lesssim \gamma^{1/3} \left( 1 + \varepsilon \delta^{-2/3} + \mathbb{E} \|g_{\varepsilon}\|_{L^2([0,T]; L^2(\mathbb{T}^1))}^2 \right).
\end{align*}

This completes the proof.

\end{proof}

\begin{proposition}\label{second-uniform-es-stochasticcontrol}
	Assume that the initial data and the interaction kernel $J$ satisfy Assumptions (A2) and \ref{Assump-J}, respectively. For every $\gamma\in(0,1]$, $\varepsilon,\delta>0$ and $n\geq1$, let $g_{\varepsilon}\subset \mathcal{A}_T$ satisfy the uniform bound
$$
\sup_{\varepsilon>0} \|g_{\varepsilon}\|_{L^2(\Omega;L^2([0,T];L^2(\mathbb{T}^1)))}^2 < \infty.
$$
Let $u_{\varepsilon}$ denote the weak solution to \eqref{eqgamma-2} with initial data $u_{\gamma,0}$ and stochastic control $g_{\varepsilon}$. Suppose moreover that $a < 0$. Then, for every $\alpha \in (0,1/2)$ and $\beta > \frac{13}{2}$, there exists a constant $C = C(u_0,T) > 0$ such that the following time-regularity estimate holds:
\begin{equation}
	\mathbb{E} \|u_{\varepsilon}\|_{W^{\alpha,2}([0,T];H^{-\beta}(\mathbb{T}^1))} \leq C(u_0,T) + C\varepsilon\gamma^{4/3}\delta^{-1/3} \|\sigma_{n}'(\cdot)\|_{L^{\infty}(\mathbb{R})}^2.
\end{equation}

\end{proposition}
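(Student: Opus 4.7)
The plan is to mirror the proof of Proposition \ref{second-uniform-es} line by line, with $u_\gamma$ replaced by $u_\varepsilon$ and relying on the entropy dissipation bound \eqref{uniformestimate-gep} of Proposition \ref{entropydissipation-stochasticcontrol} in place of \eqref{uniformestimate}. First, I would rewrite \eqref{eqgamma-2} in the form analogous to \eqref{rewriteform}: Taylor-expand the two pieces $-\gamma^{-2/3}(1+a\gamma^{2/3})\partial_x[J_{\gamma^{1/3}}\ast\partial_x u_\varepsilon]$ and $(1+a\gamma^{2/3})\partial_x[u_\varepsilon^2 J_{\gamma^{1/3}}\ast\partial_x u_\varepsilon]$ to produce the Cahn-Hilliard backbone $-\tfrac{D}{2}\partial_{xxxx}^4 u_\varepsilon+\partial_{xx}^2[\tfrac{1}{3}u_\varepsilon^3-au_\varepsilon]$ together with four remainders $R_{1,\gamma,\varepsilon},\dots,R_{4,\gamma,\varepsilon}$ defined exactly as in \eqref{remainder} but with $u_\gamma\mapsto u_\varepsilon$, where $R_{4,\gamma,\varepsilon}$ (the Stratonovich-Itô correction) now carries an extra factor $\varepsilon$. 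The Cameron-Martin perturbation contributes an additional deterministic drift
\begin{equation*}
R_{5,\gamma,\varepsilon}:=-\sqrt{2}\,\partial_x\bigl(\sigma_n(1-\gamma^{2/3}u_\varepsilon^2)\,g_\varepsilon\bigr),
\end{equation*}
while the martingale part is now of size $\varepsilon^{1/2}$.

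Next, taking the $W^{\alpha,2}([0,T];H^{-\beta}(\mathbb{T}^1))$-norm on both sides and exploiting the embedding $W^{1,1}([0,T];H^{-\beta}(\mathbb{T}^1))\hookrightarrow W^{\alpha,2}([0,T];H^{-\beta}(\mathbb{T}^1))$ for every drift piece, the estimates for the terms $I_1,I_2,I_4,I_5,I_6$ from Proposition \ref{second-uniform-es} transfer verbatim once \eqref{uniformestimate-gep} replaces \eqref{uniformestimate}. The additional summand $\varepsilon\delta^{-2/3}+\mathbb{E}\|g_\varepsilon\|_{L^2([0,T];L^2(\mathbb{T}^1))}^2$ on the right-hand side of \eqref{uniformestimate-gep} is uniformly bounded in $\varepsilon$ by the standing hypothesis on $g_\varepsilon$ and is therefore absorbed into $C(u_0,T)$. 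For the stochastic integral, applying \cite[Lemma 2.1]{FG95} together with \eqref{kk-5.2} yields
\begin{equation*}
\mathbb{E}\Bigl\|\int_0^\cdot \sqrt{2}\,\varepsilon^{1/2}\partial_x\bigl(\sigma_n(1-\gamma^{2/3}u_\varepsilon^2)\,dW_\delta\bigr)\Bigr\|_{W^{\alpha,2}([0,T];H^{-\beta}(\mathbb{T}^1))}\le C\varepsilon^{1/2},
\end{equation*}
which is again absorbed into $C(u_0,T)$. The Stratonovich-Itô term $R_{4,\gamma,\varepsilon}$, now carrying the extra factor $\varepsilon$, produces exactly the announced bound $C\varepsilon\gamma^{4/3}\delta^{-1/3}\|\sigma_n'(\cdot)\|_{L^\infty(\mathbb{R})}^2$ through the same computation as in \eqref{I7}.

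The only genuinely new contribution is $R_{5,\gamma,\varepsilon}$. Using the embedding $L^1(\mathbb{T}^1)\hookrightarrow H^{-\beta+1}(\mathbb{T}^1)$, integration by parts, Cauchy-Schwarz in $t$, and the bound $|\sigma_n(1-\gamma^{2/3}u_\varepsilon^2)|\le c$ from \eqref{kk-5.2}, one obtains
\begin{equation*}
\mathbb{E}\Bigl\|\int_0^\cdot R_{5,\gamma,\varepsilon}\,ds\Bigr\|_{W^{1,1}([0,T];H^{-\beta}(\mathbb{T}^1))}\lesssim \mathbb{E}\int_0^T\|\sigma_n(1-\gamma^{2/3}u_\varepsilon^2)\,g_\varepsilon\|_{L^1(\mathbb{T}^1)}\,ds\lesssim T^{1/2}\bigl(\mathbb{E}\|g_\varepsilon\|_{L^2([0,T];L^2(\mathbb{T}^1))}^2\bigr)^{1/2},
\end{equation*}
which is finite by the hypothesis on $g_\varepsilon$ and contributes only to $C(u_0,T)$. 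No new structural difficulty arises beyond what is already present in Proposition \ref{second-uniform-es}; the main piece of bookkeeping is to track the $\varepsilon$-dependence carefully through the Stratonovich-Itô correction so that the sole place where $\|\sigma_n'\|_{L^\infty(\mathbb{R})}$ enters the final bound is the $R_{4,\gamma,\varepsilon}$ term, yielding the desired inequality.
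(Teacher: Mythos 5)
Your proposal follows the paper's proof essentially line by line: rewrite \eqref{eqgamma-2} in the Cahn--Hilliard-plus-remainders form, estimate the same seven terms as in Proposition \ref{second-uniform-es} using the entropy bound \eqref{uniformestimate-gep} in place of \eqref{uniformestimate}, track the extra factor $\varepsilon$ in the Stratonovich--It\^o correction (the paper's $I_7$), and bound the single new term coming from the control $g_\varepsilon$ via the boundedness of $\sigma_n$ and the standing $L^2(\Omega;L^2_tL^2_x)$-bound on $g_\varepsilon$. This is exactly the paper's argument (the paper's $I_8$), so the proposal is correct and takes the same route.
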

\begin{proof}
	Taking the $W^{\alpha,2}([0,T];H^{-\beta}(\mathbb{T}^1))$-norm on both sides of the equation in the reformulated version \eqref{rewriteform}, and using the Sobolev embedding 
$$
W^{1,1}([0,T];H^{-\beta}(\mathbb{T}^1)) \subset W^{\alpha,2}([0,T];H^{-\beta}(\mathbb{T}^1)),
$$
we obtain the estimate
\begin{align*}
\mathbb{E}\|u_{\varepsilon}\|_{W^{\alpha,2}([0,T];H^{-\beta}(\mathbb{T}^1))}
\leq\,& \|u_{\gamma,0}\|_{L^1(\mathbb{T}^1)} T 
+ \mathbb{E} \Big\|\int_0^{\cdot} \tfrac{D}{2} \partial_x^4 u_{\varepsilon} \, ds \Big\|_{W^{1,1}([0,T];H^{-\beta}(\mathbb{T}^1))} \\
& + \mathbb{E} \Big\|\int_0^{\cdot} \partial_{xx}^2 \left[\tfrac{1}{3} u_{\varepsilon}^3 - a u_{\varepsilon}\right] \, ds \Big\|_{W^{1,1}([0,T];H^{-\beta}(\mathbb{T}^1))} \\
& + \mathbb{E} \Big\|\int_0^{\cdot} \sqrt{2} \partial_x\Big(\sigma_n(1 - \gamma^{2/3} u_{\varepsilon}^2) \, dW_{\delta} \Big) \Big\|_{W^{\alpha,2}([0,T];H^{-\beta}(\mathbb{T}^1))} \\
& + \sum_{j=1}^4 \mathbb{E} \Big\|\int_0^{\cdot} R_{j,\varepsilon} \, ds \Big\|_{W^{1,1}([0,T];H^{-\beta}(\mathbb{T}^1))} \\
& + \mathbb{E} \Big\|\int_0^{\cdot} \sqrt{2} \partial_x \Big( \sigma_n(1 - \gamma^{2/3} u_{\varepsilon}^2) g_{\varepsilon} \ast \eta_{\delta} \Big) \, ds \Big\|_{W^{1,1}([0,T];H^{-\beta}(\mathbb{T}^1))} \\
=:&\ \|u_{\gamma,0}\|_{L^1(\mathbb{T}^1)} T + I_1 + I_2 + I_3 + I_4 + I_5 + I_6 + I_7 + I_8.
\end{align*}

For the estimates of $I_1$ through $I_7$, we follow the same strategy as in Proposition \ref{second-uniform-es}. The term $I_8$ is additional, and to better understand the dependency on parameter $n$, we reconsider $I_7$ and note that
\begin{align*}
I_7 =\mathbb{E} \Big\|\int_0^{\cdot} R_{4,\varepsilon} \, ds \Big\|_{W^{1,1}([0,T];H^{-\beta}(\mathbb{T}^1))}\leq C \varepsilon \|\sigma_n'(\cdot)\|_{L^{\infty}(\mathbb{R})}^2 \gamma^{4/3} \delta^{-1/3}.
\end{align*}
It thus remains to estimate $I_8$. Using the boundedness of the prefactor $1 - \gamma^{2/3} u_{\varepsilon}^2$ and applying Young's inequality, we find
\begin{align*}
I_8 \lesssim \sup_{\varepsilon > 0} \|g_{\varepsilon}\|_{L^2(\Omega;L^2([0,T];L^2(\mathbb{T}^1)))}^2.
\end{align*}
This concludes the proof.

\end{proof}

\begin{theorem}\label{converg-stochasticcontrol}
Assume that the initial data and the interaction kernel $J$ satisfy Assumptions (A2) and \ref{Assump-J}, respectively. Let $a < 0$. For every $\gamma\in(0,1]$, $\varepsilon,\delta>0$ and $n\geq1$, let $g_{\varepsilon},g \in \mathcal{A}_T$. Let $u_{\varepsilon}$ denote the weak solution to \eqref{eqgamma-2} with initial data $u_{\gamma,0}$ and control $g_{\varepsilon}$, and let $u$ denote the weak solution to \eqref{skeleton-CahnHilliard} with initial data $u_{\gamma,0}$ and control $g$. Suppose that $g_{\varepsilon} \rightharpoonup g$ weakly in $L^2(\Omega;L^2([0,T];L^2(\mathbb{T}^1)))$. 

Then, under a scaling regime 
$$
(\varepsilon,\gamma(\varepsilon),\delta(\varepsilon),n(\varepsilon)) \rightarrow (0,0,0,+\infty)
$$
such that
\begin{align} \label{scaling}
\varepsilon\Big(\delta(\varepsilon)^{-2/3} + \gamma(\varepsilon)^{4/3} \delta(\varepsilon)^{-1/3} \|\sigma_{n(\varepsilon)}'(\cdot)\|_{L^{\infty}(\mathbb{R})}^2\Big) \rightarrow 0,
\end{align}
we have the convergence
\begin{align*}
\|u_{\varepsilon} - u\|_{L^2([0,T];L^2(\mathbb{T}^1))} \rightarrow 0
\end{align*}
in distribution as $\varepsilon \rightarrow 0$.
\end{theorem}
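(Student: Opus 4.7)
The plan is to adapt the compactness-and-Skorokhod argument used in the proof of Proposition \ref{prp-convergence}, suitably extended to accommodate the extra stochastic control term. Under the scaling regime \eqref{scaling}, the right-hand sides of Propositions \ref{entropydissipation-stochasticcontrol} and \ref{second-uniform-es-stochasticcontrol} remain uniformly bounded in $\varepsilon$, so combining Aubin-Lions as in Proposition \ref{prp-tight} with Chebyshev's inequality applied to the uniform $L^2(\Omega;L^2)$-bound on $g_\varepsilon$, the quadruple $(u_\varepsilon,\partial_x u_\varepsilon^2, g_\varepsilon, u_{\gamma(\varepsilon),0})$ is tight in
\begin{align*}
\mathbb{Y} = \bigl(L^2([0,T];L^2(\mathbb{T}^1)) \cap (L^2([0,T];H^1(\mathbb{T}^1)),w)\bigr) \times (L^2([0,T];L^2(\mathbb{T}^1)),w)^2 \times H^{-1}(\mathbb{T}^1).
\end{align*}
Following the Krylov diagonal strategy of Lemma \ref{lem-diagonal}, I would fix two subsequences $\varepsilon_j$, $\varepsilon_k'$ and apply the Skorokhod-Jakubowski representation theorem jointly, producing a new probability space on which $\bar{u}_{\varepsilon_j}\to\bar{u}$ strongly in $L^2([0,T];L^2(\mathbb{T}^1))$ and weakly in $L^2([0,T];H^1(\mathbb{T}^1))$, $\bar{g}_{\varepsilon_j}\rightharpoonup\bar{g}$ weakly in $L^2([0,T];L^2(\mathbb{T}^1))$, $\bar{u}_{\gamma(\varepsilon_j),0}\to\bar{u}_0$ in $H^{-1}(\mathbb{T}^1)$ almost surely, and analogously for the primed subsequence.

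Testing the It\^o reformulation of \eqref{eqgamma-2} (the analogue of \eqref{rewriteform} with an additional term $-\sqrt{2}\partial_x(\sigma_n(1-\gamma^{2/3} u_\varepsilon^2) g_\varepsilon \ast \eta_\delta)$) against a smooth test function $\varphi \in C^\infty(\mathbb{T}^1)$, I would pass to the limit as in Proposition \ref{prp-convergence}. The linear Laplacian, bi-Laplacian and cubic nonlinear terms converge by the same strong-$L^2$ and weak-$L^2$ arguments; the stochastic noise martingale $\sqrt{2}\varepsilon^{1/2}\partial_x(\sigma_{n(\varepsilon)}(\cdot)\, dW_{\delta(\varepsilon)})$ vanishes since its $H^{-\beta}$-variance is bounded by $C\varepsilon\delta(\varepsilon)^{-2/3}\to 0$; the It\^o correction $R_{4,\varepsilon}$ is controlled by $C\varepsilon\gamma(\varepsilon)^{4/3}\delta(\varepsilon)^{-1/3}\|\sigma_{n(\varepsilon)}'\|_{L^\infty(\mathbb{R})}^2\to 0$; and the Kac-expansion remainders $R_{1,\varepsilon}$, $R_{2,\varepsilon}$, $R_{3,\varepsilon}$ decay in powers of $\gamma(\varepsilon)$ exactly as in the proofs of Propositions \ref{second-uniform-es} and \ref{prp-convergence}. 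All of these contributions vanish under \eqref{scaling}.

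The crucial new step is the passage to the limit in the control term. After integration by parts this becomes
\begin{align*}
\sqrt{2}\int_0^t\int_{\mathbb{T}^1}\sigma_{n(\varepsilon)}\bigl(1-\gamma(\varepsilon)^{2/3}\bar{u}_\varepsilon^2\bigr)\,\bar{g}_\varepsilon\,\bigl(\partial_x\varphi \ast \eta_{\delta(\varepsilon)}\bigr)\,dx\,ds.
\end{align*}
Since $|\sigma_n(\zeta)|\le c\sqrt{\zeta}$ uniformly in $n$, $\sigma_n\to\sqrt{\cdot}$ in $C^1_{\mathrm{loc}}((0,\infty))$ with $\sigma_n(1)=1$, and $\gamma(\varepsilon)^{2/3}\bar{u}_\varepsilon^2\to 0$ in $L^1([0,T]\times\mathbb{T}^1)$ almost surely along a subsequence (from the uniform $L^2$-bound on $\bar{u}_\varepsilon$ combined with $\gamma(\varepsilon)\to 0$), the dominated convergence theorem yields $\sigma_{n(\varepsilon)}(1-\gamma(\varepsilon)^{2/3}\bar{u}_\varepsilon^2)(\partial_x\varphi\ast\eta_{\delta(\varepsilon)})\to\partial_x\varphi$ strongly in $L^2([0,T];L^2(\mathbb{T}^1))$. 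The strong-weak product rule then delivers the limit $\sqrt{2}\int_0^t\langle \bar{g},\partial_x\varphi\rangle\,ds$, so $\bar{u}$ solves \eqref{skeleton-CahnHilliard} with control $\bar{g}$; the analogous argument gives $\bar{u}'$ as a solution with control $\bar{g}'$. Since $\bar{g}$ and $\bar{g}'$ share the same law as $g$ and the joint Skorokhod extraction couples them consistently, one obtains $\bar{g}=\bar{g}'$ almost surely, and Theorem \ref{stability-skeleton} then forces $\bar{u}=\bar{u}'$. The joint law of $(u_{\varepsilon_j},u_{\varepsilon_k'})$ concentrates on the diagonal, yielding convergence of $\|u_\varepsilon-u\|_{L^2([0,T];L^2(\mathbb{T}^1))}$ to $0$ in probability and hence in distribution.

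The main obstacle is that $\|\sigma_{n(\varepsilon)}'\|_{L^\infty(\mathbb{R})}$ may diverge as $n(\varepsilon)\to\infty$, since the derivative of $\sqrt{\zeta}$ is singular at the origin; without careful control, the It\^o-correction term $R_{4,\varepsilon}$ could dominate. The scaling condition \eqref{scaling} is precisely tailored to absorb this by balancing the three competing rates $\varepsilon\delta^{-2/3}$, $\varepsilon\gamma^{4/3}\delta^{-1/3}\|\sigma_n'\|_{L^\infty}^2$, and the $\gamma^{1/3}$-order Kac remainders. A secondary delicate point is ensuring that the joint Skorokhod coupling of the two subsequences produces $\bar{g}=\bar{g}'$ on the common probability space rather than merely equality in law, which is what enables the uniqueness argument to close the diagonal.
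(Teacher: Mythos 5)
Your overall strategy differs from the paper's in an important way, and the place where they diverge is precisely where your argument has a genuine gap. The paper performs a \emph{single} Skorokhod--Jakubowski extraction of the tight family $(u_\varepsilon,\partial_x u_\varepsilon^2,(B^k)_{k\geq0},g_\varepsilon)$, identifies the a.s.\ limit $\bar u$ as a weak solution of \eqref{skeleton-CahnHilliard} with control $\bar g$, and then uses the elementary ``every subsequence has a further subsequence converging in distribution to the same limit'' argument to conclude convergence in distribution of the whole family. You instead reach for the Krylov diagonal lemma (Lemma~\ref{lem-diagonal}), which is what the paper uses in Proposition~\ref{prp-convergence} but deliberately \emph{not} here, and which requires showing that the joint law of $(u_{\varepsilon_j},u_{\varepsilon_k'})$ concentrates on the diagonal.

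The problem is the step where you claim ``since $\bar g$ and $\bar g'$ share the same law as $g$ and the joint Skorokhod extraction couples them consistently, one obtains $\bar g=\bar g'$ almost surely.'' This does not follow. In Proposition~\ref{prp-convergence} the diagonal argument closes because the two copies are driven by the \emph{same} Brownian motions $(B^k)_{k\geq0}$, so after the joint extraction both $\bar u_\delta$ and $\bar u'_\delta$ solve the same SPDE with the same noise, and pathwise uniqueness forces them to coincide. Here the roles of the Brownian motions are played by the controls, and the controls $g_{\varepsilon_j}$ and $g_{\varepsilon'_k}$ are genuinely different random elements that are merely assumed to converge \emph{weakly} to the same $g$ in $L^2(\Omega;L^2([0,T];L^2(\mathbb{T}^1)))$. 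Weak convergence of both to $g$ does not imply that the joint law of the pair concentrates on the diagonal, and the Skorokhod coupling of two subsequences does not manufacture an almost-sure identification $\bar g=\bar g'$ out of equality in law. Two random variables with the same law can, and generically will, be distinct, so you cannot invoke the uniqueness of Theorem~\ref{stability-skeleton} to deduce $\bar u=\bar u'$. You flag this as a ``secondary delicate point,'' but it is in fact the load-bearing step, and as written the diagonal argument does not close. The paper sidesteps the issue entirely: with a single extraction and the subsequence argument, one only needs each extracted limit to solve \eqref{skeleton-CahnHilliard} with a control of the same law as $g$ together with uniqueness of that weak solution, never a pathwise identification of two independently extracted controls. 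The remainder of your argument---the uniform estimates under \eqref{scaling}, the tightness, the three-term decomposition of the control integrand (your strong--weak product argument is essentially the paper's $I_1+I_2+I_3$ split), and the vanishing of the martingale and the $R_{i,\varepsilon}$ remainders---matches the paper's proof and is sound.
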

\begin{proof}
	Since $g_{\varepsilon} \rightharpoonup g$ weakly in $L^2(\Omega;L^2([0,T];L^2(\mathbb{T}^1)))$, there exists a constant $C > 0$ such that
\begin{align*}
\sup_{\varepsilon > 0} \|g_{\varepsilon}\|_{L^2(\Omega;L^2([0,T];L^2(\mathbb{T}^1)))} + \|g\|_{L^2(\Omega;L^2([0,T];L^2(\mathbb{T}^1)))} \leq C.
\end{align*}
As a consequence, under the scaling regime \eqref{scaling}, we may invoke Propositions \ref{entropydissipation-stochasticcontrol} and \ref{second-uniform-es-stochasticcontrol} to obtain the uniform estimate
\begin{align*}
\sup_{t \in [0,T]} \mathbb{E} \left( \int_{\mathbb{T}^1} \frac{1}{2} |u_{\varepsilon}(t)|^2 \, dx \right)
&+ \mathbb{E} \int_0^T \int_{\mathbb{T}^1} \frac{u_{\varepsilon}^2 |\partial_x u_{\varepsilon}|^2}{1 - \gamma^{2/3} u_{\varepsilon}^2} \, dx \, dt \\
&- a \, \mathbb{E} \int_0^T \|\partial_x u_{\varepsilon}\|_{L^2(\mathbb{T}^1)}^2 \, ds
+ \mathbb{E} \|u_{\varepsilon}\|_{W^{\alpha,2}([0,T];H^{-\beta}(\mathbb{T}^1))} \leq C.
\end{align*}

By applying the Aubin-Lions compactness criterion, it follows that the family 
$$
(u_{\varepsilon},\, \partial_x u_{\varepsilon}^2,\, (B^k)_{k \geq 0},\, (g_{\varepsilon})_{\varepsilon})_{\varepsilon > 0}
$$
is tight in the space
$$
L^2([0,T];L^2(\mathbb{T}^1)) \cap (L^2([0,T];H^1(\mathbb{T}^1)), w)
\times (L^2([0,T];L^2(\mathbb{T}^1)), w)
\times C([0,T];\mathbb{R}^{\mathbb{N}})
\times (L^2([0,T];L^2(\mathbb{T}^1)), w).
$$

Using the Skorokhod-Jakubowski representation theorem \cite{Jak97}, there exists a new probability space $(\bar{\Omega}, \bar{\mathcal{F}}, \bar{\mathbb{P}})$ and a sequence of random variables $(\bar{u}_{\varepsilon}, \bar{v}_{\varepsilon}, (\bar{B}^k_{\varepsilon})_{k \geq 0}, \bar{g}_{\varepsilon})$ and a limit $(\bar{u}, \bar{v}, (\bar{B}^k)_{k \geq 0}, \bar{g})$ such that for every $\varepsilon > 0$,
\begin{align*}
(u_{\varepsilon}, \partial_x u_{\varepsilon}^2, (B^k)_{k \geq 0}, g_{\varepsilon}) 
\overset{d}{=} 
(\bar{u}_{\varepsilon}, \bar{v}_{\varepsilon}, (\bar{B}^k_{\varepsilon})_{k \geq 0}, \bar{g}_{\varepsilon}), \\
(u, \partial_x u^2, (B^k)_{k \geq 0}, g) 
\overset{d}{=} 
(\bar{u}, \bar{v}, (\bar{B}^k)_{k \geq 0}, \bar{g}).
\end{align*}

Furthermore, the following convergences hold almost surely as $\varepsilon \to 0$: 
\begin{itemize}
    \item $\bar{u}_{\varepsilon} \to \bar{u}$ in $L^2([0,T];L^2(\mathbb{T}^1))\cap (L^2([0,T];H^1(\mathbb{T}^1)), w)$,
    \item $\bar{v}_{\varepsilon}  \rightharpoonup \bar{v}$ weakly in $L^2([0,T];L^2(\mathbb{T}^1))$,
    \item $\bar{g}_{\varepsilon} \rightharpoonup \bar{g}$ weakly in $L^2([0,T];L^2(\mathbb{T}^1))$,
    \item for every $k \geq 0$, $\sup_{t \in [0,T]} |\bar{B}^k_{\varepsilon}(t) - \bar{B}^k(t)| \to 0$.
\end{itemize}

Moreover, by repeating the identification argument used in Theorem \ref{thm-convergence}, we conclude that for every $\varepsilon > 0$, almost surely,
\begin{align*}
\bar{v}_{\varepsilon} = \partial_x \bar{u}_{\varepsilon}^2\  \text{and } \bar{v} = \partial_x \bar{u}^2, \quad \text{for almost every } (t,x) \in [0,T] \times \mathbb{T}^1.
\end{align*}
Let $\bar{W}_{\varepsilon}$ and $\bar{W}$ denote the noises generated by $(\bar{B}^k_{\varepsilon})_{k\geq0}$ and $(\bar{B}^k)_{k\geq0}$, respectively. 

By applying the definitions of weak solutions to equations \eqref{eqgamma-2} and \eqref{skeleton-CahnHilliard}, we obtain the following identity, valid almost surely for every $\varphi \in C^{\infty}(\mathbb{T}^1)$ and every $t \in [0,T]$:
\begin{align}\label{passagetothelimits-2}
	\langle (\bar{u}_{\varepsilon}-\bar{u})(t),\varphi\rangle=&(\langle u_{\gamma(\varepsilon),0},\varphi\rangle-\langle u_0,\varphi\rangle)+\frac{D}{2}\int^t_0\langle \bar{u}_{\varepsilon}-\bar{u},\partial_{xxxx}^4\varphi\rangle ds\notag\\
	&-\int^t_0\langle\Big[\frac{1}{3}(\bar{u}_{\varepsilon}^3-\bar{u}^3)-a(\bar{u}_{\varepsilon}-\bar{u})\Big],\partial_{xx}^2\varphi\rangle ds\notag\\
	&-\sqrt{2}\varepsilon^{1/2}\int^t_0\langle \partial_{x}\varphi,(\sigma_{n(\varepsilon)}(1-\gamma(\varepsilon)^{2/3}\bar{u}_{\varepsilon}^2)-1)d\bar{W}_{\varepsilon}\ast\eta_{\delta(\varepsilon)}\rangle\notag\\
	&-\sqrt{2}\varepsilon^{1/2}\int^t_0\langle\partial_x\varphi,(d\bar{W}_{\varepsilon}\ast{\eta_{\delta(\varepsilon)}}-d\bar{W})\rangle\notag\\
	&-\int^t_0\langle R_{1,\varepsilon}+R_{2,\varepsilon}+R_{3,\varepsilon}+R_{4,\varepsilon},\varphi\rangle ds\notag\\
	&+\int^t_0\langle\partial_x\varphi,\sigma_{n(\varepsilon)}(1-\gamma(\varepsilon)^{2/3}\bar{u}_{\varepsilon}^2)\bar{g}_{\varepsilon}\ast\eta_{\delta(\varepsilon)}-\bar{g}\rangle ds.  
\end{align}

Compared to the proof of Theorem \ref{thm-convergence}, the only additional term is the one involving the stochastic control. All other terms can be handled using estimates identical to those presented in the proof of Theorem \ref{thm-convergence}. Therefore, it suffices to address the convergence of the remaining term:
$$
\int^t_0\langle\partial_x\varphi,\sigma_{n(\varepsilon)}(1-\gamma(\varepsilon)^{2/3}\bar{u}_{\varepsilon}^2)\bar{g}_{\varepsilon}\ast\eta_{\delta(\varepsilon)}-\bar{g}\rangle ds \rightarrow 0, \quad \text{almost surely.}
$$
To this end, we decompose the integrand as follows:
\begin{align*}
\int^t_0\langle\partial_x\varphi,\sigma_{n(\varepsilon)}(1-\gamma(\varepsilon)^{2/3}\bar{u}_{\varepsilon}^2)\bar{g}_{\varepsilon}\ast\eta_{\delta(\varepsilon)}-\bar{g}\rangle ds &= \int^t_0\langle\partial_x\varphi,[\sigma_{n(\varepsilon)}(1-\gamma(\varepsilon)^{2/3}\bar{u}_{\varepsilon}^2)-1]\bar{g}_{\varepsilon}\ast\eta_{\delta(\varepsilon)}\rangle ds\\
&\quad + \int^t_0\langle\partial_x\varphi,\bar{g}_{\varepsilon}\ast\eta_{\delta(\varepsilon)}-\bar{g}_{\varepsilon}\rangle ds\\
&\quad + \int^t_0\langle\partial_x\varphi,\bar{g}_{\varepsilon}-\bar{g}\rangle ds\\
&=: I_1 + I_2 + I_3.
\end{align*}

To estimate $I_1$, we use H\"older's inequality, the Young convolution inequality, and the dominated convergence theorem to obtain:
$$
I_1 \leq C(\varphi)\|\sigma_{n(\varepsilon)}(1-\gamma(\varepsilon)\bar{u}_{\gamma(\varepsilon)}^2)-1\|_{L^2([0,T];L^2(\mathbb{T}^1))} \sup_{\varepsilon>0}\|\bar{g}_{\varepsilon}\|_{L^2(\bar{\Omega};L^2([0,T];L^2(\mathbb{T}^1)))} \rightarrow 0
$$
as $\varepsilon \to 0$.  
For $I_2$, by exploiting the symmetry of the convolution kernel $\eta_{\delta(\varepsilon)}$ and applying H\"older's inequality, we deduce
$$
I_2 = \int^t_0\langle\partial_x\varphi\ast\eta_{\delta(\varepsilon)} - \partial_x\varphi, \bar{g}_{\varepsilon}\rangle ds \leq \|\partial_x\varphi\ast\eta_{\delta(\varepsilon)} - \partial_x\varphi\|_{L^2([0,T];L^2(\mathbb{T}^1))} \sup_{\varepsilon>0}\|\bar{g}_{\varepsilon}\|_{L^2(\bar{\Omega};L^2([0,T];L^2(\mathbb{T}^1)))} \rightarrow 0,
$$
as $\varepsilon \to 0$.  

Finally, the convergence $I_3 \to 0$ follows from the weak convergence $\bar{g}_{\varepsilon} \rightharpoonup \bar{g}$ in $L^2(\bar{\Omega};L^2([0,T];L^2(\mathbb{T}^1)))$. Hence, we conclude that the limit $u$ satisfies the skeleton equation \eqref{skeleton-CahnHilliard} in the weak sense.

It remains to show that the entire sequence $(u_{\varepsilon})_{\varepsilon>0}$ on the original probability space converges in distribution. Since $(\bar{u}_{\varepsilon})_{\varepsilon>0}$ and its version $(u_{\varepsilon})_{\varepsilon>0}$ on the original probability space are identically distributed, the version on the original space also admits a subsequence converging in distribution to $u$ in $L^2([0,T];L^2(\mathbb{T}^1))$. For an arbitrary subsequence $(u_{\varepsilon_k})_{k>0}$, the compactness argument and the preceding passage-to-the-limit analysis yield the existence of a further subsequence $(u_{\varepsilon_{k_l}})_{l>0}$ that converges in distribution to $u$ in $L^2([0,T];L^2(\mathbb{T}^1))$. This shows that the whole sequence $(u_{\varepsilon})_{\varepsilon>0}$ converges in distribution to $u$ in $L^2([0,T];L^2(\mathbb{T}^1))$, thereby completing the proof.

\end{proof}

In the sequel, we present the main results concerning the rescaled large deviation principle.

\begin{theorem}
Assume that the initial data and the interaction kernel $J$ satisfy Assumptions (A2) and \ref{Assump-J}, respectively. For every $\gamma\in(0,1]$, $\varepsilon,\delta>0$ and $n\geq1$, let $u_{\varepsilon}$ denote the weak solution to \eqref{eqgamma-1} with initial data $0$. Suppose that $a < 0$. Then under the scaling regime 
$$
(\varepsilon,\gamma(\varepsilon),\delta(\varepsilon),n(\varepsilon)) \to (0,0,0,+\infty)
$$
such that
$$
\varepsilon\Big(\delta(\varepsilon)^{-2/3} + \gamma(\varepsilon)^{4/3} \delta(\varepsilon)^{-1/3} \|\sigma_{n(\varepsilon)}'(\cdot)\|_{L^{\infty}(\mathbb{R})}^2\Big) \to 0,
$$
the family of laws of $(u_{\varepsilon})_{\varepsilon>0}$ satisfies a large deviation principle with the good rate function $\mathcal{I}_{CH}: L^2([0,T];L^2(\mathbb{T}^1)) \to [0,+\infty]$ given by
\begin{align}\label{rate-CH}
\mathcal{I}_{CH}(u) := \inf \left\{ \frac{1}{2} \|g\|_{L^2([0,T];L^2(\mathbb{T}^1))}^2 : \partial_t u = \partial_{xx}^2\left[ V'(u) - \frac{D}{2} \partial_{xx}^2 u \right] - \sqrt{2}\,\partial_x g,\quad u(0) = u_0 \right\}.
\end{align}
\end{theorem}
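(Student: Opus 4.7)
The plan is to establish the large deviation principle by invoking the Budhiraja-Dupuis-Maroulas weak convergence framework \cite{BDM11} for functionals of Brownian motions, which reduces the LDP to the verification of two sufficient conditions on the associated solution maps. Since both conditions are essentially the content of Theorem \ref{stability-skeleton} and Theorem \ref{converg-stochasticcontrol} respectively, the present theorem is primarily a packaging result that assembles these two ingredients and checks that the scaling regime is compatible with the framework.

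First, I would formalize the measurable solution maps. By the well-posedness of \eqref{eqgamma-1} and \eqref{skeleton-CahnHilliard}, there exist a measurable map $\mathcal{G}^{\varepsilon}: C([0,T]; \mathbb{R}^{\mathbb{N}}) \to L^2([0,T]; L^2(\mathbb{T}^1))$ with $u_{\varepsilon} = \mathcal{G}^{\varepsilon}(\varepsilon^{1/2} B)$, and a map $\mathcal{G}^0$ sending a deterministic control $g \in L^2([0,T];L^2(\mathbb{T}^1))$ to the unique weak solution of \eqref{skeleton-CahnHilliard} with initial data $u_0$, so that $\mathcal{I}_{CH}(u) = \inf\{\tfrac{1}{2}\|g\|^2_{L^2([0,T];L^2(\mathbb{T}^1))} : u = \mathcal{G}^0(g)\}$. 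Next, I would verify the two BDM conditions: (i) For every $M < \infty$, the set $K_M = \{\mathcal{G}^0(g) : \|g\|_{L^2([0,T];L^2(\mathbb{T}^1))} \leq M\}$ is compact in $L^2([0,T]; L^2(\mathbb{T}^1))$. Given a bounded sequence $(g_n)$, Banach-Alaoglu yields a weakly convergent subsequence $g_n \rightharpoonup g$, and Theorem \ref{stability-skeleton} produces strong convergence $\mathcal{G}^0(g_n) \to \mathcal{G}^0(g)$. This also establishes that $\mathcal{I}_{CH}$ is a good rate function. (ii) For any family of predictable processes $(g_{\varepsilon})$ with $\|g_{\varepsilon}\|_{L^2([0,T];L^2(\mathbb{T}^1))} \leq M$ almost surely, converging in distribution (as $S^M$-valued random variables endowed with the weak topology) to some $g$, one has $\mathcal{G}^{\varepsilon}(\varepsilon^{1/2} B + \int_0^{\cdot} g_{\varepsilon} ds) \to \mathcal{G}^0(g)$ in distribution in $L^2([0,T]; L^2(\mathbb{T}^1))$. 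By Girsanov's theorem the left-hand side is exactly the unique weak solution to the stochastic controlled equation \eqref{eqgamma-2} driven by $(g_{\varepsilon}, W_{\delta})$, so this is precisely Theorem \ref{converg-stochasticcontrol}. Conditions (i) and (ii) together, via the BDM theorem, yield the Laplace principle with rate function $\mathcal{I}_{CH}$, which is equivalent to the LDP since $\mathcal{I}_{CH}$ is good.

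The principal technical obstacle in this program is not the abstract packaging but ensuring that the assumed scaling $\varepsilon(\delta(\varepsilon)^{-2/3} + \gamma(\varepsilon)^{4/3}\delta(\varepsilon)^{-1/3}\|\sigma_{n(\varepsilon)}'\|_{L^\infty}^2) \to 0$ is strong enough for the uniform entropy dissipation estimate of Proposition \ref{entropydissipation-stochasticcontrol} and the uniform time-regularity estimate of Proposition \ref{second-uniform-es-stochasticcontrol} to produce a tight family in the Aubin-Lions compactness setting. This regime simultaneously suppresses the divergence from the spatially correlated noise (via the $\delta^{-2/3}$ contribution) and from the It\^o-Stratonovich correction involving $\|\sigma_{n(\varepsilon)}'\|_{L^\infty}$ (via the $\gamma^{4/3}\delta^{-1/3}\|\sigma_{n(\varepsilon)}'\|^2$ contribution), so that the controlled solutions remain uniformly bounded in $L^2([0,T];H^1(\mathbb{T}^1)) \cap W^{\alpha,2}([0,T]; H^{-\beta}(\mathbb{T}^1))$ for $\alpha \in (0,1/2)$ and $\beta > 13/2$. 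Once compactness is in hand, the identification of limit points as weak solutions of the skeleton Cahn-Hilliard equation follows by the Skorokhod-Jakubowski representation, together with the passage-to-the-limit argument already developed in the proof of Theorem \ref{converg-stochasticcontrol}, where the only nontrivial additional point is the vanishing of $\sigma_{n(\varepsilon)}(1 - \gamma(\varepsilon)^{2/3} u_{\varepsilon}^2) - 1$ in $L^2$ and the effectiveness of the convolution with $\eta_{\delta(\varepsilon)}$ in the control term, both handled by dominated convergence.
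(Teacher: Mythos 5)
Your proposal is correct and takes the same route as the paper: the paper's proof is exactly a one-paragraph invocation of the Budhiraja--Dupuis--Maroulas weak convergence framework, combining Theorem \ref{stability-skeleton} (compactness of sublevel sets via weak-strong stability of the skeleton equation) and Theorem \ref{converg-stochasticcontrol} (distributional convergence of the controlled SPDE under the stated scaling). Your write-up simply makes explicit the solution maps, the BDM sufficient conditions, and the role of Girsanov, all of which the paper leaves implicit.
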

\begin{proof}
The proof proceeds by applying the weak convergence framework developed in \cite{BDM11}. By combining Theorems \ref{stability-skeleton} and \ref{converg-stochasticcontrol}, we establish the stated large deviation principle and identify the associated rate function. This concludes the proof. 
\end{proof}

\section{$\Gamma$-convergence of the Ising-Kac-Kawasaki rate function}\label{sec-6}

In this section, we assume that the initial data and the interaction kernel $J$ satisfy Assumptions (A2) and \ref{Assump-J}, respectively, and we establish a $\Gamma$-convergence result for the rate function associated with the Ising-Kac-Kawasaki (IKK) dynamics.

Following the large deviation framework developed in \cite{WZ22}, we consider the scaling regime $(\varepsilon, \delta(\varepsilon), n(\varepsilon)) \to (0, 0, +\infty)$ and fix a parameter $\gamma \in (0,1)$. Under this regime, the family of stochastic processes defined by equation \eqref{eqgamma-1} satisfies a large deviation principle with speed $\varepsilon$ and rate function given by
\begin{align}\label{rate-gamma}
\mathcal{I}_{IKK,\gamma}(u)=\frac{1}{2}\inf\Big\{\|g\|_{L^2([0,T];L^2(\mathbb{T}^1))}^2:\partial_tu=&\gamma^{-2/3}\partial_{xx}^2u-\gamma^{-2/3}(1+a\gamma^{2/3})\partial_x[(1-\gamma^{2/3}u^2)J_{\gamma^{1/3}}\ast\partial_xu]\notag\\
&-\sqrt{2}\partial_x(\sqrt{1-\gamma^{2/3}u^2}g),\ u(0)=u_{\gamma,0}\Big\},
\end{align}
for all $u \in L^2([0,T];H^1(\mathbb{T}^1))$ satisfying 
$$
\int_0^T\int_{\mathbb{T}^1} \frac{|\partial_x u|^2}{1 - u^2} \, dx \, dt < \infty.
$$
Otherwise, we set $\mathcal{I}_{IKK,\gamma}(u) = +\infty$. Let $\mathcal{I}_{CH}$ denote the Cahn--Hilliard rate function defined in \eqref{rate-CH}. Our main result in this section is the following $\Gamma$-convergence statement:
\begin{align}
\mathcal{I}_{IKK,\gamma} \overset{\Gamma}{\longrightarrow} \mathcal{I}_{CH},
\end{align}
as $\gamma \to 0$.

\subsection{Convergence of the skeleton equation}
For every $\gamma \in (0,1)$, let $g \in L^2([0,T];L^2(\mathbb{T}^1))$ and $u_{\gamma,0} \in L^2(\mathbb{T}^1)$. We consider the skeleton equation associated with the rate function \eqref{rate-gamma}, given by
\begin{align}\label{skeleton-gamma}
	\partial_t u_{\gamma} &= \gamma^{-2/3} \partial_{xx}^2 u_{\gamma} - \gamma^{-2/3}(1 + a\gamma^{2/3}) \partial_x \left[(1 - \gamma^{2/3} u_{\gamma}^2) J_{\gamma^{1/3}} \ast \partial_x u_{\gamma} \right] \notag \\
	&\quad - \sqrt{2} \, \partial_x \left( \sqrt{1 - \gamma^{2/3} u_{\gamma}^2} \, g \right), \quad u_{\gamma}(0) = u_{\gamma,0}.
\end{align}

The well-posedness theory for the Dean-Kawasaki skeleton equation, which shares a similar nonlinear and nonlocal structure with \eqref{skeleton-gamma}, has been developed in \cite{FG23}. Building upon this framework, \cite{WZ24} adapted the techniques introduced in \cite{FG23} to establish the well-posedness of equation \eqref{skeleton-gamma}. For completeness, we briefly summarize the main results of \cite{FG23}. The authors established the uniqueness of the renormalized kinetic solution (see \cite[Theorem 8]{FG23}) and the existence of a weak solution (see \cite[Proposition 20]{FG23}) for the Dean-Kawasaki skeleton equation. Moreover, they showed that these two notions of solution coincide, leveraging the additional regularity provided by entropy dissipation estimates.

In the present work, we will employ only the notion of weak solutions to \eqref{skeleton-gamma}. For the reader's convenience, we recall the definition below.

\begin{definition}\label{dfn-skeleton-ising}
	For every $\gamma\in(0,1]$, let $g \in L^2([0,T];L^2(\mathbb{T}^1))$ and $u_{\gamma,0} \in L^2(\mathbb{T}^1; [-\gamma^{-1/3}, \gamma^{-1/3}])$. A \emph{weak solution} to \eqref{skeleton-gamma} is a function
	$$
	u_{\gamma} \in C([0,T];L^2(\mathbb{T}^1;[-\gamma^{-1/3}, \gamma^{-1/3}])) \cap L^2([0,T];H^1(\mathbb{T}^1))
	$$
	that satisfies, for every $\psi \in C^{\infty}(\mathbb{T}^1)$ and every $t \in [0,T]$,
	\begin{align*}
	\int_{\mathbb{T}^1} u_{\gamma}(t) \psi \, dx &= \int_{\mathbb{T}^1} u_{\gamma,0} \psi \, dx - \gamma^{-2/3} \int_0^t \int_{\mathbb{T}^1} \partial_x u_{\gamma} \, \partial_x \psi \, dx \, ds \\
	&\quad + \gamma^{-2/3}(1 + a\gamma^{2/3}) \int_0^t \int_{\mathbb{T}^1} \partial_x \psi(x) \cdot \left[ (1 - \gamma^{2/3} u_{\gamma}^2) J_{\gamma^{1/3}} \ast \partial_x u_{\gamma} \right] dx \, ds \\
	&\quad + \sqrt{2} \int_0^t \int_{\mathbb{T}^1} \sqrt{1 - \gamma^{2/3} u_{\gamma}^2} \, g \, \partial_x \psi \, dx \, ds.
	\end{align*}
\end{definition}

In this section, we prove the convergence of solutions to the skeleton equation \eqref{skeleton-gamma} towards solutions of the limiting skeleton equation \eqref{skeleton-CahnHilliard}. 

As a preliminary step, we establish a uniform entropy dissipation estimate and a time-regularity estimate that will be instrumental in the compactness arguments used later. 

\begin{lemma}[Uniform entropy dissipation estimates]\label{entropydissipation-skeleton}
Assume that the initial data and the interaction kernel $J$ satisfy Assumptions (A2) and \ref{Assump-J}, respectively. Let $\gamma\in(0,1]$ and let $g \in L^2([0,T];L^2(\mathbb{T}^1))$. Let $u_{\gamma}$ denote the weak solution to \eqref{skeleton-gamma} with initial condition $u_{\gamma}(0) = u_{\gamma,0}$ and control $g$. Let $\Psi_{\gamma}$ be the entropy functional defined by \eqref{rescaled-entropy}. Then the following estimate holds 
\begin{align}\label{uniformentropy-gep}
&\sup_{t \in [0,T]} \int_{\mathbb{T}^1} \Psi_{\gamma}(u_{\gamma}(t)) \, dx 
+ \gamma^{-1/3} \int_0^T \int_{\mathbb{T}^1} \frac{|\partial_x u_{\gamma}|^2}{1 - \gamma^{2/3} u_{\gamma}^2} \, dx \, dt \notag \\
&\quad + \gamma^{-1/3}(1 + a \gamma^{2/3}) \int_0^T \int_{\mathbb{T}^1} \partial_x u_{\gamma} \cdot (J_{\gamma^{1/3}} \ast \partial_x u_{\gamma}) \, dx \, ds 
\lesssim 1 + \|g\|_{L^2([0,T];L^2(\mathbb{T}^1))}^2, 
\end{align}
uniformly for every $\gamma\in(0,1]$.

Moreover, if $a < 0$, then we also have the uniform estimate
\begin{align}\label{uniformestimate-gep}
\sup_{t \in [0,T]} \int_{\mathbb{T}^1} \frac{1}{2} |u_{\gamma}(t)|^2 \, dx 
+ \int_0^T \int_{\mathbb{T}^1} \frac{u_{\gamma}^2 |\partial_x u_{\gamma}|^2}{1 - \gamma^{2/3} u_{\gamma}^2} \, dx \, dt 
- a \int_0^T \|\partial_x u_{\gamma}\|_{L^2(\mathbb{T}^1)}^2 \, ds 
\lesssim 1 + \|g\|_{L^2([0,T];L^2(\mathbb{T}^1))}^2,
\end{align}
for every $\gamma\in(0,1]$. 
\end{lemma}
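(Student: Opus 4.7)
The plan is to mirror the proof of Proposition \ref{prp-entropydissipation} in the deterministic setting: apply the chain rule to a regularized entropy $\Psi_{\gamma,\bar{\delta}}$, handle the Kac term as in \eqref{kernel-term}, and absorb the new control contribution through Young's inequality. Since \eqref{skeleton-gamma} is deterministic, It\^o's formula is replaced by the classical chain rule and there is no Stratonovich-It\^o correction to track.

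Concretely, I would test \eqref{skeleton-gamma} against $\psi_{\gamma,\bar{\delta}}(u_\gamma)$ and integrate by parts to obtain
\begin{align*}
\int_{\mathbb{T}^1}\Psi_{\gamma,\bar{\delta}}(u_\gamma(t))dx &+ \gamma^{-2/3}\int_0^t\int_{\mathbb{T}^1}\psi'_{\gamma,\bar{\delta}}(u_\gamma)|\partial_x u_\gamma|^2 dx ds \\
={}& \int_{\mathbb{T}^1}\Psi_{\gamma,\bar{\delta}}(u_{\gamma,0})dx \\
&+ \gamma^{-2/3}(1+a\gamma^{2/3})\int_0^t\int_{\mathbb{T}^1}\psi'_{\gamma,\bar{\delta}}(u_\gamma)\partial_x u_\gamma(1-\gamma^{2/3}u_\gamma^2)J_{\gamma^{1/3}}\ast\partial_x u_\gamma\,dxds \\
&+ \sqrt{2}\int_0^t\int_{\mathbb{T}^1}\psi'_{\gamma,\bar{\delta}}(u_\gamma)\partial_x u_\gamma\sqrt{1-\gamma^{2/3}u_\gamma^2}\,g\,dxds.
\end{align*}
Exactly as in \eqref{kernel-term}, the pointwise bound \eqref{bound-psi-prime} together with Young's convolution inequality (using $J\geq 0$ and $\|J_{\gamma^{1/3}}\|_{L^1}=1$) controls the Kac contribution by $\tfrac{\gamma^{-1/3}}{1+\bar{\delta}}(1+a\gamma^{2/3})\int_0^t\|\partial_x u_\gamma\|_{L^2(\mathbb{T}^1)}^2 ds$. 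For the control term, Young's inequality gives
\begin{align*}
\sqrt{2}\,\psi'_{\gamma,\bar{\delta}}(u_\gamma)\partial_x u_\gamma\sqrt{1-\gamma^{2/3}u_\gamma^2}\,g \leq \tfrac{1}{2}\gamma^{-2/3}\psi'_{\gamma,\bar{\delta}}(u_\gamma)|\partial_x u_\gamma|^2 + \gamma^{2/3}\psi'_{\gamma,\bar{\delta}}(u_\gamma)(1-\gamma^{2/3}u_\gamma^2)|g|^2,
\end{align*}
so that \eqref{bound-psi-prime} again bounds the last factor and leaves a remainder of size $\gamma\,\|g\|_{L^2([0,T];L^2(\mathbb{T}^1))}^2$; the first piece is absorbed by half of the diffusion dissipation.

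Next, I would split the remaining half of the diffusion dissipation via
\begin{align*}
\gamma^{-2/3}\psi'_{\gamma,\bar{\delta}}(u_\gamma)|\partial_x u_\gamma|^2 = \tfrac{\gamma^{1/3}}{1+\bar{\delta}}\frac{u_\gamma^2|\partial_x u_\gamma|^2}{1+\bar{\delta}-\gamma^{2/3}u_\gamma^2} + \tfrac{\gamma^{-1/3}}{1+\bar{\delta}}|\partial_x u_\gamma|^2,
\end{align*}
cancel the second piece against the $\tfrac{\gamma^{-1/3}}{1+\bar{\delta}}\|\partial_x u_\gamma\|_{L^2(\mathbb{T}^1)}^2$ produced by the Kac bound, and pass $\bar{\delta}\downarrow 0$ by dominated and monotone convergence. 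This yields
\begin{align*}
\sup_{t\in[0,T]}\int_{\mathbb{T}^1}\Psi_\gamma(u_\gamma(t))dx &+ \gamma^{1/3}\int_0^T\!\!\int_{\mathbb{T}^1}\frac{u_\gamma^2|\partial_x u_\gamma|^2}{1-\gamma^{2/3}u_\gamma^2}dxdt - a\gamma^{1/3}\int_0^T\|\partial_x u_\gamma\|_{L^2(\mathbb{T}^1)}^2 ds \\
&\lesssim \int_{\mathbb{T}^1}\Psi_\gamma(u_{\gamma,0})dx + \gamma\|g\|_{L^2([0,T];L^2(\mathbb{T}^1))}^2.
\end{align*}
Combining this with the lower bound $\Psi_\gamma(\zeta)\geq \tfrac{1}{2}\gamma^{1/3}\zeta^2-\gamma^{-1/3}$ from Lemma \ref{lem-tech-1} and the initial-data bound from Assumption~(A2)(iv), and then dividing through by $\gamma^{1/3}$, produces \eqref{uniformestimate-gep} when $a<0$. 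The bound \eqref{uniformentropy-gep} follows from the same computation by simply not performing the cancellation step: retaining the full $\gamma^{-1/3}\int|\partial_x u_\gamma|^2/(1-\gamma^{2/3}u_\gamma^2)$-dissipation and keeping the Kac term with its original coefficient on the left-hand side.

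I do not foresee a conceptual obstacle. The main care required is the bookkeeping of the fractional powers of $\gamma$: the Young-inequality splitting of the control term must produce the $\gamma^{2/3}$-prefactor that, combined with \eqref{bound-psi-prime}, leaves precisely the $\gamma\|g\|_{L^2([0,T];L^2(\mathbb{T}^1))}^2$ remainder; this is exactly what makes the final division by $\gamma^{1/3}$ yield an estimate depending on $\|g\|_{L^2}^2$ uniformly in $\gamma$.
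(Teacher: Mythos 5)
Your overall strategy is the right one (and the one the paper intends): regularize the entropy, test, bound the Kac term as in \eqref{kernel-term}, and absorb the control contribution by Young's inequality. But the specific Young's inequality split you wrote fails, and the failure is not cosmetic.

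You split the control term as
$$
\sqrt{2}\,\psi'_{\gamma,\bar{\delta}}(u_\gamma)\,\partial_x u_\gamma\sqrt{1-\gamma^{2/3}u_\gamma^2}\,g
\;\leq\;
\tfrac{1}{2}\gamma^{-2/3}\psi'_{\gamma,\bar{\delta}}(u_\gamma)|\partial_x u_\gamma|^2
+\gamma^{2/3}\psi'_{\gamma,\bar{\delta}}(u_\gamma)(1-\gamma^{2/3}u_\gamma^2)|g|^2,
$$
and then absorb $\tfrac{1}{2}\gamma^{-2/3}\psi'|\partial_x u_\gamma|^2$ into the dissipation term $\gamma^{-2/3}\int\psi'|\partial_x u_\gamma|^2$. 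That uses \emph{half} of the full dissipation. But the Kac cancellation, which you inherit from \eqref{kernel-term}, is \emph{exact} at leading order: after splitting the dissipation via the chain rule, its $\gamma^{-1/3}\|\partial_x u_\gamma\|^2/(1+\bar{\delta})$ piece must cancel against the $\gamma^{-1/3}(1+a\gamma^{2/3})\|\partial_x u_\gamma\|^2/(1+\bar{\delta})$ produced by the Kac bound, leaving only the positive quantity $-a\gamma^{1/3}\|\partial_x u_\gamma\|^2$ (for $a<0$). If only half of the dissipation remains, the leftover is $-\tfrac{1}{2}\gamma^{-1/3}\|\partial_x u_\gamma\|^2/(1+\bar{\delta})$ up to lower-order corrections, which is strictly negative and blows up as $\gamma\to 0$. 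There is nothing left on the left-hand side to control it, and the argument collapses.

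The fix is to absorb only an $O(\gamma^{2/3})$ fraction of the $\gamma^{-2/3}$-dissipation, i.e., use Young's inequality in the form
$$
\sqrt{2}\,\psi'\,\partial_x u_\gamma\sqrt{1-\gamma^{2/3}u_\gamma^2}\,g
\;\leq\; c\,\psi'|\partial_x u_\gamma|^2 + C(c)\,\psi'(1-\gamma^{2/3}u_\gamma^2)|g|^2,
$$
with $c>0$ small (independent of $\gamma$ and small relative to $|a|$, so that the leftover $-c\gamma^{1/3}\|\partial_x u_\gamma\|^2$ after the Kac cancellation is dominated by $-a\gamma^{1/3}\|\partial_x u_\gamma\|^2$). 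By \eqref{bound-psi-prime} the second piece is $\lesssim C(c)\gamma^{1/3}|g|^2$, and dividing the final estimate by $\gamma^{1/3}$ then gives a $\gamma$-uniform bound $\lesssim 1+\|g\|^2_{L^2}$, as in the statement. This is precisely how Proposition~\ref{entropydissipation-stochasticcontrol} handles the analogous control term; with your split you would obtain the smaller remainder $\gamma\|g\|^2$, but only at the cost of destroying the Kac cancellation. Everything else you wrote (passage $\bar{\delta}\downarrow0$, use of Lemma~\ref{lem-tech-1} and Assumption (A2)(iv), the final division by $\gamma^{1/3}$) goes through once the Young split is corrected.
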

\begin{proof}
The proof follows by combining the arguments used in Proposition \ref{prp-entropydissipation} and Proposition \ref{entropydissipation-stochasticcontrol}.  
\end{proof}

\begin{lemma}[Time regularity of the skeleton equation]\label{time-regularity-skeleton}
Assume that the initial data and the interaction kernel $J$ satisfy Assumptions (A2) and \ref{Assump-J}, respectively. Let $\gamma\in(0,1]$ and let $g \in L^2([0,T];L^2(\mathbb{T}^1))$. Let $u_{\gamma}$ denote the weak solution to \eqref{skeleton-gamma} with initial data $u_{\gamma,0}$ and control $g$. Suppose that $a < 0$. Then, for every $\alpha \in (0, \tfrac{1}{2})$ and $\beta > \tfrac{13}{2}$, there exists a constant $C = C(T) > 0$ such that
\begin{equation}
\|\partial_t u_{\gamma}\|_{L^1([0,T];H^{-\beta}(\mathbb{T}^1))} \leq C \left( 1 + \|g\|_{L^2([0,T];L^2(\mathbb{T}^1))}^2 \right),
\end{equation}
for every $\gamma\in(0,1]$. 
\end{lemma}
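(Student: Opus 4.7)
The plan is to mirror the proof of Proposition~\ref{second-uniform-es} exactly, adapted to the deterministic skeleton setting. Rewriting \eqref{skeleton-gamma} in analogy with \eqref{rewriteform} through Taylor expansion of $J_{\gamma^{1/3}}\ast\partial_x u_\gamma$ around $\partial_x u_\gamma$ and cancellation of the second-order diffusion against the leading Kac contribution, the skeleton equation takes the form
\begin{align*}
\partial_t u_\gamma = -\tfrac{D}{2}\partial_{xxxx}^4 u_\gamma + \partial_{xx}^2\bigl[\tfrac{1}{3} u_\gamma^3 - a u_\gamma\bigr] - \sqrt{2}\,\partial_x\bigl(\sqrt{1-\gamma^{2/3} u_\gamma^2}\,g\bigr) + \tilde R_{1,\gamma} + \tilde R_{2,\gamma} + \tilde R_{3,\gamma},
\end{align*}
where $\tilde R_{j,\gamma}$, $j=1,2,3$, are defined in direct analogy with the first three remainders in \eqref{remainder}. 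No analogue of $R_{4,\gamma}$ appears, since there is no It\^o--Stratonovich correction in the deterministic skeleton setting.

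First, I would estimate the bi-Laplacian term using the Sobolev embedding $L^2(\mathbb{T}^1)\subset H^{-\beta+3}(\mathbb{T}^1)$ (valid for $\beta>13/2$) together with the $L^2([0,T];H^1(\mathbb{T}^1))$ bound from \eqref{uniformestimate-gep}. The cubic and linear terms are handled via the embedding $L^1(\mathbb{T}^1)\subset H^{-\beta+1}(\mathbb{T}^1)$ and H\"older's inequality applied to $u_\gamma^2\,\partial_x u_\gamma$, using the key bound on $\int_0^T\!\!\int_{\mathbb{T}^1} u_\gamma^2|\partial_x u_\gamma|^2/(1-\gamma^{2/3}u_\gamma^2)\,dx\,ds$ from \eqref{uniformestimate-gep}. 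The remainders $\tilde R_{1,\gamma}$ and $\tilde R_{2,\gamma}$ are treated exactly as $R_{1,\gamma}$ and $R_{2,\gamma}$ in Proposition~\ref{second-uniform-es}, via Poincar\'e's inequality and Young's convolution inequality, producing contributions of order $\gamma^{2/3}$ and $O(1)$ respectively, both controlled by the right-hand side of \eqref{uniformestimate-gep}.

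For the new control term, the pointwise bound $\sqrt{1-\gamma^{2/3}u_\gamma^2}\leq 1$ together with the embedding $L^2(\mathbb{T}^1)\subset H^{-\beta+1}(\mathbb{T}^1)$ and Cauchy--Schwarz in time give
\begin{align*}
\Big\|\int_0^\cdot \partial_x\bigl(\sqrt{1-\gamma^{2/3}u_\gamma^2}\,g\bigr)\,ds\Big\|_{W^{1,1}([0,T];H^{-\beta}(\mathbb{T}^1))} \lesssim \|g\|_{L^1([0,T];L^2(\mathbb{T}^1))} \leq \sqrt{T}\,\|g\|_{L^2([0,T];L^2(\mathbb{T}^1))},
\end{align*}
which is absorbed into $C(1+\|g\|_{L^2([0,T];L^2(\mathbb{T}^1))}^2)$.

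The main obstacle, as in Proposition~\ref{second-uniform-es}, is the remainder $\tilde R_{3,\gamma}$ carrying the singular prefactor $\gamma^{-2/3}$. I would overcome it by setting $U_\gamma := -(-\partial_{xx}^2)^{-3/2}u_\gamma$, using the embedding $L^1(\mathbb{T}^1)\subset H^{-\beta+5}(\mathbb{T}^1)$ to transfer five spatial derivatives onto $U_\gamma$, and then applying a third-order Taylor expansion of $J_{\gamma^{1/3}}\ast U_\gamma$ around $U_\gamma$ precisely as in \eqref{I6}. This yields a bound of order $\gamma^{1/3}\|u_\gamma\|_{L^1([0,T];H^1(\mathbb{T}^1))}$, which is controlled by \eqref{uniformestimate-gep} and thus by $C(1+\|g\|_{L^2([0,T];L^2(\mathbb{T}^1))}^2)$. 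Summing all contributions concludes the proof.
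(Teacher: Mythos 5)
Your proposal is correct and takes essentially the same route as the paper: the paper's proof simply says to repeat the argument of Proposition~\ref{second-uniform-es-stochasticcontrol} (itself a variant of Proposition~\ref{second-uniform-es}) in the deterministic setting, and you fill in exactly what that adaptation means — drop the martingale term $I_3$ and the It\^o--Stratonovich remainder $R_{4,\gamma}$, add a control term bounded via $\sqrt{1-\gamma^{2/3}u_\gamma^2}\le 1$, the embedding $L^2(\mathbb{T}^1)\subset H^{-\beta+1}(\mathbb{T}^1)$ and Cauchy--Schwarz in time, and keep the treatment of $R_{1,\gamma},R_{2,\gamma},R_{3,\gamma}$ including the $U_\gamma$/Taylor-expansion trick. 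The only caveat is that the uniform $H^1$ and dissipation bounds you invoke should be the deterministic ones from Lemma~\ref{entropydissipation-skeleton} rather than the stochastic analogue, but the structure of the bound $C(1+\|g\|_{L^2([0,T];L^2(\mathbb{T}^1))}^2)$ is identical, so the argument goes through unchanged.
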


\begin{proof}
The estimate follows by applying the same argument as in Proposition~\ref{second-uniform-es-stochasticcontrol}, adapted to the deterministic skeleton setting.
\end{proof}

We are now in a position to establish the convergence of the solutions to the skeleton equation \eqref{skeleton-gamma} toward the corresponding solution of the limiting skeleton equation \eqref{skeleton-CahnHilliard}.

\begin{proposition}[Convergence of skeleton equations]\label{convergence-skeleton}
Assume that the initial data and the interaction kernel $J$ satisfy Assumptions (A2) and \ref{Assump-J}, respectively. Let $a < 0$, and for each $\gamma\in(0,1]$, let $g \in L^2([0,T];L^2(\mathbb{T}^1))$. Let $u_{\gamma}$ denote the weak solution of \eqref{skeleton-gamma} with initial data $u_{\gamma,0}$ and control $g$, and let $u$ denote the weak solution of \eqref{skeleton-CahnHilliard} with initial condition $u(0) = u_0$ and the same control $g$. Then,
\begin{align*}
\|u_{\gamma} - u\|_{L^2([0,T];L^2(\mathbb{T}^1))} \longrightarrow 0 \quad \text{as } \gamma \to 0.
\end{align*}
\end{proposition}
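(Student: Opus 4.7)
The plan is to mirror the strategy used in the proof of Theorem \ref{thm-convergence}, but in the deterministic setting, where the argument becomes substantially simpler since no Skorokhod representation is needed and uniqueness of the limit (Theorem \ref{stability-skeleton}) will give convergence of the full sequence rather than a subsequence. First I would combine the uniform entropy dissipation estimate \eqref{uniformestimate-gep} with the time-regularity bound of Lemma \ref{time-regularity-skeleton} to obtain uniform-in-$\gamma$ bounds for $u_{\gamma}$ in $L^{\infty}([0,T];L^2(\mathbb{T}^1)) \cap L^2([0,T];H^1(\mathbb{T}^1))$, for $\partial_x u_{\gamma}^2$ in $L^2([0,T];L^2(\mathbb{T}^1))$ (via $\partial_x u_{\gamma}^2 = 2 u_{\gamma} \partial_x u_{\gamma}$ together with the dissipation estimate for $u_{\gamma}^2 |\partial_x u_{\gamma}|^2 / (1-\gamma^{2/3} u_{\gamma}^2)$), and for $\partial_t u_{\gamma}$ in $L^1([0,T]; H^{-\beta}(\mathbb{T}^1))$ with $\beta > 13/2$. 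The Aubin-Lions lemma then delivers, along a subsequence, strong convergence $u_{\gamma} \to u^*$ in $L^2([0,T];L^2(\mathbb{T}^1))$, weak convergence in $L^2([0,T];H^1(\mathbb{T}^1))$, and weak convergence of $\partial_x u_{\gamma}^2$ to $\partial_x (u^*)^2$ in $L^2([0,T];L^2(\mathbb{T}^1))$.

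Next, I would rewrite \eqref{skeleton-gamma} in the Cahn-Hilliard form of \eqref{rewriteform}, with the same remainder terms $R_{1,\gamma}, R_{2,\gamma}, R_{3,\gamma}$ (the Stratonovich-It\^o correction $R_{4,\gamma}$ is absent here, and the noise term is replaced by the control term $-\sqrt{2}\,\partial_x(\sqrt{1-\gamma^{2/3}u_{\gamma}^2}\,g)$). The convergence of the linear fourth-order term, the cubic term, and the remainder terms $R_{1,\gamma}, R_{2,\gamma}, R_{3,\gamma}$ against a smooth test function $\varphi \in C^{\infty}(\mathbb{T}^1)$ follows verbatim from the estimates established in Proposition \ref{second-uniform-es} and used in the passage-to-the-limit argument \eqref{passagetothelimits} of Theorem \ref{thm-convergence}: the uniform $L^2([0,T];H^1(\mathbb{T}^1))$ bound on $u_{\gamma}$, the $L^2([0,T];L^2(\mathbb{T}^1))$ bound on $\partial_x u_{\gamma}^2$, and the third-order Taylor expansion of $U_{\gamma} := -(-\partial_{xx}^2)^{-3/2} u_{\gamma}$ used to control $R_{3,\gamma}$.

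The only genuinely new ingredient is the treatment of the control term. Writing
\begin{align*}
\int_0^t \langle \partial_x \varphi, \sqrt{1-\gamma^{2/3} u_{\gamma}^2}\, g - g \rangle \, ds
= \int_0^t \langle \partial_x \varphi, (\sqrt{1-\gamma^{2/3} u_{\gamma}^2} - 1)\, g \rangle \, ds,
\end{align*}
I would invoke H\"older's inequality together with the pointwise bound $|\sqrt{1-\gamma^{2/3}u_{\gamma}^2} - 1| \leq 1$ and the almost-everywhere convergence $\sqrt{1-\gamma^{2/3} u_{\gamma}^2} \to 1$, which follows from the strong $L^2$ convergence of $u_{\gamma}$ (up to a further subsequence giving pointwise a.e.\ convergence). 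The dominated convergence theorem then gives convergence of this term to zero, exactly as in the corresponding step of Theorem \ref{converg-stochasticcontrol}.

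Combining these steps, any limit point $u^*$ is a weak solution of \eqref{skeleton-CahnHilliard} with initial datum $u_0$ (using Assumption (A2)(v) for the initial data) and control $g$. Since Theorem \ref{stability-skeleton} guarantees uniqueness of such a solution, $u^* = u$, and the standard subsequence argument upgrades the subsequential convergence to convergence of the entire family $u_{\gamma} \to u$ in $L^2([0,T];L^2(\mathbb{T}^1))$. The main technical obstacle is the control of $R_{3,\gamma}$, which requires the uniform $H^1$-estimate obtained from the rescaled entropy dissipation and crucially relies on the sign condition $a < 0$; this is however already handled in Proposition \ref{second-uniform-es} and reused here without modification.
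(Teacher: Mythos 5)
Your proposal is correct and mirrors the paper's own proof, which is stated in exactly the same two-step form: invoke Lemma \ref{entropydissipation-skeleton} and Lemma \ref{time-regularity-skeleton} together with Aubin--Lions to obtain relative compactness of $(u_\gamma)$ in $L^2([0,T];L^2(\mathbb{T}^1))$, then pass to the limit in the reformulated equation as in Proposition \ref{prp-convergence} and Theorem \ref{converg-stochasticcontrol}, and conclude via uniqueness from Theorem \ref{stability-skeleton}. Your additional elaboration of the control-term convergence and the identification of $\partial_x u_\gamma^2$ is consistent with the paper's referenced arguments, so this is the same approach in greater detail rather than a different route.
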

\begin{proof}
By Lemmas \ref{entropydissipation-skeleton} and \ref{time-regularity-skeleton}, together with the Sobolev embedding $W^{1,1}([0,T]) \subset W^{\alpha,2}([0,T])$ for every $\alpha \in (0,1/2)$, and the Aubin-Lions compactness criterion, the family $(u_{\gamma})_{\gamma\in(0,1]}$ is relatively compact in $L^2([0,T];L^2(\mathbb{T}^1))$. 

Applying a passage-to-the-limit argument analogous to that used in the proofs of Proposition~\ref{prp-convergence} and Theorem~\ref{converg-stochasticcontrol}, we conclude that any limit point of $(u_{\gamma})_{\gamma\in(0,1]}$ must coincide with the weak solution of \eqref{skeleton-CahnHilliard} with initial condition $u_0$ and control $g$. This proves the desired convergence. 
\end{proof}

\subsection{The $\Gamma$-convergence} 
Let 
\begin{align*}
\mathbb{X} := L^2([0,T];L^2(\mathbb{T}^1))\cap (L^2([0,T];H^1(\mathbb{T}^1)),w) &\cap (L^4([0,T];L^4(\mathbb{T}^1)),w^*),
\end{align*}
equipped with the subspace strong topology inherited from $L^2([0,T];L^2(\mathbb{T}^1))$, weak topology of $L^2([0,T];H^1(\mathbb{T}^1))$ and weak* topology of $L^4([0,T];L^4(\mathbb{T}^1))$. Refer to \cite{M12}, we now introduce the notion of $\Gamma$-convergence in this context. 	
\begin{definition}[$\Gamma$-convergence]\label{def-gammaconvergence} We say that $\mathcal{I}_{IKK,\gamma}$ $\Gamma$-converges to $\mathcal{I}_{CH}$ as $\gamma \to 0$ if: 

\noindent (i) For every sequence $(u_{\gamma_k})_{k\in\mathbb{N}}\subset \mathbb{X}$ satisfying
$$
u_{\gamma_k} \to u \quad \text{in } \mathbb{X}, \quad \text{as } k \to \infty, \ \gamma_k \to 0,
$$
we have the lower bound
$$
\liminf_{k \to \infty} \mathcal{I}_{IKK,\gamma_k}(u_{\gamma_k}) \geq \mathcal{I}_{CH}(u).
$$

\medskip

\noindent (ii) For every $u \in \mathbb{X}$, there exists a sequence $(u_{\gamma_k})_{k \in \mathbb{N}} \subset \mathbb{X}$ such that
$$
u_{\gamma_k} \to u \quad \text{in } \mathbb{X}, \quad \text{as } k \to \infty, \ \gamma_k \to 0,
$$
and
$$
\limsup_{k \to \infty} \mathcal{I}_{IKK,\gamma_k}(u_{\gamma_k}) \leq \mathcal{I}_{CH}(u).
$$

\end{definition}

We require the following characterization of the rate function $\mathcal{I}_{IKK,\gamma}$. Fix $\gamma \in (0,1)$, and let $u \in L^{\infty}([0,T];L^{\infty}(\mathbb{T}^1))$ be such that $-\gamma^{-1/3} \leq u \leq \gamma^{-1/3}$. Define an equivalence relation $\thicksim$ on $C^{\infty}(\mathbb{T}^1 \times [0,T])$ by declaring
$$
\psi \thicksim \phi \quad \Longleftrightarrow \quad \int_0^T \int_{\mathbb{T}^1} (1 - \gamma^{2/3} u^2) |\partial_x \psi - \partial_x \phi|^2 \,dx\,dt = 0.
$$
Let $H^1_u$ denote the Hilbert space given by the completion of the set of equivalence classes $C^{\infty}(\mathbb{T}^1 \times [0,T])/\thicksim$ under the inner product
$$
\langle \phi, \psi \rangle_{H^1_u} := \int_0^T \int_{\mathbb{T}^1} (1 - \gamma^{2/3} u^2) \, \partial_x \phi \, \partial_x \psi \, dx\,dt, \quad \text{with norm} \quad \|\phi\|_{H^1_u} := \langle \phi, \phi \rangle_{H^1_u}^{1/2}.
$$

We now state a characterization of the rate function $\mathcal{I}_{IKK,\gamma}$ for every fixed $\gamma \in(0,1)$.

\begin{lemma}\label{risez}
Assume that the initial data and the interaction kernel $J$ satisfy Assumptions (A2) and \ref{Assump-J}, respectively. Fix $\gamma \in(0,1)$, and suppose $u$ satisfies $\mathcal{I}_{IKK,\gamma}(u) < \infty$. Then there exists a function $\Psi_u \in H^1_u$ such that the rate function $\mathcal{I}_{IKK,\gamma}$ admits the following equivalent representation:
\begin{align}\label{RF low}
\mathcal{I}_{IKK,\gamma}(u) = \frac{1}{2} \int_0^T \int_{\mathbb{T}^1} (1 - \gamma^{2/3} u^2) |\partial_x \Psi_u|^2 \, dx \, dt = \frac{1}{2} \| \Psi_u \|^2_{H^1_u}.
\end{align}
Furthermore, such a $\Psi_u$ is unique in $H^1_u$. 
\end{lemma}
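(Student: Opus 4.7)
The plan is to recast the cost-minimization problem defining $\mathcal{I}_{IKK,\gamma}(u)$ as a Riesz representation problem in the Hilbert space $H^1_u$. First, I would rewrite the weak formulation in Definition~\ref{dfn-skeleton-ising} by collecting all $g$-dependent terms on one side: for any admissible control $g \in L^2([0,T];L^2(\mathbb{T}^1))$ producing $u$ and every $\phi \in C^{\infty}(\mathbb{T}^1 \times [0,T])$,
\begin{align*}
\sqrt{2}\int_0^T\!\!\int_{\mathbb{T}^1}\sqrt{1-\gamma^{2/3}u^2}\,g\,\partial_x\phi\,dx\,dt = L_u(\phi),
\end{align*}
where
\begin{align*}
L_u(\phi) := \int_0^T \langle \partial_t u,\phi\rangle\,dt &+ \gamma^{-2/3}\int_0^T\!\!\int_{\mathbb{T}^1}\partial_x u\,\partial_x\phi\,dx\,dt \\
&- \gamma^{-2/3}(1+a\gamma^{2/3})\int_0^T\!\!\int_{\mathbb{T}^1}(1-\gamma^{2/3}u^2)(J_{\gamma^{1/3}}\ast\partial_x u)\,\partial_x\phi\,dx\,dt
\end{align*}
depends only on $u$. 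Since $\mathcal{I}_{IKK,\gamma}(u) < \infty$, such an admissible $g$ exists (pick any element of a minimizing sequence), and Cauchy--Schwarz applied on the left yields
\begin{align*}
|L_u(\phi)| \leq \sqrt{2}\,\|g\|_{L^2([0,T];L^2(\mathbb{T}^1))}\,\|\phi\|_{H^1_u}.
\end{align*}
In particular $L_u$ descends to the quotient by $\thicksim$ and extends to a bounded linear functional on $H^1_u$.

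Next, I would apply the Riesz representation theorem in $H^1_u$ to obtain a unique element $\Psi_u \in H^1_u$ such that
\begin{align*}
L_u(\phi) = \sqrt{2}\,\langle \Psi_u, \phi\rangle_{H^1_u} = \sqrt{2}\int_0^T\!\!\int_{\mathbb{T}^1}(1-\gamma^{2/3}u^2)\,\partial_x\Psi_u\,\partial_x\phi\,dx\,dt, \quad \text{for every } \phi \in H^1_u,
\end{align*}
together with the bound $\|\Psi_u\|_{H^1_u} \leq \|g\|_{L^2([0,T];L^2(\mathbb{T}^1))}$ for every admissible $g$. To exhibit the minimizer explicitly, I set $g^{\star} := \sqrt{1-\gamma^{2/3}u^2}\,\partial_x\Psi_u$, which lies in $L^2([0,T];L^2(\mathbb{T}^1))$ with $\|g^{\star}\|^2_{L^2([0,T];L^2(\mathbb{T}^1))} = \|\Psi_u\|^2_{H^1_u}$. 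Substituting $g^{\star}$ in place of $g$ in the reformulated weak equation reproduces exactly the Riesz identity, so $g^{\star}$ is itself admissible; combined with the Riesz lower bound, this yields
\begin{align*}
\mathcal{I}_{IKK,\gamma}(u) = \tfrac{1}{2}\|g^{\star}\|^2_{L^2([0,T];L^2(\mathbb{T}^1))} = \tfrac{1}{2}\|\Psi_u\|^2_{H^1_u},
\end{align*}
which is exactly \eqref{RF low}. Uniqueness of $\Psi_u$ in $H^1_u$ is then inherited directly from the uniqueness clause of Riesz representation.

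The main subtlety, and the point at which the finite-cost hypothesis enters essentially, is that $L_u$ must be shown to pass to the quotient space $H^1_u$, i.e.\ to annihilate every $\phi$ with $\|\phi\|_{H^1_u} = 0$. A priori the three terms in the definition of $L_u$ are only distributions, and no cancellation between them is manifest on the degeneracy set $\{|u| = \gamma^{-1/3}\}$; it is precisely the existence of a single admissible $g \in L^2([0,T];L^2(\mathbb{T}^1))$ that forces, through the displayed Cauchy--Schwarz bound, the functional $L_u$ to vanish on this null space. Once this is established, the Riesz argument and the identification of $g^{\star}$ as the unique minimizer reduce to a standard Hilbert-space projection, and no delicate saturation or Radon--Nikodym argument needs to be carried out explicitly.
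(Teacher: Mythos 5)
Your proposal is correct and follows the same strategy the paper indicates: a Riesz representation argument in $H^1_u$, with the finite-cost hypothesis used to produce an admissible $g$, Cauchy--Schwarz to descend $L_u$ to the quotient and bound it, Riesz to obtain $\Psi_u$, and the explicit candidate $g^{\star}=\sqrt{1-\gamma^{2/3}u^2}\,\partial_x\Psi_u$ to saturate the infimum. This is precisely the route the authors sketch and attribute to \cite[Proposition 37]{FG23}, so the approaches coincide.
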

\begin{proof}
The proof can be obtained by applying a Riesz representation argument in $H^1_u$. A similar proof can be found in \cite[Proposition 37]{FG23}, thus we omit it. 
\end{proof} 
Similarly, we have the following characterization of the rate function $\mathcal{I}_{CH}$. 
\begin{lemma}\label{risez-2}
Suppose $u$ satisfies $\mathcal{I}_{CH}(u) < \infty$. Then there exists a unique function $g_u \in L^2([0,T];L^2(\mathbb{T}^1))$ such that the rate function $\mathcal{I}_{CH}$ admits the following equivalent representation:
\begin{align}\label{RF low-2}
\mathcal{I}_{CH}(u) = \frac{1}{2} \int_0^T \int_{\mathbb{T}^1}|g_u|^2 \, dx \, dt = \frac{1}{2} \| g_u \|^2_{L^2([0,T];L^2(\mathbb{T}^1))}. 
\end{align} 
\end{lemma}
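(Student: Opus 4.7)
The plan is to identify $g_u$ as the unique element of minimal $L^2$-norm in a non-empty, closed, convex subset of the Hilbert space $L^2([0,T];L^2(\mathbb{T}^1))$, and then invoke the Hilbert projection theorem. Specifically, I would introduce the admissible set
\begin{align*}
\mathcal{G}(u):=\Big\{g\in L^2([0,T];L^2(\mathbb{T}^1)):\ \partial_tu=\partial_{xx}^2\Big[V'(u)-\tfrac{D}{2}\partial_{xx}^2u\Big]-\sqrt{2}\partial_xg,\ u(0)=u_0\Big\},
\end{align*}
where the equation is understood in the weak sense. The rate function is then $\mathcal{I}_{CH}(u)=\inf\{\tfrac12\|g\|_{L^2([0,T];L^2(\mathbb{T}^1))}^2:g\in\mathcal{G}(u)\}$.

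The first step is to verify that $\mathcal{G}(u)$ is non-empty, which follows directly from the assumption $\mathcal{I}_{CH}(u)<\infty$ and the definition of the rate function as an infimum over a non-empty collection of controls. The second step establishes that $\mathcal{G}(u)$ is convex and closed in $L^2([0,T];L^2(\mathbb{T}^1))$. Convexity is immediate: the defining constraint is affine in $g$ for fixed $u$, so any convex combination of admissible controls remains admissible. For closedness, I would take a sequence $(g_n)\subset\mathcal{G}(u)$ with $g_n\to g$ strongly in $L^2([0,T];L^2(\mathbb{T}^1))$; testing the weak formulation against any $\psi\in C^{\infty}(\mathbb{T}^1)$ and passing to the limit, only the term $\sqrt{2}\int_0^t\int_{\mathbb{T}^1}\partial_x\psi\,g_n\,dx\,ds$ depends on $g_n$, and this converges to $\sqrt{2}\int_0^t\int_{\mathbb{T}^1}\partial_x\psi\,g\,dx\,ds$ by the duality pairing. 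Hence $g\in\mathcal{G}(u)$.

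With $\mathcal{G}(u)$ non-empty, closed, and convex, the projection theorem in Hilbert spaces provides a unique element $g_u\in\mathcal{G}(u)$ of minimal norm. By the definition of $\mathcal{I}_{CH}$ as an infimum, this yields
\begin{align*}
\mathcal{I}_{CH}(u)=\inf_{g\in\mathcal{G}(u)}\frac{1}{2}\|g\|_{L^2([0,T];L^2(\mathbb{T}^1))}^2=\frac{1}{2}\|g_u\|_{L^2([0,T];L^2(\mathbb{T}^1))}^2,
\end{align*}
and uniqueness follows from the strict convexity of the $L^2$-norm restricted to a convex set. In fact, since two admissible controls $g_1,g_2\in\mathcal{G}(u)$ must satisfy $\partial_x(g_1-g_2)=0$ in the sense of distributions, their difference is a function $c(t)$ depending only on time; the minimizer $g_u$ is therefore the unique admissible control with zero spatial mean for a.e.\ $t$.

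The main obstacle, if any, is the verification that $\mathcal{G}(u)$ is closed in $L^2([0,T];L^2(\mathbb{T}^1))$, but this is routine since the constraint is linear and continuous in $g$ (the remaining terms involving $u$ are fixed throughout). The rest of the argument is a standard application of Hilbert space geometry and requires no further input beyond the definition of the rate function.
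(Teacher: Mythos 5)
Your proof is correct and is essentially the standard argument one would give here; the paper itself omits a proof of this lemma and, for the analogous Lemma~\ref{risez}, merely invokes ``a Riesz representation argument,'' which is dual to your projection-theorem approach (both produce the unique minimal-norm element of a nonempty closed affine subspace of a Hilbert space). Your additional observation that any two admissible controls differ by a function of $t$ alone, so that the minimizer is the unique admissible control with zero spatial mean for a.e.\ $t$, is a correct and helpful explicit characterization.
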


We are now ready to state the main $\Gamma$-convergence result.

\begin{theorem}
Assume that the initial data and the interaction kernel $J$ satisfy Assumptions (A2) and \ref{Assump-J}, respectively. For every $\gamma \in (0,1)$, let $\mathcal{I}_{IKK,\gamma}$ be the rate function defined in \eqref{rate-gamma}, and let $\mathcal{I}_{CH}$ be the rate function defined in \eqref{rate-CH}. Then, as $\gamma \to 0$, the family $(\mathcal{I}_{IKK,\gamma})_{\gamma\in(0,1]}$ $\Gamma$-converges to $\mathcal{I}_{CH}$ in the space $\mathbb{X}$:
\begin{align}
\mathcal{I}_{IKK,\gamma} \xrightarrow{\Gamma} \mathcal{I}_{CH}, \quad \text{as } \gamma \to 0.
\end{align}
\end{theorem}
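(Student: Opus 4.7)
The plan is to verify separately the liminf and limsup conditions of Definition \ref{def-gammaconvergence}, using the Riesz-type characterisations of Lemmas \ref{risez}-\ref{risez-2} together with the compactness and identification machinery already developed for the skeleton equations in Proposition \ref{convergence-skeleton}.

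\emph{Recovery sequence (limsup).} Fix $u\in\mathbb{X}$ with $\mathcal{I}_{CH}(u)<\infty$ (otherwise the bound is trivial), and let $g_u\in L^2([0,T];L^2(\mathbb{T}^1))$ be the optimal control provided by Lemma \ref{risez-2}, so that $\mathcal{I}_{CH}(u)=\tfrac12\|g_u\|_{L^2}^2$. For each $\gamma\in(0,1]$, define $u_\gamma$ as the weak solution of \eqref{skeleton-gamma} driven by this fixed control $g_u$ with initial datum $u_{\gamma,0}$; by the definition of the infimum in \eqref{rate-gamma} one immediately obtains $\mathcal{I}_{IKK,\gamma}(u_\gamma)\le \tfrac12\|g_u\|_{L^2}^2=\mathcal{I}_{CH}(u)$. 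Proposition \ref{convergence-skeleton} then yields $u_\gamma\to u$ strongly in $L^2([0,T];L^2(\mathbb{T}^1))$, while the uniform $L^2(H^1)$-bound of Lemma \ref{entropydissipation-skeleton} together with the one-dimensional interpolation $\|u_\gamma\|_{L^4(L^4)}^4\lesssim \sup_t\|u_\gamma\|_{L^2}^2\int_0^T(\|u_\gamma\|_{L^2}^2+\|\partial_x u_\gamma\|_{L^2}^2)\,dt$ supplies uniform bounds that promote the subsequential weak $L^2(H^1)$ and weak-$\ast$ $L^4(L^4)$ limits to coincide with $u$. Hence $u_\gamma\to u$ in $\mathbb{X}$ and the limsup inequality follows.

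\emph{Lower bound (liminf).} Take any sequence $u_{\gamma_k}\to u$ in $\mathbb{X}$ with $\liminf_k\mathcal{I}_{IKK,\gamma_k}(u_{\gamma_k})<\infty$. Using Lemma \ref{risez}, select optimal controls $g_{\gamma_k}\in L^2([0,T];L^2(\mathbb{T}^1))$ realising $\tfrac12\|g_{\gamma_k}\|_{L^2}^2=\mathcal{I}_{IKK,\gamma_k}(u_{\gamma_k})$, so that along a further subsequence $g_{\gamma_k}\rightharpoonup g$ weakly in $L^2(L^2)$. The key step, and the main obstacle, is to pass to the limit in the weak formulation of \eqref{skeleton-gamma} to identify $u$ as the unique weak solution of \eqref{skeleton-CahnHilliard} driven by this $g$. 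Compactness is furnished by Lemmas \ref{entropydissipation-skeleton}-\ref{time-regularity-skeleton}; the deterministic Kac and bi-Laplacian terms are handled via the rewriting \eqref{rewriteform} and a Taylor expansion of the kernel exactly as in Proposition \ref{convergence-skeleton}; and the multiplicative control term $\sqrt{2}\partial_x(\sqrt{1-\gamma_k^{2/3}u_{\gamma_k}^2}\,g_{\gamma_k})$ is passed to the limit by weak-strong multiplication. This last product convergence is where the argument is delicate: it relies on the fact that the rescaled entropy estimate \eqref{uniformestimate-gep} prevents concentration of $u_{\gamma_k}$ near the saturation boundary $\{|u_{\gamma_k}|=\gamma_k^{-1/3}\}$, so that $\sqrt{1-\gamma_k^{2/3}u_{\gamma_k}^2}\to 1$ strongly in $L^p(L^p)$ for every finite $p$ by dominated convergence, against the weakly convergent $g_{\gamma_k}$. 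Uniqueness from Theorem \ref{stability-skeleton} identifies the limit, and weak lower semicontinuity of the $L^2$-norm yields
\[
\mathcal{I}_{CH}(u)\le \tfrac12\|g\|_{L^2}^2\le \liminf_{k\to\infty}\tfrac12\|g_{\gamma_k}\|_{L^2}^2=\liminf_{k\to\infty}\mathcal{I}_{IKK,\gamma_k}(u_{\gamma_k}),
\]
completing the $\Gamma$-liminf inequality and hence the theorem.
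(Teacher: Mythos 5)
Your proposal is correct and follows essentially the same route as the paper: both parts use the Riesz-type characterisations (Lemmas \ref{risez}-\ref{risez-2}) to extract optimal controls, the recovery sequence is built by running the fixed optimal control $g_u$ through the $\gamma$-skeleton and invoking Proposition \ref{convergence-skeleton}, and the liminf inequality extracts a weakly convergent subsequence of optimal controls, passes to the limit in the skeleton equation via the compactness of Lemmas \ref{entropydissipation-skeleton}-\ref{time-regularity-skeleton}, identifies the limit by uniqueness (Theorem \ref{stability-skeleton}), and closes with weak lower semicontinuity of the $L^2$-norm. You supply two details the paper leaves implicit: the Gagliardo-Nirenberg interpolation that upgrades the $L^2(L^2)$ convergence of Proposition \ref{convergence-skeleton} to convergence in the full topology of $\mathbb{X}$ (including weak-$\ast$ $L^4(L^4)$), and the weak-strong product argument for the control term $\sqrt{1-\gamma_k^{2/3}u_{\gamma_k}^2}\,g_{\gamma_k}$ when the controls vary with $\gamma_k$; one small inaccuracy is the attribution of $\sqrt{1-\gamma_k^{2/3}u_{\gamma_k}^2}\to1$ to an entropy-based "no concentration at the saturation boundary" mechanism, whereas it simply follows from the pointwise bound $|u_{\gamma_k}|\le\gamma_k^{-1/3}$, the strong $L^2$ convergence (giving a.e.\ convergence along a subsequence), $\gamma_k\to0$, and dominated convergence. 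This does not affect the validity of the argument.
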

\begin{proof}

\noindent \textbf{Proof of (i) in Definition \ref{def-gammaconvergence}.}  
Let $(u_{\gamma_k})_{k \in \mathbb{N}}$ be a sequence such that $u_{\gamma_k} \to u$ in $\mathbb{X}$ as $k \to \infty$. If 
$$
\liminf_{k \to \infty} \mathcal{I}_{IKK,\gamma_k}(u_{\gamma_k}) = +\infty,
$$
then the inequality holds trivially. Otherwise, assume
$$
\liminf_{k \to \infty} \mathcal{I}_{IKK,\gamma_k}(u_{\gamma_k}) < +\infty.
$$
By passing to a subsequence, denoted by $(\gamma_l)$, we may assume
$$
\liminf_{k \to \infty} \mathcal{I}_{IKK,\gamma_k}(u_{\gamma_k}) = \lim_{l \to \infty} \mathcal{I}_{IKK,\gamma_l}(u_{\gamma_l}).
$$

By Lemma \ref{risez}, for each fixed $l$, there exists a control $g_l \in L^2([0,T];L^2(\mathbb{T}^1))$ such that
$$
\frac{1}{2} \|g_l\|_{L^2([0,T];L^2(\mathbb{T}^1))}^2 = \mathcal{I}_{IKK,\gamma_l}(u_{\gamma_l}).
$$
Since the right-hand side is bounded uniformly in $l$, we deduce the existence of $g \in L^2([0,T];L^2(\mathbb{T}^1))$ with
$$
g_l \rightharpoonup g \quad \text{weakly in } L^2([0,T];L^2(\mathbb{T}^1)).
$$

By Lemma \ref{risez} again, $u_{\gamma_l}$ solves the skeleton equation \eqref{skeleton-gamma} with control $g_l$. Using the convergence $u_{\gamma_l} \to u$ in $\mathbb{X}$ and arguing as in the proofs of Proposition \ref{convergence-skeleton} and Theorem \ref{converg-stochasticcontrol}, we pass to the limit in \eqref{skeleton-gamma} and conclude that $u$ is a weak solution of \eqref{skeleton-CahnHilliard} with initial data $u_0$ and control $g$. Hence, by the lower semi-continuity of the weak convergence of $L^2([0,T];L^2(\mathbb{T}^1))$ and the definition of the rate function $\mathcal{I}_{CH}$, 
$$
\liminf_{k \to \infty} \mathcal{I}_{IKK,\gamma_k}(u_{\gamma_k}) \geq \frac{1}{2} \|g\|_{L^2([0,T];L^2(\mathbb{T}^1))}^2 \geq \mathcal{I}_{CH}(u).
$$
This completes the proof of (i).

\medskip

\noindent \textbf{Proof of (ii) in Definition \ref{def-gammaconvergence}.}  
Let $u \in \mathbb{X}$. If $\mathcal{I}_{CH}(u) = +\infty$, the statement holds trivially. Otherwise, thanks to Lemma \ref{risez-2}, there exists a control $g_u \in L^2([0,T];L^2(\mathbb{T}^1))$ such that $u$ is the weak solution of \eqref{skeleton-CahnHilliard} with control $g_u$ and
$$
\frac{1}{2} \|g_u\|_{L^2([0,T];L^2(\mathbb{T}^1))}^2 = \mathcal{I}_{CH}(u).
$$

For each $\gamma \in (0,1)$, let $u_{\gamma}$ be the weak solution of \eqref{skeleton-gamma} with control $g$ and initial data $u_{\gamma,0}$ satisfying Assumption (A2). By Proposition \ref{convergence-skeleton}, we have
$$
u_{\gamma} \to u \quad \text{in } \mathbb{X} \quad \text{as } \gamma \to 0.
$$
By the definition of $\mathcal{I}_{IKK,\gamma}$, it follows that
$$
\limsup_{\gamma \to 0} \mathcal{I}_{IKK,\gamma}(u_{\gamma}) \leq \frac{1}{2} \|g_u\|_{L^2([0,T];L^2(\mathbb{T}^1))}^2 = \mathcal{I}_{CH}(u).
$$
This completes the proof of (ii).

\end{proof}

\noindent{\bf  Acknowledgements}\quad This work is supported by the US Army Research Office, grant W911NF2310230.

\appendix
\renewcommand{\appendixname}{Appendix~\Alph{section}}
\renewcommand{\theequation}{A.\arabic{equation}}
\section{Approximation of the stochastic Cahn-Hilliard equation}\label{sec-app-A}
In this appendix, we present a rigorous proof of Lemma \ref{lem-approx-CH}. For the reader's convenience, we restate the lemma here.

\begin{lemma}
Suppose that $a < 0$. For every $\delta > 0$, let $u_0 \in H^{-1}(\mathbb{T}^1)$ be given, and denote by $u_{\delta}$ the weak solution to the approximated equation
\begin{align}\label{CahnHilliard-delta-appendix}
    du_{\delta} = \partial_{xx}^2 \Big[ V'(u_{\delta}) - \frac{D}{2} \partial_{xx}^2 u_{\delta} \Big] dt - \sqrt{2} \partial_x dW_{\delta}, \quad u_{\delta}(0) = u_0.
\end{align}
Let $u$ be the weak solution to
\begin{align*}
	du = \partial_{xx}^2\left[V'(u) - \frac{D}{2} \partial_{xx}^2 u \right]dt - \sqrt{2} \, \partial_x dW, \quad u(0) = u_0.
\end{align*}
with the same initial condition $u_0$ and the same noise $W$.  Then the following convergence holds:
$$
\mathbb{E} \| u_{\delta} - u \|_{L^2([0,T]; L^2(\mathbb{T}^1))}^2 \to 0 \quad \text{as } \delta \to 0.
$$
\end{lemma}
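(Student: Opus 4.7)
The plan is to employ the Da Prato-Debussche decomposition. Let $S(t) = e^{-(D/2)\partial_{xxxx}^4 t}$ denote the semigroup generated by the Cahn-Hilliard bi-Laplacian, and define the stochastic convolutions
$$
z_\delta(t) = -\sqrt{2}\int_0^t S(t-s)\partial_x\, dW_\delta(s), \qquad z(t) = -\sqrt{2}\int_0^t S(t-s)\partial_x\, dW(s).
$$
Writing $u_\delta = z_\delta + v_\delta$ and $u = z + v$, the residuals solve the pathwise random PDEs
$$
\partial_t v_\delta = -\tfrac{D}{2}\partial_{xxxx}^4 v_\delta + \partial_{xx}^2 V'(v_\delta + z_\delta),\qquad v_\delta(0) = u_0,
$$
and analogously for $v$ with $z$ in place of $z_\delta$. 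Since $a<0$, the nonlinearity $V'(u)=u^3-au$ is globally monotone with $V''(s)=3s^2-a>0$, which both provides pathwise well-posedness and will be the key structural ingredient below.

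The first step is to prove $z_\delta \to z$ in a sufficiently strong norm. By the It\^o isometry and the Fourier diagonalisation of $S(t)$, one computes
$$
\mathbb{E}\|z_\delta(t) - z(t)\|_{L^2(\mathbb{T}^1)}^2 = \sum_{k\in\mathbb{Z}\setminus\{0\}} \int_0^t e^{-Dk^4(t-s)}(2\pi k)^2 |\widehat{\eta_\delta}(k) - 1|^2\, ds,
$$
and $\widehat{\eta_\delta}(k)\to 1$ pointwise combined with dominated convergence yields the convergence in $L^2(\Omega; L^2([0,T]; L^2(\mathbb{T}^1)))$. Using the fourth-order smoothing of $S(t)$, one in fact obtains uniform moment bounds of $z_\delta$ and $z$ in $L^p(\Omega; L^\infty([0,T]; L^\infty(\mathbb{T}^1)))$ for every $p\geq 1$ (in one spatial dimension this regularity is well within the reach of Kolmogorov's criterion), together with the convergence $\mathbb{E}\|z_\delta-z\|_{L^\infty_t L^\infty_x}^p\to 0$.

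The second step compares $w := v_\delta - v$, which satisfies
$$
\partial_t w = -\tfrac{D}{2}\partial_{xxxx}^4 w + \partial_{xx}^2\bigl[V'(v_\delta + z_\delta) - V'(v+z)\bigr],\qquad w(0)=0.
$$
Since this evolution preserves the spatial mean and $w(0)=0$, the function $w(t)$ remains mean-zero, so testing with $(-\partial_{xx}^2)^{-1}w$ produces the identity
$$
\tfrac{1}{2}\tfrac{d}{dt}\|w\|_{H^{-1}}^2 + \tfrac{D}{2}\|\partial_x w\|_{L^2}^2 + \bigl\langle w, V'(v_\delta+z_\delta) - V'(v+z)\bigr\rangle_{L^2} = 0.
$$
Splitting
$$
V'(v_\delta+z_\delta) - V'(v+z) = \bigl[V'(v_\delta+z_\delta)-V'(v+z_\delta)\bigr] + \bigl[V'(v+z_\delta)-V'(v+z)\bigr],
$$
the first bracket tested against $w$ is nonnegative by the monotonicity of $V'$, while the second is controlled, using $|V''(s)|\lesssim 1+s^2$ and the $L^\infty$ bounds on $z, z_\delta, v$, by
$$
\bigl|\langle w, V'(v+z_\delta)-V'(v+z)\rangle\bigr| \lesssim \|w\|_{L^2}\bigl(1+\|v\|_{L^\infty}^2+\|z_\delta\|_{L^\infty}^2+\|z\|_{L^\infty}^2\bigr)\|z_\delta-z\|_{L^2}.
$$
Using Poincar\'e's inequality on the mean-zero $w$ to absorb $\|w\|_{L^2}$ into $\|\partial_x w\|_{L^2}$, followed by Gronwall's lemma and a Cauchy-Schwarz splitting in expectation, one obtains
$$
\mathbb{E}\|w\|_{L^\infty_t H^{-1}}^2 + \mathbb{E}\|w\|_{L^2_t H^1}^2 \longrightarrow 0 \quad\text{as}\quad \delta\to 0,
$$
and combined with $z_\delta\to z$ in $L^2$ this yields the desired convergence $u_\delta\to u$ in $L^2([0,T];L^2(\mathbb{T}^1))$.

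The main obstacle is the cubic nonlinearity: one must establish uniform-in-$\delta$ moment bounds on $v_\delta$ in a norm strong enough to handle $V'(v_\delta+z_\delta)$. This is achieved via an independent $L^2$ and $H^{-1}$ energy estimate on $v_\delta$ itself, exploiting the dissipative structure of the Cahn-Hilliard equation and the monotonicity of the cubic, with the uniformity transferred from the uniform $L^\infty$ control of $z_\delta$ obtained in Step 1. A minor technical point is handling the spatial mean: since only $\partial_x dW_\delta$ enters, the zero-Fourier-mode is frozen at the initial datum throughout the decomposition, so all $H^{-1}$ quotient-space arguments are legitimate.
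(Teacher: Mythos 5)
Your Da Prato--Debussche decomposition is the same one the paper uses in Appendix~A, and both arguments exploit the positivity from $a<0$ in the cubic. Your split of the nonlinearity to isolate a bracket that is monotone in $v_\delta - v$ (hence nonnegative after pairing with $w$) is a clean alternative to the paper's device, which factors $(w+z)^3 - (w_\delta+z_\delta)^3 = (V_\delta+U_\delta)\bigl((w+z)^2 + (w+z)(w_\delta+z_\delta) + (w_\delta+z_\delta)^2\bigr)$ and absorbs the cross term into the nonnegative sum-of-squares via Young's inequality. Both routes isolate the same sign structure.

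The gap is in your treatment of the second bracket. Your Hölder bound puts the residual in $L^\infty_x$,
\begin{equation*}
\bigl|\langle w, V'(v+z_\delta)-V'(v+z)\rangle\bigr| \lesssim \|w\|_{L^2}\bigl(1+\|v\|_{L^\infty}^2+\|z_\delta\|_{L^\infty}^2+\|z\|_{L^\infty}^2\bigr)\|z_\delta-z\|_{L^2},
\end{equation*}
and to run Gronwall you then need a uniform-in-$\delta$ moment bound on $v_\delta$ in (at least) $L^4_t L^\infty_x$. You claim this comes from an ``independent $L^2$ and $H^{-1}$ energy estimate,'' but testing the residual equation with $(-\partial_{xx}^2)^{-1}v_\delta$ yields $v_\delta \in L^\infty_t H^{-1} \cap L^2_t H^1 \cap L^4_t L^4$ (the cubic contributes the coercive $\|v_\delta\|_{L^4}^4$ term), and none of these, even with the one-dimensional embedding $H^1\hookrightarrow L^\infty$, delivers $L^4_t L^\infty_x$; interpolation between $L^\infty_t H^{-1}$ and $L^2_t H^1$ only reaches $L^{q}_t L^\infty_x$ for $q < 8/3$. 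Nor can $L^\infty$ control be propagated from $z_\delta$ to $v_\delta$, since the fourth-order Cahn--Hilliard residual flow has no maximum principle. The paper circumvents this entirely by working in $L^4([0,T];L^4(\mathbb{T}^1))$: the $L^p$-isometry for stochastic integrals with $p=4$ gives a $\delta$-uniform $L^4$ moment for $z,z_\delta$ (and, crucially, the convergence $\mathbb{E}\|z-z_\delta\|_{L^4_tL^4_x}^4\to 0$), the coercive quartic term in the $H^{-1}$ energy identity gives the matching $L^4$ bound on $w,w_\delta$, and the commutator term is closed by Hölder at exponent four. If you replace your $L^\infty$ Hölder by the analogous $L^4\times L^4$ splitting, the rest of your argument goes through without change; as written, the $L^\infty$ claim on the residual is unsupported.
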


\begin{proof}
	Consider the following decomposition: let $z$ be the solution to
\begin{align*}
    dz = -\frac{D}{2} (\partial_{xx}^2)^2 z\, dt - \sqrt{2}\, \partial_x dW, \quad z(0) = 0,
\end{align*}
and let $w$ be the solution to
\begin{align*}
    dw = \partial_{xx}^2 \Big[ V'(w + z) - \frac{D}{2} \partial_{xx}^2 w \Big] dt, \quad w(0) = u_0.
\end{align*}
Similarly, we consider the analogous decomposition for equation \eqref{CahnHilliard-delta-appendix}. Let $z_{\delta}$ be the solution to
\begin{align*}
    dz_{\delta} = -\frac{D}{2} (\partial_{xx}^2)^2 z_{\delta} \, dt - \sqrt{2}\, \partial_x dW_{\delta}, \quad z_{\delta}(0) = 0,
\end{align*}
and let $w_{\delta}$ be the solution to
\begin{align*}
    dw_{\delta} = \partial_{xx}^2 \Big[ V'(w_{\delta} + z_{\delta}) - \frac{D}{2} \partial_{xx}^2 w_{\delta} \Big] dt, \quad w_{\delta}(0) = u_0.
\end{align*}

\textbf{Step 1. Uniform estimates.}

We begin by establishing several uniform estimates for the processes $z, z_{\delta}, w,$ and $w_{\delta}$. 

Let $(e^{-(\partial_{xx}^2)^2 t})_{t \in [0,T]}$ denote the semigroup generated by $-(\partial_{xx}^2)^2$, and let $(K_t(x))_{(t,x) \in [0,T] \times \mathbb{T}^1}$ be the associated convolution kernel on the torus; see \cite{RYZ21} for further details. Applying the $L^p$-isometry for stochastic integrals with $p > 1$ \cite[Corollary 3.11]{NW}, together with integration by parts and Young's convolution inequality, we deduce that 
\begin{align*}
    \mathbb{E}\|z_{\delta}\|_{L^4([0,T]; L^4(\mathbb{T}^1))}^4 
    &= 2 \mathbb{E} \int_0^T \int_{\mathbb{T}^1} \Big| \int_0^t \int_{\mathbb{T}^1} \partial_x K_{t-s}(x - y) \, dW_{\delta}(s,y) \Big|^4 dx\, dt \\
    &\lesssim \int_0^T \int_{\mathbb{T}^1} \Big( \mathbb{E} \int_0^t \| \partial_x K_{t-s} \ast \eta_{\delta}(x - \cdot) \|_{L^2(\mathbb{T}^1)}^2 ds \Big)^2 dx\, dt \\
    &\leq T \cdot \Big( \int_0^T \| \partial_x K_s \ast \eta_{\delta} \|_{L^2(\mathbb{T}^1)}^2 ds \Big)^2 \\
    &\leq T \cdot \Big( \int_0^T \| \partial_x K_s \|_{L^2(\mathbb{T}^1)}^2 ds \Big)^2.
\end{align*}
By the definition of the semigroup $(e^{-(\partial_{xx}^2)^2 t})_{t \in [0,T]}$ and Parseval's identity, we have
\begin{align}\label{finite-L2}
    \int_0^T \| \partial_x K_s \|_{L^2(\mathbb{T}^1)}^2 ds 
    &= \int_0^T \sum_{k \geq 0} \langle \partial_x K_s, e_k \rangle^2 ds \notag \\
    &= \int_0^T \sum_{k \geq 0} \big(\partial_x e^{-(\partial_{xx}^2)^2 s} e_k\big)(0)^2 ds \notag \\
    &= \int_0^T \sum_{k \geq 0} |k|^2 e^{-2|k|^4 s} ds \notag \\
    &= \sum_{k \geq 0} |k|^2 \left( \frac{1 - e^{-2|k|^4 T}}{2 |k|^4} \right) < \infty.
\end{align}
Using the same argument for $z$, we conclude that there exists a constant $C > 0$ such that
\begin{align}\label{uniformes-z}
    \sup_{\delta > 0} \mathbb{E} \| z_{\delta} \|_{L^4([0,T]; L^4(\mathbb{T}^1))}^4 + \mathbb{E} \| z \|_{L^4([0,T]; L^4(\mathbb{T}^1))}^4 \leq C.
\end{align}

Next, applying It\^o's formula to the functional $\frac{1}{2} \langle (-\partial_{xx}^2)^{-1} w_{\delta}, w_{\delta} \rangle$, we obtain
\begin{align*}
    \frac{1}{2} \partial_t \langle (-\partial_{xx}^2)^{-1} w_{\delta}, w_{\delta} \rangle
    &= - \| \partial_x w_{\delta} \|_{L^2(\mathbb{T}^1)}^2 - \int_{\mathbb{T}^1} w_{\delta} (w_{\delta} + z_{\delta})^3 - a w_{\delta} (w_{\delta} + z_{\delta}) \, dx \\
    &= - \| \partial_x w_{\delta} \|_{L^2(\mathbb{T}^1)}^2 - \int_{\mathbb{T}^1} w_{\delta} \big( w_{\delta}^3 + z_{\delta}^3 + 3 w_{\delta}^2 z_{\delta} + 3 w_{\delta} z_{\delta}^2 \big) - a w_{\delta} (w_{\delta} + z_{\delta}) \, dx.
\end{align*}
Taking expectations, using Young's inequality, and invoking \eqref{uniformes-z}, we deduce
\begin{align*}
    &\sup_{t \in [0,T]} \frac{1}{2} \mathbb{E} \| w_{\delta}(t) \|_{\dot{H}^{-1}(\mathbb{T}^1)}^2 + \mathbb{E} \int_0^T \| \partial_x w_{\delta} \|_{L^2(\mathbb{T}^1)}^2 ds-\frac{a}{2}\mathbb{E}\int^T_0\|w_{\delta}\|_{L^2(\mathbb{T}^1)}^2ds \\
    \lesssim\; & \frac{1}{2} \mathbb{E} \| u_0 \|_{\dot{H}^{-1}(\mathbb{T}^1)}^2 - \frac{1}{2} \mathbb{E} \| w_{\delta} \|_{L^4([0,T]; L^4(\mathbb{T}^1))}^4 + \mathbb{E} \| z_{\delta} \|_{L^4([0,T]; L^4(\mathbb{T}^1))}^4 + \mathbb{E} \| z_{\delta} \|_{L^2([0,T]; L^2(\mathbb{T}^1))}^2 \\
    \leq\; & \frac{1}{2} \mathbb{E} \| u_0 \|_{\dot{H}^{-1}(\mathbb{T}^1)}^2 - \frac{1}{2} \mathbb{E} \| w_{\delta} \|_{L^4([0,T]; L^4(\mathbb{T}^1))}^4 + C.
\end{align*}
Applying the same reasoning to $w$, we obtain the uniform bound
\begin{align}\label{uniformes-w}
    \sup_{\delta > 0} \mathbb{E} \| w_{\delta} \|_{L^4([0,T]; L^4(\mathbb{T}^1))}^4 + \mathbb{E} \| w \|_{L^4([0,T]; L^4(\mathbb{T}^1))}^4 \leq C.
\end{align}

\textbf{Step 2. Estimates of the differences.} 

Let us define the differences $U_{\delta} := z - z_{\delta}$ and $V_{\delta} := w - w_{\delta}$. Our goal in this step is to derive suitable estimates for these differences, beginning with $U_{\delta}$. 

By applying the $L^p$-isometry for stochastic integrals with $p > 1$ (\cite{NW}) and utilizing the integration by parts formula, we observe that the following estimate holds: 
\begin{align*}
	\mathbb{E}\|U_{\delta}\|_{L^4([0,T];L^4(\mathbb{T}^1))}^4
	&= 2 \, \mathbb{E}\int_0^T \int_{\mathbb{T}^1} \Big|\int_0^t \int_{\mathbb{T}^1} \partial_x K_{t-s}(x-y) \big(dW\,dy - dW_{\delta}\,dy \big) \Big|^4 dx\, dt \\
	&\lesssim \int_0^T \int_{\mathbb{T}^1} \left( \mathbb{E} \int_0^t \left\| \partial_x K_{t-s}(x-\cdot) - \partial_x K_{t-s} \ast \eta_{\delta}(x-\cdot) \right\|_{L^2(\mathbb{T}^1)}^2 ds \right)^2 dx\, dt \\
	&\leq T \left(\int_0^T \left\| \partial_x K_s - \partial_x K_s \ast \eta_{\delta} \right\|_{L^2(\mathbb{T}^1)}^2 ds\right)^2.
\end{align*}
Utilizing the result in \eqref{finite-L2}, it follows that
\begin{equation}\label{convergence-Udelta}
	\mathbb{E}\|U_{\delta}\|_{L^4([0,T];L^4(\mathbb{T}^1))}^4 \leq T \left(\int_0^T \left\| \partial_x K_s - \partial_x K_s \ast \eta_{\delta} \right\|_{L^2(\mathbb{T}^1)}^2 ds\right)^2 \rightarrow 0, \quad \text{as } \delta \to 0.
\end{equation}

Next, we turn to estimating $V_{\delta}$. A direct computation reveals that $V_{\delta}$ satisfies the following partial differential equation:
\begin{align*}
\partial_t V_{\delta} 
= & -\frac{D}{2} (\partial_{xx}^2)^2 V_{\delta} + \partial_{xx}^2 \Big[(w+z)^3 - (w_{\delta} + z_{\delta})^3 - a(V_{\delta} + U_{\delta}) \Big] \\
= & -\frac{D}{2} (\partial_{xx}^2)^2 V_{\delta} + \partial_{xx}^2 \Big[ (V_{\delta} + U_{\delta}) \big((w+z)^2 + (w+z)(w_{\delta}+z_{\delta}) + (w_{\delta}+z_{\delta})^2 \big) \Big] \\
& \quad - a \partial_{xx}^2 (V_{\delta} + U_{\delta}).
\end{align*}

Applying It\^o's formula to the functional $\frac{1}{2} \langle (-\partial_{xx}^2)^{-1} V_{\delta}, V_{\delta} \rangle$, we obtain
\begin{align}\label{H-1Ito}
\frac{1}{2} \partial_t \langle (-\partial_{xx}^2)^{-1} V_{\delta}, V_{\delta} \rangle
= & -\frac{D}{2} \| \partial_x V_{\delta} \|_{L^2(\mathbb{T}^1)}^2 + a \int_{\mathbb{T}^1} V_{\delta} (V_{\delta} + U_{\delta}) \, dx \notag \\
& - \int_{\mathbb{T}^1} V_{\delta} \Big[ (V_{\delta} + U_{\delta}) \big((w+z)^2 + (w+z)(w_{\delta}+z_{\delta}) + (w_{\delta}+z_{\delta})^2 \big) \Big] dx.
\end{align}

Since $a < 0$, by Young's inequality we have
\begin{align*}
	a \int_{\mathbb{T}^1} V_{\delta} (V_{\delta} + U_{\delta}) dx 
	&\leq a \int_{\mathbb{T}^1} V_{\delta}^2 dx + \frac{|a|}{2} \int_{\mathbb{T}^1} \big( V_{\delta}^2 + U_{\delta}^2 \big) dx \\
	&= \frac{a}{2} \int_{\mathbb{T}^1} V_{\delta}^2 dx + \frac{|a|}{2} \int_{\mathbb{T}^1} U_{\delta}^2 dx.
\end{align*}

We further decompose the last term in \eqref{H-1Ito} as
\begin{align*}
	& -\int_{\mathbb{T}^1} V_{\delta} \big[(V_{\delta} + U_{\delta}) ((w+z)^2 + (w+z)(w_{\delta} + z_{\delta}) + (w_{\delta} + z_{\delta})^2) \big] dx \\
	= & -\int_{\mathbb{T}^1} V_{\delta}^2 \big((w+z)^2 + (w+z)(w_{\delta}+z_{\delta}) + (w_{\delta}+z_{\delta})^2 \big) dx \\
	& - \int_{\mathbb{T}^1} V_{\delta} U_{\delta} \big((w+z)^2 + (w+z)(w_{\delta}+z_{\delta}) + (w_{\delta}+z_{\delta})^2 \big) dx \\
	=: & I_1 + I_2.
\end{align*}

For the term $I_1$, applying Young's inequality to the mixed product, we deduce
\begin{align*}
	-\int_{\mathbb{T}^1} V_{\delta}^2 (w+z)(w_{\delta} + z_{\delta}) dx 
	\leq \int_{\mathbb{T}^1} \frac{1}{2} V_{\delta}^2 (w+z)^2 + \frac{1}{2} V_{\delta}^2 (w_{\delta} + z_{\delta})^2 dx.
\end{align*}
Hence, it follows that
\begin{align*}
	I_1 \leq -\int_{\mathbb{T}^1} \frac{1}{2} V_{\delta}^2 (w+z)^2 + \frac{1}{2} V_{\delta}^2 (w_{\delta} + z_{\delta})^2 dx.
\end{align*}

Similarly, the term $I_2$ is estimated as
\begin{align*}
	I_2 
	&\leq \frac{1}{2} \int_{\mathbb{T}^1} |V_{\delta} U_{\delta}| \big( (w+z)^2 + (w_{\delta} + z_{\delta})^2 \big) dx \\
	&\leq \frac{1}{2} \int_{\mathbb{T}^1} |V_{\delta}| \sqrt{(w+z)^2 + (w_{\delta} + z_{\delta})^2} |U_{\delta}| \sqrt{(w+z)^2 + (w_{\delta} + z_{\delta})^2} dx \\
	&\leq \frac{1}{4} \int_{\mathbb{T}^1} V_{\delta}^2 \big( (w+z)^2 + (w_{\delta} + z_{\delta})^2 \big) dx + C \int_{\mathbb{T}^1} U_{\delta}^2 \big( (w+z)^2 + (w_{\delta} + z_{\delta})^2 \big) dx.
\end{align*}

Taking expectations and applying H\"older's inequality, we obtain
\begin{align*}
	\mathbb{E}[I_1 + I_2] 
	&\leq C \, \mathbb{E} \int_{\mathbb{T}^1} U_{\delta}^2 \big( (w+z)^2 + (w_{\delta} + z_{\delta})^2 \big) dx \\
	&\leq C \, \mathbb{E} \| U_{\delta} \|_{L^4(\mathbb{T}^1)}^2 \left( \mathbb{E} \| w \|_{L^4(\mathbb{T}^1)}^2 + \mathbb{E} \| z \|_{L^4(\mathbb{T}^1)}^2 + \mathbb{E} \| w_{\delta} \|_{L^4(\mathbb{T}^1)}^2 + \mathbb{E} \| z_{\delta} \|_{L^4(\mathbb{T}^1)}^2 \right).
\end{align*}

Combining the above estimates and employing H\"older's inequality once more, we deduce the key inequality
\begin{align*}
&\sup_{t \in [0,T]} \mathbb{E} \| V_{\delta}(t) \|_{\dot{H}^{-1}(\mathbb{T}^1)}^2 + \mathbb{E} \int_0^T \| \partial_x V_{\delta} \|_{L^2(\mathbb{T}^1)}^2 ds - a \, \mathbb{E} \int_0^T \| V_{\delta} \|_{L^2(\mathbb{T}^1)}^2 ds \\
&\quad \leq C \int_0^T \mathbb{E} \| U_{\delta} \|_{L^4(\mathbb{T}^1)}^2 \left( \mathbb{E} \| w \|_{L^4(\mathbb{T}^1)}^2 + \mathbb{E} \| z \|_{L^4(\mathbb{T}^1)}^2 + \mathbb{E} \| w_{\delta} \|_{L^4(\mathbb{T}^1)}^2 + \mathbb{E} \| z_{\delta} \|_{L^4(\mathbb{T}^1)}^2 \right) ds \\
&\quad \quad + \frac{|a|}{2} \, \mathbb{E} \int_0^T \| U_{\delta} \|_{L^2(\mathbb{T}^1)}^2 ds \\
&\quad \leq C \, \mathbb{E} \| U_{\delta} \|_{L^4([0,T];L^4(\mathbb{T}^1))}^2 \Big( \mathbb{E} \| w \|_{L^4([0,T];L^4(\mathbb{T}^1))}^2 + \mathbb{E} \| z \|_{L^4([0,T];L^4(\mathbb{T}^1))}^2 + \mathbb{E} \| w_{\delta} \|_{L^4([0,T];L^4(\mathbb{T}^1))}^2\\
&\quad \quad + \mathbb{E} \| z_{\delta} \|_{L^4([0,T];L^4(\mathbb{T}^1))}^2 \Big) + \frac{|a|}{2} \mathbb{E} \int_0^T \| U_{\delta} \|_{L^2(\mathbb{T}^1)}^2 ds.
\end{align*}

Thanks to the uniform bounds established in \eqref{uniformes-z}, \eqref{uniformes-w} together with the convergence result \eqref{convergence-Udelta}, we conclude that
\begin{align*}
	& \sup_{t \in [0,T]} \mathbb{E} \| V_{\delta}(t) \|_{\dot{H}^{-1}(\mathbb{T}^1)}^2 + \mathbb{E} \int_0^T \| \partial_x V_{\delta} \|_{L^2(\mathbb{T}^1)}^2 ds - a \, \mathbb{E} \int_0^T \| V_{\delta} \|_{L^2(\mathbb{T}^1)}^2 ds \\
	&\quad \leq C \left( \mathbb{E} \| U_{\delta} \|_{L^4([0,T];L^4(\mathbb{T}^1))}^2 + \mathbb{E} \| U_{\delta} \|_{L^2([0,T];L^2(\mathbb{T}^1))}^2 \right) \to 0, \quad \text{as } \delta \to 0.
\end{align*}

Consequently, we deduce the strong convergence of the difference $u_{\delta} - u$ in $L^2([0,T];L^2(\mathbb{T}^1))$ norm in expectation:
\begin{equation*}
	\mathbb{E} \| u_{\delta} - u \|_{L^2([0,T];L^2(\mathbb{T}^1))}^2 \lesssim \mathbb{E} \| U_{\delta} \|_{L^2([0,T];L^2(\mathbb{T}^1))}^2 + \mathbb{E} \| V_{\delta} \|_{L^2([0,T];L^2(\mathbb{T}^1))}^2 \to 0, \quad \text{as } \delta \to 0,
\end{equation*}
thus completing the proof.

\end{proof}

\section{Corollaries and challenges of the square-root coefficient}\label{sec-app-B} 
As mentioned in the introduction, the results of this paper simplify the setting of the conjecture proposed in \cite{GJE99} in two key aspects: first, by replacing the space-time white noise with spatially correlated noise, and second, by regularizing the singular square-root diffusion coefficient with a smooth approximation. The simplification of the noise structure is currently essential due to the absence of a well-posedness theory for supercritical singular stochastic PDEs. On the other hand, addressing the singularity of the square-root coefficient is particularly relevant in the framework of fluctuating hydrodynamics; see, for instance, \cite{FG24, FG23, DFG, GWZ24, WWZ22, WZ24}. 

In this section, we present several corollaries of the results established in Section \ref{sec-3}, focusing on the rescaled fluctuating Ising-Kac equation with a square-root diffusion coefficient. We also discuss the analytical difficulties involved in establishing the nonlinear fluctuation phenomenon in the presence of such a singular coefficient.

Precisely, for every $\gamma\in(0,1]$, we consider the following equation: 
\begin{align}\label{equgamma-3}
\partial_tu_{\gamma}=\gamma^{-2/3}\partial_{xx}^2 u_{\gamma}&-\gamma^{-2/3}(1+a\gamma^{2/3})\partial_{x}[(1-\gamma^{2/3}u_{\gamma}^2)J_{\gamma^{1/3}}\ast\partial_{x} u_{\gamma}]\notag\\
&-\sqrt{2}\partial_{x}\Big(\sqrt{1-\gamma^{2/3}u_{\gamma}^2}dW_{\delta}\Big)+\partial_{x}\Big(F_{1,\delta}\frac{1}{1-\gamma^{2/3}u_{\gamma}^2}\gamma^{4/3}u_{\gamma}^2\partial_{x}u_{\gamma}\Big). 
\end{align}

\subsection{Renormalized kinetic solutions and well-posedness} 
Due to the singularity of the square-root diffusion coefficient, the regularity obtained from the available a priori estimates, such as the entropy dissipation bounds, for equation \eqref{equgamma-3} is insufficient to ensure the integrability of the Stratonovich-to-It\^o correction term. As a result, the notion of weak solutions becomes ill-defined in this context. This necessitates the development of a generalized solution framework capable of accommodating such irregular equations. Following \cite{LPT94, FG24}, and as discussed in \cite{WWZ22}, we introduce the notion of a renormalized kinetic solution.

For every $\gamma\in(0,1]$, define the kinetic function associated with $u_\gamma$ by
$$
\chi_{\gamma}(x, \zeta, t) = I_{\{0 < \zeta < u_{\gamma}(x,t)\}} - I_{\{u_{\gamma}(x,t) < \zeta < 0\}},
$$
where the simplification holds due to the assumption $u_{\gamma} \geq 0$. In the distributional sense, the chain rule yields
$$
\partial_x \chi_{\gamma} = \delta_0(\zeta - u_{\gamma}) \, \partial_x u_{\gamma}, \quad \partial_{\zeta} \chi_{\gamma} = \delta_0(\zeta) - \delta_0(\zeta - u_{\gamma}).
$$

In the following, we introduce the definition of the renormalized kinetic solution rigorously. 

\begin{definition}\label{def-7.2}(Kinetic measure)
	For every $\gamma\in(0,1]$, a kinetic measure is defined as a measurable map  
$$
q_{\gamma}: \Omega \to \mathcal{M}^+(\mathbb{T}^1 \times [-\gamma^{-1/3},\gamma^{1/3}] \times [0,T]),
$$ 
where $\mathcal{M}^+$ denotes the set of finite nonnegative Radon measures. It is required that, for every test function $\psi \in C_c^\infty(\mathbb{T}^d \times [-\gamma^{-1/3},\gamma^{1/3}])$, the process  
$$
(\omega,t) \in \Omega \times [0,T] \;\longmapsto\; 
\int_0^t \int_{\mathbb{R}} \int_{\mathbb{T}^1} \psi(x,\zeta)\,\mathrm{d}q_{\gamma}(x,\zeta,r)
$$  
is $\mathcal{F}_t$-predictable.

\end{definition}

\begin{definition}(Renormalized kinetic solution)
	For every $\gamma\in(0,1]$, let $u_{\gamma,0}\in  \overline{\text{{\rm{Ent}}}}(\mathbb{T}^{1})$. A renormalized kinetic solution of \eqref{eqgamma-1} with initial data $u_{\gamma,0}$ is an almost surely continuous $L^2(\mathbb{T}^1;[-\gamma^{-1/3},\gamma^{-1/3}])$-valued $\mathcal{F}_t$-predictable function $\rho\in L^2\left(\Omega\times[0,T];L^2(\mathbb{T}^1;[-\varepsilon^{-1},\varepsilon^{-1}])\right)$ that satisfies the following properties.
	\begin{enumerate}
		\item Essentially bounded: almost surely for every $t\in[0,T]$,
		\begin{equation}\label{eqq4}
		u_{\gamma}(\cdot,t)\in[-\gamma^{-1/3},\gamma^{-1/3}],\ a.e.
		\end{equation}
		\item Regularity of $\sqrt{1-\gamma^{2/3}u_{\gamma}^2}$: there exists a constant $c\in(0,\infty)$ depending on $T,\rho_{\gamma,0}, J$ and $\gamma$ such that
		\begin{equation}\label{eqq5}
		\mathbb{E}\int^T_0\int_{\mathbb{T}^1}\Big[|\partial_x\sqrt{1-\gamma^{2/3}u_{\gamma}^2}|^2+|\partial_xu_{\gamma}|^2\Big]\mathrm{d}x\mathrm{d}s\le c(T,u_{\gamma,0},J,\gamma).
		\end{equation}
		Furthermore, there exists a finite nonnegative kinetic measure $q_{\gamma}$ satisfying the following properties.
		\item Regularity: almost surely
		\begin{align}\label{eqq6}
		\gamma^{-2/3}\delta_{0}(\zeta-u_{\gamma})|\partial_x u_{\gamma}|^{2}\le q_{\gamma}\quad {\rm{on}}\ \mathbb{T}^1\times[-\gamma^{-1/3},\gamma^{-1/3}]\times[0,T].
		\end{align}
\item Optimal regularity: the measure $\mu_{\gamma}$ defined by
	\begin{align}\label{mu}
		\mathrm{d}\mu_{\gamma}=\left(1-\gamma^{2/3}\zeta^2\right)^{-1}\mathrm{d}q_{\gamma}\ \text{is finite on}\ \mathbb{T}^1\times(-\gamma^{-1/3},\gamma^{-1/3})\times[0,T].
	\end{align}

		\item The equation: for every $\varphi\in\mathrm{C}_{c}^{\infty}\left(\mathbb{T}^{d}\times(-\gamma^{-1/3},\gamma^{-1/3})\right)$, almost surely for every $t\in[0,T]$,
		\begin{align}\label{eqq8}
		\int_{\mathbb{R}}\int_{\mathbb{T}^1}\chi_{\gamma}(x,\zeta,t)\varphi(x,\zeta)dxd\zeta=&\int_{\mathbb{R}}\int_{\mathbb{T}^1}\bar{\chi}(u_{\gamma,0})\varphi(x,\zeta)dxd\zeta-2\gamma^{-2/3}\int_0^t\int_{\mathbb{T}^1}\partial_xu_{\gamma}(\partial_x\varphi)(x,u_{\gamma})dxds\notag\\
		-&\gamma^{4/3}\int_0^t\int_{\mathbb{T}^1}F_{1,\delta}(x)\frac{u_{\gamma}^2}{1-\gamma^{2/3}u_{\gamma}^2}\partial_x u_{\gamma}(\partial_x\varphi)(x,u_{\gamma})dxds\notag\\
		-&\gamma^{-2/3}(1+a\gamma^{2/3})\int_0^t\int_{\mathbb{T}^1}\varphi(x,u_{\gamma})\partial_x((1-\gamma^{2/3}u_{\gamma}^2)\partial_x J_{\gamma^{1/3}}\ast u_{\gamma})dxds\notag\\
		+&\int_0^t\int_{\mathbb{T}^1}(1-\gamma^{2/3}u_{\gamma}^2)F_{3,\delta}(x)(\partial_{\zeta}\varphi)(x,u_{\gamma})dxds-2\int_0^t\int_{\mathbb{R}}\int_{\mathbb{T}^1}\partial_{\zeta}\varphi(x,\zeta)dq_{\gamma}\notag\\
		-&\sqrt{2}\int_0^t\int_{\mathbb{T}^1}\varphi(x,u_{\gamma})\partial_x\Big(\sqrt{1-\gamma^{2/3}u_{\gamma}^2} \mathrm{d}W_{\delta}(s)\Big)dx,
		\end{align}
		where $\bar{\chi}(u_{\gamma,0})(x,\zeta):=I_{\{0<\zeta<u_{\gamma,0}(x)\}}-I_{\{u_{\gamma,0}(x)<\zeta<0\}}$.
	\end{enumerate}	
\end{definition}
Refer to \cite{WWZ22}, we have the following well-posedness result. 
\begin{theorem}
	Assume that the initial data, the coefficient, and the interaction kernel $J$ satisfy Assumptions (A1), \ref{Assump-sigma}, and \ref{Assump-J}, respectively. For every $\gamma\in(0,1]$, let $u_{\gamma,0}\in\overline{{\rm{Ent}}}(\mathbb{T}^1)$. Then there exists a unique renormalized kinetic solution to \eqref{equgamma-3} in the sense of Definition \ref{def-1} with initial data $u_{\gamma,0}$. 
	\end{theorem}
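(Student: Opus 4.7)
The plan is to follow the renormalized kinetic framework of \cite{WWZ22}, with the Kac interaction handled by the monotonicity arguments already developed in Section~\ref{sec-3}. The starting point is the observation that \eqref{equgamma-3} is the formal $n\to\infty$ limit of \eqref{equgamma} when the smooth coefficients $\sigma_n$ of Lemma~\ref{lem-approximation} replace $\sqrt{\cdot}$: indeed, $4F_{1,\delta}\sigma_n'(1-\gamma^{2/3}u^2)^2\to F_{1,\delta}/(1-\gamma^{2/3}u^2)$ formally, which accounts for the Stratonovich-to-It\^o correction appearing in the last term of \eqref{equgamma-3}. So the approximation $\sigma_n\to\sqrt{\cdot}$ is the natural vehicle for existence.

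\textbf{Existence via regularization.} For each $n\ge1$, let $u_{\gamma,n}$ denote the weak solution of \eqref{equgamma} with diffusion $\sigma_n$. The entropy estimate of Proposition~\ref{prp-entropydissipation} is uniform in $n$ because the offsetting cancellation between the Stratonovich correction and the It\^o correction used there did not depend on $n$. This yields uniform bounds on $\mathbb{E}\int_{\mathbb{T}^1}\Psi_\gamma(u_{\gamma,n}(t))dx$, on $\mathbb{E}\int_0^T\!\!\int_{\mathbb{T}^1}|\partial_x u_{\gamma,n}|^2dxds$, and, via the chain rule identity $|\partial_x\sqrt{1-\gamma^{2/3}u^2}|^2=\gamma^{2/3}u^2|\partial_x u|^2/(1-\gamma^{2/3}u^2)$, on $\partial_x\sqrt{1-\gamma^{2/3}u_{\gamma,n}^2}$ in $L^2([0,T]\times\mathbb{T}^1)$. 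Combined with a negative-Sobolev time-regularity estimate in the spirit of Proposition~\ref{second-uniform-es} (easier here because $\gamma$ is fixed), the Aubin--Lions lemma provides a subsequence converging strongly in $L^2([0,T];L^2(\mathbb{T}^1))$; Skorokhod--Jakubowski then gives an a.s.\ convergent realization on a new probability space. Passing to the limit in the kinetic formulation \eqref{eqq8} with test functions $\varphi\in C^\infty_c(\mathbb{T}^1\times(-\gamma^{-1/3},\gamma^{-1/3}))$, the linear, parabolic, nonlocal and stochastic terms converge as in Section~\ref{sec-4}. The parabolic defect $\gamma^{-2/3}\delta_0(\zeta-u_{\gamma,n})|\partial_x u_{\gamma,n}|^2$ converges weakly-$\ast$ to a nonnegative Radon measure $q_\gamma$ satisfying \eqref{eqq6} by lower semicontinuity, and the optimal regularity \eqref{mu} follows from the uniform bound on $\partial_x\sqrt{1-\gamma^{2/3}u_{\gamma,n}^2}$.

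\textbf{Uniqueness via doubling of variables.} Let $u_\gamma^1,u_\gamma^2$ be two renormalized kinetic solutions with the same initial data and associated measures $q_\gamma^i$, $\mu_\gamma^i$. Regularize both the spatial and velocity variables of the kinetic functions $\chi_\gamma^i$, apply It\^o's formula to $\int\!\!\int|\chi_\gamma^1-\chi_\gamma^2|^2dxd\zeta$, use that $W_\delta$ is common to both equations so the martingale terms cancel by covariation, and send the regularization parameters to zero. The quadratic variation term generated by the square-root noise contributes a singular factor $\gamma^{2/3}\zeta/\sqrt{1-\gamma^{2/3}\zeta^2}$-squared near $\zeta=\pm\gamma^{-1/3}$, and is absorbed by the defect measures $\mu_\gamma^i$ through \eqref{mu}. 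The nonlocal Kac term is controlled by Young's convolution inequality and symmetry of $J_{\gamma^{1/3}}$ exactly as in \eqref{kernel-term}. Passing the regularization parameters to zero in the correct order as in \cite[Proposition 7.9]{WWZ22} gives $L^1$-contraction, and hence pathwise uniqueness.

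\textbf{Main obstacle.} The delicate step is the passage to the limit $n\to\infty$ in the renormalized It\^o correction $\partial_x(4F_{1,\delta}\sigma_n'(1-\gamma^{2/3}u_{\gamma,n}^2)^2\gamma^{4/3}u_{\gamma,n}^2\partial_x u_{\gamma,n})$: the pointwise limit $\sigma_n'(1-\gamma^{2/3}u^2)^2\to\frac14(1-\gamma^{2/3}u^2)^{-1}$ is singular wherever $u_{\gamma,n}$ touches $\pm\gamma^{-1/3}$, and its product with $\partial_x u_{\gamma,n}$ cannot be defined as a distribution in general. The resolution is precisely the content of the kinetic formulation \eqref{eqq8}: this ill-defined product is replaced in the limit by integration of the test function against the measure $\mu_\gamma=(1-\gamma^{2/3}\zeta^2)^{-1}dq_\gamma$, whose finiteness \eqref{mu} is provided by the uniform control of $\partial_x\sqrt{1-\gamma^{2/3}u_{\gamma,n}^2}$. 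Verifying that the limit measure $q_\gamma$ produced by the compactness argument satisfies \eqref{mu} (and not merely \eqref{eqq6}) is the technical heart of the existence proof and the place where the argument crucially departs from the setting of \eqref{equgamma}.
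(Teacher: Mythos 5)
Your outline is correct and reconstructs the same strategy the paper relies on: the paper itself offers no proof here beyond the citation ``Refer to \cite{WWZ22}'', and your plan (approximate $\sqrt{\cdot}$ by $\sigma_n$, uniform entropy dissipation and negative-Sobolev time-regularity estimates at fixed $\gamma$, Aubin--Lions and Skorokhod--Jakubowski compactness, weak-$\ast$ limit of the parabolic defect to produce $q_\gamma$ and verify \eqref{mu}, then doubling of variables for uniqueness) is precisely the renormalized kinetic solution machinery of \cite{FG24,WWZ22} that the cited reference carries out. Your identification of the limit of the It\^o correction $4F_{1,\delta}\sigma_n'(\cdot)^2 \to F_{1,\delta}(1-\gamma^{2/3}u^2)^{-1}$ and of the role of the optimal regularity condition \eqref{mu} as the technical crux matches the cited argument.
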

	
\subsection{Uniform estimates and discussion of challenges} 
To establish the nonlinear fluctuation phenomenon for the renormalized kinetic solution of \eqref{equgamma-3}, it is necessary to derive uniform estimates analogous to those stated in Proposition~\ref{prp-entropydissipation} and Proposition~\ref{second-uniform-es}. In the sequel, we demonstrate that the uniform entropy dissipation estimates of Proposition~\ref{prp-entropydissipation} can indeed be extended to the renormalized kinetic solution of \eqref{equgamma-3}. 
\begin{corollary}\label{cor-entropydissipation-kineitc}
Assume that the initial data, the coefficient, and the interaction kernel $J$ satisfy Assumptions (A1), \ref{Assump-sigma}, and \ref{Assump-J}, respectively. For every $\gamma\in(0,1]$, let $u_{\gamma}$ be the renormalized kinetic solution of \eqref{equgamma} with initial data $u_{\gamma,0}$. Let $\Psi_{\gamma}$ be defined by \eqref{rescaled-entropy}. Suppose that $a<0$, then 
\begin{align}\label{uniformestimate-1}
	\sup_{t\in[0,T]}\mathbb{E}\Big(\int_{\mathbb{T}^1}\frac{1}{2}|u_{\gamma}(t)|^2dx\Big)+&\mathbb{E}\int^T_0\int_{\mathbb{T}^1}\frac{u_{\gamma}^2|\partial_xu_{\gamma}|^2}{1-\gamma^{2/3}u_{\gamma}^2}dxdt\notag\\
	&-a\mathbb{E}\int^T_0\|\partial_xu_{\gamma}\|_{L^2(\mathbb{T}^1)}^2ds\lesssim C(\delta),\ \ \text{for every }\gamma\in(0,1].  
\end{align}
\end{corollary}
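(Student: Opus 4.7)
The strategy is to mirror the proof of Proposition \ref{prp-entropydissipation} with It\^o's formula replaced by testing the renormalized kinetic equation \eqref{eqq8} against a compactly supported regularization of the entropy derivative. Fix $\bar{\delta}\in(0,1)$ and $m\in\mathbb{N}$, and let $\theta_m\in C_c^\infty((-\gamma^{-1/3},\gamma^{-1/3}))$ be a smooth cutoff with $\theta_m\uparrow 1$ pointwise as $m\to\infty$ and $|\theta_m|\leq 1$. Take the $x$-independent test function $\varphi_m(x,\zeta):=\psi_{\gamma,\bar{\delta}}(\zeta)\theta_m(\zeta)$, which lies in $C_c^\infty(\mathbb{T}^1\times(-\gamma^{-1/3},\gamma^{-1/3}))$ as required by Definition \ref{def-7.2}. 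By the definition of $\chi_\gamma$ and $\bar{\chi}$, the left-hand side of \eqref{eqq8} becomes $\int_{\mathbb{T}^1}\Psi^m_{\gamma,\bar{\delta}}(u_\gamma(t))\,dx$ with $\Psi^m_{\gamma,\bar{\delta}}(\zeta):=\int_0^\zeta\psi_{\gamma,\bar{\delta}}(\tau)\theta_m(\tau)\,d\tau$, minus the analogous initial contribution. Since $\partial_x\varphi_m=0$, the two spatial-derivative terms in \eqref{eqq8} (the linear Laplacian and the Stratonovich--It\^o correction) vanish identically. The stochastic noise term is a martingale and has zero expectation. After one integration by parts, the Kac interaction term reproduces the structure of \eqref{kernel-term}; combined with the pointwise bound $(1-\gamma^{2/3}\zeta^2)\psi'_{\gamma,\bar{\delta}}(\zeta)\leq\gamma^{1/3}/(1+\bar{\delta})$ from \eqref{bound-psi-prime} and Young's convolution inequality, this yields the same cancellation of $\gamma^{-1/3}\mathbb{E}\int_0^T\|\partial_x u_\gamma\|_{L^2(\mathbb{T}^1)}^2\,ds$ against the dissipation as in Proposition \ref{prp-entropydissipation}, exploiting the hypothesis $a<0$ so that $1+a\gamma^{2/3}>0$ for small $\gamma$.

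The essential new ingredient is the treatment of the kinetic measure term
\[
-2\int_0^t\!\int_{\mathbb{R}}\!\int_{\mathbb{T}^1}\partial_\zeta\varphi_m\,dq_\gamma
=-2\int_0^t\!\int_{\mathbb{R}}\!\int_{\mathbb{T}^1}\bigl(\psi'_{\gamma,\bar{\delta}}\theta_m+\psi_{\gamma,\bar{\delta}}\theta_m'\bigr)\,dq_\gamma.
\]
Moving this to the left-hand side and invoking the measure-theoretic bound \eqref{eqq6} with the nonnegative test function $\psi'_{\gamma,\bar{\delta}}\theta_m\geq 0$, the first piece dominates
\[
2\gamma^{-2/3}\,\mathbb{E}\int_0^t\!\int_{\mathbb{T}^1}\psi'_{\gamma,\bar{\delta}}(u_\gamma)\,\theta_m(u_\gamma)\,|\partial_x u_\gamma|^2\,dx\,ds,
\]
which furnishes the dissipation needed to close the estimate. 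The remainder $-2\int\psi_{\gamma,\bar{\delta}}\theta_m'\,dq_\gamma$ is rewritten using the optimal regularity measure from \eqref{mu} as $-2\int\psi_{\gamma,\bar{\delta}}(\zeta)(1-\gamma^{2/3}\zeta^2)\,\theta_m'(\zeta)\,d\mu_\gamma$. Since $\mu_\gamma$ is a finite measure on $\mathbb{T}^1\times(-\gamma^{-1/3},\gamma^{-1/3})\times[0,T]$ and the integrand is uniformly bounded in $m$ while converging pointwise to zero on the open set where $\theta_m'$ eventually vanishes, dominated convergence sends this contribution to zero as $m\to\infty$. Analogously, the It\^o correction $\int(1-\gamma^{2/3}u_\gamma^2)F_{3,\delta}(\partial_\zeta\varphi_m)(u_\gamma)\,dx\,ds$ is controlled by $C\gamma^{1/3}\delta^{-1}T$ via the same $\psi'_{\gamma,\bar{\delta}}$ bound.

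After passing first to the limit $m\to\infty$ and then $\bar{\delta}\to 0$ by monotone and dominated convergence (exactly as in the final step of Proposition \ref{prp-entropydissipation}), one recovers the same entropy dissipation identity; combining this with the lower bound $\Psi_\gamma(\zeta)\geq\tfrac12\gamma^{1/3}\zeta^2-\gamma^{-1/3}$ of Lemma \ref{lem-tech-1} and Assumption (A1)(iv) on $u_{\gamma,0}$ yields \eqref{uniformestimate-1}. The principal technical obstacle is the rigorous justification of the $m\to\infty$ limit in the boundary contribution $\int\psi_{\gamma,\bar{\delta}}\theta_m'\,dq_\gamma$: this is exactly where the \emph{optimal regularity} property \eqref{mu}, namely the finiteness of $\mu_\gamma=(1-\gamma^{2/3}\zeta^2)^{-1}q_\gamma$, is indispensable, because a priori the kinetic measure $q_\gamma$ could accumulate mass near the degenerate values $\zeta=\pm\gamma^{-1/3}$ and spoil the cutoff argument. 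A secondary subtlety is that the dissipation on the kinetic side carries a factor $2$ rather than $1$ (due to the convention in \eqref{eqq8}), but this only affects multiplicative constants and does not change the scaling or cancellation structure inherited from Proposition \ref{prp-entropydissipation}.
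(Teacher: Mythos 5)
Your strategy is essentially the same one the paper invokes by reference to \cite{FG24} and \cite{WWZ22}: test the renormalized kinetic identity \eqref{eqq8} against a cutoff regularization of the entropy derivative, use the kinetic measure lower bound \eqref{eqq6} for dissipation, use the optimal-regularity measure \eqref{mu} to kill the $\theta_m'$ boundary piece in the kinetic-measure term, and then run the scaling cancellation from Proposition \ref{prp-entropydissipation}. The treatment of the kinetic-measure boundary term via $\mu_\gamma$ is correct and this is indeed the decisive use of the optimal regularity property.

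There is, however, one boundary contribution you pass over. After integrating the Kac term (5) in \eqref{eqq8} by parts in $x$, the chain rule gives $\partial_x[\varphi_m(u_\gamma)]=\bigl(\psi'_{\gamma,\bar\delta}\theta_m+\psi_{\gamma,\bar\delta}\theta_m'\bigr)(u_\gamma)\,\partial_x u_\gamma$, so besides the piece that reproduces \eqref{kernel-term} there is a second piece
\[
\gamma^{-2/3}(1+a\gamma^{2/3})\int_0^t\!\int_{\mathbb{T}^1}\psi_{\gamma,\bar\delta}(u_\gamma)\,\theta_m'(u_\gamma)\,\partial_x u_\gamma\,(1-\gamma^{2/3}u_\gamma^2)\,J_{\gamma^{1/3}}*\partial_x u_\gamma\,dx\,ds,
\]
which is of the same nature as the kinetic-measure boundary term but is not covered by your discussion. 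It does vanish as $m\to\infty$, but this requires a separate argument: choose $\theta_m$ so that $(1-\gamma^{2/3}\zeta^2)|\theta_m'(\zeta)|\lesssim\gamma^{1/3}$ uniformly in $m$, note that $\theta_m'(u_\gamma)\to 0$ pointwise on $[-\gamma^{-1/3},\gamma^{-1/3}]$ since $\theta_m$ vanishes to infinite order at the endpoints, and dominate the integrand by $C\gamma^{-1/3}\|\psi_{\gamma,\bar\delta}\|_{L^\infty}\,|\partial_x u_\gamma|\,|J_{\gamma^{1/3}}*\partial_x u_\gamma|$, which is integrable by \eqref{eqq5}, Cauchy--Schwarz, and Young's convolution inequality; dominated convergence then kills the term. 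You would need the analogous remark for the $\theta_m'$ piece of the It\^o correction in term (6), though there the dominating function is elementary. Also, the uniform boundedness of $(1-\gamma^{2/3}\zeta^2)\theta_m'(\zeta)$ is not automatic for an arbitrary cutoff and should be stated as a requirement on $\theta_m$ (a piecewise-linear profile on intervals of width $\sim 1/m$ ending at $\pm\gamma^{-1/3}$ does the job). With these two additions your argument is complete, and your observation that the factor $2$ in the kinetic dissipation only helps the cancellation against the Kac term is correct.
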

\begin{proof}
This is a consequence of the combination of the proof for Proposition \ref{prp-entropydissipation} and the entropy dissipation estimates for renormalized kinetic solution in \cite[Proposition 5.18]{FG24} and \cite[Proposition 7.6]{WWZ22}. 	
\end{proof}

However, the identical time-regularity estimates in Proposition \ref{second-uniform-es} can not be established for \eqref{equgamma-3} by the same approach. This is due to the irregularity of the square-root coefficients, that produces the ill-definedness of the concept of weak solutions. To address this issue, \cite{FG24,DFG} provide an approach to obtain time-regularity estimates by a truncation argument and a diagonal approach. Later on, \cite{WWZ22} apply this approach to study the global-in-time existence of \eqref{equgamma-3}. Precisely, for every $\iota \in(0,1/4)$, let $\psi_{\iota} \in C^{\infty}(\mathbb{R})$ be a smooth function satisfying $0 \leq \psi_{\iota} \leq 1$ and
\begin{align*}
	\psi_{\iota}(\zeta)=
	\left\{
	\begin{array}{ll}
		1, & {\rm{if}}\ \zeta \in[-1+\iota, 1-\iota], \\
		0, &{\rm{if}}\  \zeta \in\left(-\infty,-1+\frac{\iota}{2}\right] \cup\left[1-\frac{\iota}{2},\infty\right),\\
		{\rm{smooth}}, &{\rm{otherwise}}.
	\end{array}
	\right.
\end{align*}
Moreover, $\left|\psi_{\iota}^{\prime}(\zeta)\right| \leqslant c / \iota$ for some $c \in(0, \infty)$ independent of $\iota$.
With the aid of $\psi_{\iota}$, we can define $h_{\iota}(\zeta)=\psi_{\iota}(\zeta)\zeta$ for every $\zeta \in \mathbb{R}$. 

In general, the tightness of renormalized kinetic solutions requires a uniform time-regularity estimate of $h_{\iota}(u_{\gamma})$ for any fixed $\iota>0$. Then by sending $\iota\rightarrow0$, a diagonal argument establishes the tightness, see \cite{FG24,FG23,DFG} for more details. Therefore in our setting, it is natural to consider applying this argument to \eqref{eq-remainder}. However, due to the lack of sufficient regularity of $u_{\gamma}$, it is not clear how to estimate $h_{\iota}'(u_{\gamma})\partial_{xxxx}^4u_{\gamma}$ and to derive the chain rule rigorously, 
   \begin{align*}
\partial_th_{\iota}(u_{\gamma})=&-\frac{D}{2}h_{\iota}'(u_{\gamma})\partial_{xxxx}^4u_{\gamma}+h_{\iota}'(u_{\gamma})\partial_{xx}^2\Big[\frac{1}{3}u_{\gamma}^3-au_{\gamma}\Big]-\sqrt{2}h_{\iota}'(u_{\gamma})\partial_{x}(\sigma(1-\gamma^{2/3}u_{\gamma}^2)dW_{\delta})\notag\\
&+h_{\iota}''(u_{\gamma})|\partial_x\sigma(1-\gamma^{2/3}u_{\gamma}^2)|^2F_{1,\delta}+h_{\iota}''(u_{\gamma}\sigma(1-\gamma^{2/3}u_{\gamma}^2)^2F_{3,\delta}\notag\\
&+h_{\iota}'(u_{\gamma})\Big(R_{1,\gamma}+R_{2,\gamma}+R_{3,\gamma}+R_{4,\gamma}\Big), 
\end{align*}
where $R_{i,\gamma}$, $i=1,2,3,4$ are defined by \eqref{remainder}. This presents obstacles to establishing uniform time-regularity estimates for \eqref{equgamma-3}.

\bibliographystyle{alpha}
\bibliography{Ising-Kac.bib}

\end{document}